	\DeclareSymbolFontAlphabet{\mathbb}{AMSb} 
	\DeclareSymbolFontAlphabet{\mathbbl}{bbold} 
\newcommand{\ind}{\mathbbl{1}} 
\newcommand{\alg}[1]{#1}
\newcommand{\vty}[1]{\mathbb{#1}}
\newcommand{\eq}{\approx}
\newtheorem{theorem}{Theorem}[section]
\newtheorem{proposition}[theorem]{Proposition}
\newtheorem{lemma}[theorem]{Lemma}
\newtheorem{corollary}[theorem]{Corollary}
\theoremstyle{definition}
\newtheorem{definition}[theorem]{Definition}
\newtheorem{remark}[theorem]{Remark}
\newtheorem{example}[theorem]{Example}
\newtheorem{problem}{Problem}
\renewcommand{\epsilon}{\varepsilon}
\renewcommand{\phi}{\varphi}
\newcommand{\lbr}{\llbracket}
\newcommand{\rbr}{\rrbracket}
\DeclareMathOperator{\Rg}{Reg}
\DeclareMathOperator{\Dn}{Den}
\DeclareMathOperator{\CoRg}{Reg^\partial}
\DeclareMathOperator{\CoDn}{Den^\partial}
\DeclareMathOperator{\C}{C} 
\DeclareSymbolFontAlphabet{\mathbb}{AMSb}
\DeclareSymbolFontAlphabet{\mathbbl}{bbold}
\begin{document}

\title{Hyper-MacNeille completions of Heyting algebras}
\author[J.~Harding]{John Harding}
\address{Department of Mathematical Sciences, New Mexico State University, Las Cruces, USA.}
\email{hardingj@mnsu.edu} 

\author[F.~M.~Lauridsen]{Frederik M\"{o}llerstr\"{o}m Lauridsen}
\address{Amsterdam, The Netherlands.}
\email{post@frederiklauridsen.eu}
\thanks{The second named author would like to thank Sam van Gool for helpful discussion and for drawing attention to~\cite{Spe69A, Spe69B}. This project has received funding from the European Union’s Horizon 2020 research and innovation program under the Marie Skłodowska-Curie grant agreement No 689176.}

\subjclass[2010]{06D20; 06B23; 06D15} 
\keywords{Heyting algebra, completions, MacNeille completion, Boolean product, sheaf, supplemented lattice}

\maketitle


\begin{abstract}
A Heyting algebra is supplemented if each element $a$ has a dual pseudo\hyp{}complement $a^+$, and a Heyting algebra is centrally supplement if it is supplemented and each supplement is central. We show that each Heyting algebra has a centrally supplemented extension in the same variety of Heyting algebras as the original. We use this tool to investigate a new type of completion of Heyting algebras arising in the context of algebraic proof theory, the so\hyp{}called hyper\hyp{}MacNeille completion. We show that the hyper\hyp{}MacNeille completion of a Heyting algebra is the MacNeille completion of its centrally supplemented extension. This provides an algebraic description of the hyper\hyp{}MacNeille completion of a Heyting algebra, allows development of further properties of the hyper\hyp{}MacNeille completion, and provides new examples of varieties of Heyting algebras that are closed under hyper\hyp{}MacNeille completions. In particular, connections between the centrally supplemented extension and Boolean products allow us to show that any finitely generated variety of Heyting algebras is closed under hyper\hyp{}MacNeille completions. 
\end{abstract}

\section{Introduction}
Recently, a unified approach to establishing the existence of cut\hyp{}free hypersequent calculi for various substructural logics has been developed~\cite{CGT17}. It is shown that there is a countably infinite set of equations/formulas, called $\mathcal{P}_3$, such that, in the presence of weakening and exchange, any logic axiomatized by formulas from $\mathcal{P}_3$ admits a cut\hyp{}free hypersequent calculus obtained by adding so\hyp{}called \emph{analytic structural rules} to a basic hypersequent calculus. 

The key idea is to establish completeness for the calculus without the cut\hyp{}rule with respect to a certain algebra and then to show that for any set of rules coming from $\mathcal{P}_3$\hyp{}formulas the calculus with the cut\hyp{}rule is also sound with respect to this algebra. In this way the argument is akin to the completeness\hyp{}via\hyp{}canonicity arguments known, most notably, from modal logic, see, e.g.,~\cite[Chap.~4]{BdRV01}, and has been employed in various levels of generality before to establish admissibility of the cut\hyp{}rule in different types of sequent calculi, see, e.g.,~\cite{GJ13, CGT12, GO10, W05, BJO04, Ono03, Oka02, Oka99, OT99, Mae91, OK85}. See also~\cite{Ter18} for a recent application of this method to second\hyp{}order logic and~\cite{GJLPT18} for similar methods applied in the context of display calculi. 

The algebras involved in the cut admissibility proof are based on complete lattices of closed sets determined by certain polarities called residuated (hyper)frames. Any algebra determines such a residuated (hyper)frame and hence gives rise to a complete algebra which turns out to be a completion of the original algebra. The analogous construction applied in the context of sequent calculi gives rise the well\hyp{}known MacNeille completion~\cite{CGT12, BJO04}. For this reason the completion introduced in~\cite{CGT17} is called the \emph{hyper\hyp{}MacNeille completion}. Restricting attention to intermediate logics, and hence to Heyting algebras, this provides a new method for completing Heyting algebras and shows that varieties of Heyting algebras defined by $\mathcal{P}_3$\hyp{}equations are closed under this hyper\hyp{}MacNeille completion. 

Our purpose here is to consider the hyper\hyp{}MacNeille completion of Heyting algebras from an algebraic, rather than proof\hyp{}theoretic, perspective. In doing so, we provide a simple alternative description of the hyper\hyp{}MacNeille completion of a Heyting algebra, are able to describe further the properties of this completion, and obtain results about varieties of Heyting algebras that are closed under hyper\hyp{}MacNeille completions but are not axiomatized by $\mathcal{P}_3$\hyp{}equations. In particular, we show that any finitely generated variety of Heyting algebras is closed under hyper\hyp{}MacNeille completions.    

Our primary tool is the notion of supplements. An element $a$ of a Heyting algebra is supplemented if it has a dual pseudo\hyp{}complement $a^+$. We say a Heyting algebra is centrally supplemented if it is supplemented and $a^+$ is central for each element $a$. We show that each Heyting algebra $\alg{A}$ has a centrally supplemented extension $S(\alg{A})$, and this extension belongs to the variety of Heyting algebras generated by the original. The technique is as follows. Any Heyting algebra $\alg{A}$ can be realized as a subdirect product $\alg{A}\leq\prod_Y\alg{A}_y$ of finitely subdirectly irreducible (fsi) quotients indexed over the minimum of its dual Esakia space. The full product is centrally supplemented, and we realize $S(\alg{A})$ as the supplemented subalgebra of the product generated by $\alg{A}$. 

We show that the hyper\hyp{}MacNeille completion $\alg{A}^+$ of a Heyting algebra $\alg{A}$ is the MacNeille completion of the centrally supplemented extension $S(\alg{A})$. This not only allows us to further develop the properties of the hyper\hyp{}MacNeille completion, but provides a simple algebraic description of hyper\hyp{}MacNeille completions for Heyting algebras. Finer properties of hyper\hyp{}MacNeille completions are obtained using a close connection between $S(\alg{A})$ and a sheaf representation of $\alg{A}$ over the subspace $Y$ consisting of the minimum of the dual Esakia space of $\alg{A}$. This leads to the interesting topic of describing the MacNeille completion of an algebra realized as the global sections of a sheaf. Some results in this vein are known~\cite{CHJ96} and employed here. 
 
The paper is organized as follows. Section~\ref{sec:supp} develops properties of supplemented bounded distributive lattices. Section~\ref{S(E)} develops the notion of a centrally supplemented extension of a Heyting algebra. Section~\ref{sec:HHMC} introduces the hyper\hyp{}MacNeille completion of a Heyting algebra and shows that the hyper\hyp{}MacNeille completion of any Heyting algebra is given by the MacNeille completion of its centrally supplemented extension. In Section~\ref{other} we relate our realization of the hyper\hyp{}MacNeille completion of a Heyting algebra to others. In Section~\ref{sheaf} we discuss relations to sheaf extensions, and in Section~\ref{examples} we provide examples of varieties that are closed under hyper\hyp{}MacNeille completions and are not defined by equations in $\mathcal{P}_3$. Finally, Section~\ref{sec:concluding-remarks} contains a few concluding remarks.       


\section{Supplemented Heyting algebras}\label{sec:supp}

A \emph{pseudo\hyp{}complement} of an element $a$ in a bounded distributive lattice $\alg{D}$, if it exists, is the largest element, denoted by $a^*$, whose meet with $a$ is 0. The notion of a supplement is dual to the notion of a pseudo\hyp{}complement. Thus, many basic properties of supplements can be obtained by dualizing known results~\cite[Chap.~VIII]{BD74} about pseudo\hyp{}complements. While we are primarily interested in the situation for Heyting algebras, the notion will be developed in the setting of bounded distributive lattices. Heyting algebras with supplements were studied, among others, by Sankappanavar in~\cite{San85}. We follow the notation used in that paper. 

\begin{definition}
For an element $a$ in a bounded distributive lattice $\alg{D}$, an element $a^+$ is the \emph{supplement} of $a$ if it is the least element whose join with $a$ is equal to $1$. We say $\alg{D}$ is \emph{supplemented} if each element of $\alg{D}$ has a supplement. 
\end{definition}  

It is easy to see, and well\hyp{}known dually from pseudo\hyp{}complements, that in a supplemented distributive lattice one always has $(x \land y)^+ \eq x^+ \lor y^+$. However, the identity $(x \lor y)^+ \eq x^+ \land y^+$ need not hold. This identity is dual to the usual Stone identity~\cite[Chap.~VIII.7]{BD74}. For an example, consider the lattice shown in Figure~\ref{fig}. 

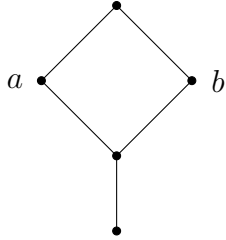
\begin{figure}[h]
\begin{center}
\begin{tikzpicture}[scale=1]
\draw[fill] (0,0) circle (1.5pt);
\draw[fill] (0,1) circle (1.5pt);
\draw[fill] (-1,2) circle (1.5pt);
\draw[fill] (1,2) circle (1.5pt);
\draw[fill] (0,3) circle (1.5pt);
\draw (0,0) -- (0,1) -- (-1,2) -- (0,3) -- (1,2) -- (0,1);
\node at (-1.35,2) {$a$};
\node at (1.35,2) {$b$};
\end{tikzpicture}
\end{center}
	\caption{A finite lattice not satisfying the dual Stone identity.}
	\label{fig}
\end{figure}

\begin{definition} \label{rb}
A bounded distributive lattice $\alg{D}$ is said to be \emph{centrally supplemented} if it is supplemented and satisfies the equation $(x\vee y)^+\eq x^+\wedge y^+$. 
\end{definition}

\begin{example}
Any finite distributive lattice is supplemented, but not necessarily centrally supplement. Examples of De Morgan supplemented distributive lattices include Boolean algebras, any bounded distributive lattice with a join\hyp{}irreducible top element, and any direct product of centrally supplemented distributive lattices. 
\end{example}

Recall, e.g., from~\cite[Chap.~VIII]{BD74}, that for a pseudo\hyp{}complemented distributive lattice $\alg{D}$, its set of \emph{regular elements} is $\Rg(\alg{D})= \{a^*:a\in\alg{D}\}$ and its set of \emph{dense elements} is $\Dn(\alg{D}) = \{a: a^*=0\}$.  Glivenko's Theorem~\cite[Thm.~VIII.4.3]{BD74} shows that $\Rg(\alg{D})$ is a Boolean algebra with the same bounds and meet as $\alg{D}$ and complementation given by pseudo\hyp{}complement. While $\Rg(\alg{D})$ is not a sublattice of $\alg{D}$, it is a quotient of $\alg{D}$ via the map sending $a$ to $a^{**}$ that collapses the filter of dense elements. We dualize these notions for a supplemented bounded distributive lattice $\alg{D}$ to obtain its set $\CoRg(\alg{D})=\{a^+:a\in\alg{D}\}$ of \emph{co\hyp{}regular} elements, and its set $\CoDn(\alg{D})=\{a:a^+=1\}$ of \emph{co\hyp{}dense} elements. The dual of the Glivenko theorem yields the following. 

\begin{proposition} \label{lem:coreg}
Let $\alg{D}$ be a supplemented bounded distributive lattice. Then $\CoRg(\alg{D})$ is a Boolean algebra with the same bounds and join as $\alg{D}$ and complementation given by supplement. In general $\CoRg(\alg{D})$ is not a sublattice, but it is a lattice quotient via the map sending $a$ to $a^{++}$ that collapses the ideal of co\hyp{}dense elements.  
\end{proposition}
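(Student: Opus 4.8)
The plan is to prove Proposition~\ref{lem:coreg} by order-dualizing the standard proof of Glivenko's theorem for pseudo-complemented distributive lattices (see~\cite[Chap.~VIII]{BD74}), taking care that the double-supplement operator is \emph{deflationary} rather than inflationary. First I would record the elementary arithmetic of supplements, each item being the exact order-dual of a well-known identity for pseudo-complements: the map $a \mapsto a^+$ is order-reversing, $a^{++} \le a$, whence $a^+ = a^{+++}$; also $0^+ = 1$, $1^+ = 0$, and, as already noted in the text, $(x \wedge y)^+ = x^+ \vee y^+$. It follows that $\gamma \colon a \mapsto a^{++}$ is monotone, idempotent, and deflationary, with set of fixpoints exactly $\CoRg(\alg{D}) = \{a^+ : a \in \alg{D}\}$; in particular $\CoRg(\alg{D})$ is a poset under the order inherited from $\alg{D}$, and $\gamma$ maps $\alg{D}$ onto it. Because $\gamma$ is monotone and deflationary, for co-regular $p, q$ we have $p \vee q \le (p \vee q)^{++} \le p \vee q$, so $\CoRg(\alg{D})$ is closed under the ambient join; this join will serve as the join of $\CoRg(\alg{D})$, its meet will be the corrected operation $p \sqcap q := (p \wedge q)^{++} = (p^+ \vee q^+)^+$, and its bounds $0$ and $1$ are inherited from $\alg{D}$.

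Second, I would show that $\gamma \colon \alg{D} \to \CoRg(\alg{D})$ carries the operations $\vee, \wedge$ of $\alg{D}$ to $\vee, \sqcap$, which exhibits $(\CoRg(\alg{D}); \vee, \sqcap, 0, 1)$ as a homomorphic image of the bounded distributive lattice $\alg{D}$, hence as a bounded distributive lattice. Preservation of the bounds is clear. For meet, $\gamma(a) \sqcap \gamma(b) = (a^{++} \wedge b^{++})^{++} = (a \wedge b)^{++} = \gamma(a \wedge b)$, the middle equality holding because $a^{++} \wedge b^{++}$ and $a \wedge b$ have the common supplement $a^+ \vee b^+$ by $(x \wedge y)^+ = x^+ \vee y^+$ (and $a^{+++} = a^+$), hence the same double supplement. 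For join I need $(a \vee b)^{++} = a^{++} \vee b^{++}$: the inequality $\ge$ is immediate from monotonicity of $\gamma$; for $\le$ I would put $c = (a \vee b)^+$, so that $c \vee a \vee b = 1$, deduce $c \vee a \ge b^+$ and hence $c \vee a \vee b^{++} \ge b^+ \vee b^{++} = 1$, then repeat the step to obtain $c \vee a^{++} \vee b^{++} = 1$, so $c \ge (a^{++} \vee b^{++})^+$; taking supplements and using that $a^{++} \vee b^{++}$ is co-regular then gives $(a \vee b)^{++} \le (a^{++} \vee b^{++})^{++} = a^{++} \vee b^{++}$.

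Third, I would check that $a^+$ is a complement of $a$ in $\CoRg(\alg{D})$: one has $a \vee a^+ = 1$ by definition of the supplement, and $a \sqcap a^+ = (a \wedge a^+)^{++} = (a^+ \vee a^{++})^+ = 1^+ = 0$, using $(x \wedge y)^+ = x^+ \vee y^+$ and $a^+ \vee a^{++} = a^+ \vee (a^+)^+ = 1$. Since complements in a distributive lattice are unique, $\CoRg(\alg{D})$ is then a Boolean algebra with join and bounds inherited from $\alg{D}$ and complement given by the supplement, and $\gamma$ realizes it as a lattice quotient of $\alg{D}$. To see that this quotient collapses exactly the ideal of co-dense elements, I would observe that $\CoDn(\alg{D})$ is an ideal — clearly a downset, and closed under join since from $(a \vee b) \vee (a \vee b)^+ = 1$ and $a^+ = 1$ one gets $b \vee (a \vee b)^+ = 1$, whence $(a \vee b)^+ \ge b^+ = 1$ — while $\gamma(a) = 0$ iff $(a^+)^+ = 0$ iff $a^+ = 1$ iff $a \in \CoDn(\alg{D})$. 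Finally, that $\CoRg(\alg{D})$ need not be a sublattice of $\alg{D}$ is already visible in the five-element lattice of Figure~\ref{fig}, where $a^+ = b$ and $b^+ = a$, so $\CoRg(\alg{D}) = \{0, a, b, 1\}$ while $a \wedge b \notin \CoRg(\alg{D})$ and $a \sqcap b = (a \wedge b)^{++} = 0$.

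I do not anticipate a serious obstacle: nearly everything is a mechanical order-dualization of~\cite[Chap.~VIII]{BD74} or a one-line verification. The single point that needs genuine care is the identity $(a \vee b)^{++} = a^{++} \vee b^{++}$, precisely because the dual Stone identity $(x \vee y)^+ = x^+ \wedge y^+$ is \emph{not} available in an arbitrary supplemented lattice; consequently the $\le$ direction has to be obtained through the iterated ``its join with the relevant element equals $1$'' argument sketched above rather than by a direct computation.
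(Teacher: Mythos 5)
Your proof is correct, and it follows exactly the route the paper intends: the paper gives no argument at all for this proposition, simply invoking ``the dual of the Glivenko theorem'' from~\cite[Thm.~VIII.4.3]{BD74}, and your write-up is a sound explicit execution of that order-dualization, including the one step that genuinely needs care (the identity $(a\vee b)^{++}=a^{++}\vee b^{++}$ in the absence of the dual Stone law) and the Figure~\ref{fig} example showing $\CoRg(\alg{D})$ need not be a sublattice. Nothing is missing.
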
 

Recall that the center $Z(\alg{D})$ of a bounded distributive lattice $\alg{D}$ is the set of all of its complemented elements. The center is a bounded sublattice that forms a Boolean algebra. The following is a simple consequence of~\cite[Thm.VIII.7.1]{BD74}. 

\begin{proposition}\label{prop:deMorgan-eq-co-Stone}
For $\alg{D}$ a supplemented distributive lattice, the following are equivalent:
\begin{enumerate}
\item $Z(\alg{D}) = \CoRg(\alg{D})$,
\item The algebra $\alg{D}$ satisfies the equation $x^+ \land x^{++} \eq 0$, 
\item $\CoRg(\alg{D})$ is a sublattice of $\alg{D}$, thus a retract of $\alg{D}$,
\item The supplement on $\alg{D}$ is a central supplement.
\end{enumerate}
\end{proposition}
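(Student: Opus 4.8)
The plan is to establish the cycle of implications $(1)\Rightarrow(2)\Rightarrow(3)\Rightarrow(4)\Rightarrow(1)$, leaning throughout on the order\hyp{}duality between supplements and pseudo\hyp{}complements, so that the argument is essentially the dual of the characterisation of Stone algebras in \cite[Thm.~VIII.7.1]{BD74}. The elementary facts about supplements used without further comment are the duals of standard pseudo\hyp{}complement identities: $x\vee x^{+}\eq 1$ for all $x$, whence $x^{+}\vee x^{++}\eq 1$; $x^{+++}\eq x^{+}$; and, as already implicit in Proposition~\ref{lem:coreg}, $\CoRg(\alg{D})$ is closed under the join of $\alg{D}$ because $a^{+}\vee b^{+}\eq(a\wedge b)^{+}$.

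The one auxiliary observation not contained in the excerpt is that $Z(\alg{D})\subseteq\CoRg(\alg{D})$ in \emph{every} supplemented bounded distributive lattice: if $c$ is complemented with complement $c'$, then $c\vee c'=1$ gives $c^{+}\le c'$, while distributing $c'=c'\wedge(c\vee c^{+})$ gives $c'\le c^{+}$, so $c'=c^{+}$ and, symmetrically, $c=c^{++}\in\CoRg(\alg{D})$. Consequently $(1)$ and $(4)$ both amount to the single inclusion $\CoRg(\alg{D})\subseteq Z(\alg{D})$, i.e.\ to the assertion that each co\hyp{}regular element is complemented; in particular $(4)\Rightarrow(1)$ is immediate, and it remains only to close the loop through $(2)$ and $(3)$.

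For $(1)\Rightarrow(2)$: if $a^{+}$ has a complement $c$, then $a^{+}\vee c=1$ forces $c\ge a^{++}$, so $a^{+}\wedge a^{++}\le a^{+}\wedge c=0$. For $(2)\Rightarrow(3)$: the identity $x^{+}\wedge x^{++}\eq 0$ together with $x^{+}\vee x^{++}\eq 1$ shows each $a^{+}$ is complemented with complement $a^{++}$, so $\CoRg(\alg{D})\subseteq Z(\alg{D})$ and hence $\CoRg(\alg{D})=Z(\alg{D})$, a bounded sublattice of $\alg{D}$; since $\CoRg(\alg{D})$ is now closed under the meet of $\alg{D}$, its Boolean meet from Proposition~\ref{lem:coreg} (a priori the quotient operation $a^{+}\wedge b^{+}\mapsto(a^{+}\wedge b^{+})^{++}$) coincides with the restriction of the meet of $\alg{D}$, so the surjection $a\mapsto a^{++}$ of Proposition~\ref{lem:coreg} is a lattice homomorphism onto the sublattice $\CoRg(\alg{D})$, and since it fixes each $a^{+}$ by $x^{+++}\eq x^{+}$ it is a retraction. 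For $(3)\Rightarrow(4)$: if $\CoRg(\alg{D})$ is a sublattice, then by Proposition~\ref{lem:coreg} it is Boolean with the same bounds as $\alg{D}$ and with meet and join inherited from $\alg{D}$, so the complement in $\CoRg(\alg{D})$ of each $a^{+}$ is a complement in $\alg{D}$; thus every supplement is central.

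I do not anticipate a serious obstacle: the mathematical content is already in \cite[Thm.~VIII.7.1]{BD74}, and the only point needing care is the bookkeeping of the \emph{two} lattice structures carried by $\CoRg(\alg{D})$ — the quotient structure of Proposition~\ref{lem:coreg} and, when available, the structure induced from $\alg{D}$ — together with the routine verification that the two agree precisely when $\CoRg(\alg{D})$ is a sublattice. An alternative to spelling out the cycle is to invoke the order\hyp{}dual of \cite[Thm.~VIII.7.1]{BD74} directly and simply record the translation of each clause, but writing out the four short implications as above seems cleaner.
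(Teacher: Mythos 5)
Your proof is correct and is essentially the paper's approach: the paper offers no written argument, merely observing that the proposition is "a simple consequence of" the order dual of the Stone-algebra characterization \cite[Thm.~VIII.7.1]{BD74}, and your cycle $(1)\Rightarrow(2)\Rightarrow(3)\Rightarrow(4)\Rightarrow(1)$ is exactly a careful write-out of that dualization. The two points that actually need checking --- the unconditional inclusion $Z(\alg{D})\subseteq\CoRg(\alg{D})$ and the agreement of the quotient meet of Proposition~\ref{lem:coreg} with the restricted meet once $\CoRg(\alg{D})$ is a sublattice --- are both handled correctly.
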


\begin{remark}
Thus Proposition~\ref{prop:deMorgan-eq-co-Stone} shows that the bounded distributive lattices that are centrally supplemented are precisely the bounded distributive lattice with order duals being \emph{Stone lattices}, see~\cite{GS57} or~\cite[Chap.~VIII.7]{BD74}. It also points to the reason for the term ``centrally supplemented''. One further advantage of this term is it allows us to speak of the property of being centrally supplemented for an individual element $a$. We say $a$ is \emph{centrally supplemented} if $a$ has a supplement $a^+$ and this supplement belongs to the center. 
\end{remark}

If a bounded distributive lattice is complete, pseudo\hyp{}complemented, and supplemented, then its center is well\hyp{}behaved. 

\begin{proposition}\label{prop:completecenter}
Let $\alg{D}$ be a bounded distributive lattice which is both pseudo\hyp{}complemented and supplemented. If $\alg{D}$ is complete, then so is $Z(\alg{D})$ and joins and meets in $Z(\alg{D})$ are the same as those in $\alg{D}$. 
\end{proposition}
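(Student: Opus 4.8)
The plan is to show that an arbitrary family $\{z_i\}_{i\in I}$ of central elements has a join and a meet in $Z(\alg{D})$ that agree with the join and meet taken in $\alg{D}$. Since $Z(\alg{D})$ is closed under complementation, it suffices by De Morgan duality to treat the join. Let $z = \bigvee_{i\in I} z_i$ be the join computed in the complete lattice $\alg{D}$; I must produce a complement for $z$ and check it is again the $\alg{D}$-join of the relevant central elements.

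First I would use supplementation: let $z^+$ be the supplement of $z$, so $z \vee z^+ = 1$ and $z^+$ is least such. The natural candidate complement of $z$ is $z^+$, so the crux is to prove $z \wedge z^+ = 0$. Here is where the interplay of the two hypotheses enters. For each $i$, the element $z_i$ is central with complement $\lnot z_i$, and $z^+ \le \lnot z_i$ since $z_i \vee z^+ \ge z \vee z^+ = 1$ forces $z^+$ above the complement... more precisely, $z_i \vee z^+ = 1$ and $z_i \wedge \lnot z_i = 0$ give $z^+ \ge \lnot z_i$ by the standard fact that in a distributive lattice $a \vee b = 1$ and $a \wedge c = 0$ with $c$ complemented imply $b \ge c$. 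Hence $z^+ \ge \bigvee_i \lnot z_i$. For the reverse, note $z \le z$ is trivial and we want $z \wedge z^+ = 0$; using pseudo-complementation, $z^*$ exists, and one shows $\lnot z_i \le z^*$ for each $i$ (as $z_i \wedge z^* \ge$ nothing useful directly — rather $z_i \le z$ gives $\lnot z_i \ge z^*$, wrong direction). Let me instead argue: $z \wedge z^+ \le z_i \vee (z \wedge z^+)$, and since the $z_i$ are central I can distribute; the cleanest route is to show $z \wedge z^+ \le \lnot z_i$ is false in general, so instead I compute $(z \wedge z^+) \wedge z_i \le z^+ \wedge z_i = 0$ using $z^+ \le \lnot z_i$ just established, hence $z \wedge z^+ \le \lnot z_i$ for all $i$ — wait, $(z\wedge z^+)\wedge z_i = 0$ only gives $z \wedge z^+ \le \lnot z_i$ when $z\wedge z^+$ is below something complemented; but $z \wedge z^+ \le z^* $ is what we get since $(z\wedge z^+)\wedge z = z \wedge z^+$... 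I would reorganize: show $z^+ \wedge z = \bigvee_i(z^+ \wedge z_i)$ would follow from infinite distributivity, which we do not have; so instead exploit that $z^+ \in \CoRg(\alg{D})$ and invoke Proposition 1.6: since $\alg{D}$ is complete, pseudo-complemented, and supplemented, I expect $z^+$ is actually central, giving $z \wedge z^+ = (z^+)^* \wedge$-interactions directly. The honest key step is: \emph{every supplement $a^+$ in such a $\alg{D}$ is central}, which I would prove by showing $a^+ \wedge a^{++} = 0$ (the condition in Proposition 1.5(2)); here completeness lets me write $a^{++}$ as a suitable meet and pseudo-complementation provides the element witnessing the meet is as claimed. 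Once $\alg{D}$ is centrally supplemented, $Z(\alg{D}) = \CoRg(\alg{D})$ is a retract of $\alg{D}$ by Proposition 1.5(3), and retracts of complete lattices are complete; but to get that joins in $Z(\alg{D})$ coincide with those in $\alg{D}$ I still need that $z = \bigvee_i z_i$ (the $\alg{D}$-join) lies in $Z(\alg{D})$. For this: $z^+$ is central, so $z^{++}$ is central, $z \le z^{++}$, and $z^{++} \vee z^+ = 1$ with $z^{++}\wedge z^+ = 0$; since $z \vee z^+ = 1$ and $z$ is below the central element $z^{++}$, distributivity gives $z = z \vee 0 = z \vee (z^{++}\wedge z^+)\wedge\ldots$, ultimately $z = z^{++}$, so $z$ is central, completing the argument, and the dual handles meets.

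The main obstacle is precisely the step asserting that every supplement is central under these hypotheses, i.e., verifying $a^+ \wedge a^{++} = 0$. The subtlety is that without infinite distributivity one cannot freely manipulate the infinitary joins defining $a^{++}$; the resolution should be that $a^{++}$, being a \emph{double} supplement, can be characterized as $\bigvee\{x : x \wedge a^+ = 0\}$ or dually via the pseudo-complement $a^* $ — one shows $a^{++} \le (a^+)^*$ and $(a^+)^* \wedge a^+ = 0$ by the defining property of pseudo-complement, hence $a^+ \wedge a^{++} = 0$ as desired. I expect this to go through cleanly precisely because having \emph{both} a pseudo-complement operation and a supplement operation available lets each one supply the witness the other needs, and completeness guarantees these operations are everywhere defined.
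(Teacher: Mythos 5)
Your strategy rests on a claim that is false: namely, that in a complete bounded distributive lattice that is both pseudo\hyp{}complemented and supplemented, \emph{every} supplement is central, i.e., $a^+\wedge a^{++}=0$ for all $a$ (equivalently, via your sub\hyp{}claim, $a^{++}\le (a^+)^*$). The paper's own Figure~\ref{fig} refutes this: that five\hyp{}element lattice is finite, hence complete, pseudo\hyp{}complemented, and supplemented, yet $a^+=b$, $a^{++}=b^+=a$, and $(a^+)^*=b^*=0$, so $a^{++}\not\le (a^+)^*$ and $a^+\wedge a^{++}=a\wedge b\neq 0$. So the hypotheses of the proposition do not make $\alg{D}$ centrally supplemented, and no amount of rewriting $a^{++}$ as an infinitary join or meet will fix this step. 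There are also sign errors en route: from $z_i\le z$ you get $z_i\vee z^+\le z\vee z^+$, not $\ge$, and antitonicity of the supplement gives $z^+\le z_i'$ for each $i$, hence $z^+\le\bigwedge_i z_i'$; the inequality $z^+\ge\bigvee_i z_i'$ that you want is already false in a complete Boolean algebra whenever $\bigwedge_i z_i<\bigvee_i z_i$.

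The proposition can be proved without showing that any element of $\alg{D}$ is central except the one you are after, and your closing intuition --- that the pseudo\hyp{}complement and the supplement each supply the witness the other needs --- is exactly right, but it must be applied to the \emph{family} $\{c_i\}$ rather than to a single element $a$. Given central elements $c_i$ with complements $c_i'$, put $c=\bigwedge_I c_i$ and $d=\bigvee_I c_i'$, both computed in $\alg{D}$. Since $c\le c_i$, antitonicity of $(\cdot)^*$ gives $c_i'=c_i^*\le c^*$ for every $i$, hence $d\le c^*$ and so $c\wedge d=0$. Dually, since $c_i'\le d$, antitonicity of $(\cdot)^+$ gives $d^+\le (c_i')^+=c_i$ for every $i$, hence $d^+\le c$ and so $c\vee d\ge d\vee d^+=1$. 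Thus $c$ is complemented, hence central, and is therefore also the meet of the family in $Z(\alg{D})$; joins are handled dually. This is precisely the paper's argument.
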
 

\begin{proof}
Let $\{c_i\}_{i \in I}$ be a collection of central elements. Note that for central elements, their pseudo\hyp{}complements and supplements agree, and we write these as complements $c_i'$. Let $c = \bigwedge_{I} c_i$ be the meet in $\alg{D}$ and let $d = \bigvee_{I} c'_i$ be the join in $\alg{D}$. For each $i\in I$ we have $c\leq c_i$, so $c_i'=c_i^*\leq c^*$, hence $d\leq c^*$. Thus $c\wedge d = 0$. Similarly, since $c=\bigwedge_Ic_i''$ we have $c\vee d=1$. So $c$ has a complement, hence belongs to the center, and is therefore the meet of the family $\{c_i\}_{i\in I}$ in the center as well. The statement for joins follows by a dual argument.  
\end{proof} 

\begin{lemma}\label{lem:meet-dense-supplemented-set}
Let $\alg{D}$ be a complete bounded distributive lattice and let $S \subseteq D$ be a meet-dense set. If every element of $S$ has a supplement in $\alg{D}$, then $\alg{D}$ is supplemented and the supplement of an element $x\in\alg{D}$ is given by 
\[ x^+ = \bigvee \{s^+ \in D : x \leq s \in S\}.\]
\end{lemma}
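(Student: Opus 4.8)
The plan is to fix $x \in \alg{D}$, put $S_x = \{s \in S : x \leq s\}$, and set $t = \bigvee\{s^+ : s \in S_x\}$; this join exists because $\alg{D}$ is complete and, by hypothesis, every $s \in S_x$ has a supplement. Meet\hyp{}density gives $x = \bigwedge S_x$, and this is essentially the only place where the hypotheses on $S$ and the completeness of $\alg{D}$ are used beyond guaranteeing that the relevant joins and meets exist. It then suffices to verify that $t$ is the least element of $\alg{D}$ whose join with $x$ equals $1$.

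First I would check that $x \vee t = 1$. Since a complete distributive lattice need not satisfy the infinite distributive law, I would not try to distribute $t$ over the meet $\bigwedge S_x$; instead I argue by contradiction. If $x \vee t \neq 1$, then meet\hyp{}density gives $x \vee t = \bigwedge\{s \in S : x \vee t \leq s\}$, so this set must contain some $s_0 \neq 1$ (otherwise the meet would be $1$). From $x \leq x \vee t \leq s_0$ we get $s_0 \in S_x$, hence $s_0^+ \leq t \leq s_0$; but then $1 = s_0 \vee s_0^+ = s_0$, a contradiction. Thus $x \vee t = 1$.

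Next I would establish minimality. Suppose $u \in \alg{D}$ satisfies $x \vee u = 1$. For each $s \in S_x$ we have $s \vee u \geq x \vee u = 1$, so $s \vee u = 1$, and since $s^+$ is by definition the least element joining with $s$ to $1$, we obtain $s^+ \leq u$. Taking the join over all $s \in S_x$ yields $t \leq u$. Combined with the previous paragraph, this shows that $x^+$ exists and equals $t$, which is exactly the displayed formula; since $x$ was arbitrary, $\alg{D}$ is supplemented.

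The only delicate point is the proof that $x \vee t = 1$: the naive route of writing $x \vee t = \bigvee_{s \in S_x}(x \vee s^+)$ only yields $x \vee s^+ \leq s \vee s^+ = 1$ for each term, which is useless, and the frame\hyp{}type inequality $\bigwedge_{s \in S_x}(t \vee s) \leq t \vee \bigwedge S_x$ is not available in a general complete distributive lattice. The contradiction argument via meet\hyp{}density circumvents this and is the crux of the lemma; the remaining steps are routine.
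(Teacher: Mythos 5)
Your proof is correct and follows essentially the same route as the paper's: define the candidate supplement as the join of the $s^+$ over all $s \in S$ above $x$, verify minimality by noting that any $u$ with $x \vee u = 1$ lies above each such $s^+$, and use meet-density to show every $s \in S$ above $x \vee t$ equals $1$. The only cosmetic difference is that you phrase this last step as a contradiction while the paper argues it directly.
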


\begin{proof}
Let $x \in D$ and set $z = \bigvee \{s^+ \in D : x \leq s \in S\}$. Note that if $y \in D$ is such that $y \lor x = 1$ then for each $s \in S$  with $s \geq x$ we have that $y \lor s = 1$ and hence $s^+ \leq y$, showing that $ z \leq y$. To see that  $x \lor z=1$, consider $s \in S$ such that $x \lor z \leq s$. Then $x \leq s$ and $z \leq s$ from which we may conclude that $s^+ \leq s$ and therefore that $s = 1$. Since $S$ is assumed to be meet-dense in $\alg{D}$ we may conclude that $x \lor z =1$. 
\end{proof}

We next turn to the matter of MacNeille completions. For a distributive lattice $\alg{D}$ we let $\overline{\alg{D}}$ be its MacNeille completion. It is well\hyp{}known that there are bounded distributive lattices whose MacNeille completions are not distributive, see, e.g.,~\cite[Chap.~XII.2]{BD74}, and by placing a new top on such, we obtain a (centrally) supplemented distributive lattice whose MacNeille completion is not distributive. But our interest here is in completions of Heyting algebras, and in this setting things are better behaved. 

\begin{proposition}\label{prop:MN-of-DM-is-DM}
If $\alg{A}$ is a centrally supplemented Heyting algebra, then its MacNeille completion $\overline{\alg{A}}$ is again a centrally supplemented Heyting algebra and $\alg{A}$ is a supplemented Heyting subalgebra of $\overline{\alg{A}}$. 
\end{proposition}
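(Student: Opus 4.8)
The plan is to build on the classical fact that the MacNeille completion $\ol{\alg{A}}$ of a Heyting algebra $\alg{A}$ is a complete Heyting algebra into which $\alg{A}$ embeds as a Heyting subalgebra that is simultaneously join-dense and meet-dense. Granting this, I would argue in three steps: (i) the supplement operation of $\alg{A}$ extends to $\ol{\alg{A}}$, meaning that for $a\in A$ the supplement $a^+$ computed in $\alg{A}$ is still the least element of $\ol{\alg{A}}$ whose join with $a$ is $1$; (ii) $\ol{\alg{A}}$ is supplemented, via meet-density and Lemma~\ref{lem:meet-dense-supplemented-set}; and (iii) every supplement in $\ol{\alg{A}}$ is central, using the good behaviour of the center established in Proposition~\ref{prop:completecenter}.

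Step (i) is the one I expect to be the real obstacle, and I would handle it as follows. Fix $a\in A$ and let $y\in\ol{\alg{A}}$ satisfy $a\vee y=1$; put $w=a^+\wedge y$. Distributivity of $\ol{\alg{A}}$ gives $a^+=a^+\wedge(a\vee y)=(a^+\wedge a)\vee w$, and since $a^+\wedge a\leq a$ this forces $a\vee w=a\vee a^+=1$. Meet-density now writes $w$ as the meet of the (non-empty) set of $b\in A$ with $b\geq w$, and for each such $b$ we have $a\vee b\geq a\vee w=1$; since $a,b\in A$ and $a^+$ is by definition the least element of $\alg{A}$ joining with $a$ to $1$, each such $b$ dominates $a^+$, hence $w\geq a^+$. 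Combined with $w=a^+\wedge y\leq a^+$ this gives $w=a^+$, so $a^+\leq y$; as $a\vee a^+=1$ still holds in $\ol{\alg{A}}$, $a^+$ is the supplement of $a$ there. In particular $A$ is closed under the supplement operation of $\ol{\alg{A}}$, so once $\ol{\alg{A}}$ is known to be supplemented, $\alg{A}$ is automatically a supplemented Heyting subalgebra.

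For step (ii) I would apply Lemma~\ref{lem:meet-dense-supplemented-set} to the meet-dense set $S=A$: by step (i) each element of $A$ has a supplement in $\ol{\alg{A}}$, so $\ol{\alg{A}}$ is supplemented with $x^+=\bigvee\{a^+:x\leq a\in A\}$ for every $x\in\ol{\alg{A}}$. For step (iii), since $\alg{A}$ is centrally supplemented, Proposition~\ref{prop:deMorgan-eq-co-Stone} gives $\CoRg(\alg{A})=Z(\alg{A})$, so each $a^+$ with $a\in A$ is complemented in $\alg{A}$; the defining complement identities are equations that persist in $\ol{\alg{A}}$, so $a^+\in Z(\ol{\alg{A}})$. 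As $\ol{\alg{A}}$ is a Heyting algebra it is pseudo-complemented, and it is complete and supplemented by the above, so Proposition~\ref{prop:completecenter} applies and joins in $Z(\ol{\alg{A}})$ coincide with those in $\ol{\alg{A}}$; hence each $x^+=\bigvee\{a^+:x\leq a\in A\}$ is a join of central elements and therefore central, and $\ol{\alg{A}}$ is centrally supplemented.

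The crux is step (i): $\ol{\alg{A}}$ contains many elements not in $\alg{A}$, possibly below $a^+$, and one must rule out that some such element already joins with $a$ to $1$; the computation above reduces precisely this to meet-density together with the minimality built into the definition of a supplement in $\alg{A}$. Everything else is routine bookkeeping with Lemma~\ref{lem:meet-dense-supplemented-set} and Propositions~\ref{prop:deMorgan-eq-co-Stone} and~\ref{prop:completecenter}, together with the standard structure theory of MacNeille completions of Heyting algebras.
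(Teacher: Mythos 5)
Your proof is correct, and it splits naturally against the paper's: steps (i) and (ii) are exactly the paper's argument, except that you supply the detail behind what the paper dispatches in one line, namely that ``meet-density of $\alg{A}$ in $\overline{\alg{A}}$ entails that $a^+$ is still the supplement of $a$ in $\overline{\alg{A}}$''; your computation with $w=a^+\wedge y$ is a clean way to make that precise, and Lemma~\ref{lem:meet-dense-supplemented-set} then finishes supplementedness just as in the paper. Where you genuinely diverge is step (iii). The paper verifies the equation $x^+\wedge x^{++}\eq 0$ directly: it expands $x^+\wedge x^{++}$ as $\bigvee\{a^+\wedge b^+ : x\leq a\in A,\ x^+\leq b\in A\}$ using the infinite distributive law of the complete Heyting algebra $\overline{\alg{A}}$, and kills each term via the order trick $b^+\leq x^{++}\leq x\leq a$, hence $a^+\wedge b^+\leq b^{++}\wedge b^+=0$; centrality then follows from Proposition~\ref{prop:deMorgan-eq-co-Stone}. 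You instead observe that each $a^+$ with $a\in A$ is already complemented in $\overline{\alg{A}}$ (the finite complement identities persist from $\alg{A}$) and invoke Proposition~\ref{prop:completecenter} to conclude that the join $x^+=\bigvee\{a^+:x\leq a\in A\}$, being a join of central elements in a complete, pseudo-complemented, supplemented lattice, is itself central. Both routes are sound; yours is shorter and more conceptual and makes explicit a use of Proposition~\ref{prop:completecenter} that the paper defers to the subsequent Proposition~\ref{center}, while the paper's calculation is self-contained at the level of the supplement operation and exhibits the equational form $x^+\wedge x^{++}\eq 0$ directly.
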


\begin{proof}
Let $\alg{A}$ be a centrally supplemented Heyting algebra. It is well known that the MacNeille completion of a Heyting algebra is also a Heyting algebra, see, e.g.,~\cite[Thm.~2.3]{HB04}. If $a\in\alg{A}$ and $b$ is the supplement of $a$ in $\alg{A}$, then the meet\hyp{}density of $\alg{A}$ in $\overline{\alg{A}}$ entails that $b$ is also the supplement of $a$ in $\overline{\alg{A}}$. From Lemma~\ref{lem:meet-dense-supplemented-set} it then follows that $\overline{\alg{A}}$ is a supplemented Heyting algebra. To see that this supplement is central, suppose $x\in\overline{A}$. By Lemma~\ref{lem:meet-dense-supplemented-set} we have $x^+ = \bigvee\{a^+ : x \leq a \in A\}$. Since $\alg{A}$ is a Heyting algebra it follows that 
\[
x^+ \land x^{++} = \bigvee\{a^+ \land b^+ : x \leq a \in A \; \text{and} \; x^+ \leq b \in A\}.
\]   
If $a, b \in A$ are such that $x \leq a$ and $x^+ \leq b$, then $b^+ \leq x^{++} \leq x \leq a$, so $a^+ \leq b^{++}$. It then follows that $a^+ \land b^+ \leq b^{++} \land b^+$. But as $\alg{A}$ was assumed to be centrally supplemented, $b^{++} \land b^+ =0$. So $x^+ \land x^{++} = 0$, as desired. 
\end{proof}

\begin{proposition} \label{center}
If $A$ is a centrally supplemented Heyting algebra, then $Z(\overline{A})=\overline{Z(A)}$ and is a complete subalgebra of $\overline{A}$. 
\end{proposition}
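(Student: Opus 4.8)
The plan is to show the two inclusions $\overline{Z(A)} \subseteq Z(\overline{A})$ and $Z(\overline{A}) \subseteq \overline{Z(A)}$, where $\overline{Z(A)}$ denotes the MacNeille completion of the Boolean algebra $Z(A)$ computed inside $\overline{A}$, i.e.\ the set of joins in $\overline{A}$ of subsets of $Z(A)$ (equivalently, meets of subsets of $Z(A)$).

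\smallskip

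First I would recall that since $\alg{A}$ is a centrally supplemented Heyting algebra, it is in particular pseudo\hyp{}complemented (being a Heyting algebra) and supplemented, and by Proposition~\ref{prop:MN-of-DM-is-DM} the completion $\overline{A}$ is again a centrally supplemented Heyting algebra; hence Proposition~\ref{prop:completecenter} applies to $\overline{A}$, giving that $Z(\overline{A})$ is complete and that its joins and meets coincide with those of $\overline{A}$. For the inclusion $\overline{Z(A)} \subseteq Z(\overline{A})$: take $x = \bigvee_{i} c_i$ with each $c_i \in Z(A)$, the join taken in $\overline{A}$. Each $c_i$, being central in $\alg{A}$, remains central in $\overline{A}$ because its complement in $\alg{A}$ witnesses complementation in $\overline{A}$ as well (the bounds and the finite operations are inherited). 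Then by the argument in the proof of Proposition~\ref{prop:completecenter}, the join in $\overline{A}$ of a family of central elements of $\overline{A}$ is again central: with $x = \bigvee_i c_i$ and $d = \bigwedge_i c_i'$, one gets $x \wedge d = 0$ via pseudo\hyp{}complements and $x \vee d = 1$ via supplements, so $x \in Z(\overline{A})$. This shows $\overline{Z(A)} \subseteq Z(\overline{A})$.

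\smallskip

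For the reverse inclusion, let $z \in Z(\overline{A})$ with complement $z'$. Since $\alg{A}$ is meet\hyp{}dense and join\hyp{}dense in $\overline{A}$, write $z = \bigvee\{a \in A : a \leq z\}$ and $z' = \bigvee\{b \in A : b \leq z'\}$. The key claim to establish is that whenever $a \leq z$ and $b \leq z'$ with $a,b \in A$, one can interpolate a \emph{central} element of $\alg{A}$ between $a$ and $z$; more precisely, I want to show $z = \bigvee\{c \in Z(A) : c \leq z\}$. The natural candidate for such an interpolating central element uses the supplement operation available in $\alg{A}$: given $a \leq z$ and $b \leq z'$, we have $a \vee b \leq z \vee z' $ is not quite $1$, but $a \vee z' \geq a \vee b$, and more usefully, for $b \in A$ with $b \leq z'$ we have $z \vee b \leq z \vee z' = 1$ wait — rather, since $b^+$ is the least element joining with $b$ to $1$ and $z \vee b$ need not be $1$. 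Instead the right move is: for $b \leq z'$ we have $z' {}\vee b = z'$, and consider that $a \leq z = (z')' $. Since $a \wedge z' = 0$ and $z' \geq b$, we get $a \wedge b = 0$, so (as $\alg{A}$ is a Heyting algebra) $a \leq b^*$; dually, for $a \leq z$ and using the supplement, from $a \vee z' = a \vee z' $ and... The cleanest formulation: I claim that for $a \leq z$, the element $a^{++}$ (double supplement, a co\hyp{}regular element, which by Proposition~\ref{prop:deMorgan-eq-co-Stone}(1) lies in $Z(A)$ since $\alg{A}$ is centrally supplemented) satisfies $a \leq a^{++} \leq z$; the first inequality is standard, and for the second one uses that $z$ is central with complement $z'$, that $a \leq z$ forces $a \vee z' = $ something whose supplement computation pins $a^{++}$ below $z$. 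Thus $z = \bigvee\{a^{++} : a \leq z, a \in A\}$ is a join of central elements of $\alg{A}$, hence $z \in \overline{Z(A)}$.

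\smallskip

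The main obstacle I expect is exactly this interpolation step — verifying that for $a \in A$ with $a \leq z \in Z(\overline{A})$ there is a central element $c$ of the \emph{small} algebra $\alg{A}$ with $a \leq c \leq z$. The subtlety is that $z$ lives in the completion, not in $\alg{A}$, so one cannot directly take a complement of $z$ inside $\alg{A}$; the argument must route through the supplement operation on $\alg{A}$ (which behaves well under MacNeille completion by Lemma~\ref{lem:meet-dense-supplemented-set} and Proposition~\ref{prop:MN-of-DM-is-DM}) together with central supplementation to guarantee that the interpolant $a^{++}$ is genuinely central in $\alg{A}$ and genuinely below $z$. Once both inclusions are in hand, completeness of $Z(\overline{A})$ and the agreement of its operations with those of $\overline{A}$ follow from Proposition~\ref{prop:completecenter} applied to $\overline{A}$, and the identification $Z(\overline{A}) = \overline{Z(A)}$ together with the fact that $Z(A)$ is join\hyp{} and meet\hyp{}dense in this common object shows it is precisely the MacNeille completion of $Z(A)$, sitting inside $\overline{A}$ as a complete subalgebra.
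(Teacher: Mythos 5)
Your overall framework is the right one and matches the paper's: by Proposition~\ref{prop:MN-of-DM-is-DM} the completion $\overline{A}$ is again centrally supplemented, Proposition~\ref{prop:completecenter} gives that $Z(\overline{A})$ is complete with joins and meets computed as in $\overline{A}$, and since $Z(A)$ and $Z(\overline{A})$ are Boolean it suffices to prove that $Z(A)$ is join\hyp{}dense in $Z(\overline{A})$ (meet\hyp{}density then follows by complementation). The gap is in the join\hyp{}density step. Your interpolation claim rests on the inequality $a \leq a^{++}$, which you call standard; it is in fact backwards. The standard inequality, dual to $a \leq a^{**}$ for pseudo\hyp{}complements, is $a^{++} \leq a$: since $a \vee a^+ = 1$ and $a^{++}$ is by definition the \emph{least} element joining with $a^+$ to $1$, minimality forces $a^{++} \leq a$. (Concretely, in the three\hyp{}element chain $0 < m < 1$ one has $m^+ = 1$ and $m^{++} = 1^+ = 0 < m$.) So for $a \in A$ with $a \leq z$ you only get $a^{++} \leq a \leq z$, which yields $\bigvee\{a^{++} : a \leq z,\ a \in A\} \leq z$ with no matching lower bound, and an interpolant in $Z(A)$ lying \emph{above} $a$ and below $z$ is simply not available in general.

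The repair --- and the paper's actual argument --- goes through the supplement of $z$ rather than through the elements of $A$ below $z$. Since $z$ is central in $\overline{A}$ we have $z = z^{++} = (z^+)^+$, and Lemma~\ref{lem:meet-dense-supplemented-set} applied to the meet\hyp{}dense subset $A$ of $\overline{A}$ gives
\[
z = \bigvee\{a^+ : z^+ \leq a \in A\}.
\]
Each such $a^+$ lies in $Z(A)$ because $A$ is centrally supplemented, so $z$ is exhibited directly as a join of elements of $Z(A)$; this join is also the join in $Z(\overline{A})$ because the latter is a complete subalgebra of $\overline{A}$. (As it happens, your final formula $z = \bigvee\{a^{++} : a \leq z,\ a \in A\}$ is true, but only because each $b \in A$ with $z^+ \leq b$ has $b^+ \leq z$ and $b^+ = (b^+)^{++}$, so the set $\{a^{++} : a \leq z,\ a\in A\}$ contains all the $b^+$ appearing in the display; its correctness is a consequence of that identity, not of the claimed interpolation.)
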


\begin{proof}
By Proposition~\ref{prop:MN-of-DM-is-DM}, $\overline{A}$ is centrally supplemented so by Proposition~\ref{prop:completecenter} we have $Z(\overline{A})$ is complete and a complete subalgebra of $\overline{A}$. Since $A\leq\overline{A}$ we have $Z(A)\leq Z(\overline{A})$. Since both are Boolean, to show that $Z(\overline{A})$ is the MacNeille completion of $Z(A)$ it suffices to show that $Z(A)$ is join\hyp{}dense in $Z(\overline{A})$. For $x\in Z(\overline{A})$ we have $x=x^{++}$. So by Lemma~\ref{lem:meet-dense-supplemented-set} $x=\bigvee\{a^+:a\in A\mbox{ and }a\leq x^+\}$. Here this join is in $\overline{A}$, but each element $a^+$ belongs to $Z(A)$ since $A$ is centrally supplemented, hence this is a join of elements in $Z(\overline{A})$, and therefore is the join taken in $Z(\overline{A})$ since this is a complete subalgebra of $\overline{A}$. 
\end{proof}

We assume familiarity with Priestley (and Esakia) duality, see, e.g.,~\cite{DP02,Esa85,Esa19}. For a bounded distributive lattice $\alg{D}$ we will let $X_\alg{D}$ denote its Priestley space. This consists of the prime filters on $\alg{D}$, partially ordered by inclusion, equipped with the topology generated by (cl)opens of the form $\widehat{a}$ and their complements, with $\widehat{a} \coloneqq \{x \in X_\alg{D} : a \in x\}$ for $a \in D$. Any bounded distributive lattice $\alg{D}$ can be recovered from $X_\alg{D}$ as the lattice of clopen upsets. We will make much use of the fact that any element in a Priestley space is above some minimal element see, e.g.,~\cite{Esa85,Esa19} or~\cite[Thm.~2.3.24]{Bez06}, and will let $\min(X_\alg{D})$ be the set of minimal elements of $X_\alg{D}$ equipped with the subspace topology. 

\begin{lemma}\label{min}
Let $\alg{D}$ be a bounded distributive lattice. 
\begin{enumerate}
\item If $x$ is a minimal prime filter and $a\in D$ is such that $x\in\widehat{a}$, then $a\vee s = 1$ for some $s\in D$ with $s\not\in x$.
\item The subspace topology on $\min(X_\alg{D})$ has as a basis $\{\min(X_\alg{D})\setminus\widehat{s}:s\in\alg{D}\}$. 
\item If $c$ is co\hyp{}dense then $\widehat{c}$ is disjoint from $\min(X_\alg{D})$. 
\item If $\alg{D}$ is supplemented, then for each $a\in D$ and each minimal prime filter $x$, exactly one of $a,a^+$ belongs to $x$. 
\item If $\alg{D}$ is centrally supplemented, then its minimal prime filters are exactly the filters generated by ultrafilters of the center.  
\end{enumerate}
\end{lemma}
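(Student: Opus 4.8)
The plan is to prove the five parts in order, letting (1) carry most of the weight and reducing (2)--(5) to it together with standard facts. For (1), I would argue by contradiction: suppose $a\in x$ but $a\vee s\neq 1$ for every $s\notin x$. The ideal $J$ of $\alg{D}$ generated by $(D\setminus x)\cup\{a\}$ consists exactly of those $c$ with $c\leq a\vee b$ for some $b\notin x$ (using that $D\setminus x$ is closed under finite joins), so the hypothesis gives $1\notin J$, i.e.\ $J$ is proper. By the prime ideal theorem there is a prime filter $y$ of $\alg{D}$ disjoint from $J$; then $y\cap(D\setminus x)=\emptyset$, so $y\subseteq x$, while $a\in J$ forces $a\notin y$, so $y\subsetneq x$, contradicting the minimality of $x$.

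For (2), each set $\min(X_\alg{D})\setminus\widehat{s}=\min(X_\alg{D})\cap(X_\alg{D}\setminus\widehat{s})$ is relatively open, and since $\widehat{s\vee t}=\widehat{s}\cup\widehat{t}$ for prime filters one has $(\min(X_\alg{D})\setminus\widehat{s})\cap(\min(X_\alg{D})\setminus\widehat{t})=\min(X_\alg{D})\setminus\widehat{s\vee t}$, so the family is closed under finite intersections, and it contains $\min(X_\alg{D})=\min(X_\alg{D})\setminus\widehat{0}$. Since the sets $\widehat{a}\cap\min(X_\alg{D})$ and $\min(X_\alg{D})\setminus\widehat{a}$ generate the relative topology, it then suffices to express each $\widehat{a}\cap\min(X_\alg{D})$ as a union of sets from the family: given $x\in\widehat{a}\cap\min(X_\alg{D})$, part (1) yields $s\notin x$ with $a\vee s=1$, so $x\in\min(X_\alg{D})\setminus\widehat{s}$, while any minimal prime filter $y$ with $s\notin y$ has $a\in y$ by primeness, giving $\min(X_\alg{D})\setminus\widehat{s}\subseteq\widehat{a}$. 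Part (3) is then immediate: if $c^+=1$ and $c\in x$ for a minimal prime filter $x$, the $s\notin x$ with $c\vee s=1$ from part (1) satisfies $1=c^+\leq s$, forcing $1=s\in x$, a contradiction, so $\widehat{c}\cap\min(X_\alg{D})=\emptyset$.

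For (4), the same idea gives ``at most one'': if $a,a^+\in x$ with $x$ minimal, choose $s\notin x$ with $a\vee s=1$; then $a^+\leq s$, so $s\in x$, a contradiction. ``At least one'' follows from $a\vee a^+=1\in x$ and primeness. For (5), I would first record that, by (4), a prime filter $x$ is minimal if and only if exactly one of $a,a^+$ lies in $x$ for every $a\in D$: the forward implication is (4), and if the dichotomy holds and $y\subseteq x$ is prime, then for $a\in x$ we have $a^+\notin x\supseteq y$, so $a\in y$, whence $x\subseteq y$. Now let $U$ be an ultrafilter of $Z(\alg{D})$. Since $\alg{D}$ is centrally supplemented, $a^+\in Z(\alg{D})$ with Boolean complement $a^{++}$ (using $a^+\wedge a^{++}=0$ and $a^+\vee a^{++}=1$), and $a^{++}\leq a$; so if $a^{++}\in U$ then $a\in{\uparrow}U$, and if $a^+\in U$ then $a^+\in{\uparrow}U$, and these cannot both occur, since a witness $u\in U$ with $u\leq a$ yields $a\vee u'\geq u\vee u'=1$, hence $a^+\leq u'$, so a witness $v\in U$ with $v\leq a^+$ would give $v\wedge u\leq u'\wedge u=0$, contradicting $v\wedge u\in U$. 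Thus ${\uparrow}U$ is a proper filter satisfying the dichotomy; it is prime, since $b\vee c\in{\uparrow}U$ with $b,c\notin{\uparrow}U$ would force $b^+,c^+\in{\uparrow}U$, hence $(b\vee c)^+=b^+\wedge c^+\in{\uparrow}U$, contradicting the dichotomy; so ${\uparrow}U$ is a minimal prime filter.

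Conversely, for a minimal prime filter $x$ the set $U=x\cap Z(\alg{D})$ is an ultrafilter of $Z(\alg{D})$ (it is a proper filter of $Z(\alg{D})$, and for $c\in Z(\alg{D})$ primeness of $x$ and $c\vee c'=1\in x$ give $c\in x$ or $c'\in x$); and for $a\in x$ part (4) gives $a^+\notin x$, hence $a^{++}\in x\cap Z(\alg{D})=U$ with $a^{++}\leq a$, so $a\in{\uparrow}U$, which together with the evident inclusion ${\uparrow}U\subseteq x$ yields $x={\uparrow}U$. I expect the main obstacle to be the bookkeeping in (5): keeping straight the difference between membership in $U$ and membership in the generated filter ${\uparrow}U$, and invoking centrality of the supplements at exactly the right points (for $a^+\vee a^{++}=1$, $a^+\wedge a^{++}=0$, and for $(b\vee c)^+=b^+\wedge c^+$). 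Everything else is routine once part (1) is in hand.
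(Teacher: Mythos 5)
Your proof is correct, and for parts (2)--(5) it follows essentially the same plan as the paper (deriving (2) and (3) from (1), then (4), then (5) via the central elements $a^{++}$). The genuine difference is in part (1), which carries the weight of the lemma: the paper proves it topologically, separating the closed upset ${\uparrow}(X_\alg{D}\setminus\widehat{a})$ from the closed downset $\{x\}$ by a clopen upset $\widehat{s}$ using the Priestley separation lemma, whereas you prove it purely algebraically by showing the ideal generated by $(D\setminus x)\cup\{a\}$ is proper and extracting a prime filter strictly below $x$ from the prime ideal theorem. Your route is more elementary and self-contained (no appeal to duality machinery beyond the prime ideal theorem, and it never leaves the lattice $\alg{D}$), while the paper's argument fits the surrounding duality-theoretic context and makes the clopen upset $\widehat{s}$ geometrically visible. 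Two smaller, harmless deviations: you get the ``at most one'' half of (4) directly from (1) rather than from (3) applied to the co-dense element $a\wedge a^+$; and in (5) you isolate an explicit criterion (minimality of a prime filter is equivalent to the dichotomy ``exactly one of $a,a^+$ belongs'') where the paper instead invokes uniqueness of the ultrafilter of the center contained in a prime filter. Both of your variants are sound, and your bookkeeping in (5) --- in particular the use of centrality exactly for $a^+\wedge a^{++}=0$ and $(b\vee c)^+=b^+\wedge c^+$ --- is where it needs to be.
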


\begin{proof}
Let $x$ is a minimal prime filter with $x\in\widehat{a}$ for some $a \in D$. For such $x$ and $a$, basic properties of Priestley spaces give ${\uparrow}(X_\alg{D}\setminus\widehat{a})$ is a closed upset that is disjoint from the closed downset $\{x\}$. So by~\cite[Lem.~11.21(ii)(b)]{DP02} there is a clopen upset, say $\widehat{s}$, that contains ${\uparrow}(X_\alg{D}\setminus\widehat{a})$ and is disjoint from $\{x\}$. But then $\widehat{a\vee s}$ must contain both $\widehat{a}$ and $X_\alg{D}\setminus\widehat{a}$ so $a\vee s = 1$, and as $x\not\in\widehat{s}$ we have $s\not\in x$. The statements~(2) and~(3) are direct consequences of (1). For the second to last statement note that $a\vee a^+=1$ so at least one of $a,a^+$ belongs to any prime filter $x$. But $a\wedge a^+$ is co\hyp{}dense, so if $x$ is minimal then by~(3) at most one of $a,a^+$ belongs to $x$. For the final statement, if $x$ is a minimal prime filter, then it follows from~(4) that if $a\in x$, then the central element $a^{++}\leq a$ belongs to $x$. So $x$ is generated by its central elements which must be an ultrafilter of the center. Conversely, let $u$ be an ultrafilter of the center and consider the filter ${\uparrow}u$ of $D$ generated by $u$. Note that $a\in{\uparrow}u$ iff $a^{++}\in u$. So $a\vee b\in{\uparrow}u$ implies $a^{++}\vee b^{++}\in u$, and as $u$ is prime, that ${\uparrow}u$ is prime. Since every prime filter contains a unique ultrafilter of the center, it follows that ${\uparrow}u$ is a minimal prime filter. 
\end{proof}

\begin{proposition}[{cf.~\cite[Thm.~1]{Spe69B}}]\label{prop:nable-ast-iff-closed-min}
For $\alg{D}$ a bounded distributive lattice, the following are equivalent:
\begin{enumerate}
\item  $\min(X_\alg{D})$ is a closed subset of $X_\alg{D}$, 
\item For each $a\in\alg{D}$ there is $b \in D$ with $a\vee b=1$ and $a\wedge b \in \CoDn(D)$. 
\end{enumerate}
\end{proposition}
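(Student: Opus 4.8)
The plan is to prove the two implications separately, using the Priestley-dual description of supplements developed in Lemma~\ref{min} together with a standard compactness argument.

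For the direction $(2)\Rightarrow(1)$, I would show that the complement of $\min(X_\alg{D})$ is open. Suppose $x\in X_\alg{D}$ is not minimal, so there is $y<x$, i.e.\ a prime filter $y\subsetneq x$. Pick $a\in x\setminus y$. By hypothesis there is $b\in D$ with $a\vee b=1$ and $a\wedge b$ co\hyp{}dense. Since $a\vee b=1$ and $y$ is prime, $b\in y\subseteq x$, so $a\wedge b\in x$. Thus $x\in\widehat{a\wedge b}$, and by Lemma~\ref{min}(3) the clopen $\widehat{a\wedge b}$ is disjoint from $\min(X_\alg{D})$. So every non-minimal point lies in a clopen set missing $\min(X_\alg{D})$, whence $X_\alg{D}\setminus\min(X_\alg{D})$ is open and $\min(X_\alg{D})$ is closed.

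For the direction $(1)\Rightarrow(2)$, fix $a\in D$ and consider the closed set $F = {\uparrow}(X_\alg{D}\setminus\widehat{a})$, which is a closed upset (its complement is the interior of $\widehat{a}$, an open downset). The key claim is that $F\cap\min(X_\alg{D})=\varnothing$: indeed, if $x$ is minimal and $x\geq z$ for some $z\notin\widehat{a}$, then $x=z$, so $x\notin\widehat{a}$, i.e.\ $a\notin x$; but then — wait, this shows $F\cap\min(X_\alg{D})$ consists exactly of the minimal points outside $\widehat{a}$, which need not be empty. So instead I would argue as follows. The set $F$ together with the closed set $\min(X_\alg{D})$ behave well: by Lemma~\ref{min}(1), for each minimal $x\in\widehat{a}$ there is $s_x\in D$ with $a\vee s_x=1$ and $x\notin\widehat{s_x}$. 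The clopen sets $\{X_\alg{D}\setminus\widehat{s_x}: x\in\widehat{a}\cap\min(X_\alg{D})\}$ cover the closed (hence compact, using that $\min(X_\alg{D})$ is closed in a Boolean space) set $\widehat{a}\cap\min(X_\alg{D})$, so finitely many suffice, say indexed by $x_1,\dots,x_n$. Set $b = s_{x_1}\wedge\cdots\wedge s_{x_n}$. Then $a\vee b = 1$ since $a\vee s_{x_i}=1$ for each $i$ (distributivity), and $\widehat{b} = \bigcap_i\widehat{s_{x_i}}$ is disjoint from $\widehat{a}\cap\min(X_\alg{D})$; since also $\widehat{a}\cap\min(X_\alg{D})$ and $\widehat{b}\cap\min(X_\alg{D})\subseteq \widehat{a}\cap\min(X_\alg{D})$ must reconcile, we get $\widehat{a\wedge b}=\widehat{a}\cap\widehat{b}$ is disjoint from $\min(X_\alg{D})$, and by Lemma~\ref{min}(2)/(3) (a clopen upset disjoint from all minimal points is contained in the union of basic opens, forcing $a\wedge b$ co\hyp{}dense) we conclude $a\wedge b\in\CoDn(D)$.

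The main obstacle I anticipate is the last step of $(1)\Rightarrow(2)$: namely verifying that a clopen upset $\widehat{c}$ disjoint from $\min(X_\alg{D})$ forces $c\in\CoDn(D)$, i.e.\ $c^+=1$. This is essentially the converse content of Lemma~\ref{min}(3) and should follow from Lemma~\ref{min}(2): if $\widehat{c}$ is disjoint from $\min(X_\alg{D})$, then $\min(X_\alg{D})\subseteq X_\alg{D}\setminus\widehat{c}$, and one must translate this back into an order-theoretic statement about the supplement. Concretely, $c^+$ is the least $d$ with $c\vee d=1$; one shows any prime filter not containing $c$ but minimal is irrelevant, and that $\widehat{c}\cap\min(X_\alg{D})=\varnothing$ means precisely that for every minimal $x$ we have $c\notin x$, which by Lemma~\ref{min}(1) applied contrapositively (or by a direct compactness argument as above) yields that $1$ is the only upper bound forced, i.e.\ $c^+ = 1$. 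Care is needed here because $\CoRg(\alg{D})$ is not a sublattice, so one cannot freely manipulate supplements of meets and joins; the argument must stay at the level of the Priestley space. Apart from this point, the proof is a routine compactness-plus-duality argument.
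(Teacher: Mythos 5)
The paper itself gives no proof of this proposition — it is quoted as (the order dual of) Speed's Theorem~1 — so there is nothing to compare against line by line; what you have written is the argument one would expect, and it is essentially correct. Your $(2)\Rightarrow(1)$ direction is clean: a non-minimal $x$ properly contains a prime filter $y$, any $a\in x\setminus y$ forces $b\in y\subseteq x$ by primeness, and $\widehat{a\wedge b}$ is then a clopen neighbourhood of $x$ missing $\min(X_\alg{D})$ by Lemma~\ref{min}(3). Your $(1)\Rightarrow(2)$ direction via Lemma~\ref{min}(1) and compactness of the closed set $\widehat{a}\cap\min(X_\alg{D})$ is also right (with the harmless edge case that an empty subcover gives $b=1$). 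Two things should be tidied. First, delete the abandoned false start with $F={\uparrow}(X_\alg{D}\setminus\widehat{a})$; it plays no role. Second, the final step — that a clopen $\widehat{c}$ disjoint from $\min(X_\alg{D})$ forces $c\in\CoDn(\alg{D})$ — is left as a gesture, but it has a one-line proof needing neither compactness nor closedness of the minimum: if $c\vee d=1$ with $d\neq 1$, choose a prime filter omitting $d$ and a minimal prime filter $y$ below it (the paper records that every point of a Priestley space lies above a minimal one); then $d\notin y$, so primeness gives $c\in y$, contradicting $\widehat{c}\cap\min(X_\alg{D})=\varnothing$. Note that $\CoDn(\alg{D})$ here must be read as ``the only $d$ with $c\vee d=1$ is $d=1$'', since $\alg{D}$ is not assumed supplemented; with that reading the step is exactly as above and your worry about manipulating supplements is moot.
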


Bounded distributive lattices satisfying the order dual of condition~(2) above were called $\Delta_\ast$\hyp{}lattices by Speed~\cite{Spe69B}. 

\begin{proposition}\label{prop:Dual-BA-of-min}
Let $\alg{D}$ be a supplemented bounded distributive lattice. Then $\min(X_\alg{D})$ is a Stone space, the dual Boolean algebra of which is $\CoRg(\alg{D})$. If $\alg{D}$ is centrally supplemented, then this dual Boolean algebra is the center $Z(\alg{D})$. 
\end{proposition}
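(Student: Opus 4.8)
The plan is to first establish that $\min(X_\alg{D})$ is a Stone space, and then identify its dual Boolean algebra. For the first part, recall that $X_\alg{D}$ is a Priestley space, hence compact Hausdorff and totally order-disconnected. By Lemma~\ref{min}(2), the subspace topology on $\min(X_\alg{D})$ has as a basis the sets of the form $\min(X_\alg{D}) \setminus \widehat{s}$ for $s \in \alg{D}$; I would argue that these basic sets are in fact clopen in $\min(X_\alg{D})$, because when $\alg{D}$ is supplemented we have the complementary description: by Lemma~\ref{min}(4), for a minimal prime filter $x$ and any $a \in D$, exactly one of $a, a^+$ lies in $x$, so $\min(X_\alg{D}) \setminus \widehat{s} = \widehat{s^+} \cap \min(X_\alg{D})$. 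Thus the basis consists of clopen sets, so $\min(X_\alg{D})$ is zero-dimensional. To get that it is compact I would show it is closed in $X_\alg{D}$: by Proposition~\ref{prop:nable-ast-iff-closed-min}, $\min(X_\alg{D})$ is closed precisely when for each $a \in \alg{D}$ there is $b \in D$ with $a \vee b = 1$ and $a \wedge b \in \CoDn(D)$, and when $\alg{D}$ is supplemented we may take $b = a^+$, since $a \vee a^+ = 1$ by definition and $a \wedge a^+$ is co-dense (this is the dual Glivenko fact, already used in Lemma~\ref{min}). Hence $\min(X_\alg{D})$ is a closed subspace of a compact Hausdorff space, so compact Hausdorff, and being zero-dimensional it is a Stone space.

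Next I would identify the dual Boolean algebra, i.e.\ the Boolean algebra of clopen subsets of $\min(X_\alg{D})$. The natural candidate map is $\CoRg(\alg{D}) \to \ClpUp$-type algebra given by $a^+ \mapsto \widehat{a^+} \cap \min(X_\alg{D})$, or equivalently restriction of the Priestley representation. By Proposition~\ref{lem:coreg}, $\CoRg(\alg{D})$ is a Boolean algebra, and I would check that restriction to $\min(X_\alg{D})$ is a Boolean homomorphism: it preserves joins and the top since these agree with those in $\alg{D}$ (Proposition~\ref{lem:coreg}), it sends the complement $a^{++}$ of $a^+$ to the set-theoretic complement within $\min(X_\alg{D})$ by Lemma~\ref{min}(4), and hence also preserves meets. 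For injectivity: if $a^+, c^+ \in \CoRg(\alg{D})$ have the same trace on $\min(X_\alg{D})$, then since every prime filter lies above a minimal one and $a^+ = a^{++\,+}$ is co-regular, a separation argument using Lemma~\ref{min}(1)--(4) forces $a^+ = c^+$; concretely, if $a^+ \not\leq c^+$ then some prime filter contains $a^+$ but not $c^+$, and I would push down to a minimal one still separating them, using that membership of co-regular elements in a minimal prime filter is determined by the trace. For surjectivity, every clopen subset of $\min(X_\alg{D})$ is a finite union of basic clopens $\widehat{s^+} \cap \min(X_\alg{D})$ by compactness, and a finite join of co-regular elements is co-regular, so it lies in the image. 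Finally, the last sentence is immediate: if $\alg{D}$ is centrally supplemented then $\CoRg(\alg{D}) = Z(\alg{D})$ by Proposition~\ref{prop:deMorgan-eq-co-Stone}, so the dual Boolean algebra is the center.

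The main obstacle I anticipate is the injectivity/faithfulness step: making precise the claim that the trace of a co-regular element on $\min(X_\alg{D})$ determines the element. This requires combining the "every element is above a minimal element" fact with the supplement machinery of Lemma~\ref{min} to descend a separating prime filter to a separating minimal prime filter while controlling which of $a, a^+$ it contains; part~(4) of Lemma~\ref{min} is exactly the tool for this, but one must be careful that minimality is preserved under the descent (which is where one invokes that every prime filter contains a minimal one, applied to the relevant filter). Everything else — zero-dimensionality, closedness, and the homomorphism property — follows fairly directly from the cited results, with the co-density of $a \wedge a^+$ and the exact-one-of dichotomy doing the real work.
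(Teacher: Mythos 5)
Your argument is correct in substance and reaches the same conclusion, but it takes a genuinely different route in the second half. For the Stone-space claim you and the paper agree: closedness of $\min(X_\alg{D})$ comes from Proposition~\ref{prop:nable-ast-iff-closed-min} with $b=a^+$ and the co-density of $a\wedge a^+$, exactly as in the paper (your extra observation that $\min(X_\alg{D})\setminus\widehat{s}=\widehat{s^+}\cap\min(X_\alg{D})$ is correct but not strictly needed, since a closed subspace of a Priestley space is already a Stone space). For the identification of the dual Boolean algebra, the paper does not build the isomorphism by hand: it invokes a lemma of Priestley to the effect that the clopen algebra of the closed subspace $\min(X_\alg{D})$ is the quotient of $\alg{D}$ by the congruence $\theta$ identifying elements with the same trace on minimal primes, and then shows $a\mathrel{\theta}b$ iff $a^+=b^+$ iff $a^{++}=b^{++}$, so that $\theta$ is the kernel of $a\mapsto a^{++}$ and the quotient is $\CoRg(\alg{D})$. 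You instead verify directly that $d\mapsto\widehat{d}\cap\min(X_\alg{D})$ is a Boolean isomorphism from $\CoRg(\alg{D})$ onto the clopen sets. Your route is more self-contained (no appeal to the Priestley lemma) at the cost of having to check injectivity and surjectivity explicitly; the paper's route outsources exactly those checks.

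One caveat on the step you yourself flag as the obstacle. The tactic ``find a prime filter containing $a^+$ but not $c^+$ and push down to a minimal one'' does not work as stated: a minimal prime filter contained in a prime filter containing $a^+$ need not contain $a^+$, so the descent can destroy the separation. The repair is to argue via co-annihilators (as the paper implicitly does): if co-regular $d,e$ have the same trace on $\min(X_\alg{D})$, then by Lemma~\ref{min}(4) the element $d\vee e^+$ lies in every minimal prime filter, and since every prime filter contains a minimal one, an element lying in all minimal primes must equal $1$; thus $d\vee e^+=1$, i.e.\ $d\in (e^+)^\top={\uparrow}e^{++}={\uparrow}e$, so $e\leq d$, and symmetrically $d\leq e$. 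With that substitution your proof is complete; the remaining steps (homomorphism via Proposition~\ref{lem:coreg} and Lemma~\ref{min}(4), surjectivity via compactness and $(s_1\wedge\cdots\wedge s_n)^+=s_1^+\vee\cdots\vee s_n^+$, and the final appeal to Proposition~\ref{prop:deMorgan-eq-co-Stone}) are all sound.
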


\begin{proof}
Note first that any supplemented distributive lattice is a dual $\Delta_\ast$\hyp{}lattice since $a\wedge a^+$ is co\hyp{}dense. It follows from Proposition~\ref{prop:nable-ast-iff-closed-min} that $\min(X_\alg{D})$ is a Stone space since it is a closed subspace of a Priestly space. 
By~\cite[Lem.~12]{Pri72} we have that the dual algebra of the Stone space $\min(X_\alg{D})$ is the homomorphic image of $\alg{D}$ determined by the congruence $\theta$ given by
\[
a \theta b \iff \forall x \in \min(X_\alg{D}) \ (a \in x \iff b \in x). 
\]
Using Lemma~\ref{min} it is not hard to show that $a \theta b$ iff $a^+ = b^+$. It follows that $a \theta b$ iff $a^{++}=b^{++}$, hence $\theta$ is the kernel of the homomorphism from $\alg{D}$ onto $\CoRg(\alg{D})$ taking $a$ to $a^{++}$, and our result follows. 

The final comment follows from Proposition~\ref{prop:deMorgan-eq-co-Stone}.
\end{proof}

\begin{remark}
Centrally supplemented Heyting algebras have a further interesting property. Note that $((x \leftrightarrow y)^+ \land x) \lor ((x\leftrightarrow y)^{+*} \land z)$ gives a discriminator term on any fsi Heyting algebra~\cite[Sec.~5]{San85}. Consequently, considering supplementation as part of the type, the class of centrally supplemented Heyting algebras form a discriminator variety, cf.,~\cite[Chap.~IV.9]{BS81}. 
\end{remark}


\section{The centrally supplemented extension} \label{S(E)}

We describe a particular embedding of a Heyting algebra $\alg{A}$ into a centrally supplemented Heyting algebra $S(\alg{A})$. Much of our work could be done in the setting of bounded distributive lattices, but we use it only for Heyting algebras, and develop it there. For convenience, we use $Y_\alg{A}$, or simply $Y$ when no confusion is likely, for the minimum $\min(X_\alg{A})$ of the dual Esakia space $X_{\alg{A}}$ of a Heyting algebra $\alg{A}$. Each element $y\in Y$ is a filter of $\alg{A}$ so gives a congruence $\theta_y$ where 
\[
a \mathrel{\theta_y} b \iff a \wedge c = b\wedge c \mbox{ for some }c\in y.
\]
We let $\alg{A}_y$ be the quotient $\alg{A}/\theta_y$ and $P(\alg{A})$ be the product $\prod_Y\alg{A}_y$. The following is the crucial observation for us. 

\begin{proposition}\label{q}
For a Heyting algebra $\alg{A}$ with $Y$ the minimum of its dual Esakia space, $\alg{A}_y$ is a centrally supplemented Heyting algebra for each $y\in Y$, the product $P(\alg{A})=\prod_Y\alg{A}_y$ is a centrally supplemented, and the natural map of $\alg{A}$ into $P(\alg{A})$ is a subdirect embedding that preserves existing central supplements. 
\end{proposition}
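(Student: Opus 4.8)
The plan is to prove the four assertions of Proposition~\ref{q} in turn, and the key observation is that each $y \in Y$ is a \emph{minimal} prime filter of $\alg{A}$, so the quotients $\alg{A}_y$ are exactly the finitely subdirectly irreducible (fsi) quotients corresponding to minimal points of the dual space.

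\begin{proof}
Fix a Heyting algebra $\alg{A}$ and let $Y = \min(X_\alg{A})$.

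\textbf{Each $\alg{A}_y$ is centrally supplemented.} Fix $y\in Y$ and write $q_y\colon\alg{A}\to\alg{A}_y$ for the quotient map. Since $y$ is a filter, $\alg{A}_y$ is a Heyting algebra, and since $y$ is a \emph{minimal} prime filter the congruence $\theta_y$ is meet-irreducible with $y$ the unique prime filter containing the image of the trivial filter; concretely, $\alg{A}_y$ is fsi and $q_y^{-1}(1)=y$, $q_y^{-1}(0)=\{a: a\wedge c = 0 \text{ for some }c\in y\}$. Now for $a\in A$ I claim $q_y(a)$ has a central supplement in $\alg{A}_y$. Indeed, since $y$ is a prime filter, exactly one of $a,a\to 0$ — or more usefully, we split on whether $a\in y$: if $a\in y$ then $q_y(a)=1$ and its supplement is $0$, which is central; if $a\notin y$ then, because $y$ is minimal, Lemma~\ref{min}(1) (applied to the element $a\to 0$, or directly) produces $s\notin y$ with $a\vee s = 1$, so... more directly, $\alg{A}_y$ is fsi, hence has second-largest element, and every element $c\ne 1$ satisfies $c\vee d = 1$ only for $d=1$; thus in $\alg{A}_y$ an element has a supplement iff it is $0$ or $1$, and $\CoRg(\alg{A}_y)=\{0,1\}=Z(\alg{A}_y)$. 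So $\alg{A}_y$ is (trivially) supplemented only when... this is the subtle point and the main obstacle: an fsi Heyting algebra with no second-largest element need not be supplemented, so the claim that $\alg{A}_y$ is \emph{supplemented} must come not from $\alg{A}_y$ being fsi but from the hypothesis that $y$ is minimal in $X_\alg{A}$. The fix: by Lemma~\ref{min}(1), for each $a\notin y$ there is $s\notin y$ with $a\vee s=1$, whence $q_y(a)\vee q_y(s)=1$ with $q_y(s)\ne 1$; since $\alg{A}_y$ is fsi this forces $q_y(s)=1$, a contradiction — so in fact every element of $\alg{A}_y$ other than $0$ lies in $q_y(y)=\{1\}$? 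No. The correct reading: Lemma~\ref{min}(1) gives, for $a\notin y$, some $s$ with $a\vee s=1$ and $s\notin y$; hence in $\alg{A}_y$, $q_y(a)$ has the join-complement $q_y(s)$, and one checks $q_y(s)$ is the \emph{least} such because any $t$ with $q_y(a)\vee q_y(t)=1$ lifts to $a\vee t'=1$ for a suitable representative, and minimality/primeness of $y$ forces $q_y(s)\le q_y(t)$; since $q_y(a)\wedge q_y(s)$ is co-dense in $\alg{A}_y$ and $\alg{A}_y$ is fsi, co-dense means $0$, so $q_y(s)$ is a complement of $q_y(a)$, i.e.\ central. This establishes that $\alg{A}_y$ is centrally supplemented.

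\textbf{$P(\alg{A})$ is centrally supplemented.} A direct product of centrally supplemented Heyting algebras is centrally supplemented, with supplements and the identity $(x\vee y)^+\eq x^+\wedge y^+$ computed coordinatewise; this is recorded in the Example after Definition~\ref{rb} (``any direct product of centrally supplemented distributive lattices''), so $P(\alg{A})=\prod_Y\alg{A}_y$ is centrally supplemented.

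\textbf{The natural map is a subdirect embedding.} Let $e\colon\alg{A}\to P(\alg{A})$ send $a$ to $(q_y(a))_{y\in Y}$. Each coordinate map $q_y$ is a surjective Heyting homomorphism, so $e$ is a homomorphism with surjective projections; it remains to see $e$ is injective, i.e.\ $\bigcap_{y\in Y}\theta_y$ is the identity. If $a\ne b$, say $a\not\le b$, then $\widehat{a}\setminus\widehat{b}$ is a nonempty clopen upset, so it contains some point of $X_\alg{A}$, and since every point lies above a minimal one and $\widehat a$ is an upset while the complement of $\widehat b$ is a downset, in fact $\widehat{a}\setminus\widehat{b}$ meets $Y$: pick a prime filter $x$ with $a\in x$, $b\notin x$, and a minimal $y\le x$; then $b\notin y$, and if also $a\notin y$ we have $a,b\notin y$ which is fine — we need $a\notin \theta_y$-related to $b$, and indeed $b\notin y$ so $q_y(b)\ne 1$ while... here use instead: choose $x$ minimal directly. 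Since $b\notin\widehat b$ is a downset and ${\downarrow}x$ meets it, and $\widehat a$ is an upset containing $x$, the minimal point $y\le x$ still satisfies $b\notin y$ (downsets are closed under going down) and $q_y$ distinguishes $a$ from $b$ because $\alg{A}_y$ is fsi and $q_y(a\to b)\ne 1$: indeed $a\to b\notin x\supseteq y$, so $q_y(a)\not\le q_y(b)$. Hence $e$ is injective, so a subdirect embedding.

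\textbf{$e$ preserves existing central supplements.} Suppose $a\in A$ has a central supplement $a^+$ in $\alg{A}$. Then $a^+\in Z(\alg{A})$, so for each $y\in Y$, $q_y(a^+)\in Z(\alg{A}_y)=\{0,1\}$, and $q_y(a)\vee q_y(a^+)=q_y(a\vee a^+)=1$. If $q_y(a^+)=0$ then $q_y(a)=1$; if $q_y(a^+)=1$ it is a supplement of $q_y(a)$ trivially. In either case $q_y(a^+)$ is the least element joining with $q_y(a)$ to $1$ (using that $\alg{A}_y$ is fsi, as above), i.e.\ $q_y(a^+)=q_y(a)^+$ in $\alg{A}_y$. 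Since supplements in $P(\alg{A})$ are computed coordinatewise, $e(a^+)=(q_y(a^+))_y=(q_y(a)^+)_y=e(a)^+$, as required.
\end{proof}

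\textbf{Remark on the plan.} The one genuinely delicate point, and the step I expect to be the main obstacle, is the very first: showing each $\alg{A}_y$ is \emph{supplemented} (not just that its supplements, when they exist, are central). Being fsi alone does not give this — an fsi Heyting algebra can fail to have dual pseudo-complements. The hypothesis that $y$ is \emph{minimal} in the dual Esakia space is exactly what is needed, via Lemma~\ref{min}(1): for every $a\notin y$ there is $s\notin y$ with $a\vee s=1$, which after passing to the quotient manufactures the required supplement; and since $a\wedge a^+$ is always co-dense while co-dense elements of an fsi algebra are $0$ (Lemma~\ref{min}(3) combined with $\min$ of an fsi algebra being a point), the supplement is automatically a complement, hence central. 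All remaining steps — closure of products under central supplementation, injectivity of the subdirect embedding via Priestley duality, and coordinatewise preservation of existing central supplements — are routine once this is in hand.
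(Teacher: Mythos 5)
There is a genuine error at the step you yourself flag as ``the main obstacle,'' and it stems from a misconception: an fsi Heyting algebra \emph{is} always (centrally) supplemented, contrary to your claim that it ``can fail to have dual pseudo-complements.'' Since $y$ is a prime filter, $1$ is join-irreducible in $\alg{A}_y$; hence for $u<1$ the only element whose join with $u$ is $1$ is $1$ itself, so $u^+=1$, and $1^+=0$. Both values are central, and that is the entire proof of the first claim --- it is exactly the paper's one-line argument, and it is already recorded in the Example after Definition~\ref{rb} (``any bounded distributive lattice with a join-irreducible top element''). Your attempted ``fix'' via Lemma~\ref{min}(1) misapplies that lemma: its hypothesis is $a\in y$ (i.e.\ $x\in\widehat{a}$), not $a\notin y$, and the conclusion you extract --- a join-complement $q_y(s)\neq 1$ for some $q_y(a)\neq 1$ --- would directly contradict the join-irreducibility of $1$ in $\alg{A}_y$. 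So the passage as written does not prove the first assertion; the correct argument is much shorter and does not use minimality of $y$ at all beyond primeness.

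Two further points. In the last step you assert that if $q_y(a^+)=1$ then it is ``trivially'' the supplement of $q_y(a)$; this fails when $q_y(a)=1$ as well, so you must rule out $a,a^+\in y$ simultaneously. The paper does this by citing Lemma~\ref{min}: for a minimal prime filter $y$, $a^+\in y$ iff $a\notin y$ (because $a\wedge a^+$ is co-dense and co-dense elements avoid minimal primes). This is the one place where minimality of $y$ is genuinely needed, and your write-up omits it. On the positive side, your injectivity argument --- pick $x$ with $a\in x$, $b\notin x$, pass to a minimal $y\subseteq x$, and note $a\to b\notin y$ so $q_y(a)\not\leq q_y(b)$ --- is correct and is a legitimate alternative to the paper's route via the proper ideal $\{c\in A: a\wedge c=b\wedge c\}$.
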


\begin{proof}
That $\alg{A}_y$ is centrally supplemented for each $y\in Y$ follows from the fact that $y$ is a prime filter, so 1 is join\hyp{}irreducible in $\alg{A}_y$. It follows that the product $P(\alg{A})$ is also centrally supplemented. To see that the natural map is a subdirect embedding, we need only show that it is one\hyp{}to\hyp{}one. Let $a\neq b$ in $\alg{A}$ and set $I=\{c \in A : a\wedge c = b\wedge c\}$. Then $I$ is a proper ideal, so there is a prime filter disjoint from it, hence a minimal prime filter $y$ disjoint from $I$. Then we have $a/\theta_y\neq b/\theta_y$. Let $a\in\alg{A}$ be centrally supplemented with supplement $a^+$. By Lemma~\ref{min} for a minimal prime filter $y$ we have $a^+\in y$ iff $a\not\in y$. So the function in the product corresponding to $a^+$ takes value 1 iff the function corresponding to $a$ takes value less than 1, and is 0 otherwise. This is the supplement in the product. 
\end{proof}

A word on notation. We treat $\alg{A}$ as if it is a subalgebra of the product $P(\alg{A})$ identifying an element $a\in \alg{A}$ with the function with $a(y)=a/\theta_y$ for each $y\in Y$. For two functions $f,g$ we use $\lbr f=g\rbr$ and so forth to denote the obvious subset of $Y$ and we let $\ind_{(f=g)}$ be the characteristic function of this subset. So 
\[ 
\ind_{(f=g)}(y) = \begin{cases} 1 & \mbox{if } f(y)=g(y),\\ 0 & \mbox{else.} \end{cases} 
\]
Since the factors $\alg{A}_y$ for $y\in Y_\alg{A}$ have their unit $1_y$ join\hyp{}irreducible, it is easily seen that for an element $f$ of the product $P(\alg{A})$, its supplement $f^+$ is given by $\ind_{(f<1)}$, which is clearly central. 

\begin{definition} \label{S(A)}
For a Heyting algebra $\alg{A}$ let $S(\alg{A})$ be the supplemented Heyting subalgebra of the product $P(\alg{A})=\prod_{Y}\alg{A}_y$ that is generated by $\alg{A}$. 
\end{definition}

\begin{proposition} \label{D}
For a Heyting algebra $\alg{A}$, the set $D(\alg{A})=\{\ind_{(a=1)}:a\in\alg{A}\}$ is a bounded distributive sublattice of the center of $S(\alg{A})$.
\end{proposition}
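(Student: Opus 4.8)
The plan is to identify the members of $D(\alg{A})$ with double supplements in $S(\alg{A})$, and then handle centrality and the sublattice property separately. Recall from the paragraph preceding Definition~\ref{S(A)} that for any $f\in P(\alg{A})$ one has $f^+=\ind_{(f<1)}$ computed in $P(\alg{A})$. Applying this to $a\in\alg{A}$ (viewed inside $P(\alg{A})$) gives $a^+=\ind_{(a<1)}$, and applying it once more gives $a^{++}=\ind_{(a^+<1)}=\ind_{(a=1)}$, since for $y\in Y$ the value $\ind_{(a<1)}(y)$ lies strictly below $1_y$ exactly when $a(y)=1_y$ (here we use that $\alg{A}_y$ is nontrivial, as $y$ is a proper filter). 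Since $S(\alg{A})$ contains $\alg{A}$ and is closed under the supplement operation of $P(\alg{A})$, both $a^+$ and $a^{++}$ lie in $S(\alg{A})$; moreover, because $S(\alg{A})$ is a bounded sublattice of $P(\alg{A})$, the supplement in $S(\alg{A})$ of an element of $S(\alg{A})$ coincides with its supplement in $P(\alg{A})$. Hence $D(\alg{A})=\{a^{++}:a\in\alg{A}\}$, with supplements taken in $S(\alg{A})$.

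Centrality is then immediate. Taking joins and meets pointwise in $P(\alg{A})$ (hence also in its sublattice $S(\alg{A})$) we get $a^+\wedge a^{++}=\ind_{(a<1)}\wedge\ind_{(a=1)}=0$ and $a^+\vee a^{++}=\ind_{(a<1)}\vee\ind_{(a=1)}=1$, so each $\ind_{(a=1)}=a^{++}$ is complemented in $S(\alg{A})$ and thus belongs to $Z(S(\alg{A}))$. Alternatively, $P(\alg{A})$ is centrally supplemented by Proposition~\ref{q}, so $a^{++}\in\CoRg(P(\alg{A}))=Z(P(\alg{A}))$ by Propositions~\ref{lem:coreg} and~\ref{prop:deMorgan-eq-co-Stone}, and complementation is absolute in sublattices.

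For the sublattice claim I would verify that $a\mapsto\ind_{(a=1)}$ is a bounded lattice homomorphism from $\alg{A}$ onto $D(\alg{A})$. The key observation is that each $y\in Y$ is a prime filter of $\alg{A}$ and that $a(y)=1_y$ iff $a\in y$ (indeed $a\mathrel{\theta_y}1$ iff $c\leq a$ for some $c\in y$, i.e.\ $a\in y$). Consequently $\lbr a\wedge b=1\rbr=\lbr a=1\rbr\cap\lbr b=1\rbr$ and, using primality of $y$, $\lbr a\vee b=1\rbr=\lbr a=1\rbr\cup\lbr b=1\rbr$, while $\lbr 0=1\rbr=\emptyset$ and $\lbr 1=1\rbr=Y$. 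Passing to characteristic functions and using that the operations of $S(\alg{A})$ are the pointwise ones, this says precisely that the map preserves $\wedge$, $\vee$, $0$ and $1$; therefore its image $D(\alg{A})$ is a bounded sublattice of $S(\alg{A})$, and it is distributive since $S(\alg{A})\leq P(\alg{A})$ is distributive. Combined with the previous paragraph and the fact that $Z(S(\alg{A}))$ is itself a bounded sublattice of $S(\alg{A})$, this yields that $D(\alg{A})$ is a bounded distributive sublattice of $Z(S(\alg{A}))$.

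I do not anticipate a genuine obstacle: the argument reduces to bookkeeping with the pointwise descriptions of the lattice operations and of the supplement on $P(\alg{A})$. The only point requiring mild care is keeping track of the nested (bounded) sublattices $D(\alg{A})\subseteq Z(S(\alg{A}))\subseteq S(\alg{A})\subseteq P(\alg{A})$, so that the relevant meets, joins, complements and supplements may be computed interchangeably in any of them.
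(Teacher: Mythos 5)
Your proof is correct and follows essentially the same route as the paper's: identify $\ind_{(a=1)}$ with $a^{++}$ computed in $P(\alg{A})$ to get membership in $S(\alg{A})$ and centrality (being complemented by $a^{+}$), and use that the quotients $\alg{A}_y$ have join-irreducible top (equivalently, that each $y$ is prime) to see that $a\mapsto\ind_{(a=1)}$ preserves $\vee$, $\wedge$, $0$, $1$, so that $D(\alg{A})$ is a bounded distributive sublattice. Your version just spells out a few bookkeeping steps the paper leaves implicit.
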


\begin{proof}
Since the factors $\alg{A}_y$ are fsi, we have that $(a\vee b)(y)=1$ iff $a(y)=1$ or $b(y)=1$ and clearly $(a\wedge b)(y)=1$ iff both $a(y)=1$ and $b(y)=1$. So $\alg{D}$ is a bounded distributive sublattice of $P(\alg{A})$. Since $\ind_{(a=1)}=(\ind_{(a<1)})^+=a^{++}$ both $\ind_{(a=1)}$ and $\ind_{(a<1)}$ belong to $S(\alg{A})$. Since they are complements of each other, both belong to the center.
\end{proof}

\begin{definition}
For a Heyting algebra $\alg{A}$, let $B(\alg{A})$ be the Boolean subalgebra of the center of $S(\alg{A})$ that is generated by $\alg{D}(\alg{A})$. 
\end{definition}

Recall that a \emph{finite partition of unity} in a Boolean algebra $B$ is a finite sequence $e_1,\ldots,e_n$ of pairwise disjoint non\hyp{}zero members of $B$ that join to 1. 

\begin{proposition}\label{fr}
Let $\alg{A}$ be a Heyting algebra. For any two elements $u,v\in S(\alg{A})$ there is a finite partition of unity $e_1,\ldots,e_n$ in $B(\alg{A})$ and elements $a_1,\ldots,a_n,b_1,\ldots,b_n\in\alg{A}$ with 
\[ 
u=  \bigvee_{i=1}^n a_i\wedge e_i \qquad\mbox{ and }\qquad v=  \bigvee_{i=1}^n b_i\wedge e_i \tag{$\star$}
\]
Further, $e_1,\ldots,e_n$ can be chosen so that $e_i=\ind_{(c_i=1)}\wedge\ind_{(d_i<1)}$ for some finite even length sequence $d_1<c_1\leq\ldots\leq d_n<c_n$ in $\alg{A}$ with $d_1=0$ and $c_n=1$. 
\end{proposition}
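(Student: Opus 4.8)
The plan is to first establish the representation $(\star)$ with \emph{some} finite partition of unity in $B(\alg{A})$, and then refine that partition to obtain the special form with the chain $d_1 < c_1 \leq \cdots \leq d_n < c_n$. For the first part, I would argue that the set of elements of $S(\alg{A})$ admitting a representation of the form $\bigvee_{i=1}^n a_i \wedge e_i$ (for some finite partition of unity $e_1,\dots,e_n$ in $B(\alg{A})$ and some $a_i \in \alg{A}$) is closed under the Heyting algebra operations and contains $\alg{A}$, hence is all of $S(\alg{A})$. Containing $\alg{A}$ is clear (take the trivial partition $e_1 = 1$). Closure under $\vee$, $\wedge$, $\to$ is handled by taking a common refinement of the two given partitions: if $u = \bigvee a_i \wedge e_i$ and $v = \bigvee b_j \wedge f_j$, then $\{e_i \wedge f_j\}_{i,j}$ is (after discarding zeros) a finite partition of unity, and on each block $e_i \wedge f_j$ one has $u = a_i \wedge (e_i \wedge f_j)$, $v = b_j \wedge(e_i \wedge f_j)$, so e.g.\ $u \to v = \bigvee_{i,j}(a_i \to b_j)\wedge(e_i \wedge f_j)$, using that the $e_i \wedge f_j$ are central and that on a product the Heyting implication is computed coordinatewise. (One should also note that the common refinement still consists of elements of $B(\alg{A})$, which it does since $B(\alg{A})$ is a Boolean algebra.) To get $u$ and $v$ simultaneously in the form $(\star)$ with the \emph{same} partition, take the common refinement of the two partitions witnessing them separately.

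For the second part, the task is to show that any finite partition of unity in $B(\alg{A})$ can be further refined to one whose blocks have the form $\ind_{(c=1)} \wedge \ind_{(d<1)}$ arising from a chain. The starting observation is Proposition~\ref{D}: the generators of $B(\alg{A})$ are the elements $\ind_{(a=1)}$ for $a \in \alg{A}$, together with their complements $\ind_{(a<1)}$. Since $\ind_{(a=1)} \wedge \ind_{(b=1)} = \ind_{(a \wedge b = 1)}$, the meet of finitely many such positive generators is again of that form, so a typical element of $B(\alg{A})$ is a finite join of elements $\ind_{(a=1)} \wedge \ind_{(b_1 < 1)} \wedge \cdots \wedge \ind_{(b_k<1)} = \ind_{(a=1)} \wedge \ind_{((b_1 \wedge \cdots \wedge b_k) < 1)}$ — wait, one must be careful: $\ind_{(b_1<1)} \wedge \ind_{(b_2<1)}$ is $\ind_{(b_1 \wedge b_2 < 1)}$ only because in each fsi factor $1$ is join-irreducible, so $b_1(y) < 1$ and $b_2(y) < 1$ together force $(b_1 \wedge b_2)(y) < 1$; this is exactly the content of the fsi property and is already used in Proposition~\ref{D}. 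Hence every element of $B(\alg{A})$ is a finite join of blocks of the form $\ind_{(c=1)} \wedge \ind_{(d<1)}$ with $c, d \in \alg{A}$, and by replacing $d$ with $d \wedge c$ we may assume $d \leq c$, and this block is nonzero iff $d < c$ is strict on a nonempty set of coordinates.

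Given an arbitrary finite partition of unity, I would now expand each block into such elementary blocks and then, crucially, \emph{linearly order} all the $c$'s and $d$'s appearing. The key algebraic point: for $c, c' \in \alg{A}$ one has $\ind_{(c=1)} \wedge \ind_{(c'=1)} = \ind_{(c \wedge c' = 1)}$ and $\ind_{(c=1)} \vee \ind_{(c'=1)} = \ind_{(c \vee c' = 1)}$, so the subalgebra of the center generated by elements $\ind_{(c=1)}$ is a homomorphic image of $\alg{A}$ (this is essentially $D(\alg{A})$ of Proposition~\ref{D}), and within it everything is governed by the lattice order on the relevant finite sublattice of $\alg{A}$. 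So after listing all relevant parameters and closing under finite meets and joins, I get a finite chain — no, only a finite sublattice; to get an actual \emph{chain} $d_1 < c_1 \leq d_2 < \cdots$ I take a maximal chain through that finite distributive sublattice from $0$ to $1$, say $0 = x_0 < x_1 < \cdots < x_m = 1$, set the blocks to be $f_j := \ind_{(x_j = 1)} \wedge \ind_{(x_{j-1} < 1)}$ for $j = 1,\dots,m$, and check these are pairwise disjoint with join $1$ (disjointness: $f_j \wedge f_{j'}$ for $j < j'$ involves $\ind_{(x_j=1)} \wedge \ind_{(x_{j'-1}<1)} = 0$ since $x_j \leq x_{j'-1}$; join telescopes to $\ind_{(x_m=1)} \vee (\text{stuff}) = \ind_{(1=1)} = 1$). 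Relabeling $d_j = x_{j-1}$, $c_j = x_j$ gives $d_1 = 0 < c_1 = x_1 = d_2 < c_2 = \cdots$, i.e.\ the required even-length chain $d_1 < c_1 \leq d_2 < c_2 \leq \cdots$ (the $\leq$ steps being equalities $c_j = d_{j+1}$, which is allowed). Finally, each original block $e_i$ is a join of some of these $f_j$'s — this needs that each elementary block $\ind_{(c=1)} \wedge \ind_{(d<1)}$ with $c, d$ among the chosen finite sublattice equals the join of the $f_j$ with $d \leq x_{j-1} < x_j \leq c$ — which follows from the corresponding identity for intervals in a finite distributive lattice together with the homomorphism property of $\ind_{(-=1)}$; refining $u, v$'s representation along this finer partition finishes the proof.

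\textbf{Main obstacle.} The genuinely delicate step is the passage from ``$B(\alg{A})$ is generated by the $\ind_{(a=1)}$ and their complements'' to ``every element, hence every partition block, decomposes into elementary blocks $\ind_{(c=1)}\wedge\ind_{(d<1)}$ coming from a single chain.'' This rests entirely on the identities $\ind_{(a=1)}\wedge\ind_{(b=1)} = \ind_{(a\wedge b=1)}$, $\ind_{(a=1)}\vee\ind_{(b=1)} = \ind_{(a\vee b=1)}$, and $\ind_{(a<1)}\wedge\ind_{(b<1)} = \ind_{(a\wedge b<1)}$, the last of which is precisely where finite subdirect irreducibility of the factors $\alg{A}_y$ is used; without it the argument collapses. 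Everything else — common refinements, telescoping joins, maximal chains in finite distributive lattices — is bookkeeping. I would make sure to isolate these three identities as a short preliminary observation (they are already implicit in Propositions~\ref{D} and the surrounding discussion) before assembling the chain.
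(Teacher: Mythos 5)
Your overall strategy matches the paper's: define the set $T$ of elements admitting a normal form $\bigvee_i a_i\wedge e_i$, show it is a subalgebra containing $\alg{A}$ via common refinements of partitions, and then reduce an arbitrary partition of unity in $B(\alg{A})$ to the atoms of the Boolean algebra generated by a finite chain. But there is a genuine gap in the first half. By Definition~\ref{S(A)}, $S(\alg{A})$ is the \emph{supplemented} Heyting subalgebra of $P(\alg{A})$ generated by $\alg{A}$, and the Heyting subalgebra generated by $\alg{A}$ is just $\alg{A}$ itself. So showing that $T$ contains $\alg{A}$ and is closed under $\wedge,\vee,\to$ only yields $T\supseteq\alg{A}$, not $T\supseteq S(\alg{A})$; the conclusion ``hence is all of $S(\alg{A})$'' does not follow. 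You must also show $T$ is closed under supplements, and this is exactly where $B(\alg{A})$ earns its keep: since the supplement on $P(\alg{A})$ is central it satisfies both De Morgan laws, so $u^+=\bigwedge_i(a_i^+\vee e_i^+)$ with $a_i^+=\ind_{(a_i<1)}\in B(\alg{A})$ and $e_i^+=e_i'\in B(\alg{A})$, whence $u^+\in B(\alg{A})\subseteq T$. This is a short fix, but it is precisely the step that distinguishes $S(\alg{A})$ from $\alg{A}$, so it cannot be omitted.

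Second, the identity you single out as the crux of the chain refinement, $\ind_{(a<1)}\wedge\ind_{(b<1)}=\ind_{(a\wedge b<1)}$, is false: $(a\wedge b)(y)<1$ does not force both $a(y)<1$ and $b(y)<1$. What join-irreducibility of $1$ in the fsi stalks actually gives is $\ind_{(a=1)}\vee\ind_{(b=1)}=\ind_{(a\vee b=1)}$, equivalently $\ind_{(a<1)}\wedge\ind_{(b<1)}=\ind_{(a\vee b<1)}$; the identity $\ind_{(a\wedge b<1)}=\ind_{(a<1)}\vee\ind_{(b<1)}$ holds but requires no fsi hypothesis at all. Your decomposition into elementary blocks survives once you take $d=b_1\vee\cdots\vee b_k$ in place of the meet, and the rest of your chain argument (maximal chain through a finite distributive sublattice of $D(\alg{A})$, blocks $\ind_{(x_j=1)}\wedge\ind_{(x_{j-1}<1)}$, telescoping join) is essentially the paper's appeal to the fact that the Boolean algebra generated by a finite distributive lattice coincides with that generated by any of its maximal chains. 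But as written the key identity carries the wrong connective, so correct it before building on it.
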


\begin{proof}
Let $T$ be the set of all elements of $P(\alg{A})$ that are equal to $\bigvee_{i=1}^n a_i\wedge e_i$ for some finite partition of unity $e_1,\ldots, e_n$ of $B(\alg{A})$ and some $a_1,\ldots,a_n \in\alg{A}$. Since $B(\alg{A})$ is contained in the center of  $S(\alg{A})$ and each $a\in\alg{A}$ is in $S(\alg{A})$, we have that $T$ is contained in $S(\alg{A})$. 

Take two elements $u,v\in T$. Then there are finite partitions of unity $f_1,\ldots,f_k$ and $g_1,\ldots,g_m$ and elements $r_1,\ldots,r_k$ and $s_1,\ldots, s_m$ with $u=\bigvee_{j=1}^k r_j\wedge f_j$ and $v=\bigvee_{\ell=1}^m s_\ell\wedge g_\ell$. These two finite partitions of unity have a common refinement $e_1,\ldots,e_n$, meaning that for each $i\leq n$ there are unique $j\leq k$ and $\ell\leq m$ with $e_i\leq f_j,g_\ell$. Setting $a_i=r_j$ and $b_i=s_\ell$ we have a representation of $u,v$ as in ($\star$) of the statement of the proposition. We then have the following description for the meet, join, and Heyting implication of $u$ and $v$ in the product $P(\alg{A})$. 
\[ 
u\wedge v= \bigvee_{i=1}^n (a_i\wedge b_i)\wedge e_i \qquad u\vee v= \bigvee_{i=1}^n (a_i\vee b_i)\wedge e_i \qquad u\to v= \bigvee_{i=1}^n (a_i\to b_i)\wedge e_i
\]
This is seen by noting that each $e_i$ is the characteristic function of some subset $E_i$ of the indexing set $Y$ and the sets $E_i$ are pairwise disjoint, non-empty, and cover $Y$. So the descriptions in ($\star$) say that $u$ takes the same value as $a_i$ on $E_i$ and $v$ the same value as $b_i$ on $E_i$. So the results of the operations are just the component\hyp{}wise ones. It follows from this that $T$ is a Heyting subalgebra of $P(\alg{A})$. 

Since the supplement of the product is central, supplements satisfy the De Morgan laws with respect to meet and join. So,  
\[u^+ = \bigwedge_{i=1}^n a_i^+\vee e_i^+\]
Since $a_i^+=\ind_{(a_i<1)}$ belongs to $B(\alg{A})$ and each $e_i^+$ is the complement of $e_i$ so belongs to $B(\alg{A})$ we have that $u^+$ belongs to $B(\alg{A})$, and hence to $T$. Thus $T$ is a supplemented Heyting subalgebra of $P(\alg{A})$ that contains $\alg{A}$, and therefore contains $S(\alg{A})$. Therefore $T=S(\alg{A})$. 

It remains to show the further remark. Any finite partition of unity in the Boolean algebra $B$ generated by a distributive lattice $D$ is a finite partition of unity in the Boolean algebra generated by some finite distributive sublattice $D'$ of $D$, and hence~\cite[Chap.~V.7]{BD74} is a finite partition of unity in the Boolean algebra $B(C)$ generated by some finite chain $C$ of $D$. The atoms of this Boolean algebra are of the form $p\wedge q'$ where $p$ covers $q$ in the chain. Thus any finite partition of unity $e_1,\ldots,e_n$ in $B(\alg{A})$ has a refinement to one given by the atoms of the Boolean algebra $B(C)$ for some finite chain $C$ in $D=\{\ind_{(a=1)}:a\in\alg{A}\}$, and our result follows. 
\end{proof} 
 
\begin{proposition} \label{k1}
For a Heyting algebra $\alg{A}$, the centrally supplemented Heyting extension $S(\alg{A})$ generates the same variety of Heyting algebras as $\alg{A}$. 
\end{proposition}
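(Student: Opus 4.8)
The plan is to establish the two inclusions between $\mathbb{V}(\alg{A})$ and $\mathbb{V}(S(\alg{A}))$, the varieties of Heyting algebras generated by $\alg{A}$ and by the Heyting reduct of $S(\alg{A})$. Note first that ``variety'' here refers to the pure Heyting signature: the supplement is not part of the type, and enters only through the way $S(\alg{A})$ was built. One inclusion is immediate, since by Definition~\ref{S(A)} the algebra $\alg{A}$ is a Heyting subalgebra of $S(\alg{A})$, so that $\alg{A} \in \mathbb{V}(S(\alg{A}))$ and hence $\mathbb{V}(\alg{A}) \subseteq \mathbb{V}(S(\alg{A}))$.

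For the reverse inclusion I would argue purely from the construction. Recall from Proposition~\ref{q} and Definition~\ref{S(A)} that $S(\alg{A})$ is a Heyting subalgebra of the product $P(\alg{A}) = \prod_{Y} \alg{A}_y$, and that each factor $\alg{A}_y = \alg{A}/\theta_y$ is a homomorphic image of $\alg{A}$; hence $\alg{A}_y \in \mathbb{V}(\alg{A})$ for every $y \in Y$. Since a variety is closed under arbitrary products, $P(\alg{A}) \in \mathbb{V}(\alg{A})$; since it is closed under subalgebras, $S(\alg{A}) \in \mathbb{V}(\alg{A})$, i.e.\ $\mathbb{V}(S(\alg{A})) \subseteq \mathbb{V}(\alg{A})$. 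Combining the two inclusions yields the statement.

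There is no real obstacle here: the substantive work was already done in setting up $P(\alg{A})$ and $S(\alg{A})$. The only point deserving a moment's care is the bookkeeping about signatures — the class operators $\mathbb{H}$, $\mathbb{S}$, $\mathbb{P}$ are applied in the Heyting signature, for which they generate the variety in the sense of Birkhoff's theorem, whereas the operation $(-)^+$ merely pins down which Heyting subalgebra of $P(\alg{A})$ is under consideration.
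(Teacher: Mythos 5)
Your proposal is correct and follows essentially the same argument as the paper: one inclusion from $\alg{A}$ being a Heyting subalgebra of $S(\alg{A})$, and the other from $S(\alg{A})$ being a Heyting subalgebra of $P(\alg{A})=\prod_Y\alg{A}_y$ whose factors are quotients of $\alg{A}$. The remark about the signature is a sensible clarification but not a substantive difference.
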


\begin{proof}
Let $\vty{V}(\alg{A})$ be the variety of Heyting algebras generated by $\alg{A}$. Since $\alg{A}$ is a Heyting subalgebra of $S(\alg{A})$, $\vty{V}(\alg{A})\subseteq\vty{V}(S(\alg{A}))$. But $S(\alg{A})$ is a Heyting subalgebra of the product $P(\alg{A})=\prod_Y\alg{A}_y$ and each $\alg{A}_y$ is a Heyting algebra quotient of $\alg{A}$, hence belongs to $\vty{V}(\alg{A})$. The other containment follows. 
\end{proof}

\begin{proposition} \label{g}
If $\alg{A}$ is a centrally supplemented Heyting algebra, then $A=S(\alg{A})$. In particular, $S(S(\alg{A}))=S(\alg{A})$. 
\end{proposition}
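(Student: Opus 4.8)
The plan is to unwind the definition of $S(\alg{A})$. By Definition~\ref{S(A)}, $S(\alg{A})$ is the least Heyting subalgebra of the product $P(\alg{A})=\prod_Y\alg{A}_y$ that contains $\alg{A}$ and is closed under the supplement operation of $P(\alg{A})$, which exists since $P(\alg{A})$ is (centrally) supplemented by Proposition~\ref{q}. Hence, to prove $S(\alg{A})=\alg{A}$ it suffices to show that the image of $\alg{A}$ under the natural embedding is already such a subalgebra. Proposition~\ref{q} tells us that this image is a Heyting subalgebra of $P(\alg{A})$, so the only thing left to verify is closure under supplements.

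For this, let $a\in\alg{A}$. Since $\alg{A}$ is assumed centrally supplemented, $a$ has a supplement $a^+$ in $\alg{A}$ and this supplement is central. By the last clause of Proposition~\ref{q} the natural map preserves existing central supplements, so the image of $a^+$ is precisely the supplement of the image of $a$ computed in $P(\alg{A})$. Thus the $P(\alg{A})$-supplement of every element of (the image of) $\alg{A}$ again lies in $\alg{A}$. Combined with the previous paragraph, this exhibits $\alg{A}$ as a supplemented Heyting subalgebra of $P(\alg{A})$ containing $\alg{A}$, whence $S(\alg{A})=\alg{A}$.

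For the final assertion I would first record that $S(\alg{A})$ is itself always centrally supplemented: the proof of Proposition~\ref{fr} shows that the $P(\alg{A})$-supplement $u^+$ of any $u\in S(\alg{A})$ lies in $B(\alg{A})$, which by definition is contained in the center of $S(\alg{A})$. Applying the first part of the proposition to the centrally supplemented Heyting algebra $S(\alg{A})$ in place of $\alg{A}$ then yields $S(S(\alg{A}))=S(\alg{A})$.

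The whole argument is a matter of unravelling definitions and citing Proposition~\ref{q}, so there is no genuine obstacle; the only point requiring a little care is to keep the identification of $\alg{A}$ with its image in $P(\alg{A})$ straight and to ensure that ``supplement'' always refers to the supplement taken in the ambient algebra $P(\alg{A})$ — which is legitimate precisely because Proposition~\ref{q} guarantees that this ambient supplement restricts correctly to $\alg{A}$ once $\alg{A}$ is centrally supplemented.
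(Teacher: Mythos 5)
Your proof is correct and follows essentially the same route as the paper: both arguments use the last clause of Proposition~\ref{q} to see that the central supplements of $\alg{A}$ are the supplements computed in $P(\alg{A})$, so that $\alg{A}$ is already a supplemented subalgebra of the product and hence equals $S(\alg{A})$, and both derive the second claim by applying the first to the centrally supplemented algebra $S(\alg{A})$. Your extra remark tracing the central supplementation of $S(\alg{A})$ back to the proof of Proposition~\ref{fr} is a welcome bit of added care that the paper leaves implicit.
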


\begin{proof}
In Proposition~\ref{q} it was shown that if $a\in\alg{A}$ has a central supplement $a^+$, then considered as elements of the product we have that $a^+$ is the supplement of $a$ in the product as well. If each element of $\alg{A}$ has a central supplement, then $\alg{A}$ is a supplemented subalgebra of the product, hence is equal to $S(\alg{A})$. The further remark follows since $S(\alg{A})$ is centrally supplemented. 
\end{proof}

We next consider the center of $S(A)$. The key facts are given in the following. Here we consider for a Heyting algebra $A$ the relation $\theta_{\!A}$ consisting of those pairs $(c,d)$ such that for each $a\in A$ we have $a\vee c=1$ iff $a\vee d=1$. It is easily seen that $\theta_{\!A}$ is a lattice congruence on $A$. In fact, this is a lattice congruence on any distributive lattice, and has been studied in its order dual version by Speed in~\cite[Sec.~5]{Spe69A} and~\cite{Spe69B}. 

\begin{proposition} \label{Z(S(A))}
For a Heyting algebra $A$, the map $\psi \colon A\to Z(S(A))$ where $\psi(a)=\ind_{(a=1)}$ is a bounded lattice homomorphism with kernel $\theta_{\!A}$. The image of this map is $D(A)$ and this image generates $Z(S(A))$ as a Boolean algebra. 
\end{proposition}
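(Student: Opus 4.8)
The plan is to verify the three assertions in turn, all of which follow fairly directly from Proposition~\ref{D}, Proposition~\ref{fr}, and the concrete description of the product $P(\alg{A})$ and its operations. First I would show $\psi$ is a bounded lattice homomorphism. Since the factors $\alg{A}_y$ are finitely subdirectly irreducible, their unit $1_y$ is join-irreducible, so for $a,b\in\alg{A}$ we have $(a\vee b)(y)=1$ iff $a(y)=1$ or $b(y)=1$, and $(a\wedge b)(y)=1$ iff $a(y)=1$ and $b(y)=1$; this is exactly the computation already carried out in the proof of Proposition~\ref{D}. Hence $\ind_{(a\vee b=1)}=\ind_{(a=1)}\vee\ind_{(b=1)}$ and $\ind_{(a\wedge b=1)}=\ind_{(a=1)}\wedge\ind_{(b=1)}$, so $\psi$ preserves joins and meets, and clearly $\psi(0)=\ind_{(0=1)}=0$ and $\psi(1)=1$ since $Y\neq\varnothing$ (as $\alg{A}$ is subdirectly embedded in $P(\alg{A})$, the indexing set is nonempty whenever $\alg{A}$ is nontrivial; the trivial case is immediate).

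Next I would identify the kernel of $\psi$. By definition $\psi(c)=\psi(d)$ iff $\ind_{(c=1)}=\ind_{(d=1)}$ iff $\lbr c=1\rbr=\lbr d=1\rbr$ as subsets of $Y$, i.e.\ $c\in y\iff d\in y$ for every minimal prime filter $y\in Y$. I claim this is exactly $\theta_{\!A}$, the relation of $(c,d)$ with $a\vee c=1\iff a\vee d=1$ for all $a\in\alg{A}$. For one direction, if $c\in y\iff d\in y$ for all $y\in Y$ and $a\vee c=1$, then $a\vee d$ lies in every $y\in Y$ (if $d\in y$ we are done, and if $d\notin y$ then $c\notin y$, so since $a\vee c=1\in y$ and $y$ is prime we get $a\in y$, hence $a\vee d\in y$); since the map $\alg{A}\to\prod_Y\alg{A}_y$ is a subdirect embedding, an element of $\alg{A}$ lying in every minimal prime filter equals $1$, so $a\vee d=1$, and symmetrically. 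For the converse, suppose $(c,d)\in\theta_{\!A}$ and $c\in y$ for some minimal prime filter $y$; by Lemma~\ref{min}(1) there is $s\notin y$ with $c\vee s=1$, whence $d\vee s=1$, and since $s\notin y$ and $y$ is prime and $d\vee s\in y$ we get $d\in y$. So $\Ker\psi=\theta_{\!A}$.

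Finally, the image of $\psi$ is by definition the set $\{\ind_{(a=1)}:a\in\alg{A}\}=D(\alg{A})$, and by Proposition~\ref{D} this lies in $Z(S(\alg{A}))$. That $D(\alg{A})$ generates $Z(S(\alg{A}))$ as a Boolean algebra: $B(\alg{A})$ is by definition the Boolean subalgebra of $Z(S(\alg{A}))$ generated by $D(\alg{A})$, so it suffices to show $Z(S(\alg{A}))\subseteq B(\alg{A})$. Given a central element $u\in Z(S(\alg{A}))$, apply Proposition~\ref{fr} to the pair $u,u$: there is a finite partition of unity $e_1,\dots,e_n$ in $B(\alg{A})$ and $a_1,\dots,a_n\in\alg{A}$ with $u=\bigvee_{i=1}^n a_i\wedge e_i$, and its complement $u'=u^+$ equals $\bigwedge_{i=1}^n(a_i^+\vee e_i^+)$. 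Since $u$ is complemented, on each set $E_i=\lbr e_i=1\rbr$ the restriction $a_i\restriction E_i$ must be complemented, i.e.\ take only the values $0,1$; but $1_y$ is join-irreducible in $\alg{A}_y$, so a complemented element is either $0_y$ or $1_y$, and this forces $a_i\wedge e_i=\ind_{(a_i=1)}\wedge e_i$, which lies in $B(\alg{A})$. Hence $u\in B(\alg{A})$. The main obstacle I anticipate is pinning down the kernel computation cleanly — one must use Lemma~\ref{min}(1) (minimal prime filters detect elements via co-relative complements) to bridge between the ``membership in all minimal prime filters'' description of $\Ker\psi$ and the algebraic description of $\theta_{\!A}$; the rest is bookkeeping with the component-wise operations and join-irreducibility of the units.
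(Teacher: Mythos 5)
Your proof is correct and follows essentially the same route as the paper's: Proposition~\ref{D} for the homomorphism claim, Proposition~\ref{fr} together with the join-irreducibility of the units $1_y$ for the claim that $D(\alg{A})$ generates the center, and Lemma~\ref{min} for identifying the kernel with $\theta_{\!A}$. The only differences are cosmetic — the paper computes $u=u^{++}=\bigvee a_i^{++}\wedge e_i$ where you argue pointwise that central elements are $\{0,1\}$-valued, and it handles the inclusion $\theta_{\!A}\subseteq\ker\psi$ contrapositively via the basis from Lemma~\ref{min}(2) where you argue directly from Lemma~\ref{min}(1) — but these are equivalent reformulations of the same ideas.
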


\begin{proof}
In the proof of Proposition~\ref{D} it is shown that $D(A)$ is a bounded sublattice of the center of $S(A)$, and the proof shows that $\psi$ is a bounded lattice homomorphism. By Proposition~\ref{fr} an element $u\in S(A)$ has a representation as $\bigvee_{i=1}^na_i\wedge e_i$ for some finite partition of unity $e_1,\ldots,e_n$ of $B(A)$ and some $a_1,\ldots,a_n\in A$. So if $u$ is in the center of $S(A)$ we have $u=u^{++}$ hence $u=\bigvee_{i=1}^na_i^{++}\wedge e_i$. But $a_i^{++}=\ind_{(a=1)}$ so belongs to $D(A)$, and by definition each of $e_1,\ldots,e_n$ belongs to $B(A)$. So $u$ belongs to the Boolean subalgebra of the center generated by $S(A)$, hence $B(A)$ is the center of $S(A)$. 

It remains to show that $\theta_{\!A}$ is the kernel of $\psi$. Suppose $\psi(c)=\psi(d)$. Then $\ind_{(c=1)}=\ind_{(d=1)}$. Then for any $a\in A$ we have $a\vee c=1$ iff $\lbr a<1\rbr\subseteq \lbr c=1\rbr$ and $a\vee d = 1$ iff $\lbr a<1\rbr\subseteq\lbr d =1\rbr$ and it follows that $a\vee c = 1$ iff $a\vee d=1$, so $(c,d)\in\theta_{\!A}$. Conversely, suppose $(c,d)$ does not belong to the kernel of $\psi$. Without loss of generality there is $y\in Y$ with $c(y)=1$ and $d(y)<1$. Then $y$ belongs to the clopen set $N=\lbr c=1\rbr\cap\lbr d<1\rbr$. By Lemma~\ref{min}, the sets of the form $Y\setminus\widehat{a}=\lbr a<1\rbr$ are a basis for the topology on $Y$, so there is $a\in A$ with $\lbr a<1\rbr\subseteq N$. Then $a\vee c=1$ and $a\vee d<1$, giving that $(c,d)$ does not belong to $\theta_{\!A}$. 
\end{proof} 

We describe the dual Stone space of the center of $S(\alg{A})$. 

\begin{lemma} \label{lem:C(Y)}
Let $\alg{A}$ be a Heyting algebra with dual Esakia space $X$ and minimum $Y$, then 
\[
\C(Y) = \bigcap \{ \widehat{a} \cup X \setminus \widehat{b} : a,b \in A, \ a \lor b^+ = 1 \},  
\]
where $\C(Y)$ denotes the closure of $Y$ in $X$ and $b^+$ is computed in $S(\alg{A})$.    
\end{lemma}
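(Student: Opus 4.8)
The plan is to translate the algebraic hypothesis ``$a \lor b^+ = 1$ in $S(\alg{A})$'' into a purely topological statement about the subspace $Y \subseteq X$, and then to exploit the fact that the closure of $Y$ in the (zero\hyp{}dimensional, compact Hausdorff) space $X$ is the intersection of the clopen sets containing $Y$.

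First I would pin down $b^+$ for $b \in A$. Since $S(\alg{A})$ is, by Definition~\ref{S(A)}, a supplemented subalgebra of $P(\alg{A})$, the supplement $b^+$ computed in $S(\alg{A})$ coincides with the one computed in the product, which by the discussion preceding Definition~\ref{S(A)} is the central element $\ind_{(b<1)}$. Recalling that $a/\theta_y = 1_y$ if and only if $a \in y$, this function takes the value $1$ at $y \in Y$ exactly when $y \notin \widehat{b}$, so $(a \lor b^+)(y) = 1$ precisely when $y \notin \widehat{b}$ or $a \in y$. Hence
\[
a \lor b^+ = 1 \quad\Longleftrightarrow\quad Y \subseteq \widehat{a} \cup (X \setminus \widehat{b}).
\]
As $\widehat{a} \cup (X \setminus \widehat{b})$ is clopen, hence closed, the right\hyp{}hand side is in turn equivalent to $\C(Y) \subseteq \widehat{a} \cup (X \setminus \widehat{b})$. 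This immediately yields the inclusion ``$\subseteq$'' of the lemma: every set in the family being intersected on the right contains $\C(Y)$.

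For the reverse inclusion I would argue by contraposition. Let $x \in X$ with $x \notin \C(Y)$; the goal is to produce $a, b \in A$ with $a \lor b^+ = 1$ and $x \notin \widehat{a} \cup (X \setminus \widehat{b})$, i.e. with $a \notin x$ and $b \in x$. Because $\C(Y)$ is closed and $X$ is a Priestley space, there is a clopen set $K$ with $\C(Y) \subseteq K$ and $x \notin K$. Now every clopen subset of $X$ is a finite union of ``boxes'' $\widehat{c} \cap (X \setminus \widehat{d})$ with $c, d \in A$ --- this follows by writing a clopen as a Boolean combination of basic clopens, distributing, and simplifying via $\widehat{c_1} \cap \widehat{c_2} = \widehat{c_1 \wedge c_2}$ and $(X \setminus \widehat{d_1}) \cap (X \setminus \widehat{d_2}) = X \setminus \widehat{d_1 \lor d_2}$. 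Applying this to $X \setminus K$, the point $x$ lies in one such box $\widehat{c} \cap (X \setminus \widehat{d})$, which is disjoint from $\C(Y)$; thus $\C(Y) \subseteq X \setminus (\widehat{c} \cap (X \setminus \widehat{d})) = \widehat{d} \cup (X \setminus \widehat{c})$, and by the first step this means $d \lor c^+ = 1$. Taking $a = d$ and $b = c$ completes the argument, since $x \in \widehat{c} \setminus \widehat{d}$ gives $b \in x$ and $a \notin x$.

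I do not anticipate a genuine obstacle. The two places needing a little care are the identification, on $Y$, of $b^+$ with $\ind_{(b<1)}$ --- so that the first displayed equivalence is literally correct --- and the routine but slightly fussy fact that clopen subsets of a Priestley space are finite unions of boxes, which is exactly what permits replacing an arbitrary clopen neighbourhood of $\C(Y)$ by one of the prescribed form $\widehat{a} \cup (X \setminus \widehat{b})$.
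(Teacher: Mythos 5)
Your proposal is correct and follows essentially the same route as the paper: the heart of both arguments is the equivalence $a \lor b^+ = 1 \iff Y \subseteq \widehat{a} \cup (X \setminus \widehat{b})$, obtained by identifying $b^+$ with $\ind_{(b<1)}$ and evaluating pointwise on $Y$. Your contrapositive argument for the reverse inclusion is just an explicit proof of the fact the paper cites directly, namely that the sets $\widehat{a} \cup (X \setminus \widehat{b})$ form a basis for the closed sets of the Priestley space $X$.
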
 

\begin{proof}
We claim that $\widehat{a} \cup X \setminus \widehat{b} \supseteq Y$ iff $a \lor b^+ = 1$. Since the sets of the form $\widehat{a} \cup X \setminus \widehat{b}$, with $a, b \in A$, form a basis for the closed sets of $X$ this will suffice to establish the lemma. Therefore, let $a, b \in A$ be given. Then, $\widehat{a} \cup X \setminus \widehat{b} \supseteq Y$ iff for all $y \in Y$ we have that $y \in \widehat{a}$ or $y \in  X \setminus \widehat{b}$. This is equivalent to having $a(y) = 1$ or $b(y) < 1$ for all $y \in Y$. Since $b^+ = \ind_{(b<1)}$ and the factors $\alg{A}_y$ are all fsi this in turn is equivalent to $a \lor b^+ = 1$, as desired.     
\end{proof}

\begin{proposition} \label{prop:C(Y)=Stone-center-Z(SA)}
Let $\alg{A}$ be a Heyting algebra with dual Esakia space $X$ and minimum $Y$, then  the dual Stone space of $Z(S(\alg{A}))$ is homeomorphic to $\C(Y)$.   
\end{proposition}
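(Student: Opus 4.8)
My plan is to route everything through Proposition~\ref{Z(S(A))}: it tells us that $Z(S(\alg{A}))$ is the Boolean algebra generated by the distributive sublattice $D(\alg{A})$, and that $\psi\colon a\mapsto\ind_{(a=1)}$ is a surjection of $\alg{A}$ onto $D(\alg{A})$ with kernel the lattice congruence $\theta_{\!A}$. Two standard duality facts then reduce the statement to a short computation. First, the Stone space of the Boolean algebra generated by a bounded distributive lattice $\alg{D}$ is just the underlying topological space of the Priestley space $X_\alg{D}$ (an ultrafilter of the generated Boolean algebra restricts to a prime filter of $\alg{D}$ and is determined by it, and every $\widehat{e}$ is a finite Boolean combination of sets $\widehat a$ with $a\in\alg{D}$). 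So the Stone dual of $Z(S(\alg{A}))$ is homeomorphic to the space underlying $X_{D(\alg{A})}\cong X_{\alg{A}/\theta_{\!A}}$. Second, by the correspondence between quotients of $\alg{A}$ and closed subspaces of $X=X_\alg{A}$, the space $X_{\alg{A}/\theta_{\!A}}$ is homeomorphic to the closed subset $F=\{x\in X: x \text{ is a union of } \theta_{\!A}\text{-classes}\}$ with the subspace topology. Hence it remains only to show $F=\C(Y)$.

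For $\C(Y)\subseteq F$ it suffices, as $F$ is closed, to check $Y\subseteq F$. Fix $y\in Y$. For $a\in\alg{A}$ we have $a\in y$ iff $a(y)=1_y$ (indeed $a(y)=1_y$ means exactly that $c\le a$ for some $c\in y$), i.e.\ iff $\ind_{(a=1)}(y)=1$; thus $y=\psi^{-1}\bigl(\{e\in Z(S(\alg{A})):e(y)=1\}\bigr)$. Being a $\psi$-preimage and $\ker\psi=\theta_{\!A}$, the filter $y$ is $\theta_{\!A}$-saturated, so $y\in F$. For $F\subseteq\C(Y)$, take $p\in F$, say $p=\psi^{-1}(q)$ with $q$ a prime filter of $D(\alg{A})$, so $a\in p\iff\psi(a)\in q$. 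By Lemma~\ref{lem:C(Y)} we must show $p\in\widehat a\cup(X\setminus\widehat b)$ whenever $a,b\in\alg{A}$ satisfy $a\vee b^+=1$ in $S(\alg{A})$. As $b^+=\ind_{(b<1)}$, this equation says $\lbr b=1\rbr\subseteq\lbr a=1\rbr$, i.e.\ $\psi(b)\le\psi(a)$; so $b\in p$ forces $\psi(b)\in q$, hence $\psi(a)\in q$ and $a\in p$, giving $p\in\widehat a$. This establishes $F=\C(Y)$ and with it the proposition. (One could also bypass Lemma~\ref{lem:C(Y)}: the lattice congruence induced on $\alg{A}$ by the closed subspace $\C(Y)$ sends $a$ to $\widehat a\cap\C(Y)$, and since $Y$ is dense in $\C(Y)$ and each symmetric difference $\widehat a\triangle\widehat b$ is clopen, $\widehat a\cap\C(Y)=\widehat b\cap\C(Y)$ iff $\widehat a\cap Y=\widehat b\cap Y$ iff $\psi(a)=\psi(b)$; so this congruence is exactly $\theta_{\!A}$, i.e.\ $\C(Y)$ is the dual space of $\alg{A}/\theta_{\!A}\cong D(\alg{A})$.)

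The content is really just the alignment of three pictures of the same space — ultrafilters of $Z(S(\alg{A}))$, the Priestley dual of $\alg{A}/\theta_{\!A}$, and the closed subset $F\subseteq X$ — together with the one new computation $y=\psi^{-1}(\{e:e(y)=1\})$. I expect the only mild subtlety to be stating the topological (order-forgetting) identifications correctly, using the fact that a Priestley space is in any case a Stone space as a topological space, so that ``closed subspace of $X$'' is the right receptacle for the Stone dual of the Boolean algebra $Z(S(\alg{A}))$.
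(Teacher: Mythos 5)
Your proof is correct, but it takes a genuinely different route from the paper's. The paper constructs an explicit map $\gamma\colon Z\to X$, $\gamma(z)={\uparrow}z\cap A$, from the ultrafilters of $Z(S(\alg{A}))$ into $X$, and then verifies by hand that its image is $\C(Y)$ (the containment of $\C(Y)$ in the image requires a filter--ideal separation argument producing an ultrafilter $z$ with $\gamma(z)=x$), that it is continuous, and that it is injective (via Proposition~\ref{Z(S(A))}). You instead factor the whole identification through two pieces of standard duality: since $Z(S(\alg{A}))$ is generated as a Boolean algebra by the bounded sublattice $D(\alg{A})$, it is the free Boolean extension of $D(\alg{A})$ (the same fact the paper invokes from~\cite[p.~97]{BD74} in Lemma~\ref{dji} and Theorem~\ref{j}(12)), so its Stone space is the topological reduct of $X_{D(\alg{A})}\cong X_{\alg{A}/\theta_{\!A}}$; and the congruence/closed-subspace correspondence realizes this as the closed set $F$ of $\theta_{\!A}$-saturated prime filters of $\alg{A}$. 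The only genuinely new computation is then $F=\C(Y)$, and here your two containments are well chosen: $\C(Y)\subseteq F$ reduces to $Y\subseteq F$ because $F$ is closed (using $a\in y\iff\ind_{(a=1)}(y)=1$ for $y$ minimal, which is exactly how $\theta_y$ is defined), and $F\subseteq\C(Y)$ uses Lemma~\ref{lem:C(Y)} together with the observation that $a\vee b^+=1$ means $\psi(b)\leq\psi(a)$. What your approach buys is the elimination of the explicit surjectivity and injectivity checks (both are absorbed into the duality facts, and in particular the prime-filter separation argument disappears into ``$F$ is closed and contains $Y$''), plus the extra information that $\C(Y)$ is precisely the Priestley dual of $\alg{A}/\theta_{\!A}\cong D(\alg{A})$; what the paper's approach buys is self-containedness and an explicit formula for the homeomorphism. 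The underlying bijection is the same in both cases, since $\gamma(z)=\{a\in A: a^{++}\in z\}=\psi^{-1}(z\cap D(\alg{A}))$.
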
 

\begin{proof}
Let $Z$ denote the dual Stone space of $Z(S(\alg{A}))$ and define a map $\gamma \colon Z \to X$ by letting $\gamma(z) = {\uparrow}z \cap A$. Using Lemma~\ref{min}, $\gamma$ is easily seen to be a well defined map with the property that $a \in \gamma(z)$ iff $a^{++} \in z$, for all $a \in A$. 

We first show that the image of $\gamma$ is $\C(Y)$. If $z \in Z$ and $a, b \in A$ are such that $a \lor b^+ =1$, then also $a^{++} \lor b^{+} = 1$. As both $a^{++}$ and $b^+$ belongs to the center of $S(\alg{A})$ we must have that $a^{++} \in z$ or $b^{+} \in z$. In the former case $a \in \gamma(z)$ and in the latter case that $b \not\in \gamma(z)$. From Lemma~\ref{lem:C(Y)} we may then conclude that $\gamma(z) \in \C(Y)$. Conversely, for $x \in \C(Y)$ consider the sets $F = {\uparrow}\{a^{++}: a \in x\}$ and $I ={\downarrow}\{b^{++} : b \not\in x\}$. It is easy to see that $F$ is a filter and $I$ an ideal of $Z(S(A))$. In fact, $F \cap I = \emptyset$. For suppose not, then we must have $a \in x$ and $b \not\in x$ such that $a^{++} \leq b^{++}$. But then $a^{++} \leq b$ and so $b \lor a^+ = 1$. As $x \not\in \widehat{b}$ and $x \not\in X\setminus \widehat{a}$ this contradicts Lemma~\ref{lem:C(Y)}. Thus there exists an ultrafilter $z$ on $Z(S(A))$ such that $F \subseteq z$ and $z \cap I = \emptyset$. We claim that $\gamma(z) = x$. If $a \in x$ then $a^{++} \in F \subseteq z$ whence $a \in \gamma(z)$. Conversely, if $a \not\in x$ then $a^{++} \in I$ whence $a^{++} \not\in z$ and so $a \not\in \gamma(z)$. Thus $\gamma(z)=x$.      

We then observe that $\gamma^{-1}(\widehat{a} \cap \C(Y)) = \widehat{a^{++}}$ and $\gamma^{-1}(\C(Y) \setminus \widehat{a}) = Z\setminus \widehat{a^{++}}$ for all $a \in A$. Consequently $\gamma$ is a continuous surjection between Stone spaces. Thus to establish the proposition it suffices to show that $\gamma$ is an injection. To this end assume that $\gamma(z) = \gamma(z')$ for some $z,z' \in Z$. Then we must have that $a^{++} \in z$ iff $a^{++} \in z'$ for all $a \in A$. But then also $a^{+} \in z$ iff $a^{+} \in z'$ for all $a \in A$. From Proposition~\ref{Z(S(A))} we may then conclude that $z = z'$.  
\end{proof}

\begin{lemma} \label{nm}
For a Heyting algebra $\alg{A}$ we have $\{a\wedge\ind_{(s<1)} : a,s\in\alg{A}\}$ is join\hyp{}dense in $S(\alg{A})$ and $\{(a\wedge\ind_{(s<1)})\vee\ind_{(s=1)} : a,s\in\alg{A}\}$ is meet\hyp{}dense in $S(\alg{A})$.  
\end{lemma}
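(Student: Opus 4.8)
The plan is to use the structural description of elements of $S(\alg{A})$ provided by Proposition~\ref{fr}: every $u \in S(\alg{A})$ can be written as $u = \bigvee_{i=1}^n a_i \wedge e_i$ where $e_1,\ldots,e_n$ is a finite partition of unity in $B(\alg{A})$ of the special form $e_i = \ind_{(c_i=1)} \wedge \ind_{(d_i<1)}$ for a finite chain $d_1 < c_1 \leq \cdots \leq d_n < c_n$ in $\alg{A}$ with $d_1 = 0$ and $c_n = 1$. Note that $\ind_{(c_i = 1)} = \ind_{(c_i < 1)}^+$, so each generator $\ind_{(s<1)}$ and each $\ind_{(s=1)}$ lies in $S(\alg{A})$, and likewise each $a \wedge \ind_{(s<1)}$ and each $(a \wedge \ind_{(s<1)}) \vee \ind_{(s=1)}$ lies in $S(\alg{A})$; so both displayed sets are genuinely subsets of $S(\alg{A})$. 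For join-density, I would fix $u = \bigvee_{i=1}^n a_i \wedge e_i$ as above and show that $u$ is the join in $S(\alg{A})$ of elements of the form $a \wedge \ind_{(s<1)}$ that lie below it.

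First I would observe that $e_i = \ind_{(c_i=1)} \wedge \ind_{(d_i<1)} \leq \ind_{(d_i<1)}$, hence $a_i \wedge e_i \leq a_i \wedge \ind_{(d_i<1)}$; but this is too crude since $a_i \wedge \ind_{(d_i<1)}$ need not lie below $u$. The right move is to go pointwise. Each $e_i$ is the characteristic function of a clopen subset $E_i \subseteq Y$, the $E_i$ partition $Y$, and $u$ agrees with $a_i$ on $E_i$. I claim $u = \bigvee_{i=1}^n \big( (a_i \vee d_i) \wedge \ind_{(d_i < 1)} \big)$, or more simply that the collection
\[
K_u = \{\, a \wedge \ind_{(s<1)} : a,s \in \alg{A},\ a \wedge \ind_{(s<1)} \leq u \,\}
\]
joins to $u$ in the product $P(\alg{A})$ (hence in $S(\alg{A})$, once we know the join lands in $S(\alg{A})$, which it does since it equals $u$). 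To see $\bigvee K_u = u$ pointwise: clearly $\bigvee K_u \leq u$; for the reverse, fix $y \in Y$, say $y \in E_i$, so $u(y) = a_i(y)$. Since $e_i(y) = 1$ we have $d_i(y) < 1$. Now consider the element $w = (a_i \wedge a_i') \wedge \ind_{(d_i < 1)}$ where I take $a_i' \in \alg{A}$ chosen so that $\ind_{(d_i<1)}$ carves out exactly the relevant piece — actually the clean statement is: the element $(a_i \vee d_i) \wedge \ind_{(d_i<1)}$ lies in $\{a \wedge \ind_{(s<1)} : a,s\in\alg{A}\}$, it lies below $u$ (on $E_j$ with $j < i$ one has $c_j \leq d_i$ so $d_i \not< 1$ forces the value $0$; on $E_j$ with $j \geq i$ one has $d_i \leq d_j < c_j \leq$ value, need to check $a_i \vee d_i \leq a_j \vee d_j = u(y)$ there — this uses $d_i \leq d_j$ and the chain structure, but \emph{not} $a_i \leq a_j$ in general), and it takes value $a_i(y) \vee d_i(y)$ at $y$. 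The subtlety is that $a_i(y) \vee d_i(y)$ may exceed $a_i(y) = u(y)$.

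The way around this is to use the freedom in the representation: by refining the partition further (Proposition~\ref{fr} already gives us refinements), one can also split each $E_i$ and replace $a_i$ by $a_i \wedge c_{i-1}'$-type adjustments, or — cleaner — one notes that $u = \bigvee_{i} a_i \wedge \ind_{(c_i=1)} \wedge \ind_{(d_i<1)}$ and since $\ind_{(c_i=1)} = \bigvee\{\ind_{(t<1)} : t \in \alg{A},\ \ind_{(t<1)} \leq \ind_{(c_i=1)}\}$ is itself a join of such generators (this is exactly the join-density of the $\ind_{(t<1)}$ in the Boolean center, which follows from Lemma~\ref{min}(2): the basic clopens of $Y$ are the $\lbr t < 1\rbr$, so every clopen of $Y$, in particular $E_i$, is a union of basic ones, equivalently $\ind_{E_i}$ is a join of $\ind_{(t<1)}$'s). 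Distributing, $u$ becomes a join of elements $a_i \wedge \ind_{(t<1)} \wedge \ind_{(d_i<1)} = a_i \wedge \ind_{(t \wedge d_i < 1)}$ — wait, $\ind_{(t<1)} \wedge \ind_{(d_i<1)} = \ind_{(t<1 \text{ and } d_i<1)}$, and this equals $\ind_{(s<1)}$ for $s = $ the element of $\alg{A}$ with $\lbr s < 1\rbr = \lbr t<1\rbr \cap \lbr d_i < 1\rbr$; such $s$ exists in $\alg{A}$ because... hmm, $\lbr t < 1\rbr \cap \lbr d_i<1\rbr$ need not be basic, but it \emph{is} a finite union of basics $\lbr r_k < 1\rbr$, and $\ind_{(r_k<1)} = r_k^+$, and we can split once more. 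So after sufficiently many refinements, $u$ is a join of elements genuinely of the form $a \wedge \ind_{(s<1)}$ each below $u$. I expect the main obstacle to be bookkeeping this double refinement cleanly; the conceptual content is just (i) Proposition~\ref{fr}'s normal form, (ii) Lemma~\ref{min}(2) saying the $\lbr s<1\rbr$ form a basis of $Y$, and (iii) the fact that operations in $S(\alg{A})$ restricted to this context are pointwise. For the second (meet-density) assertion, I would then dualize: $(a \wedge \ind_{(s<1)}) \vee \ind_{(s=1)}$ is the ``de Morgan dual'' of $a \wedge \ind_{(s<1)}$ in the sense that it equals the value $a$ on $\lbr s < 1\rbr$ and $1$ on $\lbr s = 1\rbr$, and one checks directly that $u = \bigwedge\{(a \wedge \ind_{(s<1)}) \vee \ind_{(s=1)} : u \leq (a\wedge\ind_{(s<1)})\vee\ind_{(s=1)}\}$ pointwise using the normal form of $u$ together with the fact that the complements $\ind_{(s=1)}$ generate the same Boolean algebra — this is the order-dual of the join-density argument and needs $c_n = 1$, $d_1 = 0$ to make the endpoints behave. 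The meet-density then follows from the join-density by applying the order-reversing bijection $u \mapsto u^+$... no, $S(\alg{A})$ is not self-dual; rather it follows by the same pointwise computation with joins replaced by meets and $\ind_{(s<1)}$ replaced by $\ind_{(s=1)}$ throughout.
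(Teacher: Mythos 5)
Your plan rests on exactly the same three ingredients as the paper's proof — the normal form of Proposition~\ref{fr}, the fact from Lemma~\ref{min}(2) that the sets $\lbr s<1\rbr$ form a basis for $Y$, and pointwise reasoning in the product — so the approach is sound and essentially the paper's. But your execution is far more roundabout than it needs to be, and it leaves one point hanging. The hanging point first: your worry about whether $\ind_{(t<1)}\wedge\ind_{(d<1)}$ has the form $\ind_{(s<1)}$ dissolves immediately, because the points of $Y$ are \emph{prime} filters, so $\widehat{t}\cup\widehat{d}=\widehat{t\vee d}$ on $Y$ and hence $\lbr t<1\rbr\cap\lbr d<1\rbr=\lbr t\vee d<1\rbr$; the basis is closed under finite intersections and no ``splitting once more'' is needed. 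More importantly, all of the global rewriting of $u$ (distributing infinite joins, refining partitions twice) can be skipped: the paper simply fixes $y\in Y$, takes the unique $i$ with $y\in E_i$, and uses the basis to choose $s\in A$ with $y\in\lbr s<1\rbr\subseteq E_i$; then $a_i\wedge\ind_{(s<1)}\leq u\leq(a_i\wedge\ind_{(s<1)})\vee\ind_{(s=1)}$ and all three functions agree at $y$. Since for every $y$ there is an element of each displayed set below (resp.\ above) $u$ and agreeing with $u$ at $y$, any upper bound in $S(\alg{A})$ of the lower set dominates $u$ pointwise, and dually — which is join- and meet-density in one stroke. Note also that meet-density is \emph{not} obtained by a separate dual computation with ``$\ind_{(s<1)}$ replaced by $\ind_{(s=1)}$ throughout'': the same choice of $s$ handles both bounds simultaneously, which is why the meet-dense family has the specific form $(a\wedge\ind_{(s<1)})\vee\ind_{(s=1)}$ rather than something built from $\ind_{(s=1)}$ alone.
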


\begin{proof}
Suppose $u\in S(\alg{A})$. By Proposition~\ref{fr}, $u=\bigvee_1^na_i\wedge e_i$ for some $a_1,\ldots,a_n\in A$ and $e_1,\ldots,e_n\in B(\alg{A})$. Each $e_i$ is the characteristic function of a clopen set $K_i$ of $Y$. Let $y\in K_i$. By Lemma~\ref{min} the sets $Y\setminus\widehat{s}$ are a basis for the topology. Therefore there is $s\in A$ with $y\in\lbr s<1\rbr\subseteq K_i$. It follows that $a\wedge\ind_{(s<1)}\leq u\leq (a\wedge\ind_{(s<1)})\vee\ind_{(s=1)}$ with all three functions agreeing at $y$. The result follows. 
\end{proof}

For algebras $\alg{A}\leq\alg{B}$, we say $\alg{B}$ is an \emph{essential extension} of $\alg{A}$ if every non\hyp{}trivial congruence of $\alg{B}$ restricts to a non\hyp{}trivial congruence of $\alg{A}$. When these are Heyting algebras, it is simple to see that this is equivalent to every $b<1$ in $\alg{B}$ having an $a<1$ in $\alg{A}$ with $b\leq a$. Clearly any meet\hyp{}dense extension of a Heyting algebra $\alg{A}$ is an essential extension, thus the MacNeille completion $\overline{\alg{A}}$ of $\alg{A}$ is an essential extension. 

\begin{proposition}\label{x}
For a Heyting algebra $\alg{A}$, the extension $\alg{A}\leq S(\alg{A})$ is essential. 
\end{proposition}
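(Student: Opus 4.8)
The plan is to use the characterization of essential extensions of Heyting algebras recalled just before the statement: $\alg{A} \leq S(\alg{A})$ is essential if and only if every $u < 1$ in $S(\alg{A})$ lies below some $a < 1$ with $a \in \alg{A}$. So let $u \in S(\alg{A})$ with $u < 1$ be given, and I want to produce such an $a$.

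First I would invoke Proposition~\ref{fr} to write $u = \bigvee_{i=1}^n a_i \wedge e_i$ for a finite partition of unity $e_1,\ldots,e_n$ in $B(\alg{A})$ and elements $a_1,\ldots,a_n \in \alg{A}$, where moreover each $e_i = \ind_{(c_i=1)} \wedge \ind_{(d_i<1)}$ is the characteristic function of a clopen subset $E_i$ of $Y$, with the $E_i$ pairwise disjoint, non-empty, and covering $Y$. Since $u < 1$ in the product $P(\alg{A})$ there is some index $j$ and some $y_0 \in E_j$ with $a_j(y_0) = u(y_0) < 1_{y_0}$ in $\alg{A}_{y_0}$; in particular $a_j < 1$ in $\alg{A}$. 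The natural candidate is $a := a_j$, but this need not dominate $u$ on the other blocks $E_i$. However, on $E_j$ we do have $u \leq a_j$, and $E_j$ is a clopen neighbourhood of $y_0$ in $Y$; so the idea is to replace $a_j$ by a slightly larger element of $\alg{A}$ that is still $< 1$ but now dominates $u$ everywhere.

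The key step is to exploit Lemma~\ref{min}(2): the sets $\lbr s < 1 \rbr = Y \setminus \widehat{s}$, for $s \in \alg{A}$, form a basis for the topology on $Y$, so there is $s \in \alg{A}$ with $y_0 \in \lbr s < 1\rbr \subseteq E_j$. Then consider $a := a_j \vee s$. Because $s(y_0) < 1$ and $a_j(y_0) < 1$ while $1_{y_0}$ is join-irreducible in the fsi algebra $\alg{A}_{y_0}$, we get $a(y_0) = a_j(y_0) \vee s(y_0) < 1_{y_0}$, so $a < 1$ in $\alg{A}$. On the other hand, for any $y \in Y$: if $y \in \lbr s < 1 \rbr$ then $y \in E_j$ and $u(y) = a_j(y) \leq a(y)$; if $y \notin \lbr s < 1 \rbr$ then $s(y) = 1_y$, so $a(y) = 1_y \geq u(y)$. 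Hence $u \leq a < 1$ with $a \in \alg{A}$, which is exactly what the essentiality criterion requires.

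I expect the only subtlety — and hence the step to state carefully rather than the "hard part" — is the passage from $u < 1$ in $S(\alg{A})$ to the existence of a concrete block $E_j$ and point $y_0 \in E_j$ at which $u$, and therefore $a_j$, is strictly below $1$; this uses that $S(\alg{A}) \leq P(\alg{A}) = \prod_Y \alg{A}_y$ is a subdirect product so that $u < 1$ means $u(y_0) < 1_{y_0}$ for some $y_0$, together with the fact that the $E_i$ cover $Y$ so $y_0$ lies in some $E_j$ and there $u(y_0) = a_j(y_0)$. Everything else is the routine verification above that $a_j \vee s$ works, using join-irreducibility of the unit in each fsi factor. As an alternative one could instead phrase essentiality via congruences and use the sheaf/Boolean-product structure, but the direct argument through Proposition~\ref{fr} and Lemma~\ref{min}(2) seems cleanest and self-contained.
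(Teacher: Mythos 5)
Your proof is correct and is essentially the paper's argument: the paper first reduces via the meet-dense set $\{(a\wedge\ind_{(s<1)})\vee\ind_{(s=1)}\}$ of Lemma~\ref{nm} and then checks that $a\vee s$ is the required element of $\alg{A}$ below $1$, using join-irreducibility of the unit in the fsi stalk at a witnessing point $y$. You simply inline the content of Lemma~\ref{nm} by going back to Proposition~\ref{fr} and Lemma~\ref{min}(2) directly, arriving at the same witness $a_j\vee s$ verified in the same way.
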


\begin{proof}
By Lemma~\ref{nm} it is enough to show that if $a,s\in\alg{A}$ and $u=(a\wedge\ind_{(s<1)})\vee\ind_{(s=1)}$ is strictly less than 1, then there is an element of $\alg{A}$ distinct from $1$ that lies above $u$. Clearly $a\vee s$ is an element of $\alg{A}$ that lies above $u$. Since $u<1$ there is $y\in Y$ with $u(y)<1$. This $y$ must be such that $a(y)<1$ and $s(y)<1$. Since $\alg{A}_y$ is fsi, then $(a\vee s)(y)<1$.
\end{proof}

An embedding $\alg{A}\leq\alg{B}$ is \emph{regular} if it preserves all existing joins and meets in $\alg{A}$. Join\hyp{}density implies meet\hyp{}regularity, and meet\hyp{}density implies join\hyp{}regularity. Thus the embedding of a lattice into its MacNeille completion is regular because it is both join\hyp{}dense and meet\hyp{}dense. The following notion, first introduced in~\cite{CGT17}, is key to determining when the embedding $\alg{A}\leq S(\alg{A})$ is regular. 

\begin{definition}
A Heyting algebra $\alg{A}$ is \emph{externally distributive} if whenever $S$ is a subset of $\alg{A}$ whose meet exists, for any $a\in\alg{A}$ with $a\vee s=1$ for all $s\in S$ we have $a\vee\bigwedge S = 1$. 
\end{definition}

\begin{proposition} \label{sd}
If $\alg{A}$ is an externally distributive Heyting algebra, then $\alg{A}\leq S(\alg{A})$ is a regular embedding. 
\end{proposition}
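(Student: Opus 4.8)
The plan is to verify separately that the embedding $\alg{A}\leq S(\alg{A})$ preserves every meet and every join that exists in $\alg{A}$, using the density facts of Lemma~\ref{nm} via the following standard principle: if $J$ is join-dense in $S(\alg{A})$ and $S\subseteq\alg{A}$ has meet $m$ in $\alg{A}$, then $m$ is still the meet of $S$ in $S(\alg{A})$ as soon as every $w\in J$ with $w\leq s$ for all $s\in S$ satisfies $w\leq m$ (any lower bound $u$ of $S$ in $S(\alg{A})$ equals $\bigvee\{w\in J:w\leq u\}$, and each such $w$ is then $\leq m$, so $u\leq m$); dually a meet-dense set controls joins. The one computational input I would isolate is: for $a,s,t\in\alg{A}$, one has $a\wedge\ind_{(t<1)}\leq s$ in $S(\alg{A})$ if and only if $t\vee(a\to s)=1$ in $\alg{A}$. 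This is read off the componentwise description of $\alg{A}\leq S(\alg{A})\leq P(\alg{A})$ together with the fact that each factor $\alg{A}_y$ is fsi, so $1_y$ is join-irreducible: both sides say precisely that $a(y)\leq s(y)$ holds for every $y\in Y$ with $t(y)<1$.

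For meets, let $S\subseteq\alg{A}$ with $m=\bigwedge^{\alg{A}}S$. Since $\{a\wedge\ind_{(t<1)}:a,t\in\alg{A}\}$ is join-dense in $S(\alg{A})$ by Lemma~\ref{nm}, it suffices to show that $a\wedge\ind_{(t<1)}\leq m$ whenever $a\wedge\ind_{(t<1)}\leq s$ for all $s\in S$. By the computational input this hypothesis reads $t\vee(a\to s)=1$ for all $s\in S$. As $a\to(-)$ is a right adjoint of $a\wedge(-)$, it preserves existing meets, so $\{a\to s:s\in S\}$ has meet $a\to m$ in $\alg{A}$; external distributivity of $\alg{A}$ then gives $t\vee(a\to m)=1$, i.e.\ $a\wedge\ind_{(t<1)}\leq m$, as required.

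For joins the argument is dual. First note that, $\ind_{(t<1)}$ and $\ind_{(t=1)}$ being complementary central elements, the meet-dense set of Lemma~\ref{nm} consists of the elements $a\vee\ind_{(t=1)}$ with $a,t\in\alg{A}$. Let $S\subseteq\alg{A}$ with $j=\bigvee^{\alg{A}}S$; it suffices to show that $a\vee\ind_{(t=1)}\geq j$ whenever $a\vee\ind_{(t=1)}\geq s$ for all $s\in S$. Here $s\leq a\vee\ind_{(t=1)}$ if and only if $s\wedge\ind_{(t<1)}\leq a$, which by the computational input (with the roles of $a$ and $s$ swapped) means $t\vee(s\to a)=1$. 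Since $(-)\to a$ sends the join $j$ to the meet $\bigwedge_{s\in S}(s\to a)=j\to a$ computed in $\alg{A}$, external distributivity once more yields $t\vee(j\to a)=1$, i.e.\ $j\leq a\vee\ind_{(t=1)}$; writing an arbitrary upper bound of $S$ in $S(\alg{A})$ as a meet of such elements finishes the proof.

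I expect the only delicate points to be the computational input above and the infinitary residuation identities $a\to\bigwedge S=\bigwedge_{s\in S}(a\to s)$ and $(\bigvee S)\to a=\bigwedge_{s\in S}(s\to a)$; once these are in place, external distributivity is applied essentially verbatim. The caveat worth stressing is that the meets appearing in those identities must genuinely exist in $\alg{A}$ for external distributivity to be invoked — which they do, being equal to $a\to m$ and $j\to a$ respectively.
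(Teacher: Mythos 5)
Your proposal is correct and follows essentially the same route as the paper: reduce to the join-dense and meet-dense sets of Lemma~\ref{nm}, translate the inequalities $a\wedge\ind_{(t<1)}\leq s$ and $s\leq(a\wedge\ind_{(t<1)})\vee\ind_{(t=1)}$ into the conditions $t\vee(a\to s)=1$ and $t\vee(s\to a)=1$, and then apply the infinitary residuation identities together with external distributivity. The only (harmless) cosmetic difference is your rewriting of the meet-dense generators as $a\vee\ind_{(t=1)}$.
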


\begin{proof}
As in any Heyting algebra, if $\bigwedge_I c_i$ exists, then $a\to\bigwedge_Ic_i=\bigwedge_I(a\to c_i)$, and if $\bigvee_Ic_i$ exists, then $(\bigvee_Ic_i)\to a=\bigwedge_I(c_i\to a)$. 

Suppose $\bigwedge_Ic_i$ exists in $\alg{A}$ and is equal to $c$. By Lemma~\ref{nm}, to show that $c$ is also the meet of the family $\{c_i\}_{i\in I}$ in $S(\alg{A})$ it suffices to show that for $a,s\in\alg{A}$, if $a\wedge\ind_{(s<1)}$ is beneath $c_i$ for each $i\in I$, then it is beneath $c$. Note that $a\wedge\ind_{(s<1)} \leq c_i$ iff $s\vee (a\to c_i) = 1$. 
So if $a\wedge\ind_{(s<1)}\leq c_i$ for each $i\in I$, then $s\vee (a\to c_i) = 1$ for each $i\in I$. Since $\bigwedge_Ic_i = c$ we have $\bigwedge_I(a\to c_i)$ exists and is equal to $a\to c$. So by external distributivity, $s\vee (a\to c)=1$, and this implies $a\wedge\ind_{(s<1)}\leq c$ as required. 

Next, suppose $\bigvee_Ic_i$ exists in $A$ and is equal to $c$. To show that this join is preserved, by Lemma~\ref{nm}, it it is enough to show that for $a,s\in\alg{A}$, if $c_i\leq (a\wedge\ind_{(s<1)})\vee\ind_{(s=1)}$ for all $i\in I$ then $c\leq (a\wedge\ind_{(s<1)})\vee\ind_{(s=1)}$. But $c_i\leq (a\wedge\ind_{(s<1)})\vee\ind_{(s=1)}$ iff $s\vee (c_i\to a)=1$. Our assumption gives $s\vee(c_i\to a)=1$ for each $i\in I$. Since $\bigvee_Ic_i=c$ we have $\bigwedge_I(c_i\to a)$ exists and is equal to $c\to a$. So external distributivity gives $s\vee(c\to a)=1$, hence $c\leq (a\wedge\ind_{(s<1)})\vee\ind_{(s=1)}$ as required. 
\end{proof}

\begin{theorem} \label{r}
For a Heyting algebra $\alg{A}$, these are equivalent. 
\begin{enumerate}
\item The embedding of $\alg{A}$ into $S(\alg{A})$ is regular, 
\item The embedding of $\alg{A}$ into $S(\alg{A})$ is meet\hyp{}regular, 
\item $\alg{A}$ is externally distributive. 
\end{enumerate}
\end{theorem}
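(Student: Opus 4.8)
The plan is to prove the cycle of implications $(1)\Rightarrow(2)\Rightarrow(3)\Rightarrow(1)$. The implication $(3)\Rightarrow(1)$ is already established in Proposition~\ref{sd}, and $(1)\Rightarrow(2)$ is trivial since meet-regularity is a weakening of regularity. So the only real content is $(2)\Rightarrow(3)$: assuming the embedding $\alg{A}\leq S(\alg{A})$ preserves all existing meets, I must show that $\alg{A}$ is externally distributive.

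So suppose $S\subseteq\alg{A}$ has a meet $c=\bigwedge S$ in $\alg{A}$, and let $a\in\alg{A}$ satisfy $a\vee s=1$ for all $s\in S$; the goal is $a\vee c=1$. The idea is to pass to the element $a\wedge\ind_{(s<1)}$ — wait, more precisely, for each $s\in S$ consider the central element $\ind_{(a<1)}$ in $S(\alg{A})$ and the element $a\to s$. The key computation, already used in the proof of Proposition~\ref{sd}, is that $a\wedge\ind_{(a<1)}\leq s$ iff $\ldots$; actually the cleanest route is: since $a\vee s=1$ in $\alg{A}$ for each $s\in S$, in $S(\alg{A})$ we have $\ind_{(s<1)}\leq\ind_{(a=1)}$ for each $s$ — because at any $y\in Y$ with $s(y)<1$ the hypothesis $a(y)\vee s(y)=1_y$ together with join-irreducibility of $1_y$ forces $a(y)=1$. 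Hence $\ind_{(a=1)}$ is an upper bound in $S(\alg{A})$ of the set $\{\ind_{(s<1)}:s\in S\}$. Now I want to compare this with the meet of the $s$'s. Since the embedding is meet-regular, $c=\bigwedge S$ computed in $\alg{A}$ is still the meet of $S$ in $S(\alg{A})$; applying the order-dual, one needs that $\bigvee\{\ind_{(s<1)}:s\in S\}$ relates to $\ind_{(c<1)}$. The supplement operation $f\mapsto f^+=\ind_{(f<1)}$ on $S(\alg{A})$ is order-reversing, so from $c\leq s$ for all $s$ we get $\ind_{(s<1)}\leq\ind_{(c<1)}$ for all $s$, i.e. $\ind_{(c<1)}$ is an upper bound of the set. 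The point is to show it is the \emph{least} upper bound, or at least that any upper bound dominates it; this is where meet-regularity must be invoked — the preservation of the meet $\bigwedge S=c$ in the complete lattice $\overline{S(\alg{A})}$ (or in $S(\alg{A})$ itself after noting joins of the relevant central elements exist) translates, via the order-reversing bijection between the center and itself, into the join $\bigvee_s\ind_{(s<1)}=\ind_{(c<1)}$ being computed correctly. Granting that, $\ind_{(a=1)}$ being an upper bound of $\{\ind_{(s<1)}:s\in S\}$ forces $\ind_{(c<1)}\leq\ind_{(a=1)}$, which says exactly that at every $y\in Y$ with $c(y)<1$ we have $a(y)=1$; by join-irreducibility of $1_y$ this gives $a(y)\vee c(y)=1_y$ for \emph{all} $y\in Y$, whence $a\vee c=1$ in $\alg{A}$ since $\alg{A}$ embeds subdirectly into $P(\alg{A})$.

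Concretely, I would argue as follows. Fix $S$, $c$, $a$ as above and set $N=\lbr c<1\rbr\subseteq Y$. For each finite tuple — actually to avoid worrying about whether $\bigvee\ind_{(s<1)}$ lies in $S(\alg{A})$, I would instead work with the meet directly: by meet-regularity $c=\bigwedge S$ in $S(\alg{A})$, and I want to deduce $a\vee c=1$. Consider the element $u=(a\wedge\ind_{(c<1)})\vee\ind_{(c=1)}$ of $S(\alg{A})$; by Lemma~\ref{nm}-style reasoning $c\leq s$ gives $\ind_{(s<1)}\leq\ind_{(c<1)}$ and $a\vee s=1$ gives $s\leq (a\wedge\ind_{(s<1)})\vee\ind_{(s=1)}\leq (a\wedge\ind_{(c<1)})\vee\ind_{(c=1)}=u$, so $u$ is an upper bound of $S$ in $S(\alg{A})$ and therefore... no, $c$ is the \emph{greatest} lower bound, that is the wrong direction. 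Let me instead use: $s\leq u$ for all $s\in S$ is false in general. The correct move is the \emph{meet} side: I claim $a\wedge\ind_{(c<1)}\leq s$ for each $s\in S$. Indeed at $y\in Y$: if $c(y)=1$ then $\ind_{(c<1)}(y)=0\leq s(y)$; if $c(y)<1$ then, since $c\leq s$, possibly $s(y)<1$, but then $a(y)=1$ by the hypothesis and join-irreducibility, so $(a\wedge\ind_{(c<1)})(y)=\ind_{(c<1)}(y)\leq 1=\ldots$ — hmm this needs $\ind_{(c<1)}(y)\leq s(y)$, not automatic. The clean statement is rather $a\wedge\ind_{(s<1)}\leq c$: for each $s$, $a\wedge\ind_{(s<1)}\leq s$ (at $y$: if $s(y)=1$ done; if $s(y)<1$ then $a(y)=1$ so the left side is $\ind_{(s<1)}(y)=1$, wait that's not $\leq s(y)$ either unless... no). I see the subtlety: the right identity from the proof of Proposition~\ref{sd} is \emph{$a\wedge\ind_{(s<1)}\leq c_i$ iff $s\vee(a\to c_i)=1$}; dualizing, I should use $s\vee(a\to s)\ge a\vee s=1$? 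Not quite. The main obstacle — and the step I would spend the most care on — is getting this translation between the "external" join condition $a\vee s=1$ and an order relation among genuine elements of $S(\alg{A})$ exactly right, so that meet-regularity of the embedding can be applied to the family $\{a\to s:s\in S\}$ (whose meet is $a\to c$ in $\alg{A}$, hence in $S(\alg{A})$ by meet-regularity) to conclude $s\vee(a\to c)=1$, i.e. reading off $\ind_{(s<1)}\leq\ind_{((a\to c)<1)}$ correctly and then using join-irreducibility of each $1_y$ to pass back to $a\vee c=1$ in $\alg{A}$. Everything else is the bookkeeping of $(1)\Rightarrow(2)$ (immediate) and citing Propositions~\ref{sd} and the subdirect embedding of Proposition~\ref{q}.
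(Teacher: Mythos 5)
Your skeleton matches the paper's: $(3)\Rightarrow(1)$ by Proposition~\ref{sd}, $(1)\Rightarrow(2)$ trivially, with all the content in $(2)\Rightarrow(3)$. But your treatment of $(2)\Rightarrow(3)$ is not complete, and you say as much yourself. You correctly translate $a\vee s=1$ into the pointwise statement that $a(y)=1$ wherever $s(y)<1$ (via join-irreducibility of $1_y$ in the fsi stalks), but you then encode this as $\ind_{(s<1)}\leq\ind_{(a=1)}$ and try to conclude by showing $\bigvee_{s\in S}\ind_{(s<1)}=\ind_{(c<1)}$, a claim you explicitly only ``grant''. That claim is not free: an element $u$ with $u\geq\ind_{(s<1)}=s^+$ for all $s\in S$ satisfies $u\vee s=1$ for all $s$, and it dominates $\ind_{(c<1)}=c^+$ exactly when $u\vee c=1$; so proving that $\ind_{(c<1)}$ is the least upper bound of $\{\ind_{(s<1)}:s\in S\}$ is essentially the same problem as the external distributivity you are trying to establish. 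The remainder of your writeup is a sequence of abandoned attempts at this translation and ends by naming, rather than closing, the gap.

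The fix is to compare a single element of $S(\alg{A})$ against the set $S$ itself rather than comparing characteristic functions with each other. From $a\vee s=1$ and join-irreducibility of each $1_y$ one gets $\ind_{(a<1)}\leq s$ pointwise (where $a(y)=1$ the left-hand side is $0$; where $a(y)<1$ it is $1=s(y)$). Thus $\ind_{(a<1)}$ is a lower bound of $S$ in $S(\alg{A})$, and meet-regularity gives $\ind_{(a<1)}\leq\bigwedge S=c$ directly; reading this off pointwise yields $c(y)=1$ wherever $a(y)<1$, hence $a\vee c=1$ in the product and so in $\alg{A}$ by Proposition~\ref{q}. This is the paper's argument, and it needs no claim about joins of supplements. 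Alternatively, your route could be salvaged by observing that $S(\alg{A})$, being supplemented, is automatically externally distributive (if $u\vee s=1$ for all $s$ then $s\geq u^+$ for all $s$, so $\bigwedge S\geq u^+$ and $u\vee\bigwedge S=1$); applying this inside $S(\alg{A})$ with $u=\ind_{(a=1)}$ and the meet preserved by (2) would close your argument, but that observation is absent from your proposal as written.
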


\begin{proof}
Proposition~\ref{sd} provides (3) $\Rightarrow$ (1), and (1) $\Rightarrow$ (2) is trivial. To see (2) $\Rightarrow$ (3) suppose $S\subseteq\alg{A}$ and $\bigwedge S$ exists and $a\in\alg{A}$ is such that $a\vee s=1$ for each $s\in S$. Then for each $y\in Y$, if $a(y)<1$ we have $s(y)=1$, so $\ind_{(a<1)}\leq s$ for each $s\in S$. Since the embedding is regular, the meet $\bigwedge S$ taken in $\alg{A}$ is also the meet in $S(\alg{A})$, and it follows that $\ind_{(a<1)}\leq\bigwedge S$. This implies that if $a(y)<1$, then $(\bigwedge S)(y)=1$, hence $a\vee\bigwedge S=1$. 
\end{proof}

We conclude this section by giving an abstract characterization of the centrally supplemented extensions $S(\alg{A})$ of a Heyting algebra $\alg{A}$. More precisely we show that the extension $S(\alg{A})$ is the free centrally supplemented extension of $\alg{A}$ with respect to the class of so\hyp{}called S\hyp{}homomorphisms. What follows is essentially the order dual version of a special case of Davey's construction of the $\mathfrak{m}$\hyp{}Stone extension of a bounded distributive lattice~\cite{Dav72}. 

For a bounded distributive lattice $\alg{D}$ we define the \emph{co\hyp{}annihilator}, or \emph{polar}, of an element $a \in D$ to be the set 
\[
a^\top = \{b \in D : a \lor b = 1\}.
\]
This is evidently the order dual of the well\hyp{}known notion of the annihilator of an element. 

\begin{definition}
A homomorphism $h \colon \alg{D} \to \alg{E}$ between bounded distributive lattice is called an \emph{S\hyp{}homomorphism} provided that $a^\top = b^\top$ implies $h(a)^\top = h(b)^\top$ for all $a, b \in D$. 
\end{definition}

\begin{proposition}
Any homomorphism $h \colon \alg{D} \to \alg{E}$ of supplemented distributive lattices is an S\hyp{}homomorphism.
\end{proposition}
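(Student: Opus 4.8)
The plan is to reduce the statement to the elementary observation that in a supplemented distributive lattice the polar of an element is completely determined by — and in turn determines — its supplement. First I would record that for any element $a$ of a supplemented distributive lattice $\alg{D}$ one has $a^\top = \{c \in D : a^+ \leq c\}$. Indeed, if $a^+ \leq c$ then $a \vee c \geq a \vee a^+ = 1$, so $c \in a^\top$; conversely, if $a \vee c = 1$ then $a^+ \leq c$ by the minimality clause in the definition of the supplement. In particular $a^\top = b^\top$ if and only if $a^+ = b^+$, since $a^+$ is the least element of $a^\top$ and $b^+$ the least element of $b^\top$.

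The second ingredient is that a homomorphism of supplemented distributive lattices preserves supplements, $h(a^+) = h(a)^+$, the supplement on the right being computed in $\alg{E}$. This is immediate once $^+$ is regarded as part of the type (as it is elsewhere in the paper); alternatively it may be checked by hand, the inequality $h(a)^+ \leq h(a^+)$ following from $h(a) \vee h(a^+) = h(a \vee a^+) = 1$, and the reverse inequality from the fact that supplemented distributive lattices are closed under homomorphic images, so that the supplement of $h(a)$ in $\alg{E}$ is the image of the supplement of $a$.

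Combining the two points finishes the argument: given $a, b \in D$ with $a^\top = b^\top$, the first paragraph gives $a^+ = b^+$, hence $h(a)^+ = h(a^+) = h(b^+) = h(b)^+$, and applying the first paragraph again, now inside $\alg{E}$, yields $h(a)^\top = \{c \in E : h(a)^+ \leq c\} = \{c \in E : h(b)^+ \leq c\} = h(b)^\top$. Thus $h$ is an S\hyp{}homomorphism. There is essentially no obstacle to overcome here; the only point deserving a moment's attention is the convention that homomorphisms of supplemented distributive lattices respect $^+$, after which the proof is just the short computation above.
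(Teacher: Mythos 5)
Your main argument is correct and is essentially the paper's proof: the polar $a^\top$ is the principal filter ${\uparrow}a^+$, so $a^\top=b^\top$ iff $a^+=b^+$, and then preservation of $^+$ by $h$ transfers this to $\alg{E}$. One caveat: your ``alternatively it may be checked by hand'' aside is wrong. The inequality $h(a)^+\leq h(a^+)$ does follow from $h(a)\vee h(a^+)=1$, but the reverse inequality $h(a^+)\leq h(a)^+$ can fail for a mere bounded lattice (or even Heyting algebra) homomorphism between supplemented lattices; the paper's own remark at the end of Section~3 gives the counterexample $h\colon\mathbf{3}\to\mathbf{2}\times\mathbf{3}$, where $h(m^+)=1\neq h(m)^+$. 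So the hypothesis that $h$ preserves $^+$ --- i.e.\ that supplementation is part of the signature, your first reading --- is genuinely needed and cannot be recovered ``by hand''; with that convention in place the rest of your computation is exactly the paper's.
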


\begin{proof}
We observe that in a supplemented distributive lattice the co\hyp{}annihilator of any element is the principal filter generated by its supplement. Consequently, if $a^\top = b^\top$, then $a^+ = b^+$ and hence 
\[
h(a)^\top = {\uparrow}h(a)^+ =  {\uparrow}h(a^+) = {\uparrow}h(b^+) = {\uparrow}h(b)^+ = h(b)^\top,
\]
showing that $h$ is an S\hyp{}homomorphism. 
\end{proof}

\begin{proposition}
Let $\alg{A}$ be a Heyting algebra. The embedding $\alg{A} \hookrightarrow S(\alg{A})$ is an S\hyp{}homomorphism. 
\end{proposition}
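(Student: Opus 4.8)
The plan is to show that the embedding $\iota \colon \alg{A} \hookrightarrow S(\alg{A})$ satisfies the defining property of an S\hyp{}homomorphism: if $a, b \in A$ have the same co\hyp{}annihilator in $\alg{A}$, then their images have the same co\hyp{}annihilator in $S(\alg{A})$. The natural strategy is to compute the co\hyp{}annihilator of an element of $\alg{A}$, viewed inside $S(\alg{A})$, in terms of the supplement taken in $S(\alg{A})$, and then to relate the supplement $a^+$ computed in $S(\alg{A})$ back to data visible already in $\alg{A}$.

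First I would recall that $S(\alg{A})$ is a supplemented distributive lattice (indeed centrally supplemented), so by the observation used in the previous proposition, for any $a \in A$ the co\hyp{}annihilator of $a$ computed in $S(\alg{A})$ is the principal filter ${\uparrow}a^+$, where $a^+$ is the supplement of $a$ in $S(\alg{A})$. Hence $\iota(a)^\top = \iota(b)^\top$ in $S(\alg{A})$ if and only if $a^+ = b^+$ in $S(\alg{A})$. Now, as recorded in the discussion following Definition~\ref{S(A)}, for $a \in \alg{A} \subseteq P(\alg{A})$ the supplement in the product (and hence in $S(\alg{A})$, since $S(\alg{A})$ is a supplemented subalgebra) is $a^+ = \ind_{(a<1)}$, the characteristic function of $\lbr a < 1 \rbr = \{y \in Y : a(y) < 1\}$. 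Therefore $a^+ = b^+$ in $S(\alg{A})$ precisely when $\lbr a < 1\rbr = \lbr b<1\rbr$, i.e.\ when $a$ and $b$ fail to generate $1_y$ in exactly the same quotients $\alg{A}_y$.

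It remains to connect this with the hypothesis $a^\top = b^\top$ in $\alg{A}$, which says that for all $c \in A$, $a \vee c = 1$ iff $b \vee c = 1$. Suppose $a^\top = b^\top$ but $\lbr a < 1 \rbr \neq \lbr b < 1 \rbr$, say there is $y \in Y$ with $a(y) = 1$ and $b(y) < 1$. By Lemma~\ref{min}(2) the sets $Y \setminus \widehat{c} = \lbr c < 1 \rbr$ for $c \in A$ form a basis for the subspace topology on $Y$; equivalently, invoking Lemma~\ref{min}(1) directly, since $y \in \widehat{a}$ is a minimal prime filter there is $c \in A$ with $a \vee c = 1$ and $c \notin y$, i.e.\ $c(y) < 1$. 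Then $b(y) < 1$ and $c(y) < 1$, so since $\alg{A}_y$ is fsi (its unit is join\hyp{}irreducible) we get $(b \vee c)(y) < 1$, hence $b \vee c \neq 1$, contradicting $a^\top = b^\top$. By symmetry $\lbr a<1\rbr = \lbr b<1\rbr$, so $a^+ = b^+$ in $S(\alg{A})$ and thus $\iota(a)^\top = \iota(b)^\top$, as required.

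The argument is essentially routine given the material already developed; the only point requiring care is the direction just spelled out, namely producing, from a point $y$ where $a$ and $b$ disagree on being $1$, an element $c \in \alg{A}$ that separates the corresponding co\hyp{}annihilators, and this is exactly what the minimality of $y$ together with Lemma~\ref{min} buys us. Alternatively one could phrase the whole proof as the single chain of equivalences $a^\top = b^\top \iff \lbr a<1\rbr = \lbr b<1\rbr \iff a^+ = b^+$ in $S(\alg{A}) \iff \iota(a)^\top = \iota(b)^\top$, the first equivalence being the content of Lemma~\ref{min} and the fsi-ness of the factors, the second being the explicit description of supplements in $P(\alg{A})$, and the third being the general fact about co\hyp{}annihilators in supplemented lattices.
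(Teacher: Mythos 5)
Your proof is correct, but it follows a genuinely different route from the paper's. The paper argues by contraposition inside $S(\alg{A})$: from a witness $u \in S(\alg{A})$ with $a \vee u = 1$ and $b \vee u < 1$ it uses the meet\hyp{}density of the elements $(c\wedge\ind_{(s<1)})\vee\ind_{(s=1)}$ (Lemma~\ref{nm}) to replace $u$ by such an element, and then checks that $c \vee s \in A$ already separates $a^\top$ from $b^\top$ inside $\alg{A}$. You instead compute the co\hyp{}annihilators in $S(\alg{A})$ outright as the principal filters ${\uparrow}\ind_{(a<1)}$, reducing the claim to showing that $a^\top = b^\top$ in $\alg{A}$ forces $\lbr a<1\rbr = \lbr b<1\rbr$, which you obtain from Lemma~\ref{min}(1) together with the fsi\hyp{}ness of the stalks; that step is sound, and the use of minimality of $y$ to produce the separating element $c$ is exactly right. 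Note that this reduction is essentially the statement that the relation $\theta_{\!A}$ is the kernel of $\psi(a)=\ind_{(a=1)}$, which is already Proposition~\ref{Z(S(A))}; citing it would collapse your argument to the single chain of equivalences you record at the end. What your route buys is that explicit chain (equality of co\hyp{}annihilators in $\alg{A}$ is \emph{equivalent} to equality of supplements in $S(\alg{A})$), at the cost of leaning on the concrete description of supplements in the product $P(\alg{A})$; the paper's density argument stays closer to the abstract, order\hyp{}theoretic data of $S(\alg{A})$ and does not need the identification $a^\top={\uparrow}a^+$ for elements of $S(\alg{A})$ beyond what was established in the preceding proposition.
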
 

\begin{proof}
Suppose that $a, b \in S(\alg{A})$ are such that $a^\top \neq b^\top$ as computed in $S(\alg{A})$. Then without loss of generality we have $a \lor u = 1$ and $b \lor u < 1$ for some $u \in S(\alg{A})$. As $S(\alg{A})$ supplemented and therefore externally distributive it follows from Lemma~\ref{nm} that there must be $c, s \in A$ with $u \leq (c \land \ind_{(s<1)}) \lor \ind_{(s=1)}$ and $b \lor (c \land \ind_{(s<1)}) \lor \ind_{(s=1)} < 1$. Since $(c \land \ind_{(s<1)}) \lor \ind_{(s=1)} \leq c \lor s$ we obtain that $a \lor c \lor s = 1$. On the other hand since $b \lor (c \land \ind_{(s<1)}) \lor \ind_{(s=1)} < 1$ we must have $y \in Y$ such that $b(y), c(y), s(y) <1$, so with $(b \lor c \lor s)(y) < 1$. So $b \lor c \lor s <1$. This shows that  $a^\top \neq b^\top$ also as computed in $\alg{A}$.  
\end{proof}

Recall from Propositions~\ref{lem:coreg} and~\ref{prop:deMorgan-eq-co-Stone} that if $\alg{E}$ is a centrally supplemented distributive lattice, then we have a map of bounded distributive lattices $\alg{E} \to Z(\alg{E})$ given by $e \mapsto e^{++}$.  Thus given any bounded lattice homomorphism $h \colon \alg{A} \to \alg{E}$ we obtain a bounded lattice homomorphism $h^{++} \colon \alg{A} \to Z(\alg{E})$ by composition. Further recall from Proposition~\ref{Z(S(A))} that we have an onto bounded lattice homomorphism $\psi \colon \alg{A} \to Z(S(\alg{A}))$ given by $\psi(a) = \ind_{(a =1)}$, the image of which generates the center $Z(S(\alg{A}))$.      

\begin{lemma}\label{dji}
Let $h\colon\alg{A}\to\alg{E}$ be an S-homomorphism of Heyting algebras with $E$ centrally supplemented. Then there is a Boolean algebra homomorphism $\widetilde{h}\colon Z(S(A))\to Z(E)$ with the following diagram commuting. This $\widetilde{h}$ is unique. 
\end{lemma}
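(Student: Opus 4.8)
The plan is to construct $\widetilde h$ directly on the distributive lattice $D(A)$ sitting inside $Z(S(A))$, using that $\psi\colon A\to Z(S(A))$ is onto $D(A)$ with kernel $\theta_{\!A}$, and then extend to all of $Z(S(A))$ by the universal property of the Boolean algebra generated by $D(A)$. First I would observe that, since $E$ is centrally supplemented, the composite $h^{++}\colon A\to Z(E)$, $a\mapsto h(a)^{++}$, is a bounded lattice homomorphism (by Propositions~\ref{lem:coreg} and~\ref{prop:deMorgan-eq-co-Stone}); the key point to check is that $h^{++}$ factors through $\theta_{\!A}$, i.e.\ that $(c,d)\in\theta_{\!A}$ implies $h(c)^{++}=h(d)^{++}$ in $Z(E)$. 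This is exactly where the S-homomorphism hypothesis is used: $(c,d)\in\theta_{\!A}$ says precisely that $c^\top$ and $d^\top$ cut out the same co-annihilator behaviour against elements of $A$; a short argument (using that in a distributive lattice $a\vee c=1$ for all $a$ with $a\vee c'=1$ controls $c^{++}$ via Lemma~\ref{min} and Proposition~\ref{Z(S(A))}) shows $c\mathrel{\theta_{\!A}}d$ is equivalent to $c^\top=d^\top$ in $A$, whence by the S-property $h(c)^\top=h(d)^\top$ in $E$, and therefore $h(c)^{++}=h(d)^{++}$ since in the centrally supplemented algebra $E$ the co-annihilator of an element is the principal filter on its (central) supplement.

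Since $\psi$ restricts to a surjection $A\to D(A)$ with kernel $\theta_{\!A}$, this factorization produces a well-defined bounded lattice homomorphism $h_0\colon D(A)\to Z(E)$ with $h_0\circ\psi=h^{++}$, i.e.\ $h_0(\ind_{(a=1)})=h(a)^{++}$. Now $D(A)$ is a bounded \emph{distributive lattice} and $Z(S(A))$ is, by Proposition~\ref{Z(S(A))}, the Boolean algebra generated by $D(A)$ (equivalently, by Proposition~\ref{fr} and the remark on partitions of unity, every element of $Z(S(A))$ is a finite Boolean combination of elements of $D(A)$). The target $Z(E)$ is Boolean, so by the universal property of the free Boolean extension of a bounded distributive lattice (see~\cite[Chap.~V.4]{BD74} or the standard fact that a lattice homomorphism from a distributive lattice into a Boolean algebra extends uniquely to its Boolean envelope) the map $h_0$ extends uniquely to a Boolean algebra homomorphism $\widetilde h\colon Z(S(A))\to Z(E)$. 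Uniqueness of $\widetilde h$ as a map making the diagram commute is then immediate: any such map must agree with $h^{++}$ on the image $D(A)=\psi[A]$, and $D(A)$ generates $Z(S(A))$ as a Boolean algebra, so a Boolean homomorphism is determined by its values there.

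The one subtlety I would be careful about is precisely what ``the following diagram'' commuting means, since the diagram itself is not displayed in the excerpt: presumably it asserts $\widetilde h\circ\psi = h^{++}$ (where $h^{++}\colon A\to Z(E)$, $a\mapsto h(a)^{++}$), possibly together with compatibility with the inclusions $Z(S(A))\hookrightarrow S(A)$ and $Z(E)\hookrightarrow E$ and the original $h$. In any formulation the content is the same: commutativity forces $\widetilde h$ on generators, existence requires the $\theta_{\!A}$-factorization, and that factorization is the heart of the argument. I expect the main obstacle to be verifying cleanly that $c\mathrel{\theta_{\!A}}d \iff c^\top=d^\top$ (in $A$), since one direction needs Lemma~\ref{min}(1) to produce, for any $y\in Y$ separating $c$ and $d$, an element $a\in A$ witnessing $c^\top\neq d^\top$ — this is the same move already used at the end of the proof of Proposition~\ref{Z(S(A))}, so it should go through smoothly; everything after that is a routine appeal to the universal property of Boolean envelopes.
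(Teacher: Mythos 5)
Your proposal is correct and follows essentially the same route as the paper: factor $h^{++}$ through $\ker\psi=\theta_{\!A}$ using the S\hyp{}homomorphism hypothesis together with the fact that in the centrally supplemented algebra $\alg{E}$ one has $h(c)^\top={\uparrow}h(c)^+$, then extend the resulting map on $\psi(A)=D(A)$ to $Z(S(A))$ via the universal property of the free Boolean extension, with uniqueness following since $D(A)$ generates $Z(S(A))$. The one step you flag as a potential obstacle, namely $c\mathrel{\theta_{\!A}}d\iff c^\top=d^\top$ in $A$, is in fact immediate from the definition of $\theta_{\!A}$ (it is a literal restatement), so no appeal to Lemma~\ref{min} is needed there.
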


\begin{center}
\begin{tikzpicture}
\node at (0,0) {$Z(S(A))$};
\node at (4,0) {$Z(E)$};
\node at (0,2) {$A$};
\node at (4,2) {$E$};
\draw[->] (1,0) -- (3,0);
\draw[->] (1,2) -- (3,2);
\draw[->] (0,1.5) -- (0,.5);
\draw[->] (4,1.5) -- (4,.5);
\draw[->] (1,1.5) -- (3,0.5);
\node at (2,2.5) {$h$};
\node at (-.5,1) {$\psi$};
\node at (5,1) {$(\cdot)^{++}$};
\node at (2,-.5) {$\widetilde{h}$};
\node at (2.5,1.3) {$h^{++}$};
\end{tikzpicture}
\end{center}

\begin{proof}
By the definition of $h^{++}$ the upper triangle commutes. We next show $\ker\psi\subseteq \ker h^{++}$. Let $(c,d) \in \ker\psi$. By Proposition~\ref{Z(S(A))}, $c^\top = d^\top$. Since $h$ is an S-homomorphism we have that $h(c)^\top = h(d)^\top$. Since $\alg{E}$ is supplemented we then have ${\uparrow}h(c)^+ = h(c)^\top = h(d)^\top =  {\uparrow}h(d)^+$. So $h(c)^+ = h(d)^+$, giving $h(c)^{++} = h(d)^{++}$, hence $(c,d)\in\ker h^{++}$. 

Having shown that $\ker\psi\subseteq \ker h^{++}$, it follows that there is a bounded distributive lattice homomorphism $f\colon \psi(A)\to Z(E)$ with $f(\psi(a))=h^{++}(a)$. By Proposition~\ref{Z(S(A))}, the image of $\psi$ generates the Boolean algebra $Z(S(A))$, so by \cite[p.~97]{BD74} $Z(S(A))$ is the free Boolean extension of $\psi(A)$. So there is a unique Boolean algebra homomorphism $\widetilde{h}\colon Z(S(A))\to Z(E)$ that extends $f$. This establishes our result. 
\end{proof}

\begin{theorem}[{cf.~\cite[Thm.~3]{Dav72}}]
Suppose $h \colon \alg{A} \to \alg{E}$ is an S\hyp{}homomorphism of Heyting algebras with $\alg{E}$ centrally supplemented. Then $h$ has a unique extension to a homomorphism $\overline{h} \colon S(\alg{A}) \to \alg{E}$ of supplemented Heyting algebras.      
\end{theorem}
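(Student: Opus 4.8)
The plan is to exploit the concrete description of $S(\alg{A})$ from Proposition~\ref{fr}. Every $u\in S(\alg{A})$ has a representation $u=\bigvee_{i=1}^{n}a_i\wedge e_i$ with $a_1,\dots,a_n\in A$ and $e_1,\dots,e_n$ a finite partition of unity in $Z(S(\alg{A}))$ (recall from Proposition~\ref{Z(S(A))} that the Boolean algebra $B(\alg{A})$ appearing in Proposition~\ref{fr} is exactly $Z(S(\alg{A}))$). Using the Boolean homomorphism $\widetilde{h}\colon Z(S(\alg{A}))\to Z(\alg{E})$ supplied by Lemma~\ref{dji}, I would \emph{define}
\[
\overline{h}\Bigl(\bigvee_{i=1}^{n}a_i\wedge e_i\Bigr)=\bigvee_{i=1}^{n}h(a_i)\wedge\widetilde{h}(e_i),
\]
and then check: (a) this is well defined; (b) $\overline{h}$ preserves $0,1,\wedge,\vee,\to$ and $(\cdot)^+$; (c) $\overline{h}$ extends $h$; (d) $\overline{h}$ is the only such map. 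I expect (a) to be the main obstacle.

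For (a), given two such representations of $u$ I would pass to a common refinement $\{g_k\}$ of the two partitions of unity (exactly as in the proof of Proposition~\ref{fr}), over whose blocks the equation $u=\bigvee_i a_i\wedge e_i=\bigvee_j b_j\wedge f_j$ localizes to $a_{i(k)}\wedge g_k=b_{j(k)}\wedge g_k$ in $S(\alg{A})$. Thus everything reduces to the claim: if $a,b\in A$ and $e\in Z(S(\alg{A}))$ satisfy $a\wedge e=b\wedge e$ in $S(\alg{A})$, then $h(a)\wedge\widetilde{h}(e)=h(b)\wedge\widetilde{h}(e)$ in $\alg{E}$. By symmetry it suffices to show $a\wedge e\leq b$ implies $h(a)\wedge\widetilde{h}(e)\leq h(b)$. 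Since $A$ is a Heyting subalgebra of $S(\alg{A})$, the inequality $a\wedge e\leq b$ is equivalent to $e\leq p$ in $S(\alg{A})$ where $p=a\to b\in A$; as $e$ is central we have $e=e^{++}$, and since $(\cdot)^{++}$ is order preserving while $\psi(p)=p^{++}\leq p$, this gives $e\leq\psi(p)$. Applying $\widetilde{h}$ together with the identity $\widetilde{h}\circ\psi=h^{++}$ of Lemma~\ref{dji} yields
\[
\widetilde{h}(e)\leq\widetilde{h}\bigl(\psi(p)\bigr)=h(p)^{++}\leq h(p)=h(a)\to h(b),
\]
i.e.\ $h(a)\wedge\widetilde{h}(e)\leq h(b)$, as needed.

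For (b), I would fix $u=\bigvee_i a_i\wedge e_i$ and $v=\bigvee_i b_i\wedge e_i$ over a common partition of unity $\{e_i\}$; Proposition~\ref{fr} supplies the componentwise formulas $u\star v=\bigvee_i(a_i\star b_i)\wedge e_i$ for $\star\in\{\wedge,\vee,\to\}$ and $u^+=\bigwedge_i(a_i^+\vee e_i^+)$. The elements $\widetilde{h}(e_i)$ are pairwise disjoint central members of $\alg{E}$ joining to $1$, so a short computation localizing at each $\widetilde{h}(e_i)$ shows the same componentwise identities for $\wedge,\vee,\to$ hold in $\alg{E}$; hence $\overline{h}$ preserves these (and $0,1$, since $\{1\}$ is a partition of unity). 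For the supplement one uses that in $S(\alg{A})$ the supplement of $a\in A$ equals $\ind_{(a<1)}$, which is the Boolean complement of $\psi(a)$; since $\alg{E}$ is centrally supplemented, $h(a)^+$ is likewise the Boolean complement of $h(a)^{++}$ in $Z(\alg{E})$, so $\widetilde{h}(a^+)=\widetilde{h}(\psi(a))^{\mathrm c}=(h(a)^{++})^{\mathrm c}=h(a)^+$; combined with the De Morgan law for the central supplement in $\alg{E}$, the formula for $u^+$ transforms into $\overline{h}(u)^+=\bigwedge_i(h(a_i)^+\vee\widetilde{h}(e_i)^{\mathrm c})$. Part (c) is immediate from the representation $a=a\wedge 1$, which also shows $\overline{h}$ restricts to $\widetilde{h}$ on $Z(S(\alg{A}))$.

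Finally, for (d): any homomorphism of supplemented Heyting algebras $S(\alg{A})\to\alg{E}$ extending $h$ restricts to a Boolean homomorphism on centers (it preserves $0,1,\wedge,\vee$, and central supplements are Boolean complements), and on $D(\alg{A})=\{\psi(a):a\in A\}$ it must send $\psi(a)=a^{++}$ to $h(a)^{++}$; since $D(\alg{A})$ generates $Z(S(\alg{A}))$ (Proposition~\ref{Z(S(A))}) it therefore coincides with $\widetilde{h}$ on $Z(S(\alg{A}))$, and then the representation of Proposition~\ref{fr} together with preservation of $\wedge,\vee$ forces it to equal $\overline{h}$.
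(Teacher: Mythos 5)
Your proposal is correct and follows essentially the same route as the paper's proof: define $\overline{h}$ on normal expressions via $\widetilde{h}$ from Lemma~\ref{dji}, reduce well-definedness to the key implication $a\wedge e=b\wedge e\Rightarrow h(a)\wedge\widetilde{h}(e)=h(b)\wedge\widetilde{h}(e)$ (which you prove with $a\to b$ in place of the paper's $a\leftrightarrow b$, an immaterial difference), verify the operations componentwise over a common partition of unity, and derive uniqueness from the fact that $A\cup Z(S(\alg{A}))$ generates $S(\alg{A})$. Your uniqueness argument is in fact slightly more explicit than the paper's, spelling out why the restriction to the center is forced.
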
  

\begin{proof}
We first show that for $a,b\in A$ and $e\in Z(S(A))$ 
\begin{equation}\label{kl}
a\wedge e=b\wedge e\quad\Rightarrow\quad h(a)\wedge\widetilde{h}(e)=h(b)\wedge\widetilde{h}(e)
\end{equation}
Viewing $A$ and $S(A)$ as subalgebras of the product $P(A)$, the assumption gives that $a,b$ agree on the set $\lbr e = 1\rbr$. Therefore $e\leq a\leftrightarrow b$. This gives $e\leq\psi(a\leftrightarrow b)$, hence $\widetilde{h}(e)\leq\widetilde{h}(\psi(a\leftrightarrow b))$. By Lemma~\ref{dji}, $\widetilde{h}(e)\leq (h(a)\leftrightarrow h(b))^{++}\leq h(a)\leftrightarrow h(b)$. Since $\widetilde{h}(e)$ is central in $E$, this gives $h(a)\wedge \widetilde{h}(e)=h(b)\wedge\widetilde{h}(e)$. 

A normal expression is one of the form $(a_1 \land e_1) \lor \ldots \lor (a_n \land e_n)$ where $a_1, \ldots, a_n \in A$ and $e_1, \ldots, e_n \in Z(S(\alg{A}))$ is a partition of unity. By Proposition~\ref{fr} each element of $S(\alg{A})$ is determined by a normal expression. Thus we may define a function $\overline{h}$ from $S(\alg{A})$ to $\alg{E}$ as by letting
\[{\textstyle 
\overline{h}\left(\,\bigvee_{i=1}^n a_i\wedge e_i \right) = \bigvee_{i=1}^n h(a_i)\wedge \widetilde{h}(e_i) }.
\]
We must show this is indeed well defined. Once this is shown, since $a\wedge 1$ is a normal expression for $a$, it follows that $\overline{h}(a) = h(a)$, hence $\overline{h}$ is an extension of $h$. 

Suppose $u\in S(A)$. If $\bigvee_{i=1}^n a_i\wedge e_i$ and $\bigvee_{i=1}^n b_i\wedge e_i$ are two normal expressions for $u$ using the same partition of unity. Then for each $i$ we have $a_i\wedge e_i=b_i\wedge e_i$, and it follows from (\ref{kl}) that $\overline{h}$ agrees for these two normal expressions. A partition of unity that refines $e_1,\ldots,e_n$ is of the form $(f_{i,j})$ where $i=1,\ldots,n$ and for each $i$ we have $j=1,\ldots,n_i$ and $\bigvee_{j=1}^{n_i}f_{i,j}=e_i$. Then $\bigvee_{i,j} a_i\wedge f_{i,j}$ is a normal expression for $u$. Using distributivity and the fact that $\widetilde{h}$ is a homomorphism, for each $i$ we have $\bigvee_1^{n_i}h(a)\wedge\widetilde{h}(f_{i,j}) = h(a_i)\wedge\widetilde{h}(e_i)$. It follows that $\overline{h}$ takes the same value on $\bigvee_{i=1}^na_i\wedge e_i$ and $\bigvee_{i,j}a_i\wedge f_{i,j}$. From what has been shown, we know that $\overline{h}$ takes the same value on all normal expressions for $u$ based on the partition $e_1,\ldots,e_n$ or any partition that refines this. Since any two partitions of unity have a common refinement, it follows that $\overline{h}$ takes the same value on all normal expressions for $u$, hence is well defined. 

To see that $\overline{h}$ is a Heyting algebra homomorphism, let $u,v\in S(A)$. By Proposition~\ref{fr} these elements have normal expressions over a common partition of unity, say $u=\bigvee_{i=1}^n a_i\wedge e_i$ and $v=\bigvee_{i=1}^nb_i\wedge e_i$. Then 
\begin{align*}
	\overline{h}(u)&={\textstyle \bigvee_{i=1}^n h(a_i)\wedge\widetilde{h}(e_i)}\\
	\overline{h}(v)&={\textstyle \bigvee_{i=1}^n h(b_i)\wedge\widetilde{h}(e_i)}
\end{align*}
	Since $\widetilde{h}$ is a Boolean homomorphism, we have that $\widetilde{h}(e_1),\ldots\widetilde{h}(e_n)$ is a partition of unity of $Z(E)$, and it follows that $h(u)\wedge h(v) = \bigvee_{i=1}^n h(a_i\wedge b_i)\wedge\widetilde{h}(e_i)$, hence that $\overline{h}$ preserves finite meets. A similar argument shows that $\overline{h}$ preserves finite joins and Heyting implication. 
To see that $\overline{h}$ preserves supplements, let $u$ have normal expression $\bigvee_{i=1}^na_i\wedge e_i$. Since $S(A)$ is centrally supplemented, its supplement satisfies both De Morgan laws, so $u^+=\bigwedge_{i=1}^n a_i^+\vee e_i^+$. Then using the facts that $\overline{h}$ is a Heyting homomorphism and $E$ is centrally supplemented, so its supplement satisfies both De Morgan laws, we have 
\begin{align*}
	\overline{h}(u^+)&={\textstyle \bigwedge_{i=1}^n \overline{h}(a_i^+)\vee\overline{h}(e_i^+)}\\
	\overline{h}(u)^+&={\textstyle \bigwedge_{i=1}^n\overline{h}(a_i)^+\vee\overline{h}(e_i)^+}
\end{align*} 
Therefore, it is enough to show $\overline{h}(a^+)=\overline{h}(a)^+$ and $\overline{h}(e^+)=\overline{h}(e)^+$ for $a\in A$ and $e\in Z(S(A))$. Since $e=(1\wedge e)\vee(0\wedge e')$ is a normal expression for $e$, we have that $\overline{h}(e)=\widetilde{h}(e)$. So $\overline{h}$ restricts to $\widetilde{h}$ on the center of $S(A)$. Then since supplements of central elements are their complements and $\widetilde{h}$ is a Boolean homomorphism, $\overline{h}(e)^+=\widetilde{h}(e)^+=\widetilde{h}(e^+)=\overline{h}(e^+)$. For $a\in A$, by Lemma~\ref{dji} we have $h(a)^{++}=\widetilde{h}(a^{++})$. Therefore $\overline{h}(a)^+=h(a)^+=h(a)^{+++}=\widetilde{h}(a^{++})^+=\widetilde{h}(a^{+++})=\widetilde{h}(a^+)=\overline{h}(a^+)$, where the last equality is because $a^+$ is central. 

Finally, uniqueness of $\overline{h}$ follows as it is determined on a set $A\cup Z(S(A))$ that generates $S(A)$. 
\end{proof}

\begin{remark}
It is not the case that $S(\alg{A})$ is what one would call a free centrally supplemented Heyting extension of $\alg{A}$. One can see this by considering the only Heyting algebra embedding $h \colon \mathbf{3}\to\mathbf{2}\times\mathbf{3}$ where $\mathbf{2}$ and $\mathbf{3}$ are the 2\hyp{}element and 3\hyp{}element Heyting algebras, respectively. We have $S(\mathbf{3})=\mathbf{3}$, and $\mathbf{2}\times\mathbf{3}$ is centrally supplemented, but $h$ does not preserve supplements. 
\end{remark}

\section{hyper\hyp{}MacNeille completions}\label{hyper-MacNeille} \label{sec:HHMC}

The hyper\hyp{}MacNeille completion is a recent notion introduced in the general setting of pointed residuated lattices, also known as FL\hyp{}algebras~\cite[Sec.~6.2]{CGT17}. We are interested in its specialization to the Heyting algebra setting. Things simplify in this setting in several ways. First, there is a known ``frame\hyp{}theoretic'' simplification~\cite{Ter13}, and the purpose of this note is to provide a further algebraic simplification. We begin with the bare bones simplified description, and then relate this to frame\hyp{}theoretic versions in the following section. 

\begin{definition} \label{def:k}
For $\alg{A}$ a Heyting algebra, let $\alg{W}=\alg{A}^2$ and let $N$ be the binary relation on $\alg{W}$ given by
$(s,a)\, N\, (t,b)$ iff $s\vee t\vee (a\to b) = 1$. We then let $U,L$ be the  associated Galois connection
\begin{center}
\begin{tikzpicture}
\node at (0,0) {$\mathcal{P}(W)$};
\node at (4,0) {$\mathcal{P}(W)$};
\draw[->] (1,.1) -- (3,.1);
\draw[->] (3,-.1) -- (1,-.1);
\node at (2,.4) {$U$};
\node at (2,-.4) {$L$};
\end{tikzpicture}
\end{center}
where $U(X) = \{u \in W : \forall w \in X \ (w N u) \}$ and $L(Y) = \{w \in W: \forall u \in Y \ (w N u) \}$. 
\end{definition}

Let $W_{\!\alg{A}}$ be the polarity $(W,W,N)$ described above and let $\mathcal{G}(W_{\!\alg{A}})$ be its Galois closed elements, that is, those $X\subseteq W$ with $X=LU(X)$. As is the case with any Galois connection, these Galois closed subsets form a complete lattice under set inclusion where $\bigwedge_IX_i = \bigcap_IX_i$ and $\bigvee_IX_i = LU(\bigcup_IX_i)$. A further general property of Galois connections applied to this particular situation gives the following. 

\begin{lemma}\label{n}
For a Heyting algebra $\alg{A}$, the set $\{L(t,b):t,b\in\alg{A}\}$ is meet-dense in $\mathcal{G}(W_{\!\alg{A}})$ and the set $\{LU(s,a):s,a\in\alg{A}\}$ is join-dense in $\mathcal{G}(W_{\!A})$. 
\end{lemma}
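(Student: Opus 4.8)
The plan is to derive both density claims as instances of standard facts about Galois connections induced by a polarity, so that essentially no computation specific to the relation $N$ is needed. Recall the general setup: for a polarity $(W,W,N)$ with associated maps $U,L$ as in Definition~\ref{def:k}, the Galois closed sets are exactly the sets of the form $L(Y)$ for $Y\subseteq W$, and dually the sets $LU(X)$ for $X\subseteq W$; moreover for any $X\subseteq W$ one has $U(X)=\bigcap_{w\in X}U(\{w\})$ and $LU(X)=\bigcap_{w\in X}LU(\{w\})$. From these identities both statements will follow by the usual principle that a family of closed elements which generates everything under arbitrary intersections is meet-dense, and its ``dual side'' is join-dense.

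First I would prove the meet-density of $\{L(t,b):t,b\in\alg{A}\}$. Let $Z\in\mathcal{G}(W_{\!\alg{A}})$, so $Z=L(Y)$ for some $Y\subseteq W$, namely $Y=U(Z)$. Then, writing $\{(t,b)\}$ simply as $(t,b)$ as in the statement, we have
\[
Z \;=\; L\bigl(U(Z)\bigr)\;=\;L\Bigl(\bigcup_{(t,b)\in U(Z)}\{(t,b)\}\Bigr)\;=\;\bigcap_{(t,b)\in U(Z)} L(t,b),
\]
using the general fact that $L$ turns unions into intersections. Hence every Galois closed set is an intersection of sets of the form $L(t,b)$, which is exactly the assertion that $\{L(t,b):t,b\in\alg{A}\}$ is meet-dense in $\mathcal{G}(W_{\!\alg{A}})$ (recall that in this complete lattice the meet is the intersection). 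I would spell out once that each $L(t,b)=L(\{(t,b)\})$ is itself Galois closed, since $L=LUL$ for any Galois connection.

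Next, the join-density of $\{LU(s,a):s,a\in\alg{A}\}$. Let $Z\in\mathcal{G}(W_{\!\alg{A}})$ be arbitrary. Every element $(s,a)\in Z$ satisfies $\{(s,a)\}\subseteq Z$, hence $LU(s,a)\subseteq LU(Z)=Z$ by monotonicity of the closure operator $LU$ together with $Z$ being closed; thus $\bigvee_{(s,a)\in Z}LU(s,a)\leq Z$ in $\mathcal{G}(W_{\!\alg{A}})$. Conversely $Z=\bigcup_{(s,a)\in Z}\{(s,a)\}\subseteq\bigcup_{(s,a)\in Z}LU(s,a)$, and applying the closure operator $LU$ (which is monotone and fixes $Z$) gives $Z=LU(Z)\subseteq LU\bigl(\bigcup_{(s,a)\in Z}LU(s,a)\bigr)=\bigvee_{(s,a)\in Z}LU(s,a)$, where the last equality is the definition of the join in $\mathcal{G}(W_{\!\alg{A}})$. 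Combining the two inclusions, $Z=\bigvee_{(s,a)\in Z}LU(s,a)$, which is precisely join-density.

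There is no serious obstacle here: the lemma is an instance of the completely general phenomenon that, for the complete lattice of Galois-closed sets of a polarity $(W,W,N)$, the family $\{L(w):w\in W\}$ is meet-dense and the family $\{LU(w):w\in W\}$ is join-dense, and in our case $W=\alg{A}^2$ so each $w$ is a pair $(t,b)$ or $(s,a)$. The only minor point worth stating carefully is that the relevant meets and joins in $\mathcal{G}(W_{\!\alg{A}})$ are computed as $\bigcap$ and $LU\bigl(\bigcup\,\bigr)$ respectively, as recorded just before the statement of the lemma, so that the set-theoretic computations above really do compute lattice meets and joins.
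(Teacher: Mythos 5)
Your proof is correct and is exactly the standard Galois-connection argument the paper invokes without proof (it states the lemma as "a further general property of Galois connections"): $L$ turns unions into intersections for meet-density, and the closure-of-union description of joins for join-density. Nothing is missing; your write-up simply makes explicit what the paper leaves implicit.
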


Note that while we are using $L$ and $U$ for these operations, they do not signify lower and upper bounds. Indeed, the relation $N$ need not be a partial order. It is the case that $N$ is reflexive, but it need not be transitive nor anti-symmetric. A most convenient way to visualize this relation is through the centrally supplemented extension $S(\alg{A})$. We require notation.

\begin{definition}
For $a,b,s,t\in\alg{A}$ let $f(s,a)=a\wedge\ind_{(s<1)}$ and $g(t,b)=(b\wedge\ind_{(t<1)})\vee\ind_{(t=1)}$. Then set $F=\{f(s,a):a,s\in\alg{A}\}$ and $G=\{g(t,b):b,t\in\alg{A}\}$. 
\end{definition}

We note that Lemma~\ref{nm} says that $F$ is join\hyp{}dense in $S(\alg{A})$ and $G$ is meet\hyp{}dense. One may get the impression that there is a bijective correspondence between pairs $(s,a)$ and functions $f(s,a)$. This is not the case, many pairs $(s,a)$ can yield the same function $f(s,a)$. The crucial point in connecting $S(\alg{A})$ to the relation $N$ is the following simple observation. 

\begin{proposition}\label{c}
$(s,a)\,N\,(t,b)$ iff $f(s,a)\leq g(t,b)$.
\end{proposition}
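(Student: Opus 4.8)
The plan is to unwind both sides of the claimed equivalence directly using the definitions of $N$, of $f(s,a)$ and $g(t,b)$, and the fact that $S(\alg{A})$ sits inside the product $P(\alg{A})=\prod_Y\alg{A}_y$ whose factors are all fsi. First I would fix $a,b,s,t\in\alg{A}$ and rewrite the relation $s\vee t\vee(a\to b)=1$ in terms of the coordinates indexed by $Y$: since the natural map $\alg{A}\to P(\alg{A})$ is a subdirect embedding (Proposition~\ref{q}), this identity holds in $\alg{A}$ if and only if $s(y)\vee t(y)\vee(a(y)\to b(y))=1_y$ for every $y\in Y$. Because each $\alg{A}_y$ is fsi, $1_y$ is join-irreducible, so for a fixed $y$ this disjunction equals $1_y$ precisely when $s(y)=1_y$, or $t(y)=1_y$, or $a(y)\to b(y)=1_y$, the last being equivalent to $a(y)\le b(y)$.

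Next I would compute the inequality $f(s,a)\le g(t,b)$ pointwise. By definition $f(s,a)(y)=a(y)\wedge\ind_{(s<1)}(y)$, which is $a(y)$ when $s(y)<1_y$ and $0$ when $s(y)=1_y$; and $g(t,b)(y)=(b(y)\wedge\ind_{(t<1)}(y))\vee\ind_{(t=1)}(y)$, which is $1_y$ when $t(y)=1_y$ and $b(y)$ when $t(y)<1_y$. So the inequality $f(s,a)(y)\le g(t,b)(y)$ is automatic when $s(y)=1_y$ or $t(y)=1_y$, and in the remaining case (both $s(y)<1_y$ and $t(y)<1_y$) it reduces exactly to $a(y)\le b(y)$. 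Comparing this case analysis with the one from the previous paragraph, the two conditions coincide at every $y\in Y$, hence $f(s,a)\le g(t,b)$ in $P(\alg{A})$ (equivalently in $S(\alg{A})$, since the order is inherited) if and only if $(s,a)\,N\,(t,b)$.

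I do not expect a serious obstacle here; the proof is a routine coordinatewise verification. The only point requiring a little care is the logical matching of the two case analyses: in the coordinate where $s(y)=1_y$ we must note that $f(s,a)(y)=0\le g(t,b)(y)$ trivially, and likewise where $t(y)=1_y$ we have $g(t,b)(y)=1_y$, so the inequality holds vacuously — and these are precisely the coordinates where the $N$-condition is satisfied by its first or second disjunct. In the complementary coordinates, both sides reduce cleanly to the Heyting condition $a(y)\le b(y)$, i.e.\ $a(y)\to b(y)=1_y$, using join-irreducibility of $1_y$ to pass between $s(y)\vee t(y)\vee(a(y)\to b(y))=1_y$ and the disjunctive form. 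Assembling these observations over all $y\in Y$ yields the equivalence.
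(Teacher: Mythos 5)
Your proof is correct and follows essentially the same route as the paper: both arguments reduce the relation $N$ to a coordinatewise condition over $Y$ using join-irreducibility of $1_y$ in the fsi factors $\alg{A}_y$, and then match this against the pointwise computation of $f(s,a)\leq g(t,b)$. Your version merely spells out the case analysis that the paper's proof compresses into one sentence.
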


\begin{proof}
Let $\alg{A}$ be a Heyting algebra and $a,b,s,t\in\alg{A}$. Consider $\alg{A}\leq\prod_Y\alg{A}_y$ as a subdirect product as in the previous section. Then $(s,a)N(t,b)$ iff $s\vee t\vee (a\to b)=1$. Since all of the factors $\alg{A}_y$ are fsi, this join is equal to 1 iff for each $y\in Y$ with $s(y)<1$ and $t(y)<1$ we have that $a(y)\leq b(y)$, and this occurs iff the indicated inequality between functions occurs. 
\end{proof}

\begin{theorem}\label{ggg}
There is an order embedding $\Delta \colon S(\alg{A})\to \mathcal{G}(W_{\!\alg{A}})$ given by 
\[
\Delta(u) = \{(s,a) : f(s,a)\,\leq \, u\}.
\]
This embedding is join\hyp{}dense and meet\hyp{}dense, so $\mathcal{G}(W_{\!\alg{A}})$ is the MacNeille completion of $S(\alg{A})$. 
\end{theorem}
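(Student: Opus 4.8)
The goal is to show the map $\Delta\colon S(\alg{A})\to\mathcal{G}(W_{\!\alg{A}})$ is a well-defined order embedding that is both join-dense and meet-dense; meet-density plus join-density then identifies $\mathcal{G}(W_{\!\alg{A}})$ as the MacNeille completion of $S(\alg{A})$. First I would check that $\Delta(u)$ is actually Galois closed. The key is Proposition~\ref{c}, which says $(s,a)\,N\,(t,b)$ iff $f(s,a)\le g(t,b)$, together with the density facts from Lemma~\ref{nm}: $F$ is join-dense and $G$ is meet-dense in $S(\alg{A})$. Given $u\in S(\alg{A})$, one computes $U(\Delta(u))=\{(t,b): g(t,b)\ge u\}$ — because a pair $(t,b)$ is $N$-above every $(s,a)$ with $f(s,a)\le u$ precisely when $g(t,b)$ dominates the join of those $f(s,a)$, which by join-density of $F$ is $u$ itself. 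Applying $L$ once more and using meet-density of $G$ one gets $LU(\Delta(u))=\{(s,a): f(s,a)\le u\}=\Delta(u)$, so $\Delta(u)\in\mathcal{G}(W_{\!\alg{A}})$. Along the way the same computation shows $\Delta$ is order-reflecting (if $\Delta(u)\subseteq\Delta(v)$ then every $f(s,a)\le u$ satisfies $f(s,a)\le v$, and join-density of $F$ gives $u\le v$), hence an order embedding.

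Next I would verify the two density claims. For join-density: by Lemma~\ref{n} the sets $LU(s,a)$ are join-dense in $\mathcal{G}(W_{\!\alg{A}})$, so it suffices to observe that each $LU(s,a)$ lies in the image of $\Delta$; indeed $LU(s,a)=\Delta(f(s,a))$. This follows from the computation above applied to $u=f(s,a)$: $U(\{(s,a)\})=U(\Delta(f(s,a)))$ since $(s',a')\,N\,(t,b)$ for all $(s',a')$ with $f(s',a')\le f(s,a)$ iff $(s,a)\,N\,(t,b)$ — the $N$-cone above $(s,a)$ depends only on $f(s,a)$ by Proposition~\ref{c}. Hence $LU(s,a)=LU(\Delta(f(s,a)))=\Delta(f(s,a))$. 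Dually, by Lemma~\ref{n} the sets $L(t,b)$ are meet-dense, and $L(t,b)=\Delta(g(t,b))$: a pair $(s,a)$ is $N$-below $(t,b)$ iff $f(s,a)\le g(t,b)$ iff $f(s,a)\le g(t,b)$, which is exactly $(s,a)\in\Delta(g(t,b))$. So every meet-dense generator and every join-dense generator of $\mathcal{G}(W_{\!\alg{A}})$ is in the image of $\Delta$, giving both densities at once.

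Finally, since $\Delta$ is an order embedding whose image is join-dense and meet-dense in the complete lattice $\mathcal{G}(W_{\!\alg{A}})$, the standard uniqueness of the MacNeille completion (a complete lattice containing a poset as a join-dense and meet-dense sub-poset is its MacNeille completion) yields that $\mathcal{G}(W_{\!\alg{A}})\cong\overline{S(\alg{A})}$. I expect the only real bookkeeping obstacle to be the careful identification $U(\Delta(u))=\{(t,b):u\le g(t,b)\}$ and its dual — one must use join-density of $F$ (resp.\ meet-density of $G$) to pass from "dominates each $f(s,a)\le u$" to "dominates $u$", and Proposition~\ref{c} to translate the relation $N$ into the lattice order. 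Everything else is a routine consequence of the Galois-connection formalism and Lemmas~\ref{n} and~\ref{nm}.
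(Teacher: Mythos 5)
Your proposal is correct and follows essentially the same route as the paper: Galois-closedness of $\Delta(u)$ via Proposition~\ref{c} together with the join-density of $F$ and meet-density of $G$ from Lemma~\ref{nm}, order-reflection from join-density of $F$, and the identifications $\Delta(f(s,a))=LU(s,a)$ and $\Delta(g(t,b))=L(t,b)$ combined with Lemma~\ref{n} to get both densities. The only cosmetic difference is that you obtain $LU(s,a)=\Delta(f(s,a))$ by noting $U(\{(s,a)\})=U(\Delta(f(s,a)))$, whereas the paper proves the two inclusions separately; both are fine.
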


\begin{proof}
For $u\in S(\alg{A})$, let ${\downarrow}u$ be the set of lower bounds of $u$ in $S(\alg{A})$ and ${\uparrow}u$ be the set of upper bounds. Since $F$ and $G$ are join\hyp{}dense and meet\hyp{}dense, respectively, it follows that $u$ is the join of ${\downarrow}u \cap F$ and $u$ is the meet of ${\uparrow}u\cap G$. It follows from this discussion and Proposition~\ref{c} that $U(\Delta(u))$ is the set of all $(t,b)$ such that $g(t,b)$ is an upper bound of $u$, and $LU(\Delta(u))$ is the set of all $(s,a)$ such that $f(s,a)\leq u$. So $\Delta(u)=LU(\Delta(u))$. 

Clearly $u\leq v$ implies $\Delta(u)\subseteq\Delta(v)$. But $\Delta(u)\subseteq\Delta(v)$ implies ${\downarrow}u\cap F\subseteq{\downarrow}v\cap F$, so $u\leq v$. Thus $\Delta$ is an order embedding. For $b,t\in\alg{A}$ we have $\Delta(g(t,b))=L(t,b)$ by Proposition~\ref{c}. For $a,s\in\alg{A}$, Proposition~\ref{c} gives $U(s,a)=\{(t,b):f(s,a)\leq g(t,b)\}$ and therefore $\Delta(f(s,a)) \subseteq LU(s,a)$. For the other containment, since $(s,a) \in \Delta(f(s,a))$ we have that $LU(s,a) \subseteq LU(\Delta(f(s,a))) = \Delta(f(s,a))$. Consequently, $\Delta(f(s,a))=LU(s,a)$. By Lemma~\ref{n}, $\{L(t,b):b,t\in\alg{A}\}$ and $\{LU(s,a):a,s\in\alg{A}\}$ are join\hyp{}dense and meet\hyp{}dense in $\mathcal{G}(W_{\!A})$ respectively. Thus $\Delta$ is a join\hyp{}dense and meet\hyp{}dense embedding. Since $\mathcal{G}(W_{\!A})$ is complete, it is the MacNeille completion of $S(\alg{A})$. 
\end{proof}

\begin{definition}
We call $\mathcal{G}(W_{\!A})$ the \emph{hyper\hyp{}MacNeille completion} of $\alg{A}$ and denote it $\alg{A}^+$. 
\end{definition}

We can easily accumulate a number of properties of the hyper\hyp{}MacNeille completion. 

\begin{theorem} \label{j}
For a Heyting algebra $\alg{A}$, 
\begin{enumerate}
\item $\alg{A}^+$ is a Heyting algebra,
\item $\alg{A}^+$ is centrally supplemented,
\item if $\alg{A}$ is centrally supplemented, $\alg{A}^+$ is the MacNeille completion of $\alg{A}$, 
\item if $\alg{A}$ is fsi, $\alg{A}^+$ is the MacNeille completion of $\alg{A}$, 
\item $\alg{A}^{++}$ is isomorphic to $\alg{A}^+$,
\item the center of $\alg{A}^+$ is complete and a complete sublatice of $\alg{A}^+$,
\item the embedding of $\alg{A}$ into $\alg{A}^+$ is regular iff $\alg{A}$ is externally distributive,
\item the embedding of $\alg{A}$ into $\alg{A}^+$ is essential, 
\item the embedding of $\alg{A}$ into $\alg{A}^+$ preserves central supplements,
\item $A^+$ is equal to $S(A)^+$,
\item $Z(A^+)$ is equal to $\overline{Z(S(A))}$ and is a complete subalgebra of $A^+$,
\item $Z(A^+)$ is the MacNeille completion of the free Boolean extension of $A/\theta_{\!A} \cong D(A)$,
\item if $A$ is complete, then $A^+=A$ iff $A$ is centrally supplemented. 
\end{enumerate}

\end{theorem}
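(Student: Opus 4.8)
The plan is to exploit the single structural result Theorem~\ref{ggg}, which identifies $\alg{A}^+ = \mc{G}(W_{\!\alg{A}})$ with the MacNeille completion $\overline{S(\alg{A})}$ of the centrally supplemented extension. Once this identification is in hand, essentially every item of the theorem is harvested from a proposition proved earlier in the paper, applied either to $S(\alg{A})$ itself or to the algebra $\alg{A}^+$, which is centrally supplemented and complete (being a lattice of Galois-closed sets). So the whole proof is organized as a sequence of short deductions from Theorem~\ref{ggg} together with the facts that $S(\alg{A})$ is centrally supplemented and that $\alg{A}^+$ is complete.

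For the items that are pure citations: (1) and (2) follow because the MacNeille completion of a (centrally supplemented) Heyting algebra is again a (centrally supplemented) Heyting algebra, which is Proposition~\ref{prop:MN-of-DM-is-DM} applied to $S(\alg{A})$. For (3) I would invoke Proposition~\ref{g}, giving $S(\alg{A}) = \alg{A}$ when $\alg{A}$ is centrally supplemented, so $\alg{A}^+ = \overline{S(\alg{A})} = \overline{\alg{A}}$. Item (6) is Proposition~\ref{prop:completecenter} (or Proposition~\ref{center}) applied to the complete, supplemented, pseudo-complemented algebra $\alg{A}^+$. For (9) I would combine Proposition~\ref{q} (the map $\alg{A} \to S(\alg{A})$ preserves existing central supplements) with the meet-density argument inside Proposition~\ref{prop:MN-of-DM-is-DM} (the supplement of an element of $S(\alg{A})$ is unchanged in $\overline{S(\alg{A})}$). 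Item (10) follows since $S(\alg{A})$ is centrally supplemented, so by (3) its hyper-MacNeille completion is $\overline{S(\alg{A})} = \alg{A}^+$; equivalently via $S(S(\alg{A})) = S(\alg{A})$ from Proposition~\ref{g}. Item (5) is (3) applied to $\alg{A}^+$ in place of $\alg{A}$: since $\alg{A}^+$ is centrally supplemented and complete, $\alg{A}^{++} = \overline{\alg{A}^+} \cong \alg{A}^+$. Items (11) and (12) are read off from Proposition~\ref{center} and Proposition~\ref{Z(S(A))}: the former gives $Z(\alg{A}^+) = Z(\overline{S(\alg{A})}) = \overline{Z(S(\alg{A}))}$ as a complete subalgebra, and the latter identifies $Z(S(\alg{A}))$ as the free Boolean extension of $D(\alg{A}) \cong \alg{A}/\theta_{\!A}$.

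The items requiring a short genuine argument are (4), (7), (8), and (13). For (4), note that an fsi Heyting algebra has $1$ join-irreducible, so the supplement of any $a < 1$ is $1$ and the supplement of $1$ is $0$; both are central, so every fsi Heyting algebra is centrally supplemented and (4) reduces to (3). For (8), Proposition~\ref{x} gives that $\alg{A} \to S(\alg{A})$ is essential and the MacNeille embedding is essential (being meet-dense), and I would note that essential extensions compose using the Heyting-algebra characterization recalled in the text (every $b < 1$ sits below some $a < 1$ of the subalgebra). For (13), the ``if'' direction is (3) together with the fact that a complete algebra is its own MacNeille completion, and the ``only if'' direction is immediate from (2): if $\alg{A}^+ = \alg{A}$, then $\alg{A}$ inherits central supplementation from $\alg{A}^+$.

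The main obstacle, and the only step where I expect to do more than cite, is (7). The embedding $\alg{A} \to \alg{A}^+$ factors as $\alg{A} \to S(\alg{A}) \to \overline{S(\alg{A})}$, and Theorem~\ref{r} already characterizes regularity (equivalently meet-regularity) of the first map by external distributivity. What must be checked is that the composite is regular iff the first map is, given that the second map, a MacNeille embedding, is regular. The forward direction is just composition of regular embeddings; for the converse I would argue directly that if $g \colon \alg{B} \to \alg{C}$ is a regular embedding and $g \circ f$ preserves an existing meet $\bigwedge a_i = a$ in the domain of $f$, then so does $f$: any lower bound $b$ of $\{f(a_i)\}$ maps under $g$ to a lower bound of $\{g(f(a_i))\}$, whence $g(b) \leq g(f(a))$ and so $b \leq f(a)$, showing $f(a) = \bigwedge f(a_i)$, and dually for joins. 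This reduces (7) to Theorem~\ref{r}.
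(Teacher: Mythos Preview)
Your proposal is correct and matches the paper's proof essentially item for item: both derive everything from $\alg{A}^+ \cong \overline{S(\alg{A})}$ and then cite Propositions~\ref{prop:MN-of-DM-is-DM}, \ref{prop:completecenter}, \ref{center}, \ref{q}, \ref{g}, \ref{Z(S(A))}, \ref{x}, and Theorem~\ref{r} exactly as you indicate. The one place you go beyond the paper is (7), where you supply the short argument that regularity of the composite $\alg{A}\to S(\alg{A})\to\overline{S(\alg{A})}$ is equivalent to regularity of the first factor; the paper leaves this implicit, so your version is slightly more complete.
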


\begin{proof}
(1) The hyper\hyp{}MacNeille completion $\alg{A}^+$ is the MacNeille completion of the Heyting algebra $S(\alg{A})$ and the MacNeille completion of a Heyting algebra is a Heyting algebra~\cite[Thm.~2.3]{HB04}. (2) The algebra $S(\alg{A})$ is supplemented by definition, so its MacNeille completion $\alg{A}^+$ is centrally supplemented by Proposition~\ref{prop:MN-of-DM-is-DM}. (3) This follows since $\alg{A}$ being centrally supplemented implies $A=S(\alg{A})$ by Proposition~\ref{g}. (4) This follows from (3) since a fsi Heyting algebra is centrally supplemented. (5) By (2) $\alg{A}^+$ is centrally supplemented, so by Proposition~\ref{g} we have $S(\alg{A}^+)=A^+$, so $A^{++}$ is the MacNeille completion of $A^+$ which is $A^+$ since $A^+$ is complete. (6) This follows from Proposition~\ref{prop:completecenter}. (7) This follows from the fact that the embedding of $A$ into $S(A)$ is regular iff $A$ is externally distributive given in Theorem~\ref{r} and the fact that the embedding of $S(A)$ into its MacNeille completion $A^+$ is always regular. (8) By Proposition~\ref{x}, the embedding of $A$ into $S(A)$ is essential, and the embedding of $S(A)$ into its MacNeille completion $A^+$ is essential. (9) That the embedding of $A$ into $S(A)$ preserves existing central supplements is given by Proposition~\ref{q}, and Proposition~\ref{prop:MN-of-DM-is-DM} shows that the embedding of the centrally supplemented algebra $S(A)$ into its MacNeille completion $A^+$ preserves central supplements. (10) This follows since $S(A)^+$ is the MacNeille completion of $S(S(A))$ and $S(S(A))$ is equal to $S(A)$ by Proposition~\ref{g}. (11) This follows from Proposition~\ref{center} and the facts that $S(A)$ is centrally supplemented and $A^+=\overline{S(A)}$. (12) This follows from (11), Proposition~\ref{center}, and the fact that any Boolean algebra generated by a bounded distributive lattice is its free Boolean extension. (13) If $A$ is centrally supplemented, then by Proposition~\ref{g}, $A=S(A)$, and then if $A$ is complete $A=\overline{S(A)}=A^+$. Conversely, if $A=A^+$, then $A$ is centrally supplemented by (2).
\end{proof}

\begin{remark}
In the wider setting of FL\hyp{}algebras a few results related to Theorem~\ref{j} are known. Namely, the hyper\hyp{}MacNeille completion and the MacNeille completion coincide for of any subdirectly irreducible FL\hyp{}algebra~\cite[Prop.~6.6]{CGT17}, and the embedding of an externally distributive FL\hyp{}algebra into its hyper\hyp{}MacNeille completion is regular~\cite[Thm.~6.11]{CGT17}. 
\end{remark}

\section{Other realizations of the hyper\hyp{}MacNeille completion} \label{other}

The previous section describes the hyper\hyp{}MacNeille completion of a Heyting algebra as the MacNeille of its centrally supplemented extension. This provides a convenient method to work with hyper\hyp{}MacNeille completions of Heyting algebras but is not its original source. As described in the introduction, hyper\hyp{}MacNeille completions arose in the study of proof theory for substructural logics. In~\cite{CGT17} the notion of a \emph{residuated hyper\hyp{}frame} which may be identified with a special class of so\hyp{}called \emph{residuated frames}~\cite{CGT12,GJ13}. It was shown that from a (pointed) residuated lattice one can construct a certain residuated hyper\hyp{}frame, and from this hyper\hyp{}frame construct a complete (pointed) residuated lattice extending the original. This was termed the hyper\hyp{}MacNeille completion. In this section we show that the notion of hyper\hyp{}MacNeille completion introduced in the previous section coincides with the original notion introduced by Ciabattoni, Galatos, and Terui. The results in this section are not needed in the remainder of the paper.   

Residuated frames are polarities equipped with additional structure ensuring that the lattice of Galois closed sets is a (pointed) residuated lattice. Since we are here only interested in the case of Heyting algebras, we do not need  it is not the notion of residuated frames in full generality, and it suffices to work with the simpler notion of a Heyting frame~\cite[Sec.~6.1]{Ter18}.

\begin{definition} \label{frame}
A \emph{Heyting frame} $\alg{W} = (W_0, W_1, N, \circ, \epsilon, \rightsquigarrow)$ is a structure consisting of a relation $N \subseteq W_0 \times W_1$, a monoid $(W_0, \circ, \epsilon)$, and an operation $\rightsquigarrow \colon W_0 \times W_1 \to W_1$ satisfying:

\begin{enumerate}
\item $w\circ v N u \iff v N w \rightsquigarrow u$,
\item $w \circ w N u \implies w N u$,
\item $\epsilon N u \implies w N u$,
\item $w \circ v N u \implies v \circ w N u$,
\end{enumerate}
for all $w, v \in W_0$ and $u \in W_1$. 
\end{definition}

The relation $N$ of a Heyting frame gives a Galois connection between $\mathcal{P}(W_0)$ and $\mathcal{P}(W_1)$. The purpose of the additional structure of a Heyting frame is to ensure that the lattice of Galois closed sets gives a Heyting algebra. The following is given in~\cite[Lem.~13]{Ter18}.

\begin{theorem}\label{thm:complete-residuated-lattices}
If $\alg{W} = (W_0, W_1, N, \circ, \epsilon, \rightsquigarrow)$ is a Heyting frame then the Galois closed subsets of $W_0$ form a complete Heyting algebra $\alg{W}^+$ with implication  
\[
X \to Y = \{w \in W_0 : \forall v \in X \ (v \circ w \in Y) \}. 
\]
\end{theorem}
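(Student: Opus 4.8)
The plan is to work with the closure operator $\gamma=(\cdot)^{\triangleright\triangleleft}$ on $\mathcal{P}(W_0)$ induced by $N$, where $X^\triangleright=\{u\in W_1:\forall w\in X\ (wNu)\}$ for $X\subseteq W_0$ and $Y^\triangleleft=\{w\in W_0:\forall u\in Y\ (wNu)\}$ for $Y\subseteq W_1$, and to identify the Heyting operations on the lattice $\alg{W}^+$ of its fixpoints. As for any Galois connection, $\alg{W}^+$ is automatically a complete lattice with $\bigwedge_i X_i=\bigcap_i X_i$, $\bigvee_i X_i=\gamma(\bigcup_i X_i)$, top $W_0$, and bottom $W_1^\triangleleft$; none of the frame axioms are needed for this. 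Since a complete lattice in which binary meet has a right adjoint is automatically a complete Heyting algebra (the required infinite distributivity coming for free from the adjunction), the whole content of the theorem is the residuation law
\[
Z\cap X\subseteq Y\iff Z\subseteq (X\to Y)\qquad(X,Y,Z\in\alg{W}^+),
\]
together with the fact that $X\to Y:=\{w\in W_0:\forall v\in X\ (v\circ w\in Y)\}$ is Galois closed whenever $Y$ is.

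The crux is a lemma relating the monoid to intersection: writing $X\circ Y=\{x\circ y:x\in X,\ y\in Y\}$ for the elementwise product, $\gamma(X\circ Y)=X\cap Y$ for all $X,Y\in\alg{W}^+$. For the inclusion $\subseteq$ one extracts ``integrality'' from axioms (1) and (3): axiom (3) says $\{u:\epsilon Nu\}=W_0^\triangleright$, and applying axiom (1) to $x\circ\epsilon=x$ gives $xNu\iff\epsilon N(x\rightsquigarrow u)$; hence $xNu$ forces $vN(x\rightsquigarrow u)$ for all $v\in W_0$, and axiom (1) once more turns this into $x\circ vNu$. Thus $\{x\}^\triangleright\subseteq\{x\circ v\}^\triangleright$, i.e.\ $x\circ v$ lies in every closed set containing $x$, and using the commutativity axiom (4) to symmetrise one gets $X\circ Y\subseteq X\cap Y$ for closed $X,Y$, whence $\gamma(X\circ Y)\subseteq X\cap Y$. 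For $\supseteq$ one uses ``contraction'' (axiom (2)): if $w\in X\cap Y$ and $u\in(X\circ Y)^\triangleright$, then $w\circ wNu$, so $wNu$ by axiom (2), giving $w\in\gamma(X\circ Y)$.

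Granting this lemma the rest is bookkeeping. That $X\to Y$ is closed when $Y$ is follows by rewriting, via closedness of $Y$ and axiom (1), the condition ``$v\circ w\in Y$'' as ``$wN(v\rightsquigarrow u)$ for all $u\in Y^\triangleright$'', so that $X\to Y=\{v\rightsquigarrow u:v\in X,\ u\in Y^\triangleright\}^\triangleleft$ has the form $(\cdot)^\triangleleft$. For the residuation law, for closed $Z$ we have $Z\subseteq(X\to Y)$ iff $v\circ w\in Y$ for all $v\in X$, $w\in Z$, iff $X\circ Z\subseteq Y$, iff $\gamma(X\circ Z)\subseteq Y$ (as $Y$ is closed), iff $X\cap Z\subseteq Y$ by the lemma; and $\cap$ is the meet of $\alg{W}^+$. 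The main obstacle is thus pinning down exactly how axioms (1)--(4) conspire to yield $\gamma(X\circ Y)=X\cap Y$ — in particular teasing integrality out of the interplay of the adjunction (1) and weakening (3); once that identity is in hand, verifying that $\alg{W}^+$ is a complete Heyting algebra with the stated implication is routine.
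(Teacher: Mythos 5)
Your proof is correct. Note that the paper does not prove this statement at all --- it imports it from Terui (\cite[Lem.~13]{Ter18}) --- so there is no in-paper argument to compare against; your route is the standard one for residuated frames, with the one genuinely non-routine step being exactly where you locate it, namely the identity $\gamma(X\circ Y)=X\cap Y$ for closed $X,Y$, which you correctly extract from integrality (axioms (1) and (3)), exchange (4) and contraction (2). All the individual steps check out: $X\to Y=\{v\rightsquigarrow u: v\in X,\ u\in Y^{\triangleright}\}^{\triangleleft}$ is closed because it is of the form $(\cdot)^{\triangleleft}$, the residuation law reduces to the key identity, and the frame distributive law then follows since $X\cap(-)$ is a left adjoint.
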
 

\begin{example}
For a Heyting algebra $\alg{A}$ let $\alg{M}_{\!\alg{A}} = (\alg{A}, \alg{A}, \leq, \land, 1, \to)$. It is not difficult to show that $\alg{M}_{\!\alg{A}}$ is a Heyting frame and $\alg{M}_{\!\alg{A}}^+$ is the MacNeille completion $\overline{\alg{A}}$ of $\alg{A}$.
\end{example}

We turn to a different application of Heyting frames that leads to the hyper\hyp{}MacNeille completion of a Heyting algebra. The approach is essentially the one presented in~\cite{Ter13}. For a Heyting algebra $\alg{A}$, we use $\alg{A}_\vee=(A,\vee,0)$ for its join\hyp{}semilattice reduct and $\alg{A}_\wedge=(A,\wedge,1)$ for its meet\hyp{}semilattice reduct. 

\begin{definition}
For a Heyting algebra $\alg{A}$ let $\alg{W}_{\!\alg{A}} = (W, W, N,\circ, (0, 1), \rightsquigarrow)$. Here the monoid $W = \alg{A}_\lor \times \alg{A}_\land$ has operation $(s,a)\circ(t,b)=(s\vee t,a\wedge b)$, the relation $N$ is given by $(s,a) N (t, b)$ iff $s \lor t \lor (a \to b) = 1$, and $(s, a) \rightsquigarrow (t, b) = (s \lor t, a \to b)$.
\end{definition} 

We now connect Heyting frames to the results of the previous section. 

\begin{proposition}
For $A$ a Heyting algebra, $\alg{W}_{\!\alg{A}}$ is a Heyting frame and the complete Heyting algebra $W_{\!A}^+$ of its Galois closed elements is the hyper\hyp{}MacNeille completion $A^+$ and this is isomorphic to the MacNeille completion of $S(\alg{A})$. 
\end{proposition}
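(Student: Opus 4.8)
The statement to be proved is essentially a bookkeeping result: it asserts that the frame-theoretic construction $\alg{W}_{\!\alg{A}}^+$ coincides with the $\mathcal{G}(W_{\!\alg{A}})$ of Definition~\ref{def:k}, hence with $\overline{S(\alg{A})}$ by Theorem~\ref{ggg}. The plan is to verify, one axiom at a time, that $\alg{W}_{\!\alg{A}} = (W, W, N, \circ, (0,1), \rightsquigarrow)$ is a Heyting frame, and then observe that the underlying Galois connection is literally the same $U,L$ pair from Definition~\ref{def:k}, so that the two complete lattices of Galois closed sets are identical as sets; finally I would check that the Heyting implication from Theorem~\ref{thm:complete-residuated-lattices} agrees with the implication inherited from the MacNeille completion of $S(\alg{A})$.

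\textbf{Step 1: $\alg{W}_{\!\alg{A}}$ is a Heyting frame.} One must check conditions (1)--(4) of Definition~\ref{frame}. For (1), $(s,a)\circ(t,b)\,N\,(u,c)$ unfolds to $s\lor t\lor u\lor((a\land b)\to c)=1$, and using the Heyting-algebra identity $(a\land b)\to c = b\to(a\to c)$ this is exactly $(t,b)\,N\,(s,a)\rightsquigarrow(u,c)$ since $(s,a)\rightsquigarrow(u,c)=(s\lor u,\, a\to c)$; so (1) is a one-line computation. For (2), idempotency: $(s,a)\circ(s,a)=(s,a)$ since $\lor$ and $\land$ are idempotent, so (2) is trivial — actually it is an equality, not just an implication. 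Condition (3) says $(0,1)\,N\,(t,b)$ implies $(s,a)\,N\,(t,b)$; but $(0,1)\,N\,(t,b)$ means $0\lor t\lor(1\to b)=1$, i.e. $t\lor b=1$, which forces $s\lor t\lor(a\to b)=1$ for any $s,a$ since $b\le a\to b$. Condition (4) is immediate from commutativity of $\lor$ and $\land$. I would present these four as short displayed computations.

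\textbf{Step 2: identifying the Galois closed sets and the lattice operations.} The relation $N$ of $\alg{W}_{\!\alg{A}}$ is, by inspection, exactly the relation $N$ on $W=\alg{A}^2$ of Definition~\ref{def:k}. Hence the Galois connection $U,L$ is the same, the Galois closed sets $\mathcal{G}(W_{\!\alg{A}})$ are the same, and joins/meets (intersection, and $LU$ of unions) agree. By Theorem~\ref{ggg}, $\Delta\colon S(\alg{A})\to\mathcal{G}(W_{\!\alg{A}})$ is a join- and meet-dense embedding into a complete lattice, so $\mathcal{G}(W_{\!\alg{A}})$ is (up to iso) the MacNeille completion $\overline{S(\alg{A})}$, and by definition it is the hyper-MacNeille completion $\alg{A}^+$.

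\textbf{Step 3: the Heyting structure agrees.} It remains to check that the Heyting implication produced by Theorem~\ref{thm:complete-residuated-lattices}, namely $X\to Y=\{(s,a): \forall (t,b)\in X,\ (t,b)\circ(s,a)\in Y\}$, coincides with the Heyting implication of the lattice $\overline{S(\alg{A})}$ transported along $\Delta$. Since $\Delta$ is a regular embedding of the Heyting algebra $S(\alg{A})$ into the complete Heyting algebra $\overline{S(\alg{A})}$, and $F=\{f(s,a)\}$ is join-dense while $G=\{g(t,b)\}$ is meet-dense, both implications are determined by their behaviour on generators via the residuation adjunction; using Proposition~\ref{c} (that $(s,a)\,N\,(t,b)$ iff $f(s,a)\le g(t,b)$) together with the identity $f((s,a)\circ(t,b)) = f(s\lor t, a\land b)$ and the description $u\to v$ in the proof of Proposition~\ref{fr}, one checks the two implications agree on closed sets of the form $L(t,b)=\Delta(g(t,b))$, hence everywhere. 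The main obstacle — though it is more a matter of care than of depth — is this last compatibility check: making sure the monoid operation $\circ$ on the frame genuinely mirrors meet of the corresponding elements of $S(\alg{A})$ (it mirrors $(s\lor t, a\land b)$, which on the level of the functions $f$ is meet in $S(\alg{A})$ only after one accounts for the fact that $f$ is not injective on pairs), and that the residuation in the frame is therefore forced to match Heyting implication computed in $S(\alg{A})$ as in Proposition~\ref{fr}. Everything else is routine verification.
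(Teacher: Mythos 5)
Your proposal is correct and follows essentially the same route as the paper, which simply notes that the frame axioms are straightforward to verify, that the polarity $(W,W,N)$ is literally the one from Definition~\ref{def:k}, and that the rest follows from Theorem~\ref{ggg}. The only remark worth making is that your Step~3 can be collapsed to one line: on a complete lattice the Heyting implication, being the residual of meet, is uniquely determined by the order, so once the underlying complete lattices $\alg{W}_{\!\alg{A}}^+$ and $\overline{S(\alg{A})}$ are identified there is nothing further to check about the implications.
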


\begin{proof}
It is straightforward to verify that $\alg{W}_{\!\alg{A}}$ is a Heyting frame. Note that the set $W$ and relation $N$ are exactly as in Definition~\ref{def:k}, so $\mathcal{G}(W_{\!A})=W_{\!A}^+$. The result follows from results of the previous section. 
\end{proof}

We next recall the original definition from~\cite[Sec.~6.2]{CGT17} of the hyper\hyp{}MacNeille completion of a Heyting algebra $\alg{A}$. Let $\alg{F}(A^2)$ be the free commutative monoid on the set $A^2$. We then obtain a monoid $\alg{M}_{\!\alg{A}}^\ast = \alg{F}(A^2) \times \alg{A}_\land$, with operation $(h, a) \circ (g, b) = (hg, a \land b)$ and unit $(\epsilon, 1)$, where $\epsilon$ denotes the empty word. The universal property of $\alg{F}(A^2)$ yields a monoid homomorphism $(\cdot)^\ast \colon \alg{F}(A^2) \to \alg{A}_\lor$ by letting $(a,b)^\ast = a \to b$.

\begin{definition}
For a Heyting algebra $\alg{A}$, let $\alg{V}_{\!\alg{A}} = (\alg{M}_{\!\alg{A}}^*, \alg{M}_{\!\alg{A}}^*, Q, \circ, (\epsilon, 1), \rightsquigarrow)$ where $Q$ is given by $(h,a) \mathrel{Q} (g, b)$ iff $h^\ast \lor g^\ast \lor (a \to b) = 1$, and $(h, a) \rightsquigarrow (g, b) = (hg, a \to b)$.
\end{definition}

It is not hard to show that $\alg{V}_{\!\alg{A}}$ is a Heyting frame. As establish in~\cite[Thm.~5.20]{CGT17} the resulting algebra of Galois closed sets is a completion of $\alg{A}$. 

\begin{theorem}
Let $\alg{A}$ be a Heyting algebra. Then there is a Heyting algebra embedding $\alpha \colon \alg{A}\to\alg{V}_{\!\alg{A}}^+$ given by $\alpha(b)=L(\epsilon,b)$. 
\end{theorem}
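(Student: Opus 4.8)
The plan is to show that $\alpha(b) = L(\epsilon, b)$ is an injective Heyting algebra homomorphism into $\alg{V}_{\!\alg{A}}^+$. The most efficient route is to factor the argument through the already-established isomorphism between $\alg{A}^+$ and the MacNeille completion of $S(\alg{A})$, rather than verifying the homomorphism properties from scratch in the (more cumbersome) free-monoid frame $\alg{V}_{\!\alg{A}}$. First I would record that $\alg{V}_{\!\alg{A}}$ is a Heyting frame (routine from Definition~\ref{frame}, using that $(\cdot)^\ast$ is a monoid homomorphism), so by Theorem~\ref{thm:complete-residuated-lattices} the Galois closed subsets of $\alg{M}_{\!\alg{A}}^\ast$ form a complete Heyting algebra $\alg{V}_{\!\alg{A}}^+$ with the indicated implication.

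Next I would establish that $\alg{V}_{\!\alg{A}}^+$ is isomorphic to $\alg{W}_{\!\alg{A}}^+ = \alg{A}^+$ via the map induced by the monoid homomorphism $p \colon \alg{M}_{\!\alg{A}}^\ast \to \alg{W}_{\!\alg{A}}$, $(h, a) \mapsto (h^\ast, a)$. The key point is that $p$ is a \emph{Galois morphism} of polarities: by definition of $Q$ and $N$ one has $(h,a) \mathrel{Q} (g,b)$ iff $p(h,a) \mathrel{N} p(g,b)$, so the Galois closures on $\alg{M}_{\!\alg{A}}^\ast$ and on $\alg{W}_{\!\alg{A}}$ correspond, and the induced map on Galois-closed sets is a lattice isomorphism. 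One then checks it respects $\rightsquigarrow$ (immediate, since $p((h,a) \rightsquigarrow (g,b)) = p(h,a) \rightsquigarrow p(g,b)$ because $(hg)^\ast = h^\ast \lor g^\ast$), hence is a Heyting algebra isomorphism. Under this isomorphism, $L_Q(\epsilon, b)$ pulls back to $L_N(1, b)$, which by Proposition~\ref{c} is $\Delta(g(1,b)) = \Delta(b)$ since $g(1,b) = (b \land \ind_{(1<1)}) \lor \ind_{(1=1)}$ needs care --- actually $g(1,b)$ should be computed: with $t = 1$ we get $\ind_{(1<1)} = 0$ and $\ind_{(1=1)} = 1$, so $g(1,b) = 1$, which is wrong. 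The correct pair is $(0, b)$: $g(0,b) = (b \land \ind_{(0<1)}) \lor \ind_{(0=1)} = b$. So I would instead verify directly that $L_Q(\epsilon, b) = L_Q(0\text{-word}, b)$ corresponds, under $p$, to $L_N((0,b))$ --- here the empty word $\epsilon$ satisfies $\epsilon^\ast = 0$ (the monoid unit of $\alg{A}_\lor$), so $p(\epsilon, b) = (0, b)$ and the identification is exact. Thus $\alpha(b) = L_Q(\epsilon,b)$ is carried to $L_N((0,b)) = \Delta(g(0,b)) = \Delta(b)$ under the composite isomorphism $\alg{V}_{\!\alg{A}}^+ \cong \alg{A}^+ \cong \overline{S(\alg{A})}$.

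Finally, since $\Delta \colon S(\alg{A}) \to \alg{A}^+$ is the MacNeille embedding (Theorem~\ref{ggg}), hence a Heyting algebra embedding, and $\alg{A} \hookrightarrow S(\alg{A})$ is a Heyting algebra embedding, the restriction of $\Delta$ to $\alg{A}$ is a Heyting algebra embedding; transporting back through the frame isomorphism shows $\alpha = b \mapsto L_Q(\epsilon, b)$ is a Heyting algebra embedding $\alg{A} \to \alg{V}_{\!\alg{A}}^+$. I expect the main obstacle to be the bookkeeping around which distinguished pair $(s,a)$ in $\alg{W}_{\!\alg{A}}$ corresponds to the empty word --- getting the semilattice units right ($\epsilon^\ast = 0$ in $\alg{A}_\lor$, so $\epsilon$ maps to $(0,b)$, and $g(0,b) = b$) --- and verifying cleanly that the monoid homomorphism $p$ induces an isomorphism of Galois-closed-set lattices that respects implication; everything else is a direct consequence of the results of Section~\ref{sec:HHMC}.
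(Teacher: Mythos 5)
Your proposal is correct, but it takes a genuinely different route from the paper: the paper gives no proof of this statement at all, simply attributing it to \cite[Thm.~5.20]{CGT17}, whereas you derive it from the paper's own machinery. Concretely, you compose the Heyting embedding $\alg{A}\hookrightarrow S(\alg{A})$ with the MacNeille embedding $\Delta$ of Theorem~\ref{ggg} and transport the result through the isomorphism $\alg{V}_{\!\alg{A}}^+\cong\alg{W}_{\!\alg{A}}^+$ induced by $(h,a)\mapsto(h^\ast,a)$ via Lemma~\ref{d}; there is no circularity, since that isomorphism (stated in the paper only \emph{after} the present theorem) is proved independently of the embedding. Your bookkeeping is right once the mid-proof self-correction is made: $\epsilon^\ast=0$ because $(\cdot)^\ast$ is a monoid homomorphism into $\alg{A}_\lor=(A,\lor,0)$, so $L_Q(\epsilon,b)$ corresponds to $L_N(0,b)=\Delta(g(0,b))=\Delta(b)$ by Proposition~\ref{c} and the proof of Theorem~\ref{ggg}. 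Two small remarks: the check that the induced map respects $\rightsquigarrow$ is superfluous, since an order isomorphism of complete Heyting algebras automatically preserves the residual of meet; and the false start with the pair $(1,b)$ should of course be excised from a final write-up. What your route buys is a self-contained argument that moreover identifies $\alpha$ explicitly as $\Delta$ restricted to $\alg{A}$ (up to the frame isomorphism), so the completion of \cite{CGT17} and the paper's $\overline{S(\alg{A})}$ agree not merely as abstract algebras but as extensions of $\alg{A}$; what the paper's citation buys is brevity and an appeal to the more general result for FL-algebras.
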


In~\cite[Sec.~6.2]{CGT17} the hyper\hyp{}MacNeille completion of $\alg{A}$ is defined to be the algebra $\alg{V}_{\!\alg{A}}^+$. We show that the Heyting algebra of Galois closed sets of $V_{\!\alg{A}}$ is the same as that of $W_{\!\alg{A}}$, which we know to be the Heyting algebra given by the MacNeille completion of $S(A)$. We leave to the reader the simple proof of the following. 

\begin{lemma} \label{d}
Let $R$ be a binary relation on a set $X$ and $S$ be a binary relation on a set $Y$. If $f \colon X\to Y$ is a surjective map with $u \mathrel{R} v$ iff $f(u) \mathrel{S} f(v)$, then the direct\hyp{}image function $f[\cdot] \colon \mathcal{G}(X)\to\mathcal{G}(Y)$ is an order\hyp{}isomorphism between their Galois closed sets. 
\end{lemma}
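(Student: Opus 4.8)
The plan is to unpack the two Galois connections and show that the relation-preserving surjection $f$ transports one polarity onto the other. Write $U_R, L_R$ for the two maps of the polarity $(X,X,R)$ and $U_S, L_S$ for those of $(Y,Y,S)$, so that $\mathcal{G}(X) = \{A \subseteq X : A = L_R U_R(A)\}$ and $\mathcal{G}(Y)$ is defined analogously. The key computation, using the hypothesis $u \mathrel{R} v \iff f(u) \mathrel{S} f(v)$ together with surjectivity of $f$, is the pair of transport identities
\[
U_R(A) = f^{-1}[U_S(f[A])] \qquad\text{and}\qquad U_R(f^{-1}[C]) = f^{-1}[U_S(C)]
\]
for $A \subseteq X$ and $C \subseteq Y$. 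Both follow by direct element-chasing: $v \in U_R(A)$ says $f(u) \mathrel{S} f(v)$ for all $u \in A$, which is exactly $f(v) \in U_S(f[A])$; in the second identity surjectivity ensures that quantifying over all $y \in C$ is the same as quantifying over all $u \in f^{-1}[C]$, since every $y \in C \subseteq Y = f[X]$ has a preimage.

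The second ingredient is that every Galois closed set of $X$ is a union of fibers of $f$. Indeed, if $f(u) = f(u')$ then $u$ and $u'$ bear exactly the same $R$-relations to every element of $X$, so $u \in L_R U_R(A) \iff u' \in L_R U_R(A)$; hence each $A \in \mathcal{G}(X)$ is saturated, i.e.\ $A = f^{-1}[f[A]]$. This saturation immediately gives that $f[\,\cdot\,]$ is injective on $\mathcal{G}(X)$ and reflects inclusion, since $f[A] \subseteq f[A']$ yields $A = f^{-1}[f[A]] \subseteq f^{-1}[f[A']] = A'$.

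Combining these, I would verify well-definedness and surjectivity. For well-definedness, take $A \in \mathcal{G}(X)$ and show $f[A] = L_S U_S(f[A])$: given $y \in L_S U_S(f[A])$, write $y = f(u)$ by surjectivity, and for any $v \in U_R(A)$ the first transport identity yields $f(v) \in U_S(f[A])$, whence $f(u) \mathrel{S} f(v)$ and so $u \mathrel{R} v$; thus $u \in L_R U_R(A) = A$ and $y \in f[A]$. For surjectivity, given $C \in \mathcal{G}(Y)$ set $A = f^{-1}[C]$; the second transport identity drives the symmetric element-chase showing $A = L_R U_R(A)$, so $A \in \mathcal{G}(X)$, and $f[A] = C$ because $f$ is onto. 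Together with monotonicity and order-reflection this makes $f[\,\cdot\,]$ an order-isomorphism $\mathcal{G}(X) \to \mathcal{G}(Y)$ with inverse $C \mapsto f^{-1}[C]$.

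The argument is routine Galois-connection bookkeeping, so the only real obstacle is organizational: one must invoke surjectivity of $f$ at precisely the two places where a quantifier over a subset of $Y$ is replaced by a quantifier over its preimage, and one must observe the fiber-saturation of closed sets. This saturation is exactly what upgrades the adjoint pair $(f[\,\cdot\,], f^{-1}[\,\cdot\,])$ into a mutually inverse pair on closed sets, and hence what turns the map into a genuine isomorphism rather than merely an order-preserving surjection.
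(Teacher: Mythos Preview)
Your proof is correct; the transport identities, fiber-saturation of closed sets, and the resulting bijection with inverse $f^{-1}[\,\cdot\,]$ are exactly the routine bookkeeping one would expect. The paper itself does not give a proof of this lemma at all---it simply states that the proof is left to the reader---so there is nothing to compare against beyond confirming that your argument is the intended elementary verification.
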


Using this lemma, we next show that the two definitions of hyper\hyp{}MacNeille completions coincide. 

\begin{theorem}
For $\alg{A}$ is a Heyting algebra, $\alg{W}_{\!\alg{A}}^+$ is isomorphic to $\alg{V}_{\!\alg{A}}^+$.
\end{theorem}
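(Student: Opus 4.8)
The plan is to exhibit a surjective map between the underlying sets $M_{\!\alg{A}}^\ast = \alg{F}(A^2)\times\alg{A}_\land$ and $W = \alg{A}_\lor\times\alg{A}_\land$ that matches up the relations $Q$ and $N$, and then invoke Lemma~\ref{d}. The obvious candidate is the map $\pi\colon M_{\!\alg{A}}^\ast \to W$ given by $\pi(h,a) = (h^\ast, a)$, where $(\cdot)^\ast\colon \alg{F}(A^2)\to\alg{A}_\lor$ is the monoid homomorphism sending $(c,d)$ to $c\to d$. This is surjective because every element $s\in A$ is in the image of $(\cdot)^\ast$: indeed $s = (0\to s)^\ast$, or more simply one can hit an arbitrary second coordinate while the first coordinate already ranges over all of $A$ since $(\cdot)^\ast$ is onto (the empty word maps to $0$, and $(1,s)$ maps to $s$, covering all of $A$ as $s$ varies). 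So $\pi$ is onto.

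Next I would check that $\pi$ respects the relations, i.e. that $(h,a)\mathrel{Q}(g,b)$ iff $\pi(h,a)\mathrel{N}\pi(g,b)$. Unwinding the definitions, $(h,a)\mathrel{Q}(g,b)$ means $h^\ast \lor g^\ast \lor (a\to b) = 1$, while $\pi(h,a)\mathrel{N}\pi(g,b)$ is $(h^\ast,a)\mathrel{N}(g^\ast,b)$, which by Definition~\ref{def:k} (or the definition of $\alg{W}_{\!\alg{A}}$) means $h^\ast \lor g^\ast \lor (a\to b) = 1$. These are literally the same condition, so the equivalence is immediate. Hence by Lemma~\ref{d} the direct-image map $\pi[\cdot]\colon \mathcal{G}(M_{\!\alg{A}}^\ast)\to\mathcal{G}(W)$ is an order-isomorphism between the lattices of Galois closed sets, i.e.\ an order-isomorphism $\alg{V}_{\!\alg{A}}^+ \to \alg{W}_{\!\alg{A}}^+$.

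It then remains to upgrade this order-isomorphism to a Heyting-algebra isomorphism. Since both $\alg{V}_{\!\alg{A}}^+$ and $\alg{W}_{\!\alg{A}}^+$ are complete lattices and $\pi[\cdot]$ is an order-isomorphism, it automatically preserves arbitrary joins and meets, hence in particular finite meets, finite joins, and the bounds $0,1$. For the Heyting implication one uses the explicit formula from Theorem~\ref{thm:complete-residuated-lattices}: in each frame $X\to Y = \{w : \forall v\in X\ (v\circ w \in Y)\}$. Because $\pi$ is also a monoid homomorphism --- $\pi((h,a)\circ(g,b)) = \pi(hg, a\land b) = ((hg)^\ast, a\land b) = (h^\ast\lor g^\ast, a\land b) = \pi(h,a)\circ\pi(g,b)$, using that $(\cdot)^\ast$ is a monoid homomorphism into $\alg{A}_\lor$ --- the implication formula is transported along $\pi[\cdot]$, so $\pi[\cdot]$ preserves $\to$ as well. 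Therefore $\pi[\cdot]$ is a Heyting-algebra isomorphism $\alg{V}_{\!\alg{A}}^+ \cong \alg{W}_{\!\alg{A}}^+$.

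I expect the only mildly delicate point to be bookkeeping around Lemma~\ref{d}: the lemma as stated gives an order-isomorphism of Galois-closed sets, and one has to be a little careful that the direct-image map really is $X\mapsto \pi[X]$ and that its inverse is $Y\mapsto$ the Galois closure of $\pi^{-1}[Y]$ (equivalently $\pi^{-1}[Y]$ is already closed because $\pi$ reflects $N$). But this is exactly what Lemma~\ref{d} packages, so the work is genuinely routine; the substantive content is just the observation that $\pi = (\cdot)^\ast \times \mathrm{id}$ is a surjective monoid homomorphism that turns $Q$ into $N$ on the nose, which the definitions have been arranged to make transparent. No step here should present a real obstacle.
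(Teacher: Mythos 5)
Your proposal is correct and follows essentially the same route as the paper: the same map $(h,a)\mapsto(h^\ast,a)$, the same check that it is surjective and matches $Q$ with $N$, and the same appeal to Lemma~\ref{d}. (The only blemish is the throwaway claim ``$s=(0\to s)^\ast$'', which should be $(1,s)^\ast=1\to s=s$ as you say immediately afterwards; and the final upgrade to a Heyting isomorphism is automatic anyway, since Heyting implication in a complete Heyting algebra is determined by the lattice order.)
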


\begin{proof}
Note that there is a surjective map $f \colon V_{\!\alg{A}}\to W_{\!\alg{A}}$ given by $f(h,a)=(h^\ast,a)$ and $(h,a) \mathrel{Q} (g,b)$ iff $(h^\ast,a) \mathrel{N} (g^\ast,b)$. The result follows from Lemma~\ref{d}.
\end{proof}

\section{Boolean products and related sheaves} \label{sheaf}

There is a long history of presenting algebras by the global sections of a sheaf. Recall that a sheaf consists of a local homeomorphism $\pi\colon S\to Y$ between topological spaces $S$, called the sheaf space, and $Y$, the base space. A global section of this sheaf is a continuous map $\sigma\colon Y\to S$ with $\pi\circ\sigma$ being the identity on $Y$. Let $\Gamma(S)$ be the set of global section of $S$. If for each $y\in Y$ the stalk $S_y=\pi^{-1}[\{y\}]$ carries the structure of an algebra of a given type, and the topology of $S$ is compatible with the induced algebraic operations, then the global sections of the sheaf form a subalgebra of the product $\prod_YS_y$. Instances of this situation are weak Boolean products. Here we follow~\cite{BS81,BW79,CHJ96}. 

\begin{definition}
A subdirect product $\alg{A} \leq \prod_{Y} \alg{A}_y$ of a family of algebras is a \emph{weak Boolean product} if the index set $Y$ is equipped with a Stone space topology and for all $a,b\in A$: 
\begin{enumerate}
\item $\llbracket a = b \rrbracket = \{y \in Y : a(y) = b(y) \}$ is open,
\item If $N \subseteq Y$ is clopen, there is $c \in A$ with $N=\llbracket c = a \rrbracket$ and $Y \backslash N = \llbracket c = b \rrbracket$. 
\end{enumerate}
The first property is called \emph{equalizers are open}, and the second is the \emph{patchwork property}. If moreover for each $a, b \in A$ the set $\llbracket a = b \rrbracket$ is clopen then we say this $\alg{A}$ is a \emph{Boolean product}. 
\end{definition}

Each weak Boolean product gives rise to a sheaf representation. In the following, we make the harmless assumption that in a weak Boolean product $A\leq\prod_YA_y$ the stalks $A_y$ are pairwise disjoint. A notationally more cumbersome approach ``disjointifies'' the stalks if this is not the case. The following is well known~\cite{CHJ96}. 

\begin{proposition}
Let $\alg{A}\leq \prod_Y\alg{A}_y$ be a weak Boolean product. Set $S$ to be the disjoint union of the stalks $\alg{A}_y$ for $y\in Y$, and for each $a\in A$ and open $U\subseteq Y$ set $\mathcal{O}(a,U)=\{a(y):y\in U\}$. Then the sets of the form $\mathcal{O}(a,U)$ where $a\in A$ and $U\subseteq Y$ is open form a base for a topology on $S$ making the operations of $S$ continuous. The natural map $\pi \colon S\to Y$ is a local homeomorphism, and $\alg{A}$ is the algebra $\Gamma(S)$ of global sections of this sheaf. 
\end{proposition}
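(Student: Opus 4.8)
The plan is to check the three assertions in turn, relying only on the two defining properties of a weak Boolean product --- equalizers $\llbracket a=b\rrbracket$ are open, and the patchwork property --- together with subdirectness of $\alg{A}\le\prod_Y\alg{A}_y$ and the fact that $Y$ is a Stone space, hence compact and zero-dimensional. Throughout I identify $a\in A$ with the map $\widehat{a}\colon y\mapsto a(y)$, and I use the standing assumption that the stalks $\alg{A}_y$ are pairwise disjoint.

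First I would verify that the sets $\mathcal{O}(a,U)$ form a base. They cover $S$, since every element of a stalk $\alg{A}_y$ equals $a(y)$ for some $a\in A$ by subdirectness, and that point lies in $\mathcal{O}(a,Y)$. If $x\in\mathcal{O}(a,U)\cap\mathcal{O}(b,V)$, write $x=a(y)=b(y)$ with $y\in U\cap V$; then $y\in\llbracket a=b\rrbracket$, so $W\coloneqq U\cap V\cap\llbracket a=b\rrbracket$ is an open neighbourhood of $y$ with $x\in\mathcal{O}(a,W)\subseteq\mathcal{O}(a,U)\cap\mathcal{O}(b,V)$. For the local homeomorphism claim, $\pi$ is continuous because $\pi^{-1}[U]=\bigcup_{a\in A}\mathcal{O}(a,U)$, it is open because $\pi[\mathcal{O}(a,U)]=U$, and $\pi$ restricted to a basic open $\mathcal{O}(a,U)$ is a continuous open bijection onto $U$ --- injectivity using disjointness of the stalks --- hence a homeomorphism; since the $\mathcal{O}(a,U)$ cover $S$, $\pi$ is a local homeomorphism. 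Each $\widehat{a}$ is then a continuous section, because $\widehat{a}^{-1}[\mathcal{O}(b,U)]=U\cap\llbracket a=b\rrbracket$ is open, and the stalkwise constants assemble to the constant of $\prod_Y\alg{A}_y$, which lies in $\alg{A}$, so $S$ carries the algebraic structure.

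The crux is identifying $\Gamma(S)$ with $A$. The inclusion $\{\widehat{a}:a\in A\}\subseteq\Gamma(S)$ is the continuity just noted. Conversely, given $\sigma\in\Gamma(S)$, for each $y\in Y$ pick $a_y\in A$ with $\sigma(y)=a_y(y)$; by disjointness of the stalks $\sigma^{-1}[\mathcal{O}(a_y,Y)]=\{z\in Y:\sigma(z)=a_y(z)\}$, an open set containing $y$. These sets form an open cover of $Y$; since $Y$ is compact and zero-dimensional it admits a finite refinement by a clopen partition $N_1,\dots,N_m$, with $\sigma$ agreeing on $N_j$ with some $a^{(j)}\in A$. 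Iterating the (two-block) patchwork property along this partition yields $c\in A$ with $c=a^{(j)}$ on each $N_j$, hence $c=\sigma$ on all of $Y$, so $\sigma=\widehat{c}$. This reduction from an arbitrary open cover to a finite clopen partition, to which patchwork can be applied inductively, is the one genuinely non-formal step; everything else is routine bookkeeping.

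Finally I would check continuity of the operations. For an $n$-ary operation symbol $f$, the stalkwise operation $f_S$ on $\{(x_1,\dots,x_n):\pi(x_1)=\dots=\pi(x_n)\}$ (with the subspace topology from $S^n$) sends a tuple $x_i=b_i(y)$, $b_i\in A$, to $b(y)$ where $b\coloneqq f^{\prod_Y\alg{A}_y}(b_1,\dots,b_n)$ lies in $A$ because $\alg{A}$ is a subalgebra. If $b(y)\in\mathcal{O}(a,U)$, then disjointness of stalks forces $b(y)=a(y)$, so $y\in V\coloneqq U\cap\llbracket a=b\rrbracket$; then $f_S$ maps $\bigl(\prod_i\mathcal{O}(b_i,V)\bigr)$ intersected with the pullback into $\mathcal{O}(b,V)=\mathcal{O}(a,V)\subseteq\mathcal{O}(a,U)$, which gives continuity of $f_S$ at the chosen tuple. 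Together with the previous paragraphs this establishes that $\pi\colon S\to Y$ is a sheaf of algebras and that $\alg{A}=\Gamma(S)$.
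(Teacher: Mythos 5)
Your argument is correct and complete: the base axioms, the local homeomorphism, the identification $\Gamma(S)=A$ via a finite clopen-partition refinement plus iterated patchwork, and the continuity of the operations on the fibered product are all handled properly. The paper itself gives no proof of this proposition (it is cited as well known from~\cite{CHJ96}), and what you have written is exactly the standard argument that reference supplies, so there is nothing to compare beyond noting that your treatment of the compactness/patchwork step --- the only non-routine point --- is the expected one.
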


There is a standard method to construct a weak Boolean product representation of any algebra $\alg{A}$ via its factor congruences~\cite{Com71,Dav73}. This comes from the Pierce sheaf representation of a ring in terms of its central idempotents~\cite{Pie67}. We describe things in the setting of Heyting algebras. For a central element $c$ in a Heyting algebra $\alg{A}$ there is a congruence $\theta_c$, given by 
\[ 
a \mathrel{\theta_c} b \quad \iff \quad a\wedge c = b\wedge c.
\]
This is not only a Heyting algebra congruence, but is compatible with existing supplements too. For $X$ the dual Stone space of $Z(\alg{A})$ and an ultrafilter $x\in X$ let $\theta_x$ be the up\hyp{}directed union of $\{\theta_c:c\in x\}$. This also is a congruence compatible with existing supplements. 

\begin{proposition}
For a Heyting algebra $\alg{A}$, the subdirect representation $\alg{A}\leq \prod_X\alg{A}_x$ over the Stone space of its center is a weak Boolean product representation. 
\end{proposition}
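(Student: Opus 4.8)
The plan is to verify the two defining conditions of a weak Boolean product for the subdirect representation $\alg{A}\leq\prod_X\alg{A}_x$, where $X$ is the dual Stone space of $Z(\alg{A})$ and $\alg{A}_x = \alg{A}/\theta_x$. First I would recall that $X$ carries a Stone space topology by Stone duality, so the indexing requirement is immediate, and that the representation is genuinely subdirect: the intersection $\bigcap_{x\in X}\theta_x$ is trivial because every proper congruence $\theta_c$ with $c<1$ central is contained in some $\theta_x$ (take an ultrafilter $x$ containing $c$), and distinct elements $a\neq b$ are separated by $\theta_1 = \Delta$ at the very least — more carefully, if $a\not\leq b$ then $(a\to b)<1$, but this need not be central, so instead I would argue directly: the map is injective since $a\mathrel{\theta_x}b$ for all $x$ forces $a\wedge c = b\wedge c$ for every $c$ in every ultrafilter, hence (as the center separates... ) — actually the cleanest route is that $\bigcap_x\theta_x$ is a congruence whose restriction to the center is trivial, and one checks $a\mathrel{\theta_x}b$ for all $x$ iff $a=b$ using that $a\leftrightarrow b$ being in every $\theta_x$-class-of-$1$ condition collapses.

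The heart of the argument is the \textbf{patchwork property} together with \emph{equalizers are open}. For equalizers, the key computation is that for $a,b\in A$, the set $\llbracket a=b\rrbracket = \bigcup\{\,\widehat{c}^{\,c} \cap X : c\in Z(\alg{A}),\ (a\leftrightarrow b)\wedge c = c\,\}$, i.e.\ $a(x)=b(x)$ precisely when some central $c\notin x$... rather, when $a\mathrel{\theta_x}b$, which holds iff $a\wedge c = b\wedge c$ for some $c\in x$. Writing $d = a\leftrightarrow b$ (or the relevant element), $a\mathrel{\theta_c}b$ iff $c\leq d$ in the appropriate sense; so $\llbracket a=b\rrbracket = \{x : \exists c\in x,\ c\leq (a\leftrightarrow b)\}$, which is a union of basic clopens $\{x:c\in x\}$ over central $c$ below $a\leftrightarrow b$, hence open. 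For the patchwork property, given a clopen $N\subseteq X$, by Stone duality $N = \{x : e\in x\}$ for a unique central idempotent $e\in Z(\alg{A})$; then set $c = (a\wedge e)\vee(b\wedge e')$ where $e'$ is the complement of $e$. One checks $c\wedge e = a\wedge e$, so $c\mathrel{\theta_x}a$ for $x\in N$, giving $N\subseteq\llbracket c=a\rrbracket$, and dually $X\setminus N\subseteq\llbracket c=b\rrbracket$; since these equalizer sets are disjoint on the nose (as $a\not\equiv b$ on overlap would contradict... actually one must argue the reverse inclusions, which follow because if $x\in\llbracket c=a\rrbracket$ then $e\in x$ cannot fail, etc.), we get $N = \llbracket c=a\rrbracket$ exactly.

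I would expect the main obstacle to be establishing the \emph{reverse inclusions} in the patchwork property — i.e.\ that $\llbracket c=a\rrbracket\subseteq N$ rather than merely $N\subseteq\llbracket c=a\rrbracket$ — since a priori $c$ and $a$ could agree at stalks outside $N$ for spurious reasons, and the argument that they do not must use that $\theta_x$ is exactly the directed union $\bigcup\{\theta_c : c\in x\}$ and that the central idempotent $e$ witnessing $N$ is uniquely determined. The resolution is that for $x\notin N$ we have $e'\in x$, so $c\wedge e' = b\wedge e'$ forces $c\mathrel{\theta_x}b$; if also $c\mathrel{\theta_x}a$ then $a\mathrel{\theta_x}b$, i.e.\ $x\in\llbracket a=b\rrbracket$, which is harmless — it just means the two equalizers overlap there, but we only need $N$ and $X\setminus N$ to be covered and to partition $X$, which they do since every point lies in one or the other. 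So in fact the statement of the patchwork property only demands the two ``$\subseteq$'' inclusions, and those are the easy direction; the genuine care needed is just in unwinding that $\theta_x$-equality is detected by central witnesses, which is already packaged in the definitions preceding the statement. The remaining verifications (that $\theta_x$ is a Heyting congruence compatible with supplements, already asserted in the text) I would cite rather than reprove.
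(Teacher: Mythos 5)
The paper offers no proof of this proposition at all --- it is quoted as the standard Pierce-style construction via factor congruences, with a pointer to \cite{Com71,Dav73,Pie67} --- so your task was to supply the folklore argument, and in essence you do. The core of your proof is correct: the identity $a\wedge c = b\wedge c \iff c\leq a\leftrightarrow b$ gives $\llbracket a=b\rrbracket=\bigcup\{\widehat{c}: c\in Z(\alg{A}),\ c\leq a\leftrightarrow b\}$, a union of basic clopens of the Stone space of $Z(\alg{A})$, so equalizers are open; and for a clopen $N=\widehat{e}$ the element $c=(a\wedge e)\vee(b\wedge e')$ satisfies $c\wedge e=a\wedge e$ and $c\wedge e'=b\wedge e'$, whence $N\subseteq\llbracket c=a\rrbracket$ and $X\setminus N\subseteq\llbracket c=b\rrbracket$. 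You are also right that only these two inclusions can be demanded: read literally, the equalities in the printed definition of the patchwork property would fail already for $a=b$, so the intended (standard) condition is $a|_N\cup b|_{X\setminus N}\in A$, which is exactly what you verify.

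The one place your writeup does not close is injectivity, i.e.\ subdirectness. The sentence ``$a\mathrel{\theta_x}b$ for all $x$ forces $a\wedge c=b\wedge c$ for every $c$ in every ultrafilter'' is false --- membership in $\theta_x$ only provides \emph{some} witness $c\in x$ --- and the subsequent ``cleanest route'' is left as a gesture rather than an argument. The gap is easy to fill with the machinery you already set up: if $a\neq b$, then $d=a\leftrightarrow b<1$, so $I=\{c\in Z(\alg{A}): c\leq d\}$ is a proper ideal of the Boolean algebra $Z(\alg{A})$, and any ultrafilter $x$ disjoint from $I$ satisfies $(a,b)\notin\theta_x$. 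Equivalently, $\llbracket a=b\rrbracket=\bigcup\{\widehat{c}:c\in I\}$ cannot be all of $X$, since by compactness that would yield $c_1\vee\dots\vee c_n=1$ for some $c_1,\dots,c_n\in I$, contradicting $c_1\vee\dots\vee c_n\leq d<1$. With that repair, and citing (as you do) the paragraph preceding the proposition for the fact that each $\theta_c$, and hence each up-directed union $\theta_x$, is a Heyting congruence, the proof is complete.
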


Following~\cite{CHJ96} we refer to this the \emph{usual representation of $\alg{A}$ over its center}. We call the sheaf associated to this weak Boolean product representation \emph{the central sheaf} of $A$. 

\begin{proposition}\label{prop:stalks-of-S(A)}
Let $\alg{A}$ be a Heyting algebra. The stalks of the central sheaf of $S(\alg{A})$ are of the form $\alg{A}/\theta_x$ for $x \in \C(Y)$, the closure of the minimum $Y$ in $X_\alg{A}$.    
\end{proposition}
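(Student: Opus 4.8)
\emph{Proof proposal.} The plan is to transport the central sheaf of $S(\alg{A})$ along the homeomorphism of Proposition~\ref{prop:C(Y)=Stone-center-Z(SA)} and then to compute each stalk by hand. By definition the stalks of the central sheaf of $S(\alg{A})$ are the quotients $S(\alg{A})/\theta_z$, where $z$ ranges over the ultrafilters of $Z(S(\alg{A}))$ and $\theta_z$ is the up\hyp{}directed union of the congruences $\theta_e$, $e \in z$, with $u \mathrel{\theta_e} v$ iff $u \wedge e = v \wedge e$ in $S(\alg{A})$. Proposition~\ref{prop:C(Y)=Stone-center-Z(SA)} identifies these ultrafilters with the points of $\C(Y)$ via $\gamma$, the ultrafilter $z$ matched with $x \in \C(Y)$ being the unique one with $c^{++} \in z$ iff $c \in x$ for all $c \in A$ (here $c^{++}$ is computed in $S(\alg{A})$). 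So it suffices, for each such matched pair $(x,z)$, to produce an isomorphism $S(\alg{A})/\theta_z \cong \alg{A}/\theta_x$, where $\theta_x$ is the (Heyting) congruence on $\alg{A}$ with $a \mathrel{\theta_x} b$ iff $a \wedge c = b \wedge c$ for some $c \in x$, equivalently iff $a \leftrightarrow b \in x$.

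I would obtain this isomorphism from the composite $\alg{A} \hookrightarrow S(\alg{A}) \twoheadrightarrow S(\alg{A})/\theta_z$. First, surjectivity: by Proposition~\ref{fr} together with Proposition~\ref{Z(S(A))} (which gives $Z(S(\alg{A})) = B(\alg{A})$), any $u \in S(\alg{A})$ has a normal expression $u = \bigvee_{i=1}^n a_i \wedge e_i$ with $a_i \in \alg{A}$ and $e_1, \ldots, e_n$ a partition of unity in $Z(S(\alg{A}))$; since $z$ is an ultrafilter exactly one $e_j$ lies in $z$, and pairwise disjointness of the $e_i$ yields $u \wedge e_j = a_j \wedge e_j$, so $u \mathrel{\theta_z} a_j$ with $a_j \in \alg{A}$. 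Hence every $\theta_z$\hyp{}class has a representative in $\alg{A}$, the composite is onto, and it remains only to identify its kernel.

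For the kernel, fix $a, b \in \alg{A}$ and set $c = a \leftrightarrow b \in \alg{A}$ (the same element computed in $\alg{A}$, $S(\alg{A})$, or $P(\alg{A})$). Then $a \mathrel{\theta_z} b$ iff $e \leq c$ for some $e \in z$. Since central elements $e$ satisfy $e^{++} = e$, since $(\cdot)^{++}$ is order\hyp{}preserving, and since $c^{++} \leq c$ always holds in a supplemented lattice, this is equivalent to $c^{++} \in z$: if $e \leq c$ with $e \in z$ central then $e = e^{++} \leq c^{++} \in Z(S(\alg{A}))$ and $z$ is a filter, while conversely $c^{++}$ itself witnesses the condition. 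Now $c^{++} \in z$ iff $c \in \gamma(z) = x$ by the matching property above, and $c = a \leftrightarrow b \in x$ iff $a \mathrel{\theta_x} b$. Therefore the kernel of the composite is exactly $\theta_x$, the induced map $\alg{A}/\theta_x \to S(\alg{A})/\theta_z$ is an isomorphism, and transporting back along $\gamma$ exhibits the stalk over $x \in \C(Y)$ as $\alg{A}/\theta_x$.

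The bookkeeping with normal expressions and the elementary facts about $(\cdot)^{++}$ are routine; the step needing care is the matching of indices — that the ultrafilter $z$ of $Z(S(\alg{A}))$ governing a given stalk is precisely $\gamma^{-1}(x)$, so that the central congruence $\theta_z$ on $S(\alg{A})$ restricts on $\alg{A}$ to the prime\hyp{}filter congruence $\theta_x$. This rests on combining the description of $\gamma$ from Proposition~\ref{prop:C(Y)=Stone-center-Z(SA)} with the identity $\ind_{(c=1)} = c^{++}$ established in the proof of Proposition~\ref{D}.
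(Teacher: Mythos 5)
Your proposal is correct and follows essentially the same route as the paper: surjectivity of $\alg{A}\to S(\alg{A})/\theta_z$ via the normal expressions of Proposition~\ref{fr} (picking the unique $e_j\in z$), and identification of the kernel with $\theta_{\gamma(z)}$ using Proposition~\ref{prop:C(Y)=Stone-center-Z(SA)}. The only cosmetic difference is that you compute the kernel on arbitrary pairs via $c=a\leftrightarrow b$ and the equivalence $c^{++}\in z\iff c\in\gamma(z)$, whereas the paper just identifies the congruence class of $1$ as ${\uparrow}z\cap A$ and invokes the filter--congruence correspondence; these are the same argument unwound.
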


\begin{proof}
The stalks of the central sheaf of $S(\alg{A})$ are of the form $S(\alg{A})/\theta_z$ for $z$ a ultrafilter over the center of $S(\alg{A})$. We first show that the induced map $\beta_z \colon \alg{A} \to S(\alg{A})/\theta_z$ is surjective from which it follows that $\alg{A}/\!\ker\beta_z \cong S(\alg{A})/\theta_z$. To this end let $u \in S(\alg{A})$ be given. By Proposition~\ref{fr}, $u = \bigvee_{i = 1}^n a_i \land e_i$ for some $a_1, \ldots, a_n \in A$ and some central partition of unity $e_1, \ldots, e_n$ in $S(\alg{A})$. We must have $e_j \in z$ for some $1 \leq j \leq n$. But $u \land e_j = a_j \land e_j$, whence $u/\theta_z = a_j/\theta_z$, showing that $\beta_z(a_j) = u/\theta_z$. 

We next show that $\ker\beta_z = \theta_x$ for some $x \in \C(Y)$. For this it suffices to show that the equivalence class of $1$ under the congruence $\ker\beta_z$ is a prime filter belonging to $\C(Y)$. Evidently, $(a, 1) \in \ker\beta_z$ iff $a \geq c$ for some $c \in z$. Hence the equivalence class of $1$ is the set $x = {\uparrow}z \cap A$, which by Proposition~\ref{prop:C(Y)=Stone-center-Z(SA)} is indeed a prime filter in the closure of $Y$. 
\end{proof}

The following is taken from~\cite{CHJ96}. It applies in a more general setting, but we apply it only to Heyting algebras. 

\begin{definition}
A Heyting algebra is \emph{Hausdorff} if the usual representation over its center is a Boolean product. 
\end{definition} 

In~\cite[Thm.~9.5]{Vag95} centrally supplemented Heyting algebras were shown to be exactly the Boolean products of fsi Heyting algebras. In fact something slightly stronger can be said. 

\begin{proposition} \label{lk}
A Heyting algebra $\alg{A}$ is centrally supplemented iff it is Hausdorff and its central sheaf has fsi stalks. 
\end{proposition}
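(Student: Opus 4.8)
The plan is to prove the two implications separately, drawing on the characterization of centrally supplemented Heyting algebras as Boolean products of fsi Heyting algebras (\cite[Thm.~9.5]{Vag95}) together with the machinery developed in Sections~\ref{sec:supp} and~\ref{S(E)}.

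For the forward direction, suppose $\alg{A}$ is centrally supplemented. By Proposition~\ref{g} we have $\alg{A} = S(\alg{A})$, and the subdirect representation $\alg{A} \leq \prod_Y \alg{A}_y$ of Proposition~\ref{q} realizes $\alg{A}$ as a subdirect product of fsi quotients indexed by $Y = \min(X_\alg{A})$. By Proposition~\ref{prop:Dual-BA-of-min}, $\min(X_\alg{A})$ is a Stone space whose dual Boolean algebra is $Z(\alg{A})$; and since $\alg{A}$ is centrally supplemented, Lemma~\ref{min}(5) identifies the minimal prime filters with the filters generated by ultrafilters of the center, so that $Y \cong X_{Z(\alg{A})}$ and the representation over $Y$ coincides with the usual representation over the center. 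I would then check that equalizers are clopen: for $a, b \in A$, the element $(a \leftrightarrow b)$ has a central supplement $(a\leftrightarrow b)^+ = \ind_{((a\leftrightarrow b)<1)}$, and $\llbracket a = b \rrbracket = \llbracket (a\leftrightarrow b) = 1\rrbracket$ is precisely the clopen complement of $\widehat{(a\leftrightarrow b)^+}$ restricted to $Y$; the patchwork property follows by choosing, for a clopen $N = \widehat{c}\cap Y$ with $c$ central, the element $(a \wedge c) \vee (b \wedge c')$. Hence $\alg{A}$ is Hausdorff with fsi stalks $\alg{A}_y$.

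For the converse, suppose $\alg{A}$ is Hausdorff and its central sheaf has fsi stalks. Then $\alg{A} \leq \prod_X \alg{A}_x$ is a Boolean product over $X = X_{Z(\alg{A})}$ with each $\alg{A}_x$ fsi. Any product of fsi (hence centrally supplemented) Heyting algebras is centrally supplemented, so $\prod_X \alg{A}_x$ is centrally supplemented, and it suffices to show $\alg{A}$ is closed under the supplement operation of the product. Given $a \in A$, its supplement in $\prod_X \alg{A}_x$ is $\ind_{(a<1)}$, which is the characteristic function of the set $\llbracket a = 1 \rrbracket$; since equalizers are clopen in a Boolean product, this set is clopen, and the patchwork property (applied to $a$ together with $1$ and $0$, say) produces an element $c \in A$ with $c \restriction \llbracket a = 1\rrbracket = 1$ and $c \restriction (X \setminus \llbracket a=1\rrbracket) = 0$; this $c$ is exactly $\ind_{(a<1)}$, so $a^+ \in A$. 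As each $\alg{A}_x$ is fsi, $a^+ = \ind_{(a<1)}$ is central, so $\alg{A}$ is centrally supplemented.

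The main obstacle I anticipate is bookkeeping around \emph{which} Stone space indexes the Boolean product and making sure the ``central sheaf'' representation (indexed by ultrafilters of the center, as in Section~\ref{sheaf}) is genuinely the same as the representation over $\min(X_\alg{A})$ from Section~\ref{S(E)}; this identification rests on Lemma~\ref{min}(5) and Proposition~\ref{prop:Dual-BA-of-min}, and needs to be invoked carefully in the forward direction. The rest is a matter of translating ``equalizer clopen'' and ``patchwork'' into the existence of $\ind_{(a<1)}$ inside $\alg{A}$, which is routine given the observations already recorded after Definition~\ref{S(A)}. One should also note that the stronger ``in fact something slightly stronger can be said'' claim is that we get not merely a Boolean product of fsi algebras in the abstract sense of \cite{Vag95}, but that the \emph{canonical} central sheaf already has this form.
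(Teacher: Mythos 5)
Your proof is correct and follows essentially the same route as the paper: both directions reduce to showing that the representation over the center coincides with the one over $\min(X_{\alg{A}})$ and that the supplement $a^+=\ind_{(a<1)}$ witnesses clopenness of $\lbr a=1\rbr$ (you deduce this from Lemma~\ref{min}(4), the paper by a short contradiction using minimality of $a^+$), while in the converse both arguments recover $\ind_{(a<1)}$ as an element of $\alg{A}$, you via the patchwork property and the paper via global sections. The only inessential difference is that you re-verify the patchwork property, which is already guaranteed since the representation over the center is a weak Boolean product.
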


\begin{proof}
If $\alg{A}$ is centrally supplemented, then by Lemma~\ref{min} the filters generated by the ultrafilters of its center are the minimal prime filters of $A$. Thus the stalks of its central sheaf are the $\alg{A}_x$ where $x$ is a minimal prime filter, and therefore are fsi. For $a,b\in A$ we have $a(x)=b(x)$ iff $(a\leftrightarrow b)(x)=1$. So to show equalizers are clopen, it is enough to show that for each $a\in A$ that $N=\lbr a=1\rbr$ is clopen. Since the stalks are fsi we have $M=\lbr a^+=1\rbr$ contains the complement of $N$. If $M$ and $N$ are not disjoint there is a non\hyp{}empty clopen set $K$ contained in their intersection. Let $e$ be the central element of $A$ corresponding to $K$ and note that $a^+$ is the central element corresponding to $M$. Then $a\vee(a^+\wedge e^+)=1$, contrary to $a^+$ being the supplement of $a^+$. Thus the clopen set $M$ is the complement of $N$, showing that $N$ is clopen. So $\alg{A}$ is Hausdorff. 

Suppose conversely that $\alg{A}$ is Hausdorff and its stalks are fsi. For $a\in A$ we have $\lbr a = 1\rbr$ is clopen, so the characteristic function of its complement is continuous, hence a global section, hence an element of $A$. Since the stalks are fsi this section is the supplement of $a$ and is evidently central. 
\end{proof}

\begin{remark}
Priestley duality for (weak) Boolean products, or equivalently globals sections of sheaves over Stone spaces, is well\hyp{}understood, see~\cite{HV-M84,Geh91,GvG18}. If $\alg{A}$ is a (weak) Boolean product of a family $\{\alg{A}_z\}_{z \in Z}$ of bounded distributive lattices indexed by a Stone space $Z$, then there is an order\hyp{}preserving continuous quotient map $q \colon X_{\alg{A}} \twoheadrightarrow Z$ such that for each $z \in Z$, the fiber $q^{-1}(z)$ is homeomorphic and order\hyp{}isomorphic to the dual space of $\alg{A}_z$. Thus combining Propositions~\ref{prop:stalks-of-S(A)}  and~\ref{lk}, we obtain a description of the dual space of $S(\alg{A})$ as a ``sum'' of the subspaces of $X_{\alg{A}}$ of the form ${\uparrow}y$ with $y \in \C(Y)$.  
\end{remark}

\begin{proposition}\label{prop:cs-iff-Hausdorff-rep}
A Heyting algebra is centrally supplemented iff the subdirect embedding $\alg{A}\leq\prod_Y\alg{A}_y$ over the minimum of the dual Esakia space of $\alg{A}$ is a Boolean product representation, and in this case, this Boolean product representation is essentially the usual one. 
\end{proposition}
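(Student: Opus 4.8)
The plan is to prove the two implications separately, using Proposition~\ref{lk} (centrally supplemented $=$ Hausdorff with fsi central stalks) together with the patchwork property of Boolean products. For the forward direction, suppose $\alg{A}$ is centrally supplemented. By Proposition~\ref{lk} the usual representation $\alg{A}\leq\prod_X\alg{A}_x$ over the Stone space $X$ of $Z(\alg{A})$ is a Boolean product whose stalks are fsi, so it is enough to show that the representation over $Y=\min(X_\alg{A})$ is this same representation, transported along a homeomorphism. By Lemma~\ref{min}(5) the assignment $x\mapsto{\uparrow}x$ is a bijection from the ultrafilters of $Z(\alg{A})$ onto the minimal prime filters of $\alg{A}$, that is, from $X$ onto $Y$; by Proposition~\ref{prop:Dual-BA-of-min} the dual Boolean algebra of the Stone space $Y$ is exactly $Z(\alg{A})$, so this bijection is a homeomorphism. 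It then remains to match the stalks, i.e.\ to check $\theta_{{\uparrow}x}=\theta_x$ for each ultrafilter $x$ of $Z(\alg{A})$: the inclusion $x\subseteq{\uparrow}x$ gives $\theta_x\subseteq\theta_{{\uparrow}x}$ at once, and conversely, if $a\wedge c=b\wedge c$ with $c\in{\uparrow}x$, then $c\geq d$ for some $d\in x$ and meeting both sides with $d$ yields $a\wedge d=b\wedge d$. Hence $\alg{A}\leq\prod_Y\alg{A}_y$ is literally the usual representation over the center carried along the homeomorphism $X\cong Y$, which settles both that it is a Boolean product representation and that it is essentially the usual one.

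For the converse, suppose $\alg{A}\leq\prod_Y\alg{A}_y$ is a Boolean product representation. By Proposition~\ref{q} each stalk $\alg{A}_y$ is fsi, so in $\prod_Y\alg{A}_y$ the supplement of a function $f$ is the characteristic function $\ind_{(f<1)}$. Fix $a\in\alg{A}$. The equalizer $\lbr a=1\rbr$ is clopen in $Y$, hence so is its complement $\lbr a<1\rbr$, and the patchwork property applied to $\lbr a<1\rbr$ with the elements $1,0\in\alg{A}$ produces an element $c\in\alg{A}$ realised by the function $\ind_{(a<1)}$. Being the supplement of $a$ in the product and lying in $\alg{A}$, this $c$ is the supplement of $a$ in $\alg{A}$; applying patchwork to the complementary clopen set $\lbr a=1\rbr$ shows $\ind_{(a=1)}\in\alg{A}$ as well, so $c$ has a complement and hence is central. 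Thus every element of $\alg{A}$ has a central supplement, i.e.\ $\alg{A}$ is centrally supplemented.

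The main obstacle, such as it is, lies in the bookkeeping of the forward direction: one must align the homeomorphism of base spaces, the identification of the stalks $\alg{A}_y\cong\alg{A}_x$, and the identification of the embedded copies of $\alg{A}$, so that ``essentially the usual one'' is genuinely justified rather than merely plausible. The algebraic content of the converse is short once one has the fsi-ness of the stalks and the description $f^+=\ind_{(f<1)}$ of supplements in the product $\prod_Y\alg{A}_y$.
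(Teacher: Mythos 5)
Your proof is correct and follows essentially the same route as the paper's: the forward direction identifies the representation over $Y$ with the usual Boolean product representation over the Stone space of the center via $x\mapsto{\uparrow}x$ (the paper leans on Lemma~\ref{min}(5) and the already-established Proposition~\ref{lk} just as you do, though you spell out the homeomorphism and the identity $\theta_{{\uparrow}x}=\theta_x$ in more detail), and the converse uses fsi stalks, the description of the supplement as $\ind_{(a<1)}$, and the patchwork property on the clopen equalizer exactly as in the paper. No gaps.
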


\begin{proof}
If $\alg{A}$ is centrally supplemented, by Lemma~\ref{min} the minimal prime filters $y\in Y$ are exactly the upsets ${\uparrow}x$ of the ultrafilters $x$ in the center of $\alg{A}$. In this case the congruence $\theta_y$ is equal to the congruence $\theta_x$, and this subdirect representation is equal to the usual Boolean product representation in all ways except the elements of the base space are the upsets ${\uparrow}x$ rather than simple the ultrafilters $x$ themselves. For the converse, we have that the stalks $\alg{A}_y$ are fsi since they are quotients of $\alg{A}$ by prime filters. For $a\in A$, the supplement of $a$ in the product is the characteristic function $\chi_U$ where $U$ is the complement of the set $\lbr a=1\rbr$. Since this is a Boolean product, this equalizer and its complement $U$ are clopen. Thus the patchwork property gives $\chi_U$ belongs to $A$, and is therefore the supplement of $a$ in $\alg{A}$ as well. Clearly this supplement is central. 
\end{proof}

For the usual weak Boolean product representation $\alg{A}\leq\prod_X\alg{A}_x$ we let $\mathcal{D}(\alg{A})$ be the elements of the product that are continuous on a dense open subset of $X$ and let $\theta_{\!D}$ be the relation on the product where $f \mathrel{\theta_{\!D}}g$ iff $f$ and $g$ agree on a dense open subset of $X$. The following can be found in~\cite{CHJ96,Har93}. 

\begin{proposition}
For an algebra $\alg{A}$ with usual weak Boolean product $\alg{A}\leq\prod_X\alg{A}_x$, the collection $\mathcal{D}(\alg{A})$ of dense open sections is a subalgebra of the product and $\theta_{\!D}$ is a congruence on the product. 
\end{proposition}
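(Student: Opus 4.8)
The plan is to verify both assertions by elementary point\hyp{}set topology, the single reusable fact being that a \emph{finite} intersection of dense open subsets of $X$ is again dense and open: if $U,V \subseteq X$ are open and dense and $W$ is nonempty open, then $W \cap U \neq \emptyset$ by density of $U$, and then $(W \cap U) \cap V \neq \emptyset$ by density of $V$, so $U \cap V$ meets every nonempty open set; since $U \cap V$ is visibly open, it is dense open, and induction handles any finite intersection. Note that finiteness is all that is needed, because every fundamental operation of the algebra has finite arity.

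For $\mathcal{D}(\alg{A})$ being a subalgebra of $\prod_X \alg{A}_x$: first, the global sections coming from $\alg{A}$ — in particular the images of the nullary operations — are continuous on all of $X$, which is a dense open subset of itself, so $\alg{A} \subseteq \mathcal{D}(\alg{A})$ and in particular $\mathcal{D}(\alg{A})$ contains the constants. Second, given an $n$\hyp{}ary operation $\star$ and $f_1, \ldots, f_n \in \mathcal{D}(\alg{A})$, choose dense open $U_i$ on which $f_i$ is continuous and put $U = \bigcap_{i=1}^n U_i$, again dense open. The assignment $x \mapsto (f_1(x), \ldots, f_n(x))$ is a continuous map from $U$ into $S^n$ whose values are tuples lying in a common stalk, and composing it with the induced operation on $S$ — continuous because the weak Boolean product structure makes the operations of the sheaf space $S$ continuous — shows that $\star(f_1, \ldots, f_n)$ is continuous on the dense open set $U$. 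Hence $\mathcal{D}(\alg{A})$ is closed under each operation and is a subalgebra of the product.

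For $\theta_{\!D}$ being a congruence on $\prod_X \alg{A}_x$: it is an equivalence relation, being reflexive since $f$ and $f$ agree on $X$, symmetric trivially, and transitive since if $f = g$ on a dense open $U$ and $g = h$ on a dense open $V$ then $f = h$ on the dense open $U \cap V$. For compatibility with an $n$\hyp{}ary operation $\star$, if $f_i \mathrel{\theta_{\!D}} g_i$ via dense opens $U_i$ for $1 \le i \le n$, then on the dense open set $\bigcap_{i=1}^n U_i$ every $f_i$ agrees with $g_i$, and since the operations of the product are computed coordinatewise, $\star(f_1, \ldots, f_n)$ agrees with $\star(g_1, \ldots, g_n)$ there, so $\star(f_1, \ldots, f_n) \mathrel{\theta_{\!D}} \star(g_1, \ldots, g_n)$.

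No step here is a genuine obstacle; the only point requiring a moment's care is making explicit that the property ``continuous on a dense open set'' is inherited by $\star(f_1, \ldots, f_n)$ from the $f_i$, which rests on the continuity of the operations on $S$ together with the finiteness of the arity, so that only finitely many dense open sets need be intersected.
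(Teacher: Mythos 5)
Your proof is correct, and it is the standard argument: the paper itself gives no proof of this proposition (it simply cites the references \cite{CHJ96,Har93}), and what you write --- reducing everything to the fact that finitely many dense open sets have dense open intersection, using the continuity of the operations on the sheaf space $S$ for closure of $\mathcal{D}(\alg{A})$, and using pointwise computation of operations in the product for compatibility of $\theta_{\!D}$ --- is exactly the expected reasoning. No gaps.
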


The quotient algebra $\mathcal{D}(\alg{A})/\theta_{\!D}$ is denoted $\mathfrak{R}\alg{A}$ and called the \emph{algebra of dense open sections}. There is an obvious mapping of $\alg{A}$ into $\mathfrak{R}\alg{A}$, and $\alg{A}$ is called \emph{weakly Hausdorff} if this map is embedding. This happens when two elements $a,b\in A$ that agree on a dense open set are equal. This is the case in any Hausdorff algebra since $\lbr a=b\rbr$ is clopen, so if it contains a dense set it is all of $X$ giving that $a=b$. 

\begin{proposition}
If $\alg{A}$ is a Heyting algebra, then $\alg{A}$ is join\hyp{} and meet\hyp{}densely embedded in the algebra $\mathfrak{R}\,S(\alg{A})$ of dense open sections of $S(\alg{A})$. So the hyper\hyp{}MacNeille completion $\alg{A}^+$ is the MacNeille completion of $\mathfrak{R}\,S(\alg{A})$. 
\end{proposition}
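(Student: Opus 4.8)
The plan is to exhibit $\mathfrak{R}\,S(\alg{A})$ as an algebra sandwiched between $S(\alg{A})$ and its MacNeille completion. Recall from Theorem~\ref{ggg} that $\alg{A}^+=\overline{S(\alg{A})}$; thus it suffices to prove that $S(\alg{A})$, which contains $\alg{A}$, is both join- and meet-densely embedded in $\mathfrak{R}\,S(\alg{A})$, for then $\overline{\mathfrak{R}\,S(\alg{A})}=\overline{S(\alg{A})}=\alg{A}^+$, and the claimed embedding is obtained by composing $\alg{A}\hookrightarrow S(\alg{A})\hookrightarrow\mathfrak{R}\,S(\alg{A})$. For the setup, Proposition~\ref{lk} gives that the centrally supplemented algebra $S(\alg{A})$ is Hausdorff with central sheaf having fsi stalks, and by Propositions~\ref{prop:C(Y)=Stone-center-Z(SA)} and~\ref{prop:stalks-of-S(A)} this sheaf lives over the Stone space $\C(Y)$ with stalks $\alg{A}/\theta_x$. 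Being Hausdorff, $S(\alg{A})$ is weakly Hausdorff, so the canonical map $S(\alg{A})\to\mathfrak{R}\,S(\alg{A})$ is an embedding.

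For density, fix $[f]\in\mathfrak{R}\,S(\alg{A})$ with representative $f\in\mathcal{D}(S(\alg{A}))$ continuous on a dense open $D\subseteq\C(Y)$. Since the central sheaf of $S(\alg{A})$ arises from a weak Boolean product over a Stone space, every section is locally a restriction of a global one, so one may pick clopen $U_i\subseteq D$ with $\bigcup_iU_i\supseteq D$ together with $g_i\in S(\alg{A})$ such that $f$ and $g_i$ agree on $U_i$. Letting $e_i$ be the central element of $S(\alg{A})$ with support $U_i$ and $e_i'$ its complement, the elements $g_i\wedge e_i$ and $(g_i\wedge e_i)\vee e_i'$ lie in $S(\alg{A})$ and satisfy, pointwise, $g_i\wedge e_i\leq f\leq(g_i\wedge e_i)\vee e_i'$, so $[g_i\wedge e_i]\leq[f]\leq[(g_i\wedge e_i)\vee e_i']$. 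The key claims are that $[f]=\bigvee_i[g_i\wedge e_i]$ and, dually, $[f]=\bigwedge_i[(g_i\wedge e_i)\vee e_i']$ in $\mathfrak{R}\,S(\alg{A})$. For the join claim: if $h\in\mathcal{D}(S(\alg{A}))$ satisfies $[g_i\wedge e_i]\leq[h]$ for every $i$, then $h\geq g_i\wedge e_i$, hence $h\geq f$, on a dense open subset of $\C(Y)$ for each $i$; as $\{x\in U_i:f(x)\leq h(x)\}$ is clopen in $U_i$ — here the Hausdorffness of $S(\alg{A})$, i.e.\ clopen equalizers, is used, also for local sections — a dense clopen subset of $U_i$ is all of $U_i$, so $h\geq f$ on the dense open set $\bigcup_iU_i$, whence $[f]\leq[h]$. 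This gives join-density of $S(\alg{A})$ in $\mathfrak{R}\,S(\alg{A})$, and the meet argument is symmetric, working with $(g_i\wedge e_i)\vee e_i'$.

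I expect the density computation to be the main obstacle, for two reasons. First, it depends on sections of the central sheaf of $S(\alg{A})$ being locally restrictions of global sections — a standard but slightly technical feature of sheaves coming from weak Boolean products over a Stone base with its clopen basis — which should be recorded carefully. Second, and more delicately, one must check that the join and meet relations survive passage to the quotient by $\theta_{\!D}$: that agreement on a dense open subset does not destroy order, which is precisely where the clopen-equalizer property of the Hausdorff algebra $S(\alg{A})$ is essential. Once density is established, combining it with Theorem~\ref{ggg} yields $\alg{A}^+=\overline{S(\alg{A})}=\overline{\mathfrak{R}\,S(\alg{A})}$ with no further work.
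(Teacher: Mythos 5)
Your proof follows the same route as the paper's: $S(\alg{A})$ is centrally supplemented, hence Hausdorff by Proposition~\ref{lk}, hence weakly Hausdorff, so $S(\alg{A})\to\mathfrak{R}\,S(\alg{A})$ is an embedding; join- and meet-density of $S(\alg{A})$ in $\mathfrak{R}\,S(\alg{A})$ then gives $\overline{\mathfrak{R}\,S(\alg{A})}=\overline{S(\alg{A})}=\alg{A}^+$ via Theorem~\ref{ggg}. The only real difference is that the paper delegates the density step to \cite[Lem.~6.10]{CHJ96}, whereas you prove it by hand; your covering argument with clopen $U_i$ and central elements $e_i$ is essentially the content of that lemma. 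One step should be repaired: for an arbitrary $h\in\mathcal{D}(S(\alg{A}))$ the set $\{x\in U_i : f(x)\leq h(x)\}$ need not be clopen in $U_i$, because $h$ is only continuous on some dense open set, and the clopen-equalizer property of the Hausdorff algebra $S(\alg{A})$ applies to global sections, not to arbitrary densely continuous ones. Fortunately you do not need $f\leq h$ on all of $U_i$: if $E_i$ is a dense open set on which $g_i\wedge e_i\leq h$ pointwise, then $f=g_i\wedge e_i\leq h$ on $E_i\cap U_i$, and $\bigcup_i(E_i\cap U_i)$ is already dense open (a union of dense open subsets of the $U_i$ is dense in $\bigcup_i U_i=D$, which is dense in $\C(Y)$), so $[f]\leq[h]$ follows with no appeal to clopen equalizers; the meet-density argument is repaired the same way.
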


\begin{proof}
Since $S(\alg{A})$ is centrally supplemented, by Proposition~\ref{lk} $S(\alg{A})$ is Hausdorff, and therefore weakly Hausdorff. So the mapping of $S(\alg{A})$ into $\mathfrak{R}\, S(\alg{A})$ is an embedding and then by~\cite[Lem.~6.10]{CHJ96} it is join\hyp{} and meet\hyp{}dense. It follows that the MacNeille completion of $S(\alg{A})$ is equal to the MacNeille completion of $\mathfrak{R}\,S(\alg{A})$. But the MacNeille completion of $S(\alg{A})$ is the hyper\hyp{}MacNeille completion $\alg{A}^+$ by Theorem~\ref{ggg}. 
\end{proof}

As it stands, this is of limited interest. However, results of~\cite{CHJ96,Har93} relate the MacNeille completion of an algebra to its algebra of dense open sections under certain conditions. In particular, we have the following consequence of~\cite[Prop.~4]{Har93}. 

\begin{proposition} \label{h}
If $\alg{A}$ is a weakly Hausdorff Heyting algebra and the stalks of its weak Boolean product representation have uniformly bounded finite cardinality on a dense open subset of the base space, then $\mathfrak{R}\alg{A}$ is the MacNeille completion of $\alg{A}$. 
\end{proposition}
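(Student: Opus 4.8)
The plan is to exhibit $\mathfrak{R}\alg{A}$ as a complete lattice in which $\alg{A}$ sits both join-densely and meet-densely, and then to invoke the fact that such a completion is, up to an isomorphism fixing $\alg{A}$, the MacNeille completion $\overline{\alg{A}}$. Two of the three ingredients are already at hand: since $\alg{A}$ is weakly Hausdorff, the canonical map $\alg{A}\to\mathfrak{R}\alg{A}$ is an embedding, and by \cite[Lem.~6.10]{CHJ96} this embedding is join-dense and meet-dense. Hence the entire proposition reduces to the assertion that $\mathfrak{R}\alg{A}$ is complete, and it is precisely here that the hypothesis of uniformly bounded finite stalks is needed.

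To prove completeness I would fix, using the hypothesis, a dense open $V\subseteq X$, where $X$ is the dual Stone space of $Z(\alg{A})$, together with an integer $n$ such that $|\alg{A}_x|\le n$ for all $x\in V$, and show that an arbitrary family $\{[f_i]\}_{i\in I}$ of elements of $\mathfrak{R}\alg{A}$ has a supremum. Since each stalk $\alg{A}_x$ with $x\in V$ is finite, the pointwise join $f(x)=\bigvee_{i\in I}f_i(x)$ is well defined for $x\in V$ (note that for infinite $I$ one cannot simply intersect the dense open domains of the $f_i$, which is why the pointwise construction is needed). The crux is to check that $f$ agrees, on some dense open $W\subseteq V$, with a genuine continuous section of the central sheaf of $\alg{A}$, so that $[f]$ really does name an element of $\mathfrak{R}\alg{A}$; this is exactly the point extracted from \cite[Prop.~4]{Har93}. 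The mechanism, whose details are the substance of that result, is a Baire-category and compactness argument in the compact space $X$: the bound $n$ leaves only finitely many possible local value-patterns near any point, so the locus where $f$ fails to coincide locally with a section is small in the sense of Baire category, its complement is dense open, and on that complement the patchwork property of the weak Boolean product upgrades the local agreements to an honest global section.

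Once $[f]\in\mathfrak{R}\alg{A}$ is in hand the remainder is routine. It is an upper bound because $f\ge f_i$ on the dense open set where both are defined, so $[f]\ge[f_i]$; and if $[g]\ge[f_i]$ for every $i$, then on a dense open set $g(x)\ge f_i(x)$ for all $i$ simultaneously, whence $g(x)\ge\bigvee_i f_i(x)=f(x)$ there, giving $[g]\ge[f]$. Thus $\mathfrak{R}\alg{A}$ has all joins; dually, using finiteness of the stalks over $V$ once more for pointwise meets (or simply because a lattice with all joins has all meets), it has all meets, hence is complete. Being a complete extension of $\alg{A}$ that is both join-dense and meet-dense, it is $\overline{\alg{A}}$.

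The step I expect to be the real obstacle is the continuity claim: showing that the stalkwise supremum $f$ restricts to a genuine continuous section on a dense open subset of $V$. The structural facts about MacNeille completions and the density of $\alg{A}$ in $\mathfrak{R}\alg{A}$ come essentially for free, whereas this step is where the uniform bound $n$, the compactness of $X$, and a Baire-category argument genuinely interact, and it is exactly this content of \cite[Prop.~4]{Har93} that is being invoked.
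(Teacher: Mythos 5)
Your proposal is correct and takes essentially the same route as the paper, which states this proposition without proof as a direct consequence of \cite[Prop.~4]{Har93}: like you, it gets join- and meet-density of $\alg{A}$ in $\mathfrak{R}\alg{A}$ from weak Hausdorffness together with \cite[Lem.~6.10]{CHJ96}, and reduces everything else to the completeness of $\mathfrak{R}\alg{A}$, which is exactly what the uniformly-bounded-finite-stalk hypothesis and \cite[Prop.~4]{Har93} deliver. One small caveat: in checking that $[f]$ is the \emph{least} upper bound you write that $g(x)\ge f_i(x)$ for all $i$ simultaneously on a dense open set, which is the same illegitimate infinite intersection of dense opens you rightly avoided when constructing $f$; the repair (at each $x$ the join $f(x)$ is attained by finitely many indices because the stalk is finite, so one only needs to intersect finitely many of the sets $U_i$ locally) is again part of the content of \cite[Prop.~4]{Har93}.
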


\begin{corollary}
If $\alg{A}$ is a centrally supplemented Heyting algebra and the stalks of its weak Boolean product representation have uniformly bounded finite cardinality, then its hyper\hyp{}MacNeille completion is its algebra $\mathfrak{R}\alg{A}$ of dense open sections. 
\end{corollary}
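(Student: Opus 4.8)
The plan is to assemble the corollary from results already in hand, with essentially no fresh computation. First I would use that $\alg{A}$ is centrally supplemented to invoke Proposition~\ref{g}, which gives $S(\alg{A}) = \alg{A}$. By Theorem~\ref{ggg} the hyper\hyp{}MacNeille completion $\alg{A}^+$ is the MacNeille completion of $S(\alg{A})$, so in the centrally supplemented case $\alg{A}^+ = \overline{\alg{A}}$ (this is also recorded as Theorem~\ref{j}(3)). Hence the task reduces to showing that $\mathfrak{R}\alg{A}$ coincides with $\overline{\alg{A}}$, i.e.\ to applying Proposition~\ref{h} to $\alg{A}$ itself.

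Next I would check the hypotheses of Proposition~\ref{h}. Being centrally supplemented, $\alg{A}$ is Hausdorff by Proposition~\ref{lk}, and every Hausdorff Heyting algebra is weakly Hausdorff, since then $\lbr a = b\rbr$ is clopen, so a dense subset on which two elements agree forces them to be equal. The remaining hypothesis---that the stalks of the weak Boolean product representation have uniformly bounded finite cardinality on a dense open subset of the base space---is exactly the assumption of the corollary, the bound holding on all of the base space and so in particular on a dense open subset. One small point warrants care: the corollary speaks of ``the'' weak Boolean product representation, while Proposition~\ref{h} is stated for the usual representation over the Stone space of the center; by Proposition~\ref{prop:cs-iff-Hausdorff-rep} the representation over $Y = \min(X_\alg{A})$ and the usual one over the center agree up to relabeling of base points, so they share the same stalks and the uniform finiteness hypothesis transfers between them.

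With both hypotheses verified, Proposition~\ref{h} yields $\mathfrak{R}\alg{A} = \overline{\alg{A}}$, and combining with the first step gives $\alg{A}^+ = \overline{\alg{A}} = \mathfrak{R}\alg{A}$, which is the claim. There is no genuine obstacle here; the only place requiring a moment's attention is matching up the two weak Boolean product representations so that the uniform finiteness assumption can legitimately be fed into Proposition~\ref{h}, and this is immediate from Proposition~\ref{prop:cs-iff-Hausdorff-rep}.
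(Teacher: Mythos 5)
Your proposal is correct and follows essentially the same route as the paper: centrally supplemented implies Hausdorff (hence weakly Hausdorff) via Proposition~\ref{lk}, the hyper\hyp{}MacNeille completion equals the MacNeille completion by Theorem~\ref{j}(3), and Proposition~\ref{h} then identifies that with $\mathfrak{R}\alg{A}$. Your extra care in matching the representation over $\min(X_{\alg{A}})$ with the usual one over the center via Proposition~\ref{prop:cs-iff-Hausdorff-rep} is a reasonable point the paper leaves implicit, but it does not change the argument.
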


\begin{proof}
Since $\alg{A}$ is centrally supplemented it is Hausdorff by Proposition~\ref{lk} and its hyper\hyp{}MacNeille completion is its MacNeille completion by Theorem~\ref{j}. The result then follows by Proposition~\ref{h}. 
\end{proof}

We note that $\mathfrak{R}\alg{A}$ always belongs to to the variety generated by $\alg{A}$ since it is a quotient of a subalgebra of a product of quotients of $\alg{A}$. This will be put to use in the following section, but first we turn to a different application of sheaves. 

\begin{definition}
For the subdirect product $\alg{A}\leq\prod_Y\alg{A}_y$ of a Heyting algebra $\alg{A}$ over the minimum of its dual Esakia space, let $S_Y$ be the disjoint union of the stalks $\alg{A}_y$ for $y\in Y$, and for $a\in A$ and $U\subseteq Y$ open set $\mathcal{O}(a,U)=\{a(y):y\in U\}$. 
\end{definition}

The following is a simple modification of the standard Pierce sheaf construction whose proof is found in~\cite[Prop.~1]{Har93}. 

\begin{proposition}
For a Heyting algebra $\alg{A}$ the sets of the form $\mathcal{O}(a,U)$ where $a\in A$ and $U\subseteq Y$ is open form a basis of a topology on $S_Y$. With this topology $S_Y$ is a sheaf of Heyting algebras with $\alg{A}$ a Heyting subalgebra of the globals sections $\Gamma(S_Y)$. 
\end{proposition}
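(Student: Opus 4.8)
The plan is to check that $S_Y$, topologized as stated, is the \'etale space of a sheaf of Heyting algebras, following the template of the Pierce sheaf construction (as in~\cite[Prop.~1]{Har93}); the only point needing care is that here the base space is $Y=\min(X_\alg{A})$ rather than the Stone space of a center, so one must verify directly that equalizers are open in $Y$. The key computation is that for $a,b\in A$ and $y\in Y$ one has $a(y)=b(y)$ in $\alg{A}_y=\alg{A}/\theta_y$ iff $c\leq a\leftrightarrow b$ for some $c\in y$, i.e.\ iff $a\leftrightarrow b\in y$; hence $\lbr a=b\rbr=\widehat{a\leftrightarrow b}\cap Y$ is clopen in $Y$.

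First I would check the basis axioms. Since $\alg{A}\leq\prod_Y\alg{A}_y$ is a subdirect embedding (Proposition~\ref{q}), every element of the stalk $\alg{A}_y$ has the form $a(y)$ for some $a\in A$, so every point of $S_Y$ lies in some $\mathcal{O}(a,Y)$ and the basic sets cover $S_Y$. For intersections, given $\mathcal{O}(a,U)$ and $\mathcal{O}(b,V)$, a point lies in both iff it equals $a(y)=b(y)$ for a $y$ which is unique (the stalks are disjoint), so $y\in U\cap V\cap\lbr a=b\rbr$; using the clopenness of $\lbr a=b\rbr$ in $Y$ this gives $\mathcal{O}(a,U)\cap\mathcal{O}(b,V)=\mathcal{O}(a,\,U\cap V\cap\lbr a=b\rbr)$, again a basic open. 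Next I would verify that $\pi\colon S_Y\to Y$ sending $a(y)\mapsto y$ is a local homeomorphism: for each $a\in A$ and open $U\subseteq Y$ the restriction $\pi|_{\mathcal{O}(a,U)}$ is a bijection onto $U$, is continuous since $\pi^{-1}(U')\cap\mathcal{O}(a,U)=\mathcal{O}(a,U\cap U')$, and is open since $\pi[\mathcal{O}(a,U)]=U$; these sets cover $S_Y$, so $\pi$ is \'etale.

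Then I would address the algebraic structure. Each stalk $\alg{A}_y$ is a Heyting algebra, and for fixed $a\in A$ the map $\widehat a\colon y\mapsto a(y)$ is a global section, being continuous because $\widehat a^{\,-1}[\mathcal{O}(b,V)]=V\cap\lbr a=b\rbr$ is open. The stalkwise operations are continuous on the relevant fibred products: if $p=a(y)$ and $q=b(y)$ lie in a common stalk then $p\wedge q=(a\wedge b)(y)$, and $\wedge$ carries $\bigl(\mathcal{O}(a,U)\times\mathcal{O}(b,V)\bigr)\cap(S_Y\times_Y S_Y)$ into $\mathcal{O}(a\wedge b,\,U\cap V)$, shrinking neighbourhoods as needed; the same argument handles $\vee$ and $\to$, while $\widehat 0,\widehat 1$ are continuous by the previous remark. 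Hence $S_Y$ is a sheaf of Heyting algebras.

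Finally, since the operations of $\Gamma(S_Y)$ are computed stalkwise, $a\mapsto\widehat a$ is a Heyting homomorphism $\alg{A}\to\Gamma(S_Y)$, and it is injective by Proposition~\ref{q}, so $\alg{A}$ is a Heyting subalgebra of $\Gamma(S_Y)$. I expect the only non-formal ingredient to be the identity $\lbr a=b\rbr=\widehat{a\leftrightarrow b}\cap Y$, which is what makes equalizers clopen in $Y$ and thereby makes both the basis and the sections $\widehat a$ behave; everything else is the routine bookkeeping of the Pierce-sheaf construction.
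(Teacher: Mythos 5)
Your proof is correct and follows exactly the route the paper intends: the paper gives no argument of its own but defers to the standard Pierce-sheaf construction of~\cite[Prop.~1]{Har93}, and your write-up is a careful execution of that template, with the one genuinely new ingredient being the identity $\lbr a=b\rbr=\widehat{a\leftrightarrow b}\cap Y$ that makes equalizers clopen over the base $Y=\min(X_{\alg{A}})$. Nothing further is needed.
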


We call this the \emph{sheaf over the minimum of} $\alg{A}$. There are differences between this situation and the central sheaf of $\alg{A}$ that comes from the weak Boolean product representation of $\alg{A}$ over its center. The space $Y$ has a basis of clopen sets, but need not be compact, so is generally not a Stone space. So this sheaf cannot come from weak Boolean product. The patchwork property holds only in that one can patch two global sections over a clopen set, but patching two global sections coming from elements of $\alg{A}$ will not usually result in a global section coming from $A$. This and the lack of compactness of the base space $Y$ point to why $\alg{A}$ can be a proper subalgebra of $\Gamma(S_Y)$. 

\begin{proposition}\label{prop:S(A)subG(SY)}
For $\alg{A}$ a Heyting algebra, the global sections $\Gamma(S_Y)$ of the sheaf over its minimum is a centrally supplemented Heyting algebra and the centrally supplemented extension $S(\alg{A})$ is a supplemented subalgebra of $\Gamma(S_Y)$. 
\end{proposition}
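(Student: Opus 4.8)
The plan is to exhibit $\Gamma(S_Y)$ as a supplemented Heyting subalgebra of the product $P(\alg{A}) = \prod_Y \alg{A}_y$ that contains $\alg{A}$. Once this is done the inclusion $S(\alg{A}) \leq \Gamma(S_Y)$ is immediate, since by Definition~\ref{S(A)} $S(\alg{A})$ is the \emph{smallest} supplemented Heyting subalgebra of $P(\alg{A})$ containing $\alg{A}$; and because the supplements of both $S(\alg{A})$ and $\Gamma(S_Y)$ will turn out to be those inherited from $P(\alg{A})$, the embedding $S(\alg{A}) \hookrightarrow \Gamma(S_Y)$ preserves supplements, making it an embedding of supplemented algebras. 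By the preceding proposition $\Gamma(S_Y)$ is already a Heyting subalgebra of $P(\alg{A})$ containing $\alg{A}$, so the one genuine task is to show that $\Gamma(S_Y)$ is closed under the supplement operation of $P(\alg{A})$ and that the resulting supplements are central.

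Here is the key step. As observed in Section~\ref{S(E)}, since each $1_y$ is join\hyp{}irreducible in $\alg{A}_y$, the supplement of $f \in P(\alg{A})$ is the characteristic function $\ind_{(f<1)}$. Thus I must verify that $\ind_{(f<1)}$ is a global section of $S_Y$ whenever $f$ is, and for this it suffices to show that the sets $\lbr f = 1 \rbr$ and $\lbr f < 1\rbr$ are open in $Y$. This is a local argument: since $f$ is a global section, each $y_0 \in Y$ has an open neighbourhood $V$ on which $f$ agrees with the section determined by some $b \in \alg{A}$; as $\widehat{b}$ is clopen in $X_\alg{A}$, both $\widehat{b}\cap Y$ and $Y\setminus\widehat{b}$ are clopen in the subspace $Y$, so $\lbr f = 1\rbr \cap V = \widehat{b}\cap Y \cap V$ and $\lbr f<1\rbr \cap V = (Y\setminus\widehat{b})\cap V$ are open neighbourhoods of $y_0$ lying in $\lbr f=1\rbr$ and $\lbr f<1\rbr$ respectively. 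Hence $\lbr f=1\rbr$ and $\lbr f<1\rbr$ are open, and being complementary in $Y$ they are clopen. It follows that $\ind_{(f<1)}$ coincides with the section $0 \in \alg{A}$ on the open set $\lbr f=1\rbr$ and with the section $1 \in \alg{A}$ on the open set $\lbr f<1\rbr$, so it is a global section; the same reasoning gives $\ind_{(f=1)} \in \Gamma(S_Y)$.

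It remains to assemble the pieces. For $f \in \Gamma(S_Y)$ the element $\ind_{(f<1)} \in \Gamma(S_Y)$ is the least element of $P(\alg{A})$ joining with $f$ to $1$, hence a fortiori the least such in $\Gamma(S_Y)$; so $\Gamma(S_Y)$ is supplemented, with supplement inherited from $P(\alg{A})$. This supplement is complemented in $\Gamma(S_Y)$ by $\ind_{(f=1)}$, hence lies in $Z(\Gamma(S_Y))$, so $\Gamma(S_Y)$ is centrally supplemented. Since $\alg{A} \leq \Gamma(S_Y)$, Definition~\ref{S(A)} gives $S(\alg{A}) \leq \Gamma(S_Y)$; and for $u \in S(\alg{A})$ the supplement of $u$ computed in $S(\alg{A})$, in $\Gamma(S_Y)$, and in $P(\alg{A})$ all agree — each of the three contains $\ind_{(u<1)}$ — so $S(\alg{A})$ is a supplemented subalgebra of $\Gamma(S_Y)$, as required. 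The only delicate point is the topological bookkeeping in the middle paragraph: one must keep the continuity of a section distinct from the openness of the sets $\lbr f=1\rbr$ and $\lbr f<1\rbr$, and use the description of the subspace topology on $Y$ from Lemma~\ref{min}. Everything else is routine.
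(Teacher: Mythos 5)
Your proof is correct and follows essentially the same route as the paper's: both reduce the problem to showing that $\Gamma(S_Y)$ is closed under the supplement of the product, identify that supplement as $\ind_{(f<1)}$ using join\hyp{}irreducibility of $1$ in the fsi stalks, and verify continuity of this characteristic function by a local argument using the continuity of the given section (the paper checks continuity pointwise via the basic opens $\mathcal{O}(1,Y)$ and $\mathcal{O}(a,\lbr a<1\rbr)$, while you first observe that a global section locally agrees with a section from $\alg{A}$ and that $\lbr f=1\rbr$, $\lbr f<1\rbr$ are therefore clopen — the same computation repackaged). Your closing paragraph assembling centrality and the minimality of $S(\alg{A})$ is slightly more explicit than the paper's, but the content is identical.
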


\begin{proof}
	By Definition~\ref{S(A)} $S(\alg{A})$ is the supplemented subalgebra of the product $\prod_{y\in Y} \alg{A}_y$ generated by $\alg{A}$. Since we know that $\alg{A}$ is contained in $\Gamma(S_Y)$ and that $\Gamma(S_Y)$ is a Heyting subalgebra of the product $\prod_{y \in Y} \alg{A}_y$, it remains only to show that $\Gamma(S_Y)$ is closed under supplements in the product. Suppose $\sigma$ is a global section of $S_Y$. Since the stalks are fsi, the supplement $\sigma^+$ is the characteristic function $\chi_U$ of $U=\lbr \sigma < 1\rbr$. Let $y\in Y$. If $\chi_U(y)=0$, then $\sigma(y)=1$, so $\sigma(y)\in\mathcal{O}(1,Y)$. By the continuity of $\sigma$ there is a neighborhood $V$ of $y$ where $\sigma$ takes value 1, hence $\chi_U$ takes value 0 on $V$, showing $\chi_U$ is continuous at $y$. Suppose $\chi_U(y)=1$ and that $a\in A$ is such that $\sigma(y)=a(y)$. Then $a(y)<1$. By the continuity of $\sigma$ there is a neighborhood $W$ of $y$ mapped by $\sigma$ into $\mathcal{O}(a,\lbr a<1\rbr)$. It follows that $\sigma$ takes value less than 1 on $W$ and so $\chi_U$ takes value 1 on $W$. Again, $\chi_U$ is continuous at $y$. So $\sigma^+$ is a global section. 
\end{proof}

\begin{definition}
Let $\mathcal{Q}(\alg{A})$ be the algebra of dense open sections of the sheaf $S_Y$ over the minimum of $\alg{A}$, that is, the set of sections of $S_Y$ that are continuous on a dense open set modulo equivalence on a dense open set. 
\end{definition}

As in~\cite{CHJ96} it is straightforward to verify that $\mathcal{Q}(\alg{A})$ is a Heyting algebra, and by construction it belongs to the variety generated by $\alg{A}$. Unlike the situation with the algebra of dense open sections over the central sheaf, there is no special requirement, like weakly Hausdorff, for the natural map from $\alg{A}$ to $\mathcal{Q}(\alg{A})$ to be an embedding. Furthermore, by Proposition~\ref{prop:S(A)subG(SY)} each member of $S(\alg{A})$ is a global section of $S_Y$ and hence continuous everywhere, and therefore there is a natural map from $S(\alg{A})$ to $\mathcal{Q}(\alg{A})$. In fact, we have the following. 

\begin{proposition}
For any Heyting algebra $\alg{A}$, the natural map from $S(\alg{A})$ into $\mathcal{Q}(\alg{A})$ is a join\hyp{}dense and meet\hyp{}dense Heyting algebra embedding.
\end{proposition}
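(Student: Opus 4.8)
The plan is to verify, in turn, that the natural map $\iota \colon S(\alg{A}) \to \mathcal{Q}(\alg{A})$ is a Heyting homomorphism, that it is injective, and that its image is join\hyp{}dense and meet\hyp{}dense. For the first point I would note that by Proposition~\ref{prop:S(A)subG(SY)} every element of $S(\alg{A})$ is a global section of $S_Y$, in particular a section continuous on the dense open set $Y$, so $\iota$ is merely the restriction to $S(\alg{A})$ of the quotient map from the algebra of dense\hyp{}open sections onto $\mathcal{Q}(\alg{A})$, hence a Heyting homomorphism. Two standing observations about $S_Y$ will be used throughout: since $S_Y \to Y$ is a local homeomorphism, the equalizer $\lbr \sigma = \tau \rbr$ of two continuous sections over a common open domain is open (given a common value at $y_0$, pick an open $W$ of the sheaf space containing that value on which the projection is injective, and note $\sigma$ and $\tau$ agree on $\sigma^{-1}(W) \cap \tau^{-1}(W)$); and since the Heyting operations on sections are continuous, $\lbr \sigma \to \tau = 1 \rbr = \{y : \sigma(y) \le \tau(y)\}$ is open in the common domain, so that $[\sigma] \le [\tau]$ in $\mathcal{Q}(\alg{A})$ precisely when $\sigma(y) \le \tau(y)$ on a dense open subset of that domain. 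I would also record that $Y$ has the clopen basis $\{\lbr s < 1\rbr : s \in \alg{A}\}$ of Lemma~\ref{min}(2) and that each stalk $\alg{A}_y$ is an fsi quotient of $\alg{A}$.

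Injectivity follows from the fact that equalizers of elements of $S(\alg{A})$ are clopen. Given $u, v \in S(\alg{A})$, the element $(u \leftrightarrow v)^{++}$ lies in the center of $S(\alg{A})$, which by Proposition~\ref{Z(S(A))} is generated by the elements $\ind_{(a=1)} = \chi_{\widehat{a}\cap Y}$; as $\widehat{a}\cap Y$ is clopen in $Y$, every central element, and in particular $(u\leftrightarrow v)^{++}$, is the characteristic function of some clopen $K \subseteq Y$. Since the stalks are fsi, $\lbr u = v \rbr = \lbr (u\leftrightarrow v)^{++} = 1\rbr = K$. If $\iota(u) = \iota(v)$ then $u$ and $v$ agree on a dense open set, so the closed set $K$ is dense, forcing $K = Y$ and $u = v$. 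Thus $\iota$ is an injective Heyting homomorphism, i.e. a Heyting embedding.

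For join\hyp{}density it suffices to show $\iota[F]$ is join\hyp{}dense, where $F = \{f(s,a) : s,a\in\alg{A}\} \subseteq S(\alg{A})$. Given $\alpha \in \mathcal{Q}(\alg{A})$ represented by a section $\sigma$ continuous on a dense open $V$, I would show $\alpha = \bigvee\{\iota(f(s,a)) : \iota(f(s,a)) \le \alpha\}$ by taking an arbitrary upper bound $g = [\tau]$ of this set, with $\tau$ continuous on a dense open $V'$, and deriving a contradiction from $\alpha \not\le g$. In that case $\lbr \sigma\to\tau = 1\rbr$ is not dense, so there is a nonempty open $O \subseteq V\cap V'$ with $\sigma(y) \not\le \tau(y)$ for all $y \in O$. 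Choosing $y_0 \in O$ and $a \in \alg{A}$ with $\sigma(y_0) = a(y_0)$ (possible, as the stalk is a quotient of $\alg{A}$), the open equalizer of $\sigma$ and $a$ contains $y_0$, so by Lemma~\ref{min}(2) there is $s \in \alg{A}$ with $y_0 \in \lbr s < 1\rbr \subseteq O$ and $\sigma = a$ on $\lbr s < 1\rbr$. Then $f(s,a) = a\wedge\ind_{(s<1)}$ agrees with $\sigma$ on $\lbr s < 1\rbr$ and equals $0$ elsewhere, hence $f(s,a) \le \sigma$ on all of $V$ and $\iota(f(s,a)) \le \alpha$; by the choice of $g$ this gives $f(s,a) \le \tau$ on a dense open set, and any point of that set lying in $\lbr s < 1\rbr$ then witnesses $\sigma(y) = f(s,a)(y) \le \tau(y)$, contradicting its membership in $O$. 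This establishes join\hyp{}density.

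Meet\hyp{}density I would prove symmetrically using $G = \{g(t,b) : t,b\in\alg{A}\}$: with $\alpha$ represented by $\sigma$ on a dense open $V$, one checks $\alpha = \bigwedge\{\iota(g(t,b)) : \iota(g(t,b)) \ge \alpha\}$, using that $g(t,b) = (b\wedge\ind_{(t<1)})\vee\ind_{(t=1)}$ equals $b$ on $\lbr t < 1\rbr$ and equals $1$ elsewhere, so that $\iota(g(t,b)) \ge \alpha$ whenever $\sigma = b$ on $\lbr t<1\rbr$, and running the same local\hyp{}approximation argument with a lower bound in place of an upper bound. The one genuinely delicate point is keeping the sheaf topology on $S_Y$ and the dense\hyp{}open quotient defining $\mathcal{Q}(\alg{A})$ in step; once it is fixed that equalizers of continuous sections are open and that the sets $\lbr s<1\rbr$ form a basis for $Y$, the rest is routine. (In particular this shows $\mathcal{Q}(\alg{A})$ has the same MacNeille completion as $S(\alg{A})$, which by Theorem~\ref{ggg} is $\alg{A}^+$.)
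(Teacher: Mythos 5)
Your proof is correct and takes essentially the same route as the paper's: the paper establishes injectivity by reducing, via the normal form of Proposition~\ref{fr}, to showing that only $1$ is sent to $1$ on a nonempty clopen set, which is the same clopen-equalizer observation you make with $(u\leftrightarrow v)^{++}$, and it delegates join- and meet-density to \cite[Lem.~6.9]{CHJ96}, whose argument is precisely the local patching with the elements $f(s,a)$ and $g(t,b)$ that you carry out in full. No gaps.
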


\begin{proof}
Clearly the natural map $\alpha$ from $S(\alg{A})$ to $\mathcal{Q}(\alg{A})$ is a Heyting algebra homomorphism. To see it is an embedding, we need only show that $1$ is the only element of $S(\alg{A})$ mapped by $\alpha$ to 1. Suppose $a\in A$ and $U\subseteq Y$ is clopen, and set $U'=Y\setminus U$. Let $\psi=(a\wedge\chi_U)\vee \chi_{U'}$. If we show that $\alpha(\psi)=1$ implies that $\psi=1$, it follows from the  description of $S(\alg{A})$ given in Proposition~\ref{fr} that $\alpha$ is an embedding. If $\psi\neq 1$, then there is $y\in U$ with $a(y)\neq 1$. Then $\lbr a<1\rbr\cap U$ is a non\hyp{}empty clopen set on which $\psi$ is different than 1, hence $\alpha(\psi)\neq 1$. The proof of join\hyp{} and meet\hyp{}density is the same as that given in~\cite[Lem.~6.9]{CHJ96} once it is noted that the elements agreeing with a given $a\in A$ on a clopen set $U$ and 0 elsewhere belong to $S(\alg{A})$ and the elements agreeing with $a$ on $U$ and $1$ elsewhere belong to $S(\alg{A})$. 
\end{proof}

This immediately gives the following. 

\begin{theorem} \label{kj}
For a Heyting algebra $\alg{A}$ we have that the hyper\hyp{}MacNeille completion of $\alg{A}$ is the MacNeille completion of the algebra of dense open sections of the sheaf over the minimum of \alg{A}, that is, $\alg{A}^+=\overline{\mathcal{Q}(\alg{A})}$.
\end{theorem}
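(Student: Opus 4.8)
The plan is to chain together the two density statements already in hand. By Theorem~\ref{ggg}, the hyper\hyp{}MacNeille completion $\alg{A}^+=\mathcal{G}(W_{\!\alg{A}})$ is the MacNeille completion $\overline{S(\alg{A})}$ of the centrally supplemented extension; in particular $S(\alg{A})$ is both join\hyp{}dense and meet\hyp{}dense in $\alg{A}^+$. By the preceding proposition, the natural map embeds $S(\alg{A})$ into $\mathcal{Q}(\alg{A})$ as a join\hyp{}dense and meet\hyp{}dense Heyting subalgebra, and $\mathcal{Q}(\alg{A})$ is a Heyting algebra, so its MacNeille completion $\overline{\mathcal{Q}(\alg{A})}$ makes sense.

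First I would record the elementary transitivity of density: if a sublattice $L_1$ is join\hyp{}dense (respectively meet\hyp{}dense) in $L_2$, and $L_2$ is join\hyp{}dense (respectively meet\hyp{}dense) in $L_3$, then $L_1$ is join\hyp{}dense (respectively meet\hyp{}dense) in $L_3$. Since $\mathcal{Q}(\alg{A})$ is, as always, join\hyp{}dense and meet\hyp{}dense in its own MacNeille completion $\overline{\mathcal{Q}(\alg{A})}$, applying this with $L_1=S(\alg{A})$, $L_2=\mathcal{Q}(\alg{A})$, and $L_3=\overline{\mathcal{Q}(\alg{A})}$ shows that $S(\alg{A})$ is join\hyp{}dense and meet\hyp{}dense in the complete lattice $\overline{\mathcal{Q}(\alg{A})}$.

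Then I would appeal to the standard characterization of the MacNeille completion as the (unique up to isomorphism fixing the given lattice) complete lattice in which the given lattice is simultaneously join\hyp{}dense and meet\hyp{}dense, as already used around Proposition~\ref{x}. Hence $\overline{\mathcal{Q}(\alg{A})}$ is the MacNeille completion of $S(\alg{A})$, which by Theorem~\ref{ggg} is precisely $\alg{A}^+$; as both are Heyting algebras extending $\mathcal{Q}(\alg{A})$ and the MacNeille completion of a Heyting algebra is a Heyting algebra, the induced isomorphism is one of Heyting algebras, giving $\alg{A}^+=\overline{\mathcal{Q}(\alg{A})}$.

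I do not anticipate any real obstacle here: all of the substance lies in the preceding proposition, and what remains is merely the bookkeeping of composing the embeddings $S(\alg{A})\hookrightarrow\mathcal{Q}(\alg{A})\hookrightarrow\overline{\mathcal{Q}(\alg{A})}$ together with the observation that simultaneous join\hyp{} and meet\hyp{}density pins down the MacNeille completion. The one point to be careful about is that the density must be verified for the composite embedding and not just for each factor separately, which is exactly what the transitivity statement above supplies.
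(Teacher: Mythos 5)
Your proposal is correct and matches the paper's argument: the paper derives Theorem~\ref{kj} immediately from the preceding proposition by the same chain of join\hyp{} and meet\hyp{}dense embeddings $S(\alg{A})\hookrightarrow\mathcal{Q}(\alg{A})\hookrightarrow\overline{\mathcal{Q}(\alg{A})}$ together with Theorem~\ref{ggg}. The transitivity-of-density bookkeeping you spell out is exactly what the paper leaves implicit.
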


A number of special cases of this are worthwhile to note. In the next result we will use $Y_0$ for the set of isolated points of the minimum of the dual Esakia space of a Heyting algebra $\alg{A}$. This is clearly an open set. 

\begin{corollary}\label{prop:HMN-iso-dense}
If $\alg{A}$ is a Heyting algebra and the isolated points $Y_0$ of its minimum is a dense subset of $Y$, then $\alg{A}^+ = \prod_{Y_0} \overline{\alg{A}_y}$.
\end{corollary}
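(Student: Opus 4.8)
The plan is to combine Theorem~\ref{kj} with the observation that, when the isolated points $Y_0$ are dense in $Y$, dense open subsets of $Y$ can be understood through their trace on $Y_0$, and every point of $Y_0$ contributes the stalk $\alg{A}_y$ directly. First I would note that since $Y_0$ is a dense open subset of $Y$, a section $\sigma$ of the sheaf $S_Y$ over the minimum that is continuous on a dense open set $V$ is, up to the equivalence $\theta_{\!D}$ defining $\mathcal{Q}(\alg{A})$, determined by its restriction to $V \cap Y_0$, which is again dense open in $Y$ (the intersection of two dense opens). Conversely, any choice of values over $Y_0$ in the respective stalks extends uniquely (up to $\theta_D$-equivalence) to a continuous section on a dense open set, because each point of $Y_0$ is isolated and hence $Y_0$ carries the discrete topology as a subspace, so continuity on $Y_0$ is automatic. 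Thus the natural restriction map $\mathcal{Q}(\alg{A}) \to \prod_{Y_0}\alg{A}_y$ is an injective Heyting homomorphism, and I would argue it is also onto by the same extension-by-density argument: given $(x_y)_{y \in Y_0}$ in the full product, the function sending $y \in Y_0$ to $x_y$ is continuous on the dense open set $Y_0$, hence represents an element of $\mathcal{Q}(\alg{A})$. This shows $\mathcal{Q}(\alg{A}) \cong \prod_{Y_0}\alg{A}_y$.

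Next I would observe that each $\alg{A}_y$ for $y \in Y_0$ is a \emph{finitely subdirectly irreducible} Heyting algebra (being a quotient of $\alg{A}$ by a prime filter), so by Theorem~\ref{j}(4) its hyper-MacNeille completion coincides with its MacNeille completion $\overline{\alg{A}_y}$; but more to the point, I need the MacNeille completion of the \emph{product} $\prod_{Y_0}\alg{A}_y$. Here I would use the standard fact that the MacNeille completion commutes with arbitrary products of bounded lattices when the product is formed over a set, i.e.\ $\overline{\prod_{Y_0}\alg{A}_y} = \prod_{Y_0}\overline{\alg{A}_y}$; this follows because meet-density and join-density of each factor in its completion lift coordinatewise to the product, and the product of complete lattices is complete. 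Finally, by Theorem~\ref{kj} we have $\alg{A}^+ = \overline{\mathcal{Q}(\alg{A})} = \overline{\prod_{Y_0}\alg{A}_y} = \prod_{Y_0}\overline{\alg{A}_y}$, which is the desired identity (noting that for fsi $\alg{A}_y$ the completion $\overline{\alg{A}_y}$ could equally be written as its hyper-MacNeille completion, but the MacNeille form is what we want as stated).

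The main obstacle I anticipate is the claim that $\mathcal{Q}(\alg{A}) \cong \prod_{Y_0}\alg{A}_y$, specifically the surjectivity and the correct handling of the $\theta_D$-equivalence: one must check that two sections agreeing on a dense open subset of $Y$ already agree on a dense open subset contained in $Y_0$, and that an arbitrary family over $Y_0$ genuinely extends to a continuous section on some dense open subset of the full space $Y$ — the latter requires that $Y_0$ itself is dense open (given by hypothesis) and that continuity on $Y_0$ imposes no constraint since $Y_0$ is discrete. A secondary point requiring care is the commutation of MacNeille completion with infinite products; while this is classical for complete lattices, I would want to state it cleanly (or cite it) and confirm it respects the Heyting structure, which holds since the Heyting implication in a product and in a MacNeille completion of a Heyting algebra are both computed "as expected".
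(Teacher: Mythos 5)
Your proposal is correct and follows essentially the same route as the paper, which likewise observes that $Y_0$ dense open gives $\mathcal{Q}(\alg{A})\cong\prod_{Y_0}\alg{A}_y$ and then invokes Theorem~\ref{kj} together with the fact that MacNeille completion commutes with products of bounded lattices. The $\theta_{\!D}$-bookkeeping you flag as an obstacle is settled by one observation: every dense subset of $Y$ contains every isolated point, so two sections are $\theta_{\!D}$-equivalent iff they agree on all of $Y_0$, making the restriction map to $\prod_{Y_0}\alg{A}_y$ well defined and bijective on classes.
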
 

\begin{proof}
If $Y_0$ is dense, then it is a dense open subset of $Y$. So $\mathcal{Q}(\alg{A})$ is isomorphic to the product $\prod_{Y_0}\alg{A}_y$. The result follows from Theorem~\ref{kj} and the well\hyp{}known, and easily proved fact that the MacNeille completion of a product of bounded lattices is the product of their MacNeille completions. 
\end{proof}

\begin{corollary}
If $\alg{A}$ is a Heyting algebra whose minimum is finite, then $\alg{A}^+ = \prod_Y \overline{\alg{A}_y}$. 
\end{corollary}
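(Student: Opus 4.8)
The plan is to reduce this immediately to Corollary~\ref{prop:HMN-iso-dense}. Recall that $Y$ denotes the minimum $\min(X_\alg{A})$ of the dual Esakia space $X_\alg{A}$, and $Y_0$ its set of isolated points. The only thing needing verification is that when $Y$ is finite we have $Y_0 = Y$; once this is known, $Y_0$ is trivially a dense subset of $Y$ and Corollary~\ref{prop:HMN-iso-dense} applies directly.

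First I would note that $X_\alg{A}$, being an Esakia space, is in particular a Priestley space, hence a compact Hausdorff space; consequently its subspace $Y = \min(X_\alg{A})$ is Hausdorff. A finite Hausdorff space is discrete, so each singleton $\{y\}$ with $y \in Y$ is open in $Y$, i.e.\ every point of $Y$ is isolated. Therefore $Y_0 = Y$.

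Next I would invoke Corollary~\ref{prop:HMN-iso-dense}: since $Y_0 = Y$ is (vacuously) dense in $Y$, we obtain $\alg{A}^+ = \prod_{Y_0}\overline{\alg{A}_y} = \prod_{Y}\overline{\alg{A}_y}$, which is the desired conclusion.

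There is essentially no obstacle here; the statement is a clean specialization of the preceding corollary. The only point requiring a moment's care is the observation that $Y$, as a subspace of the Esakia space $X_\alg{A}$, is Hausdorff, so that finiteness forces discreteness and hence $Y_0 = Y$; this is immediate from the definition of Priestley (and thus Esakia) spaces. Alternatively one could argue via Lemma~\ref{min}(2) that in the finite case each basic open $Y \setminus \widehat{s}$ is also closed and separate points directly, but the Hausdorff argument is the cleanest route.
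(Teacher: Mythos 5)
Your proposal is correct and is exactly the argument the paper leaves implicit (the corollary is stated without proof as an immediate specialization of Corollary~\ref{prop:HMN-iso-dense}): finiteness of $Y$, a subspace of the Hausdorff Priestley space $X_\alg{A}$, forces $Y$ to be discrete, so $Y_0 = Y$ is dense and the preceding corollary applies. No issues.
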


Theorem~\ref{kj} also has application in another direction.  

\begin{proposition} \label{dr}
If there is a dense open set on which the stalks of the sheaf $S_Y$ over the minimum of $\alg{A}$ have uniformly bounded finite cardinality, then $\alg{A}^+=\mathcal{Q}(\alg{A})$. 
\end{proposition}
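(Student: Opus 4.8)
The plan is to reduce the statement to the machinery already developed for the central sheaf of $S(\alg{A})$ and then to apply Proposition~\ref{h}. By Theorem~\ref{kj} we have $\alg{A}^+ = \overline{\mathcal{Q}(\alg{A})}$, so $\mathcal{Q}(\alg{A})$ is join- and meet-dense in $\alg{A}^+$; hence it is enough to prove that $\mathcal{Q}(\alg{A})$ is complete, since a complete lattice that is dense in its own MacNeille completion must be the whole of it. To establish completeness I would identify $\mathcal{Q}(\alg{A})$ with the algebra $\mathfrak{R}\,S(\alg{A})$ of dense open sections of the central sheaf of $S(\alg{A})$ and show that this algebra coincides with $\overline{S(\alg{A})} = \alg{A}^+$.

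The first main step is to transfer the cardinality hypothesis from the base $Y$ of the sheaf $S_Y$ to the base of the central sheaf of $S(\alg{A})$. By Proposition~\ref{prop:C(Y)=Stone-center-Z(SA)} this latter base is (homeomorphic to) $\C(Y)$, the closure of $Y$ in $X_\alg{A}$, and by Proposition~\ref{prop:stalks-of-S(A)} the stalk over $x \in \C(Y)$ is $\alg{A}/\theta_x$, whose dual Esakia space is the closed upset ${\uparrow}x$ of $X_\alg{A}$. Let $V \subseteq Y$ be a dense open set with $|\alg{A}_y| \leq n$ for all $y \in V$. Using Lemma~\ref{min}(2), write $V = \bigcup_j (\min(X_\alg{A}) \setminus \widehat{s_j})$ for suitable $s_j \in \alg{A}$, and put $\widetilde{K}_j = \{x \in \C(Y) : s_j \notin x\}$; by Proposition~\ref{prop:C(Y)=Stone-center-Z(SA)} this set is clopen in $\C(Y)$, and clearly $\widetilde{K}_j \cap Y = \min(X_\alg{A}) \setminus \widehat{s_j}$. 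Now fix $x \in \widetilde{K}_j$. Since every point of a Priestley space lies above a minimal one, there is $y \in \min(X_\alg{A}) = Y$ with $y \subseteq x$; then $s_j \notin y$, so $y \in V$ and $|\alg{A}_y| \leq n$, while ${\uparrow}x \subseteq {\uparrow}y$ exhibits $\alg{A}/\theta_x$ as a quotient of $\alg{A}_y$, whence $|\alg{A}/\theta_x| \leq n$. Thus $\widetilde{W} = \bigcup_j \widetilde{K}_j$ is an open subset of $\C(Y)$ containing the dense set $V$, so it is dense, and all stalks of the central sheaf of $S(\alg{A})$ over $\widetilde{W}$ have at most $n$ elements.

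Since $S(\alg{A})$ is centrally supplemented it is Hausdorff by Proposition~\ref{lk}, hence weakly Hausdorff, so Proposition~\ref{h} applies to $S(\alg{A})$ with the dense open set $\widetilde{W}$ and yields that $\mathfrak{R}\,S(\alg{A})$ is the MacNeille completion of $S(\alg{A})$, which by Theorem~\ref{ggg} is $\alg{A}^+$. It remains to identify $\mathfrak{R}\,S(\alg{A})$ with $\mathcal{Q}(\alg{A})$. One first notes that restricting the central sheaf of $S(\alg{A})$ to the dense subspace $Y \subseteq \C(Y)$ recovers the sheaf $S_Y$ (using Proposition~\ref{fr}: every element of $S(\alg{A})$ agrees on $Y$, locally on basic clopens, with an element of $\alg{A}$), so restriction of sections to $Y$ gives a Heyting homomorphism $r \colon \mathfrak{R}\,S(\alg{A}) \to \mathcal{Q}(\alg{A})$ fixing $S(\alg{A})$. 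I would check $r$ is injective using that the sheaf space of a Boolean product is Hausdorff, so two continuous sections agreeing on a dense subset of their common open domain agree on all of it; and that $r$ is surjective by covering the domain of a dense open section of $S_Y$ by basic clopens $\min(X_\alg{A}) \setminus \widehat{s}$ on which it equals some $a \in \alg{A}$, and gluing the corresponding sections $a$ of the central sheaf over the clopens $\{x \in \C(Y) : s \notin x\}$ (the pieces agree on overlaps because the stalks are fsi). Then $\mathcal{Q}(\alg{A}) \cong \mathfrak{R}\,S(\alg{A}) = \alg{A}^+$ is complete, so $\mathcal{Q}(\alg{A}) = \overline{\mathcal{Q}(\alg{A})} = \alg{A}^+$.

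The step I expect to be the main obstacle is the cardinality transfer of the second paragraph. The hypothesis concerns only the stalks over the minimum $Y$, whereas Proposition~\ref{h} needs uniform finiteness over a dense open subset of the strictly larger base $\C(Y)$; the key input is that every point of $\C(Y)$ lies above a minimal point of $X_\alg{A}$, so that its stalk is a quotient of one already controlled by the hypothesis, and the subtlety is that the dense open set of $\C(Y)$ must be assembled from basic clopens so that the controlling minimal point can be kept inside $V$. The identification of $\mathfrak{R}\,S(\alg{A})$ with $\mathcal{Q}(\alg{A})$ is then a matter of sheaf bookkeeping, resting on the density of $Y$ in $\C(Y)$ and the fact that the central sheaf of $S(\alg{A})$ restricts to $S_Y$.
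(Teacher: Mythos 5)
Your argument is correct, but it takes a genuinely different route from the paper's. The paper makes the same initial reduction (by Theorem~\ref{kj} it suffices to show $\mathcal{Q}(\alg{A})$ is complete) and then simply re-runs the proof of \cite[Prop.~4]{Har93} directly on the sheaf $S_Y$ over the non-compact base $Y$, the only point needing attention being that equalizers $\lbr a = b\rbr = \lbr a\leftrightarrow b = 1\rbr$ are clopen in $Y$. You instead keep Proposition~\ref{h} as a black box over the compact base: you transfer the cardinality bound from $Y$ to a dense open subset of $\C(Y)$ (correctly exploiting that every $x\in\C(Y)$ lies above some $y\in Y$ with $\alg{A}/\theta_x$ a quotient of $\alg{A}_y$, and that the controlling minimal point can be trapped inside $V$ by working with the basic clopens $X\setminus\widehat{s_j}$), apply Proposition~\ref{h} to $S(\alg{A})$, and then identify $\mathfrak{R}\,S(\alg{A})$ with $\mathcal{Q}(\alg{A})$ by restricting sections to the dense subspace $Y\subseteq\C(Y)$. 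The trade-off is clear: the paper's proof is shorter but leaves the adaptation of \cite{Har93} implicit, whereas yours is longer but entirely modular, reusing Proposition~\ref{h} verbatim at the cost of the transfer lemma and the sheaf-restriction bookkeeping. Two small points in your sketch deserve sharper justification: the agreement of the glued pieces in the surjectivity argument follows not from the stalks being fsi but from the fact that $\lbr a = a'\rbr$ is clopen in $\C(Y)$ and contains the dense subset $\widetilde{K}_s\cap\widetilde{K}_{s'}\cap Y$ of the clopen $\widetilde{K}_s\cap\widetilde{K}_{s'}$; and the injectivity argument should note that a dense set meets every nonempty open set, so its intersection with the dense open domain of agreement is still dense, before invoking closedness of the equalizer of two continuous maps into the Hausdorff sheaf space.
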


\begin{proof}
It is enough to show that in this circumstance $\mathcal{Q}(\alg{A})$ is complete. This is shown by adapting the proof of~\cite[Prop.~4]{Har93}. The key element is that in our setting, for $a,b\in A$ we have that $\lbr a=b\rbr$ is equal to $\lbr a\leftrightarrow b = 1\rbr$ and is therefore clopen. 
\end{proof}

\begin{remark}
Using sheaf representations to construct different types of completions of lattice based algebras is by no means a new technique as references to~\cite{Har93, CHJ96} show. We point to two more examples of this phenomenon. Given a completely regular Baire space $X$ the Dedekind completion of the Riesz space $C(X)$ of real\hyp{}valued continuous functions on $X$ may be obtain as the Riesz space consisting of certain bounded real\hyp{}valued functions on $X$ which are continuous on a dense set and identified by equality on a dense set~\cite[Thm.~6.1]{Dan16}. Similarly, lateral completions of $\ell$\hyp{}groups may be obtain from sheaf representations in a way resembling the construction of $\mathcal{Q}(\alg{A})$~\cite{RY09}.
\end{remark}


\section{Varieties closed under hyper\hyp{}MacNeille completions} \label{examples}

As described in the introduction, the hyper\hyp{}MacNeille completion arose from systematic considerations in substructural proof theory about the admissibility of the cut\hyp{}rule in certain hypersequent calculi. In the context of this work there is a hierarchy of families of equations/formulas, first given in~\cite{CGT08}, among which the formulas belonging to the level $\mathcal{P}_3$ can effectively be transformed into an equivalent hypersequent calculus in which, under certain assumptions, the cut\hyp{}rule is guaranteed to be admissible~\cite{CGT17}. This fact is closely related to one of the key results about hyper\hyp{}MacNeille completions of Heyting algebras. 

\begin{theorem}[{\cite[Thm.~7.3]{CGT17}}] \label{P3}
If $\vty{V}$ is a variety of Heyting algebras axiomatized by $\mathcal{P}_3$\hyp{}equations, then $\vty{V}$ is closed under hyper\hyp{}MacNeille completions. 
\end{theorem}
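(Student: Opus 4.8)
The statement is recorded here as background; it is~\cite[Thm.~7.3]{CGT17}, where it is obtained proof\hyp{}theoretically by turning a $\mathcal{P}_3$\hyp{}equation into an equivalent finite set of analytic structural hypersequent rules and showing that the residuated hyperframe attached to $\alg{A}$ satisfies the associated frame conditions whenever $\alg{A}$ validates the equation. The plan for an alternative, \emph{algebraic} proof is to exploit the description $\alg{A}^+ \cong \overline{S(\alg{A})}$ of Theorem~\ref{ggg}. First I would reduce the problem: since $S(\alg{A})$ generates the same variety as $\alg{A}$ (Proposition~\ref{k1}), $S(\alg{A}) \in \vty{V}$, and $S(\alg{A})$ is centrally supplemented with $\alg{A}^+ = \overline{S(\alg{A})}$; together with Theorem~\ref{j}(3) this makes Theorem~\ref{P3} equivalent to the assertion that $\mathcal{P}_3$\hyp{}equations are preserved under MacNeille completions of centrally supplemented Heyting algebras. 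So it would suffice to prove: if $\alg{B}$ is centrally supplemented and lies in the $\mathcal{P}_3$\hyp{}variety $\vty{V}$, then $\overline{\alg{B}} \in \vty{V}$. By Proposition~\ref{lk} such a $\alg{B}$ is essentially a Boolean product $\alg{B} \leq \prod_{z \in Z} \alg{B}_z$ of fsi Heyting algebras over a Stone space $Z$, and by Proposition~\ref{prop:MN-of-DM-is-DM} so is $\overline{\alg{B}}$.

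The second step would use the normal\hyp{}form theory of the substructural hierarchy: a $\mathcal{P}_3$\hyp{}equation is equivalent over Heyting algebras to a finite conjunction of equations each of the form $\bigvee_{i=1}^n t_i = 1$ with the $t_i$ drawn from a lower fragment of the hierarchy that is known to be preserved by ordinary MacNeille completions (cf.~\cite{CGT12}). The crux is that in a Boolean product of fsi algebras such an equation holds iff there is a clopen partition $Z = \bigsqcup_k Z_k$ and indices $i_k$ with $t_{i_k}$ identically $1$ on the stalks over $Z_k$; that is, a $\mathcal{P}_3$\hyp{}equation localizes to finitely many MacNeille\hyp{}stable conditions on the stalks, patched together by central elements. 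The lower\hyp{}fragment conditions persist stalk\hyp{}by\hyp{}stalk in $\overline{\alg{B}}$, and the patching central elements survive into $\overline{\alg{B}}$ by Proposition~\ref{center}. The hard part will be making this localization precise and, above all, verifying that it survives the passage to the completion: the MacNeille completion of an infinite Boolean product is \emph{not} the product of the MacNeille completions of the stalks, so one cannot argue coordinate\hyp{}wise, and controlling the interaction of joins in $\overline{\alg{B}}$ with the clopen partition of the normal form seems to require the central\hyp{}sheaf and dense\hyp{}open\hyp{}sections machinery of Section~\ref{sheaf} together with Proposition~\ref{center}.

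A route that sidesteps this difficulty is to transcribe the argument of~\cite{CGT17} into the present simplified setting: transform each $\mathcal{P}_3$\hyp{}equation of $\vty{V}$ into the corresponding analytic structural rule, check directly that the Heyting frame $\alg{W}_{\!\alg{A}}$ of Definition~\ref{def:k} satisfies the closure condition attached to that rule whenever $\alg{A}$ validates the equation, and conclude via Theorem~\ref{thm:complete-residuated-lattices} that $\alg{A}^+ = \mathcal{G}(W_{\!\alg{A}})$ lies in $\vty{V}$. This should be essentially routine once the rule\hyp{}to\hyp{}frame\hyp{}condition dictionary of~\cite{CGT17} is specialized to Heyting frames, and it is this route I would follow if the normal\hyp{}form analysis of the second step proves unwieldy.
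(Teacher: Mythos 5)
The paper does not prove this statement at all: it is imported verbatim as \cite[Thm.~7.3]{CGT17}, whose proof is the syntactic one you describe in your opening and closing paragraphs (each $\mathcal{P}_3$-equation is converted to analytic hypersequent rules and the corresponding closure conditions are verified on the residuated hyperframe). So your fallback route --- transcribing the argument of \cite{CGT17} --- coincides with the paper's own treatment, which is simply to cite that theorem; to that extent your proposal is consistent with the paper.

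Your first, algebraic route, however, should not be mistaken for a proof, and you are right to flag its ``hard part'' as unresolved. The reduction is sound as far as it goes: by Proposition~\ref{k1} and Theorem~\ref{ggg} (packaged later in the paper as Corollary~\ref{cor:HMN-iff-MN-on-DM}), closure of $\vty{V}$ under hyper-MacNeille completions is equivalent to closure of the centrally supplemented members of $\vty{V}$ under MacNeille completions, and by Proposition~\ref{lk} such members are Boolean products of fsi stalks. It is also known (the paper cites \cite{CGT11} for this) that the fsi members of a $\mathcal{P}_3$-variety are closed under MacNeille completions. But the step from ``each stalk's MacNeille completion stays in $\vty{V}$'' to ``the MacNeille completion of the Boolean product stays in $\vty{V}$'' is precisely the obstruction: as you yourself note, $\overline{\alg{B}}$ is not the product of the $\overline{\alg{B}_z}$, and no stalk-wise localization or patching argument is actually supplied --- the claimed clopen-partition normal form for validity of a $\mathcal{P}_3$-equation in a Boolean product is asserted, not established, and its persistence under completion is exactly what is in doubt. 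This missing step is essentially the open Problem the authors pose in Section~\ref{sec:concluding-remarks}: they explicitly say they would like a derivation of this theorem ``without recourse to syntactic analysis'' and do not have one. So the only complete proof available is the syntactic one of \cite{CGT17}, and your proposal, read charitably, amounts to citing it.
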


To give a bit of a feel for such varieties, we give the following. 

\begin{proposition}\label{prop:P3-properties}
If $\vty{V}$ is a variety of Heyting algebras axiomatized by $\mathcal{P}_3$\hyp{}equations, then
\begin{enumerate}
\item The variety $\vty{V}$ is generated by its finite members.
\item If $\alg{A}\in\vty{V}$ is fsi, each Heyting algebra $\alg{B}$ that is a $(\wedge, 0, 1)$\hyp{}subalgebra of $\alg{A}$ belongs to $\vty{V}$.  
\item The variety $\vty{V}$ can be axiomatized using only $\lor$\hyp{}free equations. 
\item The class of fsi algebras in $\vty{V}$ is closed under MacNeille completions.
\item If $\vty{V}\neq \mathbb{HA}$, then there is a natural number $n$ such that the dual Esakia space of any fsi algebra in $\vty{V}$ has at most $n$ maximal points. 
\end{enumerate}
\end{proposition}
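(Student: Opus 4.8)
These statements package standard consequences of the substructural hierarchy in the Heyting setting, and the plan is to derive all five from a single normal form for $\mathcal{P}_3$\hyp{}equations. Since in a Heyting algebra the monoid product is $\wedge$ and its unit is $1$, passing from $\mathcal{N}_n$ to $\mathcal{P}_{n+1}$ merely adds closure under $\vee$, $\wedge$ and $1$. Using distributivity, the currying law $a\to(b\to c)\approx(a\wedge b)\to c$, the law $a\to(b\wedge c)\approx(a\to b)\wedge(a\to c)$, and the law $(a\vee b)\to c\approx(a\to c)\wedge(b\to c)$ to drive disjunctions out of antecedents, an induction on the generation of $\mathcal{N}_2$ shows that every $\mathcal{N}_2$\hyp{}term is equivalent over $\mathbb{HA}$ to a finite meet of $\vee$\hyp{}free terms, i.e.\ terms in the variables and $0,1$ built from $\wedge$ and $\to$ alone. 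Distributing once more at the top level, every $\mathcal{P}_3$\hyp{}equation becomes equivalent over $\mathbb{HA}$ to a finite set of equations of the form $s_1\vee\dots\vee s_m\approx 1$ with each $s_k$ $\vee$\hyp{}free; this is the computation underlying~\cite{CGT08,CGT17}. It therefore suffices to treat a variety $\vty{V}$ presented by such equations.

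\textbf{Items (1) and (4).} For (1) I would invoke the finite model property: the cut\hyp{}free analytic hypersequent calculus of~\cite{CGT17} has the subformula property, so the standard proof\hyp{}search argument produces a finite countermodel for any non\hyp{}theorem, whence $\vty{V}$ is generated by its finite members; alternatively this is the finite embeddability property of~\cite{CGT17}. For (4), let $\alg{A}\in\vty{V}$ be fsi. By Theorem~\ref{j}(4) its hyper\hyp{}MacNeille completion equals its MacNeille completion $\overline{\alg{A}}$, and by Theorem~\ref{P3} we have $\alg{A}^+\in\vty{V}$, so $\overline{\alg{A}}\in\vty{V}$; moreover $\overline{\alg{A}}$ is again fsi, since meet\hyp{}density gives, for any $x<1$ in $\overline{\alg{A}}$, some $a\in A$ with $x\le a<1$, so $x\vee y=1$ with $x,y<1$ would force $a\vee b=1$ with $a,b\in A$ both $<1$, contradicting that $\alg{A}$ is fsi.

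\textbf{Item (2), the crux.} The plan is to dualize. A Heyting algebra $\alg{B}$ that is a $(\wedge,0,1)$\hyp{}subalgebra of $\alg{A}$ gives rise, through Esakia/Priestley duality, to a continuous order\hyp{}preserving surjection relating the dual spaces of $\alg{A}$ and $\alg{B}$; this map is in general \emph{not} an Esakia morphism, but when $\alg{A}$ is fsi its dual space has a least point, which is carried to the least point of the dual space of $\alg{B}$. One then checks on frames that validity of a normal\hyp{}form axiom $s_1\vee\dots\vee s_m\approx1$ is transmitted backwards along such a surjection onto a rooted space: a valuation on the dual of $\alg{B}$ refuting the axiom is pulled back to the dual of $\alg{A}$, with $\vee$\hyp{}freeness of the $s_k$ ensuring that their truth sets transport correctly, and rootedness of the dual of $\alg{A}$ being exactly what turns ``$\bigvee_k s_k<1$'' into ``some point refutes every $s_k$''. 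Hence $\alg{A}\in\vty{V}$ forces $\alg{B}\in\vty{V}$. I expect the main obstacle to be isolating the precise frame\hyp{}theoretic shadow of a bare $(\wedge,0,1)$\hyp{}embedding and verifying this transport; the fsi hypothesis is genuinely needed, as $\mathbf{3}$ is a $(\wedge,0,1)$\hyp{}subalgebra of the non\hyp{}fsi Boolean algebra $\mathbf{2}\times\mathbf{2}$.

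\textbf{Items (3) and (5).} For (3), item (2) says $\vty{V}$ is closed under $(\wedge,0,1)$\hyp{}subalgebras of its fsi members; since a $(\wedge,\to,0,1)$\hyp{}embedding preserves $\to$, one reduces a general $(\wedge,\to,0,1)$\hyp{}subalgebra to fsi quotients and upgrades this to closure of $\vty{V}$ under $(\wedge,\to,0,1)$\hyp{}subalgebras, and a variety of Heyting algebras with this property is axiomatized by $\vee$\hyp{}free equations. For (5), since $\vty{V}\neq\mathbb{HA}$ there is a normal\hyp{}form axiom $\varepsilon\colon s_1\vee\dots\vee s_m\approx1$ valid in $\vty{V}$ but not in $\mathbb{HA}$, and by (1) it is refuted in some finite fsi Heyting algebra $\alg{H}$. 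Because the $s_k$ are $\vee$\hyp{}free and involve finitely many variables, their values depend only on a bounded ``upper part'' of the dual space, and from this one extracts a bound $N$, depending only on $\varepsilon$, such that any fsi algebra whose dual Esakia space has more than $N$ maximal points must contain, as a $(\wedge,0,1)$\hyp{}subalgebra or a quotient of one, a copy of a configuration refuting $\varepsilon$; by (2) such an algebra cannot lie in $\vty{V}$. Hence the dual space of any fsi member of $\vty{V}$ has at most $N$ maximal points. The bookkeeping in this last step, and the duality of the previous paragraph, carry essentially all the weight of the proof.
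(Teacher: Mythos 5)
The paper's own ``proof'' of this proposition is a pointer to \cite{Lau19a}, \cite{Lau19b} and \cite{CGT11}, so the comparison is really between your sketch and those sources, and your sketch has a genuine defect at its foundation. The normal form on which you build items (2), (3) and (5) is false: by the definition of the substructural hierarchy $\mathcal{N}_2$ contains $\mathcal{P}_1$, so $x\vee y$ and, say, $z\to(x\vee y)$ are $\mathcal{N}_2$-terms, and these are not equivalent over $\mathbb{HA}$ to finite meets of $\vee$-free terms. The identities you list only drive $\vee$ out of \emph{antecedents}, not out of consequents or out of the $\mathcal{P}_1$ leaves; and by Diego's theorem the $\{\wedge,\to,0,1\}$-fragment of Heyting algebras is locally finite while $\mathbb{HA}$ is not, so $\vee$ is not term-definable from $\{\wedge,\to,0,1\}$ at all. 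Item (3) asserts that the \emph{variety} admits some $\vee$-free axiomatization, not that each $\mathcal{P}_3$-\emph{term} can be rewritten; conflating these is exactly where the work is hidden. Item (2) has a second, independent problem: a bare $(\wedge,0,1)$-embedding does not dualize to a continuous order-preserving surjection of Esakia spaces (inverse images of prime filters under a map that need not preserve $\vee$ need not be prime), and you yourself flag ``isolating the precise frame-theoretic shadow'' as unresolved --- but that shadow is the actual content of the result, developed in \cite{Lau19a} and \cite[Chap.~2]{Lau19b}. Likewise, the passage in (3) from closure under subalgebras in a reduced signature to axiomatizability by equations in that signature is an unproved Birkhoff-type theorem, and the ``bookkeeping'' you defer in (5) is the whole argument there.

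On the positive side, your derivation of (4) is correct and genuinely different from the paper's citation of \cite{CGT11}: for fsi $\alg{A}\in\vty{V}$, Theorem~\ref{j}(4) gives $\alg{A}^+=\overline{\alg{A}}$, Theorem~\ref{P3} gives $\alg{A}^+\in\vty{V}$, and meet-density shows $\overline{\alg{A}}$ is again fsi; this is a clean argument from results already in the paper, with the caveat that it outsources the hard part to \cite[Thm.~7.3]{CGT17}, whereas the paper's concluding remarks express the hope of deriving that theorem from MacNeille-closure of the fsi class, i.e., of reversing your implication. Item (1) as you present it is a citation of the finite model property, which is no worse than what the paper does.
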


\begin{proof}
The first three statements can be found in~\cite{Lau19a} and~\cite[Chap.~2]{Lau19b}, the fourth in~\cite{CGT11}, and the last in~\cite[Chap.~3]{Lau19b}.  
\end{proof}

It is worthwhile to discuss the general situation for completions of Heyting algebras. The variety $\mathbb{HA}$ of Heyting algebras is closed under MacNeille completions, canonical completions, and hyper\hyp{}MacNeille completions. MacNeille completions are always regular, hyper\hyp{}MacNeille completions are regular exactly when the algebra is externally distributive, and canonical completions are regular only for finite Heyting algebras. Here are some known results about completions of Heyting algebras. 

\begin{theorem}[\cite{HB04}] 
The only varieties of Heyting algebras that are closed under MacNeille completions are the trivial variety, the variety of Boolean algebras, and the variety of all Heyting algebras. 
\end{theorem}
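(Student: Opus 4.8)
The plan is to prove the two inclusions separately. That the three listed varieties are closed under MacNeille completions is the easy direction: the trivial variety trivially; the variety $\mathbb{BA}$ of Boolean algebras because the MacNeille completion of a Boolean algebra is a complete Boolean algebra; and $\mathbb{HA}$ because the MacNeille completion of a Heyting algebra is a Heyting algebra, as already used in the paper, cf.~\cite[Thm.~2.3]{HB04}. The substance is the converse: if a variety $\mathbb{V}$ of Heyting algebras is closed under MacNeille completions and is neither trivial nor contained in $\mathbb{BA}$, then $\mathbb{V}=\mathbb{HA}$.

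Assume $\mathbb{V}$ is such a variety. Being nontrivial, $\mathbb{V}$ contains $\mathbf{2}$. Being not contained in $\mathbb{BA}$, $\mathbb{V}$ contains a non-Boolean Heyting algebra $\mathbf{C}$, so there is $c\in C$ with $c\vee c^{*}\neq 1$. Putting $d=c\vee c^{*}$ we have $d^{*}=c^{*}\wedge c^{**}=0$, so $0<d<1$, and one checks that $\{0,d,1\}$ is a Heyting subalgebra of $\mathbf{C}$ isomorphic to the three-element chain $\mathbf{3}$. Hence $\mathbf{3}\in\mathbb{V}$ and $\mathbb{V}(\mathbf{3})\subseteq\mathbb{V}$. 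Since $\mathbb{HA}$ has the finite model property, it is generated by its finite members, so to conclude $\mathbb{V}=\mathbb{HA}$ it is enough to show that every finite Heyting algebra lies in $\mathbb{V}$. As $\mathbb{V}$ is a variety closed under MacNeille completions with $\mathbb{V}(\mathbf{3})\subseteq\mathbb{V}$, this reduces to the following claim: \emph{for every finite Heyting algebra $\mathbf{B}$ there is a Heyting algebra $\mathbf{A}_{\mathbf{B}}\in\mathbb{V}(\mathbf{3})$ such that $\mathbf{B}$ is a homomorphic image of a subalgebra of the MacNeille completion $\overline{\mathbf{A}_{\mathbf{B}}}$}. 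Indeed, granting this, $\mathbf{A}_{\mathbf{B}}\in\mathbb{V}(\mathbf{3})\subseteq\mathbb{V}$ forces $\overline{\mathbf{A}_{\mathbf{B}}}\in\mathbb{V}$, hence $\mathbf{B}\in\mathbb{V}$, for every finite $\mathbf{B}$.

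Proving this claim is the heart of the matter and the step I expect to be the main obstacle. The tension is that $\mathbf{A}_{\mathbf{B}}\in\mathbb{V}(\mathbf{3})$ forces the dual Esakia space of $\mathbf{A}_{\mathbf{B}}$ to contain no three-element chain (so $\mathbf{A}_{\mathbf{B}}$ has depth at most $2$), whereas a finite $\mathbf{B}$ with dual poset $P$ may have arbitrarily large depth; the long chains witnessing $\mathbf{B}$ cannot be present in $\mathbf{A}_{\mathbf{B}}$ and must appear only after completing. I would construct $\mathbf{A}_{\mathbf{B}}$ via Esakia duality, taking an Esakia space $X$ of depth at most $2$ assembled from a convergent sequence of finite configurations of depth at most $2$ whose limiting behaviour is arranged so that (i) $\mathrm{ClopUp}(X)\in\mathbb{V}(\mathbf{3})$ is incomplete, and (ii) the new cuts added in $\overline{\mathrm{ClopUp}(X)}$, together with suitable elements of $\mathbf{A}_{\mathbf{B}}$, span a subalgebra that has $\mathbf{B}$ as a quotient. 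The delicate point is (ii): identifying the subalgebra of $\overline{\mathbf{A}_{\mathbf{B}}}$ and the surjection onto $\mathbf{B}$ explicitly, and verifying it preserves $\to$ and the bounds. An equivalent, perhaps more economical, packaging uses Jankov formulas: it suffices to show that the Jankov formula of each finite subdirectly irreducible Heyting algebra $\mathbf{B}$ is refuted in $\overline{\mathbf{A}_{\mathbf{B}}}$ for some $\mathbf{A}_{\mathbf{B}}\in\mathbb{V}(\mathbf{3})$, which is the same construction viewed through refutation rather than through an explicit embedding. Once the claim is established, the theorem follows by the reduction above.
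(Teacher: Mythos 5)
The paper itself gives no proof of this statement---it is imported from \cite{HB04}---so your attempt has to be measured against the argument there. Your easy direction and your reduction are both correct: the verification that $\{0,\,c\vee c^*,\,1\}$ is a Heyting subalgebra isomorphic to $\mathbf{3}$ is fine, $\mathbb{HA}$ is indeed generated by its finite members, and closure of $\mathbb{V}$ under $\mathbf{H}$, $\mathbf{S}$ and MacNeille completions would let you conclude from your displayed claim. The genuine gap is that the claim---for every finite Heyting algebra $\mathbf{B}$ there is $\mathbf{A}_{\mathbf{B}}\in\mathbb{V}(\mathbf{3})$ with $\mathbf{B}\in\mathbf{HS}(\overline{\mathbf{A}_{\mathbf{B}}})$---carries the entire content of the theorem, and you do not prove it. The paragraph about ``a convergent sequence of finite configurations of depth at most $2$ whose limiting behaviour is arranged so that \dots the new cuts \dots span a subalgebra that has $\mathbf{B}$ as a quotient'' names the desiderata but defines no space, exhibits no elements of the completion, and verifies nothing; the Jankov-formula reformulation at the end is the same unproved assertion in different clothing. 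You correctly flag this as ``the main obstacle,'' but an acknowledged obstacle is still an obstacle.

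There is also a structural over-commitment in the reduction. As stated, you require every finite $\mathbf{B}$ to be reached by a \emph{single} MacNeille completion of an algebra in $\mathbb{V}(\mathbf{3})$. The proof in \cite{HB04} does not establish this and does not need it: the argument is iterative, proceeding by induction on the depth of finite trees (whose upset algebras generate $\mathbb{HA}$, since every finite rooted poset is a p-morphic image of a finite tree). At each stage one builds, from upset algebras already known to lie in $\mathbb{V}$, a subalgebra of a power consisting of suitably ``eventually trivial'' sequences, and shows that its MacNeille completion acquires a new cut from which the upset algebra of a tree of one greater depth is obtained inside $\mathbf{HS}$ of the completion. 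The completion is thus re-applied at every stage to algebras that lie in $\mathbb{V}$ only by virtue of the previous stages, not to members of $\mathbb{V}(\mathbf{3})$. So you should either weaken your claim to this iterated form---in which case the induction on depth and the explicit sequence construction, together with the verification that the cut element generates the right subalgebra and that the quotient preserves $\to$, is the whole proof and remains to be written---or else prove the one-step version, which would be a strictly stronger statement than what \cite{HB04} establishes and for which you offer no evidence.
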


\begin{theorem}[\cite{Har08}]
The variety $\vty{V}(\mathbf{3})$ generated by the 3\hyp{}element Heyting algebra admits a regular completion.  
\end{theorem}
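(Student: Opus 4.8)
The plan is to take the regular completion of $\alg{A}\in\vty{V}(\mathbf{3})$ to be the hyper\hyp{}MacNeille completion $\alg{A}^+$ itself. Note that this cannot be arranged with the ordinary MacNeille completion: by the preceding theorem $\vty{V}(\mathbf{3})$ is not closed under MacNeille completions. First I would check that $\alg{A}^+$ lands in $\vty{V}(\mathbf{3})$ and is complete. Since the only subdirectly irreducible members of $\vty{V}(\mathbf{3})$ are $\mathbf{2}$ and $\mathbf{3}$, every finitely subdirectly irreducible quotient of $\alg{A}$ has at most three elements; in particular the stalks $\alg{A}_y$ of the sheaf $S_Y$ over the minimum $Y$ of the dual Esakia space of $\alg{A}$ all have cardinality at most $3$. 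Thus Proposition~\ref{dr} applies, with the dense open set taken to be all of $Y$, and gives $\alg{A}^+ = \mathcal{Q}(\alg{A})$. As $\mathcal{Q}(\alg{A})$ is a quotient of a subalgebra of a product of quotients of $\alg{A}$ it belongs to $\vty{V}(\alg{A}) \subseteq \vty{V}(\mathbf{3})$, and it is complete because it equals $\alg{A}^+$.

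It remains to show the embedding $\alg{A} \hookrightarrow \alg{A}^+$ is regular. By Theorem~\ref{j}(7) this is equivalent to $\alg{A}$ being externally distributive, so the theorem reduces to the claim that every $\alg{A}\in\vty{V}(\mathbf{3})$ is externally distributive. I would prove this dually. Identify $\alg{A}$ with the clopen upsets of its dual Esakia space $X$; since every principal upset ${\uparrow}x$ is the dual space of the finitely subdirectly irreducible quotient $\alg{A}_x$, it is a singleton or a two\hyp{}element chain, so $X$ has height at most $2$ and every non\hyp{}minimal point covers a unique minimal point. Let $S \subseteq \alg{A}$ have a meet $c$ in $\alg{A}$ and let $a \in \alg{A}$ satisfy $a \lor s = 1$ for all $s \in S$; we must show $a \lor c = 1$. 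Dually, $\widehat{c}$ is the largest clopen upset contained in the closed upset $K = \bigcap_{s \in S} \widehat{s}$, and the hypothesis says precisely that $X \setminus \widehat{a} \subseteq K$. The clopen downset $X \setminus \widehat{a}$ then satisfies ${\uparrow}(X \setminus \widehat{a}) \subseteq {\uparrow}K = K$, so if ${\uparrow}(X \setminus \widehat{a})$ is clopen it is a clopen upset inside $K$, hence contained in $\widehat{c}$, whence $X \setminus \widehat{a} \subseteq \widehat{c}$, i.e.\ $a \lor c = 1$. Thus it suffices to show that in such a space ${\uparrow}D$ is clopen for every clopen downset $D$; equivalently (taking $D = X \setminus \widehat{a}$), that every $\alg{A} \in \vty{V}(\mathbf{3})$ is \emph{supplemented}, with $\widehat{a^+} = {\uparrow}(X \setminus \widehat{a})$.

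The main obstacle is precisely this last, purely topological claim. One has ${\uparrow}D = D \cup \{y^* : y \in D \text{ and } y < y^*\}$, so the point is that the ``successor set'' adjoined to $D$ is clopen. I would establish this by first showing that $\min(X)$ is closed in $X$ (using the Esakia condition that ${\downarrow}U$ is clopen for clopen $U$, applied to the clopen upsets that separate a non\hyp{}minimal point from the minimal point it covers), and then that the covering map from the set of non\hyp{}minimal points to $\min(X)$ is continuous and, combined with the clopenness of $D$, sends $D \cap \min(X)$ to a clopen set. Should this route prove delicate, an alternative is to bypass external distributivity and instead build a regular embedding of $\alg{A}$ directly into a product $\mathbf{2}^I \times \mathbf{3}^J$ of complete factors, via a suitably large family of homomorphisms onto $\mathbf{2}$ and $\mathbf{3}$, and verify using the height\hyp{}$2$ structure of $X$ that this embedding preserves all existing joins and meets.
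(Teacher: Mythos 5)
Your approach cannot work, and the paper itself tells you why two results later: the theorem stating that the trivial variety and the variety of Boolean algebras are the \emph{only} varieties closed under hyper\hyp{}MacNeille completions on which the hyper\hyp{}MacNeille completion is always regular is proved precisely by exhibiting a member of $\vty{V}(\mathbf{3})$ that is not externally distributive, namely the algebra $\alg{A}$ of empty and co\hyp{}finite subsets of $\mathbb{N}$. Concretely, with $S_n=\mathbb{N}\setminus\{n\}$ the family $\{S_n : n\geq 1\}$ has meet $\emptyset$ in $\alg{A}$ (the only element omitting every $n\geq 1$ is $\emptyset$), while $S_0\vee S_n=1$ for all $n\geq 1$ but $S_0\vee\emptyset\neq 1$. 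So your central claim that every member of $\vty{V}(\mathbf{3})$ is externally distributive is false, and by Theorem~\ref{j}(7) the embedding $\alg{A}\hookrightarrow\alg{A}^+$ is not regular for this $\alg{A}$. The step you yourself flag as the main obstacle is exactly where things break: $\alg{A}$ is not supplemented ($S_0$ has no least complementary element), so ${\uparrow}(X\setminus\widehat{a})$ need not be clopen in the dual space; in a Priestley/Esakia space the upset of a clopen set is closed but in general not open, and the ``successor set'' argument does not go through. The first half of your proposal (that $\alg{A}^+=\mathcal{Q}(\alg{A})\in\vty{V}(\mathbf{3})$ via Proposition~\ref{dr}) is fine, but it only reproves that $\vty{V}(\mathbf{3})$ is closed under hyper\hyp{}MacNeille completions; it does not yield regularity.

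Your fallback of regularly embedding $\alg{A}$ into $\mathbf{2}^I\times\mathbf{3}^J$ also fails on the same example: the Heyting algebra homomorphisms from the co\hyp{}finite algebra onto $\mathbf{2}$ or $\mathbf{3}$ are the quotient sending all nonempty sets to $1$, together with the maps $h_n$ determined by the filters of co\hyp{}finite sets containing $n$; in every such coordinate $\bigwedge_{n\geq 1}h(S_n)$ is nonzero, so no embedding into a product of $\mathbf{2}$'s and $\mathbf{3}$'s preserves the meet $\bigwedge_{n\geq 1}S_n=\emptyset$. The cited theorem is not proved in this paper at all --- it is quoted from~\cite{Har08}, where the regular completion is a bespoke construction that is neither the MacNeille, canonical, nor hyper\hyp{}MacNeille completion, and in particular is not a product of finite factors. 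In short: the statement is true, but the completion witnessing it cannot be $\alg{A}^+$, and no repair along the lines you propose is available.
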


To help place the hyper\hyp{}MacNeille completion among these other types of completions, we give the following. 

\begin{theorem}
Let $\vty{V}$ be a variety of Heyting algebras. If $\vty{V}$ is closed under MacNeille completions, then it is closed under hyper\hyp{}MacNeille completions, and if $\vty{V}$ is closed under hyper\hyp{}MacNeille completions, then it is closed under canonical completions. 
\end{theorem}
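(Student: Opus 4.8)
\emph{Part 1 (MacNeille-closed $\Rightarrow$ hyper-MacNeille-closed).} This is immediate from Section~\ref{sec:HHMC}. Assume $\vty{V}$ is closed under MacNeille completions and let $\alg{A}\in\vty{V}$. By Proposition~\ref{k1} the centrally supplemented extension $S(\alg{A})$ generates the same variety as $\alg{A}$, so $S(\alg{A})\in\mathbb{V}(\alg{A})\subseteq\vty{V}$; indeed $S(\alg{A})$ is already visibly a Heyting subalgebra of the product $\prod_Y\alg{A}_y$ of quotients of $\alg{A}$. Hence $\overline{S(\alg{A})}\in\vty{V}$, and $\overline{S(\alg{A})}=\alg{A}^+$ by Theorem~\ref{ggg}, so $\alg{A}^+\in\vty{V}$.

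\emph{Part 2 (hyper-MacNeille-closed $\Rightarrow$ canonically closed), the reduction.} Assume $\vty{V}$ is closed under hyper-MacNeille completions; write $\alg{A}^\sigma$ for the canonical extension of $\alg{A}$, identified with the Heyting algebra $\mathrm{Up}(X)$ of all up-sets of the poset reduct of the dual Esakia space $X$ of $\alg{A}$, with $U\to V=\{x:{\uparrow}x\cap U\subseteq V\}$. The plan is first to reduce to finitely subdirectly irreducible algebras. For each $x\in X$ the assignment $U\mapsto U\cap{\uparrow}x$ is a surjective Heyting homomorphism $\mathrm{Up}(X)\twoheadrightarrow\mathrm{Up}({\uparrow}x)$, and these maps separate the points of $\mathrm{Up}(X)$, so $\alg{A}^\sigma$ is a subdirect product of the algebras $\mathrm{Up}({\uparrow}x)$. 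Since ${\uparrow}x$ is (up to order-isomorphism) the dual space of the fsi quotient $\alg{A}_x$ of $\alg{A}$ obtained by collapsing the closed up-set ${\uparrow}x$, we have $\mathrm{Up}({\uparrow}x)=(\alg{A}_x)^\sigma$, and as $\vty{V}$ is closed under quotients, subalgebras and products it suffices to prove that $(\alg{A}_x)^\sigma\in\vty{V}$; that is, it suffices to treat fsi $\alg{A}$.

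\emph{Part 2, the fsi case.} So let $\alg{B}\in\vty{V}$ be fsi. Being fsi is expressed by a universal first-order sentence, hence is inherited by subalgebras, and (as $\vty{V}$ is equationally axiomatised, so closed under elementary equivalence) is enjoyed by every elementary extension of $\alg{B}$ inside $\vty{V}$. By Theorem~\ref{j}(4) the hyper-MacNeille completion of an fsi algebra equals its MacNeille completion, so $\vty{V}$ is closed under MacNeille completions of its fsi members. It thus remains to realise $\alg{B}^\sigma=\mathrm{Up}(X_\alg{B})$ as the MacNeille completion of some fsi member of $\vty{V}$. Here I would take the ``intermediate'' Heyting subalgebra $\alg{I}$ of $\mathrm{Up}(X_\alg{B})$ generated by the closed elements (the up-sets ${\uparrow}q$, $q\in X_\alg{B}$, i.e. intersections of clopen up-sets) together with the open elements (the up-sets $X_\alg{B}\setminus{\downarrow}q$, i.e. unions of clopen up-sets). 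As the ${\uparrow}q$ are completely join-irreducible and join-dense and the $X_\alg{B}\setminus{\downarrow}q$ completely meet-irreducible and meet-dense, $\overline{\alg{I}}=\mathrm{Up}(X_\alg{B})=\alg{B}^\sigma$, and $\alg{I}$, as a subalgebra of the fsi algebra $\mathrm{Up}(X_\alg{B})$, is fsi. The point then is that $\alg{I}\in\vty{V}$: in the spirit of Section~\ref{sheaf}, where $\mathfrak{R}\alg{A}$ and $\mathcal{Q}(\alg{A})$ are shown to stay in $\mathbb{V}(\alg{A})$, one approximates each closed and each open element of $\alg{B}^\sigma$ by sections built from clopen subsets of $X_\alg{B}$ and elements of $\alg{B}$, exhibiting $\alg{I}$ as a homomorphic image of a subalgebra of a product of quotients of $\alg{B}$, so that $\alg{I}\in\mathbb{V}(\alg{B})\subseteq\vty{V}$. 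An alternative route is to pass to a sufficiently saturated elementary extension $\alg{B}^\ast\succ\alg{B}$, which is automatically fsi and in $\vty{V}$ with $\overline{\alg{B}^\ast}=(\alg{B}^\ast)^+\in\vty{V}$, and to use that $\alg{B}^\sigma$ is a homomorphic image of $\overline{\alg{B}^\ast}$.

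\emph{The main obstacle.} Everything above is routine given the earlier sections except this last step of Part 2: establishing that the canonical extension of an fsi algebra is obtainable, \emph{within the variety}, from MacNeille completions of fsi algebras. Making ``$\alg{I}\in\mathbb{V}(\alg{B})$'' precise (or, on the alternative route, producing the surjection $\overline{\alg{B}^\ast}\twoheadrightarrow\alg{B}^\sigma$) requires a careful Esakia-duality analysis of how the closed and open elements of the canonical extension are witnessed inside products of quotients — essentially a variant of the standard comparisons between MacNeille and canonical completions — and this is where the real work of the proof lies.
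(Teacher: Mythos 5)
Part 1 is correct and is exactly the paper's argument: $S(\alg{A})\in\vty{V}$ by Proposition~\ref{k1}, and $\alg{A}^+=\overline{S(\alg{A})}$.

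Part 2 contains a genuine gap, and it is precisely the step you yourself flag as ``the main obstacle''. Your reduction to the fsi case via the subdirect decomposition $\alg{A}^\sigma\leq\prod_x\mathrm{Up}({\uparrow}x)$ is fine, and so is the observation that $\overline{\alg{I}}=\alg{B}^\sigma$ for the intermediate algebra $\alg{I}$ generated by the closed and open elements. But the entire weight of the proof rests on the claim $\alg{I}\in\vty{V}$, and for this you offer only an analogy with $\mathfrak{R}\alg{A}$ and $\mathcal{Q}(\alg{A})$; the closed and open elements of $\alg{B}^\sigma$ are genuinely infinitary objects (arbitrary intersections and unions of clopen up-sets) and there is no evident presentation of the Heyting algebra they generate as a quotient of a subalgebra of a product of quotients of $\alg{B}$. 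Your ``alternative route'' is also misstated: the known theorem (Gehrke--Harding--Venema) realises $\alg{B}^\sigma$ as a \emph{subalgebra} of the MacNeille completion of a suitable ultrapower of $\alg{B}$, not as a homomorphic image of it, and you give no argument for the surjection you posit.

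The paper closes exactly this gap by citing that ultrapower theorem, and in a way that makes your fsi reduction unnecessary: for $\alg{E}=S(\alg{A})$ one has $\alg{E}^\sigma\leq\overline{\alg{E}^I/\mathfrak{u}}$ for some ultrapower; the ultrapower is again in $\vty{V}$ and again centrally supplemented (an elementary property), so by Theorem~\ref{j}(3) its MacNeille completion is its hyper-MacNeille completion and hence lies in $\vty{V}$; then $\alg{A}^\sigma\leq\alg{E}^\sigma$ gives the result. So the shape of your argument (realise the canonical extension inside a MacNeille completion of something in $\vty{V}$ to which hyper-MacNeille closure applies) is the right one, but the realisation itself is a nontrivial theorem that must be invoked or proved, not sketched; as written, the proposal does not constitute a proof of the second implication.
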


\begin{proof}
The first statement follows since $\alg{A}\in\vty{V}$ implies that $S(\alg{A})\in\vty{V}$, and $\alg{A}^+=\overline{S(\alg{A})}$. For the second statement suppose that $\vty{V}$ is closed under hyper\hyp{}MacNeille completions and $\alg{A}\in\vty{V}$. By~\cite{GHV06}, for any Heyting algebra $\alg{E}$ the canonical completion $\alg{E}^\sigma$ is a subalgebra of the MacNeille completion of an ultrapower $\alg{E}^I/\mathfrak{u}$ of $\alg{E}$. So for $\alg{E}=S(\alg{A})$ we have that $\alg{E}\in\vty{V}$ and that $\alg{E}$ is centrally supplemented. So the ultrapower $\alg{E}^I/\mathfrak{u}$ also belongs to $\vty{V}$ and is centrally supplemented. Thus by Theorem~\ref{j}(3), $(\alg{E}^I/\mathfrak{u})^+=\overline{\alg{E}^I/\mathfrak{u}}$ belongs to $\vty{V}$. But $\alg{E}^\sigma$ is a subalgebra of $\overline{\alg{E}^I/\mathfrak{u}}$, and so belongs to $\vty{V}$. Since $\alg{A}\leq S(\alg{A})=\alg{E}$, general properties of canonical extensions, see, e.g.,~\cite{GH01}, give $\alg{A}^\sigma\leq \alg{E}^\sigma$, and therefore $\alg{A}^\sigma\in\vty{V}$. 
\end{proof}

One might hope that hyper\hyp{}MacNeille completions will provide a new source of regular completions for Heyting algebras. This in not the case. 

\begin{theorem}
The trivial variety and variety of Boolean algebras are the only varieties of Heyting algebras that are closed under hyper\hyp{}MacNeille completions and for which the hyper\hyp{}MacNeille completion is always regular. 
\end{theorem}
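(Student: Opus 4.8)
The plan is to turn the regularity requirement into an internal condition, settle the easy inclusion directly, and then prove the hard inclusion by showing that any non--Boolean variety already contains a non--externally distributive algebra.

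By Theorem~\ref{j}(7), asking that $\alg A^+$ be a regular completion for every $\alg A\in\vty V$ is exactly asking that every member of $\vty V$ be externally distributive. With this reformulation the inclusion ``$\supseteq$'' is routine: the trivial variety is clear; and the variety of Boolean algebras is closed under MacNeille completions by~\cite{HB04}, hence closed under hyper--MacNeille completions by the theorem relating the two completions proved above, while every Boolean algebra is centrally supplemented, so its hyper--MacNeille completion coincides with its MacNeille completion by Theorem~\ref{j}(3) and the MacNeille embedding is always regular.

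For ``$\subseteq$'', suppose $\vty V$ is nontrivial and every member is externally distributive, and assume toward a contradiction that $\vty V$ is not the variety of Boolean algebras. Then some $\alg B\in\vty V$ fails $x\vee\neg x\eq 1$; fixing a witness $x$ and putting $b=x\vee\neg x<1$, one has $\neg b=\neg x\wedge\neg\neg x=0$, and $\{0,b,1\}$ is a Heyting subalgebra of $\alg B$ isomorphic to the three--element chain $\mathbf{3}$. Hence $\mathbf{3}\in\vty V$, and since $\mathbf{2}$ embeds into $\mathbf{3}$ we also have $\mathbf{2}^{\mathbb N}\times\mathbf{3}\in\vty V$. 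Writing $0<\mathbf m<1$ for the elements of $\mathbf{3}$ and $\bar 0,\bar 1$ for the constant sequences in $\mathbf{2}^{\mathbb N}$, consider
\[
\alg A=\bigl\{(\sigma,v)\in\mathbf{2}^{\mathbb N}\times\mathbf{3}:\ \sigma(n)\text{ is eventually }1\text{ when }v\neq 0,\ \text{and eventually }0\text{ when }v=0\bigr\}.
\]
I would first check that $\alg A$ is a Heyting subalgebra of $\mathbf{2}^{\mathbb N}\times\mathbf{3}$; the only point requiring attention is closure under $\to$, which holds because in $\mathbf{3}$ one has $v\to w=0$ precisely when $v\neq 0$ and $w=0$, matching the behaviour of $\sigma\to\tau$ on a cofinite set, and the cases for $\wedge,\vee,0,1$ are immediate. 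Thus $\alg A\in\vty V$. But $\alg A$ is not externally distributive: the set $S=\{(\sigma,1):\sigma\in\mathbf{2}^{\mathbb N}\text{ eventually }1\}\subseteq\alg A$ has $\bigwedge S=(\bar 0,0)=0$ in $\alg A$, since $(\bar 0,0)$ is the only element of $\alg A$ lying below every member of $S$; yet $a:=(\bar 1,\mathbf m)\in\alg A$ satisfies $a\vee s=(\bar 1,1)=1$ for all $s\in S$ while $a\vee\bigwedge S=a\neq 1$. This contradicts external distributivity of $\vty V$, so $\vty V$ is the variety of Boolean algebras. (Closure under hyper--MacNeille completions is not actually needed for this inclusion.)

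The step I expect to be the main obstacle is producing the algebra $\alg A$. External distributivity is delicate to violate inside the very narrow variety $\vty V(\mathbf{3})$: a supplemented Heyting algebra is automatically externally distributive (if $a\vee s=1$ for all $s\in S$ then $a^+\le s$ for every $s$, hence $a^+\le\bigwedge S$ and $a\vee\bigwedge S=1$), and the obvious subalgebras of powers of $\mathbf{3}$ built from characteristic functions are supplemented; moreover in any $\alg C\le\mathbf{3}^I$ a meet that exists is automatically $\le$ the pointwise infimum. The algebra $\alg A$ above is rigged so that $a$ has \emph{no} supplement in $\alg A$ -- its supplement in the ambient product would be $(\bar 0,1)$, which lies outside $\alg A$ -- and so that the meet of the co--witnesses of $a$ collapses all the way down to $0$ instead of stopping at the pointwise infimum $(\bar 0,1)$.
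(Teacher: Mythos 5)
Your proof is correct and follows the same strategy as the paper's: reduce regularity of $\alg{A}\hookrightarrow\alg{A}^+$ to external distributivity via Theorem~\ref{j}(7), note that the trivial and Boolean varieties qualify, and then exhibit a non--externally distributive algebra in every other variety. The only substantive difference is the witness. The paper invokes the standard fact that every nontrivial non-Boolean variety contains $\vty{V}(\mathbf{3})$ (which you reprove via the subalgebra $\{0,\,x\vee\neg x,\,1\}$) and takes as witness the algebra of empty and co-finite subsets of $\mathbb{N}$, which lies in $\vty{V}(\mathbf{3})$: for a co-finite proper subset $a$, the family $S$ of co-finite supersets of $\mathbb{N}\setminus a$ satisfies $a\vee s=1$ for all $s\in S$ while $\bigwedge S=\emptyset$, so external distributivity fails. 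Your witness inside $\mathbf{2}^{\mathbb{N}}\times\mathbf{3}$ is also correct --- closure under $\to$ and the computation $\bigwedge S=(\bar 0,0)$ both check out --- but it is more elaborate than needed, and your concern that external distributivity is delicate to violate within $\vty{V}(\mathbf{3})$ is unfounded: the co-finite-set algebra is not supplemented (a nonempty finite set has no least co-finite superset), which is exactly why it serves as a one-line counterexample.
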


\begin{proof}
Since there are members of $\vty{V}(\mathbf{3})$, such as the algebra of empty and co\hyp{}finite subsets of the natural numbers, that are not externally distributive, the statement follows from Theorem~\ref{j}(7) and the fact that any variety of Heyting algebras other than the trivial variety and the variety of Boolean algebras contains $\vty{V}(\mathbf{3})$. 
\end{proof}

To continue to place the hyper\hyp{}MacNeille completion, we give further examples of varieties that are closed under it.

\begin{theorem}
Any variety of Heyting algebras generated by a finite algebra is closed under hyper\hyp{}MacNeille completions. 
\end{theorem}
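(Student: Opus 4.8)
The plan is to reduce the statement to the sheaf-theoretic description of $\alg{A}^+$ from Section~\ref{sheaf}, exploiting that a finitely generated congruence-distributive variety has all of its finitely subdirectly irreducible members of uniformly bounded finite cardinality. So fix a finite Heyting algebra $\mathbf{M}$, put $n = |M|$, write $\vty{V} = \vty{V}(\mathbf{M})$, and let $\alg{A} \in \vty{V}$; the goal is to show $\alg{A}^+ \in \vty{V}$.

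First I would establish the size bound on stalks. Since the variety of all Heyting algebras is congruence-distributive and $\mathbf{M}$ is finite, the finitely-subdirectly-irreducible form of Jónsson's Lemma gives that every fsi member of $\vty{V}$ lies in $\mathbb{HS}(\mathbf{M})$, hence has at most $n$ elements. Now take the subdirect representation $\alg{A} \leq \prod_Y \alg{A}_y$ over the minimum $Y$ of the dual Esakia space of $\alg{A}$. Each stalk $\alg{A}_y$ is a homomorphic image of $\alg{A}$, so belongs to $\vty{V}(\alg{A}) \subseteq \vty{V}$, and by Proposition~\ref{q} it is fsi (its unit is join-irreducible, $y$ being a prime filter); hence $|A_y| \leq n$ for every $y \in Y$. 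In particular the stalks of the sheaf $S_Y$ over the minimum of $\alg{A}$ have uniformly bounded finite cardinality on the dense open set $Y$ itself.

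The hypothesis of Proposition~\ref{dr} is thereby met, so $\alg{A}^+ = \mathcal{Q}(\alg{A})$. But $\mathcal{Q}(\alg{A})$ is, by construction, a homomorphic image of a subalgebra of the product $\prod_Y \alg{A}_y$ of homomorphic images of $\alg{A}$, hence lies in $\vty{V}(\alg{A})$; therefore $\alg{A}^+ \in \vty{V}(\alg{A}) \subseteq \vty{V}$, as required. One could equally argue through the centrally supplemented extension: by Propositions~\ref{prop:stalks-of-S(A)} and~\ref{prop:C(Y)=Stone-center-Z(SA)} the stalks of the central sheaf of $S(\alg{A})$ are fsi quotients of $\alg{A}$, so again of size at most $n$, whence the corollary following Proposition~\ref{h} gives $\alg{A}^+ = S(\alg{A})^+ = \mathfrak{R}\,S(\alg{A}) \in \vty{V}(S(\alg{A})) = \vty{V}(\alg{A})$, using Theorem~\ref{j}(10) and Proposition~\ref{k1}.

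The one delicate point --- and essentially the only obstacle --- is that the stalks $\alg{A}_y$ are guaranteed only to be \emph{finitely} subdirectly irreducible and need not be subdirectly irreducible, so one must invoke the fsi strengthening of Jónsson's Lemma valid in congruence-distributive varieties rather than its bare subdirectly-irreducible form; granting that, the remainder is routine assembly of Proposition~\ref{q}, Proposition~\ref{dr} (equivalently Theorem~\ref{kj}), and the fact that $\vty{V}(\alg{A})$ is closed under forming $\mathcal{Q}(\alg{A})$.
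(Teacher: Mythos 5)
Your proposal is correct and follows essentially the same route as the paper: Jónsson's Lemma (in its fsi form) bounds the cardinality of the stalks $\alg{A}_y$ over the minimum by $|M|$, Proposition~\ref{dr} then yields $\alg{A}^+=\mathcal{Q}(\alg{A})$, and $\mathcal{Q}(\alg{A})$ lies in $\vty{V}(\alg{A})$ by construction. The alternative route through the central sheaf of $S(\alg{A})$ that you sketch is a valid variant, but the main argument is the one the paper gives.
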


\begin{proof}
Suppose $\vty{V}$ is generated by the finite algebra $\alg{E}$ of cardinality $n$. Then by J\'{o}nsson's Lemma~\cite[Sec.~6]{Jon67}, the fsi algebras in $\vty{V}$ are all homomorphic images of subalgebras of $\alg{E}$, hence have cardinality at most $n$. Then by Proposition~\ref{dr} for any $\alg{A}\in\vty{V}$ we have $\alg{A}^+=\mathcal{Q}(\alg{A})$ since all the stalks of the sheaf $S_Y$ over the minimum of $\alg{A}$ are fsi and hence have cardinality at most $n$. But $\mathcal{Q}(\alg{A})$ belongs to the variety generated by $\alg{A}$, and hence to $\vty{V}$. 
\end{proof}

\begin{remark}
Using Proposition~\ref{prop:P3-properties}(2) it is not hard to come up with examples of finitely generated varieties of Heyting algebras which are not axiomatizable by $\mathcal{P}_3$\hyp{}equations, e.g., the variety generated by the Heyting algebra $(\mathbf{2} \times \mathbf{2}) \oplus \mathbf{1}$, obtained by adding a new top element to  the four element Boolean algebra. Note however, that not all varieties determined by $\mathcal{P}_3$\hyp{}equations are finitely generated, such as the variety $\vty{LC}$ generated by all totally ordered Heyting algebras. 
\end{remark}

There are other examples of varieties that are closed under hyper\hyp{}MacNeille completions that are not axiomatizable by $\mathcal{P}_3$\hyp{}equations. The key is the following simple consequence of the fact that $A^+=\overline{S(A)}$ and that $S(A)$ is centrally supplemented. 

\begin{corollary}\label{cor:HMN-iff-MN-on-DM}
A variety $\vty{V}$ of Heyting algebras is closed under hyper\hyp{}MacNeille completions iff the class of centrally supplemented algebras in $\vty{V}$ is closed under MacNeille completions. 
\end{corollary}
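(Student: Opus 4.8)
The plan is to exploit the identity $\alg{A}^+ = \overline{S(\alg{A})}$ established in Theorem~\ref{ggg}, together with Proposition~\ref{g} (which says $S(\alg{A}) = \alg{A}$ whenever $\alg{A}$ is centrally supplemented) and Proposition~\ref{k1} (which says $S(\alg{A})$ generates the same variety as $\alg{A}$). The statement is a biconditional, so I would prove the two implications separately.

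For the forward direction, suppose $\vty{V}$ is closed under hyper-MacNeille completions and let $\alg{A} \in \vty{V}$ be centrally supplemented. By Proposition~\ref{g} we have $S(\alg{A}) = \alg{A}$, so Theorem~\ref{ggg} gives $\alg{A}^+ = \overline{S(\alg{A})} = \overline{\alg{A}}$. Since $\vty{V}$ is closed under hyper-MacNeille completions, $\alg{A}^+ = \overline{\alg{A}} \in \vty{V}$; moreover $\overline{\alg{A}}$ is again a centrally supplemented Heyting algebra by Proposition~\ref{prop:MN-of-DM-is-DM}. Hence the class of centrally supplemented algebras in $\vty{V}$ is closed under MacNeille completions.

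For the converse, suppose the class of centrally supplemented algebras in $\vty{V}$ is closed under MacNeille completions, and let $\alg{A} \in \vty{V}$ be arbitrary. By Proposition~\ref{k1}, $S(\alg{A})$ lies in the variety generated by $\alg{A}$, hence in $\vty{V}$; and $S(\alg{A})$ is centrally supplemented by construction (Definition~\ref{S(A)} together with Proposition~\ref{fr}, or more directly since it is a supplemented subalgebra of the centrally supplemented product $P(\alg{A})$). By hypothesis, $\overline{S(\alg{A})} \in \vty{V}$. But $\overline{S(\alg{A})} = \alg{A}^+$ by Theorem~\ref{ggg}, so $\alg{A}^+ \in \vty{V}$. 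As $\alg{A}$ was arbitrary, $\vty{V}$ is closed under hyper-MacNeille completions.

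I do not anticipate a genuine obstacle here: every ingredient has already been assembled in the preceding sections, and the argument is essentially a bookkeeping exercise combining $\alg{A}^+ = \overline{S(\alg{A})}$ with the facts that $S(\alg{A})$ is centrally supplemented, lies in $\vty{V}(\alg{A})$, and equals $\alg{A}$ when $\alg{A}$ is already centrally supplemented. The only point requiring a moment's care is making sure that in the forward direction one records that $\overline{\alg{A}}$ remains centrally supplemented (so that it is an instance of the class we are claiming is closed under completions), which is exactly what Proposition~\ref{prop:MN-of-DM-is-DM} provides.
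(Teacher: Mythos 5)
Your proposal is correct and is exactly the argument the paper intends: the corollary is stated there as an immediate consequence of $\alg{A}^+=\overline{S(\alg{A})}$ together with the facts that $S(\alg{A})$ is centrally supplemented, lies in $\vty{V}(\alg{A})$ (Proposition~\ref{k1}), and equals $\alg{A}$ when $\alg{A}$ is already centrally supplemented (Proposition~\ref{g}). Your write-up simply makes these steps explicit, including the correct observation that Proposition~\ref{prop:MN-of-DM-is-DM} is needed in the forward direction to keep $\overline{\alg{A}}$ inside the class of centrally supplemented members of $\vty{V}$.
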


Having a supplement makes it possible to use syntactic methods analogous to the ones developed in~\cite{GV99,TV07}, or, alternatively, ALBA-style methods as in, e.g.,~\cite{CP12,CGP14,CPS17}, to establish closure under MacNeille completions. Consider, for example, the variety $\vty{BD}_2$ of Heyting algebras satisfying the equation 
\[
1 \approx x_2 \lor (x_2 \to (x_1 \lor x_1^*)). \tag{$\mathsf{bd}_2$}
\] 
This variety is not determined by $\mathcal{P}_3$\hyp{}equations~\cite[Prop.~3.24]{Lau19a}, nor is it finitely generated. Note however, that the corresponding intermediate logic does admit a simple cut\hyp{}free hypersequent calculus~\cite{CMS13}. 

\begin{proposition}
The variety $\vty{BD}_2$ is closed under hyper\hyp{}MacNeille completions. 
\end{proposition}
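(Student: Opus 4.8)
The plan is to apply Corollary~\ref{cor:HMN-iff-MN-on-DM}, which reduces the claim to showing that the class of centrally supplemented members of $\vty{BD}_2$ is closed under MacNeille completions. So let $\alg A$ be a centrally supplemented Heyting algebra satisfying $(\mathsf{bd}_2)$. By Proposition~\ref{prop:MN-of-DM-is-DM} its MacNeille completion $\overline{\alg A}$ is again centrally supplemented and the embedding $\alg A\hookrightarrow\overline{\alg A}$ preserves supplements, so it only remains to verify $\overline{\alg A}\models(\mathsf{bd}_2)$.

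First I would reformulate $(\mathsf{bd}_2)$ using supplements. In any supplemented Heyting algebra $p\lor q=1$ holds exactly when $p^+\leq q$, so by residuation $(\mathsf{bd}_2)$ is equivalent, over supplemented Heyting algebras, to the inequality $x_2\land x_2^+\leq x_1\lor x_1^*$. Hence it is enough to prove: if $\alg A$ is supplemented and $\alg A\models x\land x^+\leq y\lor y^*$, then $\overline{\alg A}$ satisfies the same inequality.

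Now take $u,v\in\overline{\alg A}$. By Lemma~\ref{lem:meet-dense-supplemented-set} we have $v^+=\bigvee\{a^+:v\leq a\in A\}$, and since $\overline{\alg A}$ is a complete Heyting algebra, binary meet distributes over this join; combined with $v\land a^+\leq a\land a^+$ this yields $v\land v^+\leq\bigvee\{a\land a^+:v\leq a\in A\}$. So it suffices to show $a\land a^+\leq u\lor u^*$ for every $a\in A$. Here the key move is that $u\lor u^*$ is dense in $\overline{\alg A}$: by meet-density of $\alg A$ in $\overline{\alg A}$ it equals $\bigwedge\{b\in A:b\geq u\lor u^*\}$, and each such $b$ satisfies $b^*\leq(u\lor u^*)^*=0$ and is therefore a dense element of $\alg A$. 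Thus the problem collapses to the purely internal statement that in $\alg A$ one has $a\land a^+\leq b$ for every $a\in A$ and every dense $b\in A$.

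Finally this internal statement is to be verified via the subdirect representation of Proposition~\ref{q}: since $\alg A$ is centrally supplemented it is (coordinatewise) a subdirect product $\alg A\leq\prod_Y\alg A_y$ of fsi Heyting algebras, with $\land$, $(\cdot)^*$ and $(\cdot)^+$ all computed in each stalk; and as each $\alg A_y$ is a quotient of $\alg A$ it satisfies $(\mathsf{bd}_2)$, hence has depth at most $2$. In an fsi Heyting algebra of depth at most $2$ — whose Esakia space is a single root with an antichain of maximal points above it — every element strictly below the top lies below every dense element: if $c<1$ and $b$ is dense, any point witnessing $c\not\leq b$ would have to be a maximal point lying outside $\widehat b$, forcing $\neg b\neq 0$, a contradiction. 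Since $(a\land a^+)(y)$ is always strictly below the top of $\alg A_y$ while $b(y)$ is dense there, we get $(a\land a^+)(y)\leq b(y)$ at every $y$, hence $a\land a^+\leq b$, which closes the chain of reductions and gives $\overline{\alg A}\models(\mathsf{bd}_2)$. I expect the delicate point to be this passage to dense elements — recognizing that everything in $\alg A$ above $u\lor u^*$ is dense, together with the depth-$2$ structural fact that every non-top element sits below every dense element; it is exactly this last fact that fails for deeper bounded-depth equations, which is why the argument is specific to $\vty{BD}_2$ rather than applying to bounded-depth varieties in general.
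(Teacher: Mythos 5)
Your proof is correct and takes essentially the same route as the paper's: both rewrite $(\mathsf{bd}_2)$ over supplemented algebras as $x\land x^+\leq y\lor y^*$ and then use join\hyp{} and meet\hyp{}density of $\alg{A}$ in $\overline{\alg{A}}$ (an Ackermann/minimal\hyp{}valuation step) to reduce validity in $\overline{\alg{A}}$ to the internal statement that every co\hyp{}dense element of $\alg{A}$ lies below every dense element. The only divergence is at the very end: where the paper notes that this internal statement is exactly the instance $x_1\coloneqq a$, $x_2\coloneqq b$ of the equation in $\alg{A}$ itself, you verify it by passing to the stalks $\alg{A}_y$ and arguing on their depth\hyp{}$2$ rooted Esakia spaces --- correct, but heavier than needed, since in each fsi stalk $c<1$ and $b$ dense give $c\leq b$ directly from $c\lor(c\to(b\lor b^*))=1$ and join\hyp{}irreducibility of $1$.
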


\begin{proof}
By Corollary~\ref{cor:HMN-iff-MN-on-DM} it is enough to show that the MacNeille completion of each \mbox{centrally} supplemented member of $\vty{BD}_2$ is in $\vty{BD}_2$. Observe that on supplemented Heyting algebras the defining equation for $\vty{BD}_2$ is equivalent to the equation 
\begin{equation}\label{eq:BD2}
x_1^+ \land x_1 \leq x_2 \lor x_2^*.  
\end{equation}
Let $\alg{A}$ be a supplemented Heyting algebra, then $\overline{\alg{A}}$ is also a supplemented Heyting algebra. Since the set $A$ is both join\hyp{} and meet\hyp{}dense in $\overline{A}$  we have that $\overline{\alg{A}}$ validates Equation~(\ref{eq:BD2}) iff the following two\hyp{}sorted quasi\hyp{}equation holds 
\begin{equation}\label{eq:BD2a}
\forall a, b \in A\, \forall x_1, x_2 \in \overline{A} \ ((a \leq x_1^+ \land x_1 \; \text{and} \; x_2 \lor x_2^* \leq b) \implies a \leq b.
\end{equation}
This is essentially mimicking the \emph{approximation} step of the ALBA algorithm. Noting that $a\leq x_1$ and $a\leq x_1^+$ implies that $a\leq x_1^+\leq a^+$, and similarly with $x_2, x_2^*\leq b$, we have that (\ref{eq:BD2a}) is equivalent to 
\begin{equation}\label{eq:BD2c}
\forall a, b \in A ((a \leq a^+ \; \text{and} \; b^* \leq b) \implies a \leq b).
\end{equation}   
There is no ambiguity here since supplements and pseudo\hyp{}complements in $\alg{A}$ agree with those in $\overline{A}$ by meet\hyp{}density and join\hyp{}density. This last step is essentially a version of what is known as the \emph{Ackermann Lemma} or the \emph{minimal valuation argument} in the context of correspondence theory, see, e.g.,~\cite{CP12}. Now,~(\ref{eq:BD2c}) only depends on $\alg{A}$, it is expressing that every co\hyp{}dense element is below every dense element. Moreover, the above reasoning also shows that~(\ref{eq:BD2c}) is equivalent to $\alg{A}$ validating~(\ref{eq:BD2}) and hence $\alg{A}$ belonging to $\vty{BD}_2$.
\end{proof}

\begin{remark}
A similar kind of argument works for many other varieties of Heyting algebras. Further, the above result has not used the full power of the centrally supplemented condition, it has only used supplementation. 
\end{remark}


\section{Concluding remarks}\label{sec:concluding-remarks} 

While we have been able to obtain some new results and reprove some already known facts about the hyper\hyp{}MacNeille completion of Heyting algebras many questions remain to be answered. In particular, we would like to have a good enough understanding of $\alg{A}^+$ to be able to derive the fact that any variety of Heyting algebra axiomatized by $\mathcal{P}_3$\hyp{}equations is closed under hyper\hyp{}MacNeille completions~\cite[Thm.~7.3]{CGT17} without recourse to syntactic analysis. We note that for any such variety the class of finitely subdirectly irreducible algebras is always closed under MacNeille completions~\cite{CGT11}, see also~\cite{BHIL18} for an alternative argument. A positive answer to the following question would solve this problem. 

\begin{problem}
Let $\vty{V}$ be a variety of Heyting algebras such that for each fsi $\alg{A}\in\vty{V}$ we have that $\overline{\alg{A}}\in\vty{V}$. Must it be the case that $\vty{V}$ is closed under hyper\hyp{}MacNeille completions?
\end{problem}

We believe sheaf\hyp{}theoretic techniques may be useful in approaching this problem.

\bibliographystyle{abbrv}
\bibliography{bibliography}

\begin{thebibliography}{10}

\bibitem{BD74}
R.~Balbes and {\relax Ph}.~Dwinger.
\newblock {\em Distributive Lattices}.
\newblock University of Missouri Press, Columbia, Mo., 1974.

\bibitem{BJO04}
F.~Belardinelli, P.~Jipsen, and H.~Ono.
\newblock Algebraic aspects of cut elimination.
\newblock {\em Studia Logica}, 77(2):209--240, 2004.

\bibitem{BHIL18}
G.~Bezhanishvili, J.~Harding, J.~Ilin, and F.~M. Lauridsen.
\newblock Mac{N}eille transferability and stable classes of {H}eyting algebras.
\newblock {\em Algebra Universalis}, 79:Art. 55, pp. 21, 2018.

\bibitem{Bez06}
N.~Bezhanishvili.
\newblock {\em Lattices of Intermediate and Cylindric Modal Logics}.
\newblock PhD thesis, University of Amsterdam, 2006.

\bibitem{BdRV01}
P.~Blackburn, M.~de~Rijke, and Y.~Venema.
\newblock {\em Modal Logic}, volume~53 of {\em Cambridge Tracts in Theoretical
  Computer Science}.
\newblock Cambridge University Press, Cambridge, 2001.

\bibitem{BS81}
S.~Burris and H.~P. Sankappanavar.
\newblock {\em A Course in Universal Algebra}, volume~78 of {\em Graduate Texts
  in Mathematics}.
\newblock Springer-Verlag, New York-Berlin, 1981.

\bibitem{BW79}
S.~Burris and H.~Werner.
\newblock Sheaf constructions and their elementary properties.
\newblock {\em Trans. Amer. Math. Soc.}, 248(2):269--309, 1979.

\bibitem{CGT08}
A.~Ciabattoni, N.~Galatos, and K.~Terui.
\newblock From axioms to analytic rules in nonclassical logics.
\newblock In {\em Proceedings of the Twenty-Third Annual {IEEE} Symposium on
  Logic in Computer Science, {LICS} 2008, 24-27 June 2008, Pittsburgh, PA,
  {USA}}, pages 229--240. {IEEE} Computer Society, 2008.

\bibitem{CGT11}
A.~Ciabattoni, N.~Galatos, and K.~Terui.
\newblock Mac{N}eille completions of {FL}-algebras.
\newblock {\em Algebra Universalis}, 66(4):405--420, 2011.

\bibitem{CGT12}
A.~Ciabattoni, N.~Galatos, and K.~Terui.
\newblock Algebraic proof theory for substructural logics: cut-elimination and
  completions.
\newblock {\em Ann. Pure Appl. Logic}, 163(3):266--290, 2012.

\bibitem{CGT17}
A.~Ciabattoni, N.~Galatos, and K.~Terui.
\newblock Algebraic proof theory: hypersequents and hypercompletions.
\newblock {\em Ann. Pure Appl. Logic}, 168(3):693--737, 2017.

\bibitem{CMS13}
A.~Ciabattoni, P.~Maffezioli, and L.~Spendier.
\newblock Hypersequent and labelled calculi for intermediate logics.
\newblock In {\em Automated Reasoning with Analytic Tableaux and Related
  Methods}, volume 8123 of {\em Lecture Notes in Comput. Sci.}, pages 81--96.
  Springer, Heidelberg, 2013.

\bibitem{Com71}
S.~D. Comer.
\newblock Representations by algebras of sections over {Boolean} spaces.
\newblock {\em Pacific J. Math.}, 38:29--38, 1971.

\bibitem{CGP14}
W.~Conradie, S.~Ghilardi, and A.~Palmigiano.
\newblock Unified correspondence.
\newblock In {\em Johan van {Benthem} on Logic and Information Dynamics},
  volume~5 of {\em Outst. Contrib. Log.}, pages 933--975. Springer, Cham, 2014.

\bibitem{CP12}
W.~Conradie and A.~Palmigiano.
\newblock Algorithmic correspondence and canonicity for distributive modal
  logic.
\newblock {\em Ann. Pure Appl. Logic}, 163(3):338--376, 2012.

\bibitem{CPS17}
W.~Conradie, A.~Palmigiano, and S.~Sourabh.
\newblock Algebraic modal correspondence: {Sahlqvist} and beyond.
\newblock {\em J. Log. Algebr. Methods Program.}, 91:60--84, 2017.

\bibitem{CHJ96}
G.~D. Crown, J.~Harding, and M.~F. Janowitz.
\newblock Boolean products of lattices.
\newblock {\em Order}, 13(2):175--205, 1996.

\bibitem{Dav72}
B.~A. Davey.
\newblock {${\mathfrak{M}}$}-{Stone} lattices.
\newblock {\em Canadian J. Math.}, 24:1027--1032, 1972.

\bibitem{Dav73}
B.~A. Davey.
\newblock Sheaf spaces and sheaves of universal algebras.
\newblock {\em Math. Z.}, 134:275--290, 1973.

\bibitem{DP02}
B.~A. Davey and H.~A. Priestley.
\newblock {\em Introduction to Lattices and Order}.
\newblock Cambridge University Press, New York, second edition, 2002.

\bibitem{Dan16}
N.~D\u{a}ne\c{t}.
\newblock The {Dedekind} completion of {$C(X)$} with pointwise discontinuous
  functions.
\newblock In {\em Ordered Structures and Applications}, Trends Math., pages
  111--126. Birkh\"{a}user/Springer, Cham, 2016.

\bibitem{Esa85}
L.~Esakia.
\newblock {\em Heyting Algebras {\rm{I}}. Duality Theory}.
\newblock ``Metsniereba'', Tbilisi, 1985.
\newblock (Russian).

\bibitem{Esa19}
L.~Esakia.
\newblock {\em Heyting Algebras: Duality Theory}, volume~50 of {\em Trends in
  Logic}.
\newblock Springer, 2019.
\newblock Translated from the Russian by A.~Evseev.

\bibitem{GJ13}
N.~Galatos and P.~Jipsen.
\newblock Residuated frames with applications to decidability.
\newblock {\em Trans. Amer. Math. Soc.}, 365(3):1219--1249, 2013.

\bibitem{GO10}
N.~Galatos and H.~Ono.
\newblock Cut elimination and strong separation for substructural logics: an
  algebraic approach.
\newblock {\em Ann. Pure Appl. Logic}, 161(9):1097--1133, 2010.

\bibitem{Geh91}
M.~Gehrke.
\newblock The order structure of {Stone} spaces and the {$T_D$}-separation
  axiom.
\newblock {\em Z. Math. Logik Grundlag. Math.}, 37(1):5--15, 1991.

\bibitem{GH01}
M.~Gehrke and J.~Harding.
\newblock Bounded lattice expansions.
\newblock {\em J. Algebra}, 238(1):345--371, 2001.

\bibitem{GHV06}
M.~Gehrke, J.~Harding, and Y.~Venema.
\newblock {MacNeille} completions and canonical extensions.
\newblock {\em Trans. Amer. Math. Soc.}, 358(2):573--590, 2006.

\bibitem{GvG18}
M.~Gehrke and S.~J. v.~Gool.
\newblock Sheaves and duality.
\newblock {\em J. Pure Appl. Algebra}, 222(8):2164--2180, 2018.

\bibitem{GV99}
S.~Givant and Y.~Venema.
\newblock The preservation of {Sahlqvist} equations in completions of {B}oolean
  algebras with operators.
\newblock {\em Algebra Universalis}, 41(1):47--84, 1999.

\bibitem{GS57}
G.~Gr\"{a}tzer and E.~T. Schmidt.
\newblock On a problem of {M.~H.~Stone}.
\newblock {\em Acta Math. Acad. Sci. Hungar.}, 8:455--460, 1957.

\bibitem{GJLPT18}
G.~Greco, P.~Jipsen, F.~Liang, A.~Palmigiano, and A.~Tzimoulis.
\newblock Algebraic proof theory for {LE}-logics.
\newblock arXiv:1808.04642v1, 2018.

\bibitem{HV-M84}
G.~Hansoul and L.~Vrancken-Mawet.
\newblock D\'{e}compositions bool\'{e}ennes de lattis distributifs born\'{e}s.
\newblock {\em Bull. Soc. Roy. Sci. Li\`ege}, 53(2):88--92, 1984.

\bibitem{Har93}
J.~Harding.
\newblock Completions of orthomodular lattices. {II}.
\newblock {\em Order}, 10(3):283--294, 1993.

\bibitem{Har08}
J.~Harding.
\newblock A regular completion for the variety generated by the three-element
  {Heyting} algebra.
\newblock {\em Houston J. Math.}, 34(3):649--660, 2008.

\bibitem{HB04}
J.~Harding and G.~Bezhanishvili.
\newblock {MacNeille} completions of {Heyting} algebras.
\newblock {\em Houston J. Math.}, 30(4):937--952, 2004.

\bibitem{Jon67}
B.~J\'{o}nsson.
\newblock Algebras whose congruence lattices are distributive.
\newblock {\em Math. Scand.}, 21:110--121 (1968), 1967.

\bibitem{Lau19b}
F.~M. Lauridsen.
\newblock {\em Cuts and Completions: Algebraic aspects of structural proof
  theory}.
\newblock PhD thesis, University of Amsterdam, 2019.

\bibitem{Lau19a}
F.~M. Lauridsen.
\newblock Intermediate logics admitting a structural hypersequent calculus.
\newblock {\em Studia Logica}, 107(2):247--282, 2019.

\bibitem{Mae91}
S.~Maehara.
\newblock Lattice-valued representation of the cut-elimination theorem.
\newblock {\em Tsukuba J. Math.}, 15(2):509--521, 1991.

\bibitem{Oka99}
M.~Okada.
\newblock Phase semantic cut-elimination and normalization proofs of first- and
  higher-order linear logic.
\newblock {\em Theoret. Comput. Sci.}, 227(1-2):333--396, 1999.
\newblock Linear logic, I (Tokyo, 1996).

\bibitem{Oka02}
M.~Okada.
\newblock A uniform semantic proof for cut-elimination and completeness of
  various first and higher order logics.
\newblock {\em Theoret. Comput. Sci.}, 281(1-2):471--498, 2002.

\bibitem{OT99}
M.~Okada and K.~Terui.
\newblock The finite model property for various fragments of intuitionistic
  linear logic.
\newblock {\em J. Symbolic Logic}, 64(2):790--802, 1999.

\bibitem{Ono03}
H.~Ono.
\newblock Closure operators and complete embeddings of residuated lattices.
\newblock {\em Studia Logica}, 74(3):427--440, 2003.

\bibitem{OK85}
H.~Ono and Y.~Komori.
\newblock Logics without the contraction rule.
\newblock {\em J. Symbolic Logic}, 50(1):169--201, 1985.

\bibitem{Pie67}
R.~S. Pierce.
\newblock {\em Modules over Commutative Regular Rings}.
\newblock Memoirs of the American Mathematical Society, No. 70. American
  Mathematical Society, Providence, R.I., 1967.

\bibitem{Pri72}
H.~A. Priestley.
\newblock Ordered topological spaces and the representation of distributive
  lattices.
\newblock {\em Proc. London Math. Soc. (3)}, 24:507--530, 1972.

\bibitem{RY09}
W.~Rump and Y.~C. Yang.
\newblock Lateral completion and structure sheaf of an {A}rchimedean
  {$l$}-group.
\newblock {\em J. Pure Appl. Algebra}, 213(1):136--143, 2009.

\bibitem{San85}
H.~P. Sankappanavar.
\newblock Heyting algebras with dual pseudocomplementation.
\newblock {\em Pacific J. Math.}, 117(2):405--415, 1985.

\bibitem{Spe69B}
T.~P. Speed.
\newblock Some remarks on a class of distributive lattices.
\newblock {\em J. Austral. Math. Soc.}, 9:289--296, 1969.

\bibitem{Spe69A}
T.~P. Speed.
\newblock Two congruences on distributive lattices.
\newblock {\em Bull. Soc. Roy. Sci. Li\`ege}, 38:86--95, 1969.

\bibitem{Ter13}
K.~Terui.
\newblock Herbrand's {Theorem} via {Hypercanonical} {Extensions}.
\newblock Presentation delivered on September 24 at TbiLLC 2013, {G}adauri,
  2013.

\bibitem{Ter18}
K.~Terui.
\newblock {MacNeille} completion and {Buchholz'} omega rule for parameter-free
  second order logics.
\newblock In {\em Computer science logic 2018}, volume 119 of {\em LIPIcs.
  Leibniz Int. Proc. Inform.}, pages Art. No. 37, 19. Schloss Dagstuhl.
  Leibniz-Zent. Inform., Wadern, 2018.

\bibitem{TV07}
M.~Theunissen and Y.~Venema.
\newblock {MacNeille} completions of lattice expansions.
\newblock {\em Algebra Universalis}, 57(2):143--193, 2007.

\bibitem{Vag95}
D.~J. Vaggione.
\newblock Locally {Boolean} spectra.
\newblock {\em Algebra Universalis}, 33(3):319--354, 1995.

\bibitem{W05}
A.~M. Wille.
\newblock A {G}entzen system for involutive residuated lattices.
\newblock {\em Algebra Universalis}, 54(4):449--463, 2005.

\end{thebibliography}

\end{document}